\newcommand*\fullref[3][\relax]{%
  \ifdefined\hyperref%
    {\hyperref[#3]{#2\penalty 200\ \ref*{#3}#1}}%
  \else%
    {#2\penalty 200\ \relax\ref{#3}#1}%
  \fi%
}
\tikzset{
  bst/.style={
    standard/.style={
      font=\small,
      draw=gray,
      rounded rectangle,
      minimum width=4.5mm,
      minimum height=4.5mm,
      inner xsep=0mm,
      inner ysep=1mm,
      outer sep=0mm,
      line width=.5pt,
    },
    empty/.style={
      minimum width=3mm,
      minimum height=3mm,
    },
    triangle/.style={
      isosceles triangle,
      isosceles triangle apex angle=60,
      shape border rotate=90,
      rounded corners=2mm,
      minimum width=8mm,
      inner xsep=0mm,
      inner ysep=.5mm
    },
    blank/.style={
      draw=none,
    },
    nodecount/.style={
      blank,
      font=\scriptsize,
    },
    every node/.style={standard},
    every child/.style={draw=black,line width=.6pt},
    level distance=10mm,
    level 1/.style={sibling distance=60mm},
    level 2/.style={sibling distance=30mm},
    level 3/.style={sibling distance=15mm},
  },
  medbst/.style={
    bst,
    level distance=10mm,
    level 1/.style={sibling distance=15mm},
    level 2/.style={sibling distance=15mm},
    level 3/.style={sibling distance=15mm},
  },
  smallbst/.style={
    bst,
    level distance=8mm,
    level 1/.style={sibling distance=10mm},
    level 2/.style={sibling distance=10mm},
    level 3/.style={sibling distance=10mm},
  },
  tinybst/.style={
    bst,
    level distance=5mm,
    level 1/.style={sibling distance=8mm},
    level 2/.style={sibling distance=8mm},
    level 3/.style={sibling distance=8mm},
    every node/.append style={
      font=\footnotesize,
    },
    triangle/.append style={
      rounded corners=1mm,
      minimum width=7mm,
      inner xsep=-.5mm,
    },
  },
  microbst/.style={
    bst,
    standard/.append style={
      font=\scriptsize,
      minimum width=3mm,
      minimum height=3mm,
      inner ysep=.25mm,
    },
    level distance=3mm,
    level 1/.style={sibling distance=6mm},
    level 2/.style={sibling distance=6mm},
    level 3/.style={sibling distance=6mm},
  },
  nanobst/.style={
    bst,
    standard/.append style={
      font=\tiny,
      minimum width=2mm,
      minimum height=2mm,
      inner ysep=.25mm,
    },
    level distance=2mm,
    level 1/.style={sibling distance=4mm},
    level 2/.style={sibling distance=4mm},
    level 3/.style={sibling distance=4mm},
  },
}
\newif\ifpgf@rectanglewrc@donecorner@
\def\pgf@rectanglewithroundedcorners@docorner#1#2#3#4{%
  \edef\pgf@marshal{%
    \noexpand\pgfintersectionofpaths
      {%
        \noexpand\pgfpathmoveto{\noexpand\pgfpoint{\the\pgf@xa}{\the\pgf@ya}}%
        \noexpand\pgfpathlineto{\noexpand\pgfpoint{\the\pgf@x}{\the\pgf@y}}%
      }%
      {%
        \noexpand\pgfpathmoveto{\noexpand\pgfpointadd
          {\noexpand\pgfpoint{\the\pgf@xc}{\the\pgf@yc}}%
          {\noexpand\pgfpoint{#1}{#2}}}%
        \noexpand\pgfpatharc{#3}{#4}{\cornerradius}%
      }%
    }%
  \pgf@process{\pgf@marshal\pgfpointintersectionsolution{1}}%
  \pgf@process{\pgftransforminvert\pgfpointtransformed{}}%
  \pgf@rectanglewrc@donecorner@true
}
  \savedmacro\cornerradius{%
    \edef\cornerradius{\pgfkeysvalueof{/pgf/rectangle corner radius}}%
  }
    \edef\pgf@marshal{%
      \noexpand\pgfpointborderrectangle
      {\noexpand\pgfqpoint{\the\pgf@xb}{\the\pgf@yb}}
      {\noexpand\pgfqpoint{\the\pgf@xc}{\the\pgf@yc}}%
    }%
    \pgfextract@process\borderpoint{}%
       \pgf@rectanglewithroundedcorners@docorner{\cornerradius}{0pt}{0}{90}%
  \newcommand{\convexpath}[2]{
    [
    create hullcoords/.code={
      \global\edef\namelist{#1}
      \foreach [count=\counter] \nodename in \namelist {
        \global\edef\numberofnodes{\counter}
        \coordinate (hullcoord\counter) at (\nodename);
      }
      \coordinate (hullcoord0) at (hullcoord\numberofnodes);
      \pgfmathtruncatemacro\lastnumber{\numberofnodes+1}
      \coordinate (hullcoord\lastnumber) at (hullcoord1);
    },
    create hullcoords
    ]
    ($(hullcoord1)!#2!-90:(hullcoord0)$)
    \foreach [
    evaluate=\currentnode as \previousnode using \currentnode-1,
    evaluate=\currentnode as \nextnode using \currentnode+1
    ] \currentnode in {1,...,\numberofnodes} {
      let \p1 = ($(hullcoord\currentnode) - (hullcoord\previousnode)$),
      \n1 = {atan2(\y1,\x1) + 90},
      \p2 = ($(hullcoord\nextnode) - (hullcoord\currentnode)$),
      \n2 = {atan2(\y2,\x2) + 90},
      \n{delta} = {Mod(\n2-\n1,360) - 360}
      in
      {arc [start angle=\n1, delta angle=\n{delta}, radius=#2]}
      -- ($(hullcoord\nextnode)!#2!-90:(hullcoord\currentnode)$)
    }
  }
  \newcommand{\convexpath}[2]{
    [
    create hullcoords/.code={
      \global\edef\namelist{#1}
      \foreach [count=\counter] \nodename in \namelist {
        \global\edef\numberofnodes{\counter}
        \coordinate (hullcoord\counter) at (\nodename);
      }
      \coordinate (hullcoord0) at (hullcoord\numberofnodes);
      \pgfmathtruncatemacro\lastnumber{\numberofnodes+1}
      \coordinate (hullcoord\lastnumber) at (hullcoord1);
    },
    create hullcoords
    ]
    ($(hullcoord1)!#2!-90:(hullcoord0)$)
    \foreach [
    evaluate=\currentnode as \previousnode using \currentnode-1,
    evaluate=\currentnode as \nextnode using \currentnode+1
    ] \currentnode in {1,...,\numberofnodes} {
      let \p1 = ($(hullcoord\currentnode) - (hullcoord\previousnode)$),
      \n1 = {atan2(\x1,\y1) + 90},
      \p2 = ($(hullcoord\nextnode) - (hullcoord\currentnode)$),
      \n2 = {atan2(\x2,\y2) + 90},
      \n{delta} = {Mod(\n2-\n1,360) - 360}
      in
      {arc [start angle=\n1, delta angle=\n{delta}, radius=#2]}
      -- ($(hullcoord\nextnode)!#2!-90:(hullcoord\currentnode)$)
    }
  }
\tikzset{
  pretableaumatrix/.style={
    ampersand replacement=\&,
    matrix of math nodes,
    outer sep=1mm,
    inner sep=0mm,
    anchor=center,
    row sep={between borders,-\pgflinewidth},
    column sep={between borders,-\pgflinewidth},
    dottedentry/.style={densely dotted},
    spaceentry/.style={draw=none,execute at begin node=\null},
  },
  pretableaunode/.style={
    font=\small,
    draw=gray,
    sharp corners,
    rectangle,
    anchor=base,
    text height=3.75mm,
    text depth=1.25mm,
    minimum height=5mm,
    minimum width=5mm,
    inner sep=0mm,
    outer sep=0mm,
  },
  tableaumatrix/.style={
    pretableaumatrix,
    every node/.append style={
      pretableaunode,
    },
  },
  medtableaumatrix/.style={
    pretableaumatrix,
    every node/.append style={
      pretableaunode,
      font=\footnotesize,
      text height=2.75mm,
      text depth=.75mm,
      minimum height=3.5mm,
      minimum width=3.5mm
    },
  },
  smalltableaumatrix/.style={
    pretableaumatrix,
    every node/.append style={
      pretableaunode,
      font=\scriptsize,
      text height=1.85mm,
      text depth=.15mm,
      minimum height=2.5mm,
      minimum width=2.5mm,
    },
  },
  tinytableaumatrix/.style={
    pretableaumatrix,
    every node/.append style={
      pretableaunode,
      font=\tiny,
      text height=1.25mm,
      text depth=.15mm,
      minimum height=1.75mm,
      minimum width=1.75mm
    },
  },
  tableau/.style={
    baseline=-1.25mm,
    every matrix/.style={tableaumatrix},
  },
  medtableau/.style={
    baseline=-1.25mm,
    every matrix/.style={medtableaumatrix},
  },
  smalltableau/.style={
    baseline=-1.25mm,
    every matrix/.style={smalltableaumatrix},
  },
  preshapetableaumatrix/.style={
    pretableaumatrix,
    execute at end cell={\strut},
    every node/.append style={
      draw=black,
      anchor=base,
      inner sep=0mm,
      outer sep=0mm,
    },
    shadedentry/.style={fill=gray},
    darkshadedentry/.style={fill=darkgray},
  },
  medshapetableaumatrix/.style={
    preshapetableaumatrix,
    every node/.append style={
      text height=2.75mm,
      text depth=.75mm,
      minimum height=3.5mm,
      minimum width=3.5mm
    },
  },
  shapetableaumatrix/.style={
    ampersand replacement=\&,
    matrix of math nodes,
    outer sep=0mm,
    inner sep=0mm,
    anchor=base,
    row sep={between borders,-\pgflinewidth},
    column sep={between borders,-\pgflinewidth},
    execute at begin cell={\strut},
    every node/.append style={draw,anchor=base,text height=1mm,text depth=.5mm,minimum size=1.5mm,inner sep=0mm,outer sep=0mm},
  },
  shapetableau/.style={
    every matrix/.style={shapetableaumatrix},
  },
  topalign/.style={
    every matrix/.append style={name=maintableau,anchor=maintableau-1-1.base},
    baseline,
  },
}
\newcommand*\tableau[2][]{\tikz[tableau,#1]\matrix{#2};}
\newcommand*\smalltableau[2][]{\tikz[smalltableau,#1]\matrix{#2};}
\theoremstyle{definition}
\newtheorem{definition}{Definition}[section]
\newtheorem{algorithm}[definition]{Algorithm}
\newtheorem{conjecture}[definition]{Conjecture}
\newtheorem{example}[definition]{Example}
\newtheorem{method}[definition]{Method}
\newtheorem{question}[definition]{Question}
\theoremstyle{plain}
\newtheorem{lemma}[definition]{Lemma}
\newtheorem{proposition}[definition]{Proposition}
\newtheorem{theorem}[definition]{Theorem}
\numberwithin{equation}{section}
\newcommand*{\textparens}[1]{\textup{(}#1\textup{)}}
\newcommand*{\defterm}[1]{\emph{#1}}
\newcommand\chyph{\penalty\@M-\hskip\z@skip}
\DeclarePairedDelimiter{\parens}{\lparen}{\rparen}
\DeclarePairedDelimiter{\set}{\{}{\}}
\DeclarePairedDelimiterX{\gset}[2]{\{}{\}}{\,#1:#2\,}
\newcommand{\sizeddelimiter}[2]{\bBigg@{#1}#2}
\newcommand*{\nset}{\mathbb{N}}
\newcommand*{\emptyword}{\varepsilon}
\DeclarePairedDelimiter{\gen}{\langle}{\rangle}
\DeclarePairedDelimiterX{\pres}[2]{\langle}{\rangle}{#1\,\delimsize\vert\,\mathopen{}#2}
\newcommand*{\drel}[1]{\mathcal{#1}}
\newcommand*{\cgen}[1]{#1^{\#}}
\newcommand*{\aA}{\mathcal{A}}
\newcommand*{\evlit}{{\mathrm{ev}}}
\newcommand*{\ev}[2][]{\evlit\parens[#1]{#2}}
\newcommand*{\evrel}{\equiv_\evlit}
\newcommand*{\cochseq}{\mathrm{cochseq}}
\newcommand*{\tree}[1]{#1}
\newcommand*{\plac}{{\mathsf{plac}}}
\newcommand*{\hypo}{{\mathsf{hypo}}}
\newcommand*{\sylv}{{\mathsf{sylv}}}
\newcommand*{\baxt}{{\mathsf{baxt}}}
\newcommand*{\stal}{{\mathsf{stal}}}
\newcommand*{\taig}{{\mathsf{taig}}}
\newcommand*{\placcong}{\equiv_\plac}
\newcommand*{\hypocong}{\equiv_\hypo}
\newcommand*{\stalcong}{\equiv_\stal}
\newcommand*{\sylvcong}{\equiv_\sylv}
\newcommand*{\taigcong}{\equiv_\taig}
\newcommand*{\baxtcong}{\equiv_\baxt}
\newcommand*{\Plit}{\mathrm{P}}
\renewcommand*{\P}[2][]{\Plit\parens[#1]{#2}}
\newcommand*{\colreading}[2][]{\mathrm{C}\parens[#1]{#2}}
\newcommand*{\rowreading}[2][]{\mathrm{R}\parens[#1]{#2}}
\newcommand*{\plit}{\mathrm{P}}
\newcommand*{\pplac}[2][]{\plit_{\plac}\parens[#1]{#2}}
\newcommand*{\phypo}[2][]{\plit_{\hypo}\parens[#1]{#2}}
\newcommand*{\psylv}[2][]{\plit_{\sylv}\parens[#1]{#2}}
\newcommand*{\pbaxt}[2][]{\plit_{\baxt}\parens[#1]{#2}}
\newcommand*{\ptaig}[2][]{\plit_{\taig}\parens[#1]{#2}}
\newcommand*{\pstal}[2][]{\plit_{\stal}\parens[#1]{#2}}
\tikzset{
  olsubword/.style={
    every node/.append style={
      name=mainnode,
      draw=darkgray,
      rounded corners=.5mm,
      inner sep=.5mm,
    },
    baseline=(mainnode.base),
  }
}
\newcommand*\olsubword[1]{\tikz[olsubword]\node{#1};}
\tikzset{
  mogrifyarrow/.style={
    ->,
    >/.tip=Computer Modern Rightarrow,
    decorate,
    decoration={
      zigzag,
      amplitude=0.2em,
      segment length=0.35em,
      pre length=0.35em,
      post length=0.35em,
    },
  },
}
\tikzset{
  bstoutline/.style={darkgray,thick}
}
\tikzset{
  lcomment/.style={
    align=right,
    font=\scriptsize,
    inner sep=.5mm,
    anchor=east,
  },
  rcomment/.style={
    align=left,
    font=\scriptsize,
    inner sep=.5mm,
    anchor=west,
  },
  tcomment/.style={
    align=center,
    font=\scriptsize,
    inner sep=.5mm,
    anchor=south,
  },
}
\newcommand{\cyc}{\sim}
\begin{document}

\title[Combinatorics of cyclic shifts]{Combinatorics of cyclic shifts in plactic, hypoplactic, sylvester, Baxter, and related monoids}

\author[A.J. Cain]{Alan J. Cain}
\address{%
Centro de Matem\'{a}tica e Aplica\c{c}\~{o}es (CMA)\\
Faculdade de Ci\^{e}ncias e Tecnologia\\
Universidade Nova de Lisboa\\
2829--516 Caparica\\
Portugal
}
\email{%
a.cain@fct.unl.pt
}

\thanks{The first author was supported by an Investigador {\sc FCT} fellowship ({\sc IF}/01622/2013/{\sc CP}1161/{\sc
    CT}0001).}

\author[A. Malheiro]{Ant\'{o}nio Malheiro}
\address{%
Centro de Matem\'{a}tica e Aplica\c{c}\~{o}es (CMA)\\
Faculdade de Ci\^{e}ncias e Tecnologia\\
Universidade Nova de Lisboa\\
2829--516 Caparica\\
Portugal
}
\address{%
Departamento de Matem\'{a}tica\\
Faculdade de Ci\^{e}ncias e Tecnologia\\
Universidade Nova de Lisboa\\
2829--516 Caparica\\
Portugal
}
\email{%
ajm@fct.unl.pt
}

\thanks{For both authors, this work was partially supported by by the Funda\c{c}\~{a}o para
a Ci\^{e}ncia e a Tecnologia (Portuguese Foundation for Science and Technology) through the project {\sc UID}/{\sc
  MAT}/00297/2013 (Centro de Matem\'{a}tica e Aplica\c{c}\~{o}es) and the project {\scshape PTDC}/{\scshape
      MHC-FIL}/2583/2014.}

\begin{abstract}
  The cyclic shift graph of a monoid is the graph whose vertices are elements of the monoid and whose edges link
  elements that differ by a cyclic shift. This paper examines the cyclic shift graphs of `plactic-like' monoids, whose
  elements can be viewed as combinatorial objects of some type: aside from the plactic monoid itself (the monoid of
  Young tableaux), examples include the hypoplactic monoid (quasi-ribbon tableaux), the sylvester monoid (binary search
  trees), the stalactic monoid (stalactic tableaux), the taiga monoid (binary search trees with multiplicities), and the
  Baxter monoid (pairs of twin binary search trees). It was already known that for many of these monoids, connected
  components of the cyclic shift graph consist of elements that have the same evaluation (that is, contain the same
  number of each generating symbol). This paper focusses on the maximum diameter of a connected component of the cyclic
  shift graph of these monoids in the rank-$n$ case. For the hypoplactic monoid, this is $n-1$; for the sylvester and
  taiga monoids, at least $n-1$ and at most $n$; for the stalactic monoid, $3$ (except for ranks $1$ and $2$, when it is
  respectively $0$ and $1$); for the plactic monoid, at least $n-1$ and at most $2n-3$. The current state of knowledge,
  including new and previously-known results, is summarized in a table.
\end{abstract}

\maketitle

\tableofcontents

\section{Introduction}

In a monoid $M$, two elements $s$ and $t$ are related by a cyclic shift, denoted $s \sim t$, if and only if there exist
$x,y \in M$ such that $s = xy$ and $t = yx$. In the plactic monoid (the monoid of Young tableaux, here denoted $\plac$),
elements that have the same evaluation (that is, elements that contain the same number of each generating symbol) can be
obtained from each other by iterated application of cyclic shifts \cite[\S~4]{lascoux_plaxique}. Furthermore, in the
plactic monoid of rank $n$ (denoted $\plac_n$), it is known that $2n-2$ applications of cyclic shifts are sufficient
\cite[Theorem~17]{choffrut_lexicographic}.

To restate these results in a new form, define the \defterm{cyclic shift graph} $K(M)$ of a monoid $M$ to be the
undirected graph with vertex set $M$ and, for all $s,t \in M$, an edge between $s$ and $t$ if and only if $s \sim t$.
Connected components of $K(M)$ are $\sim^*$-classes (where $\sim^*$ is the reflexive and transitive closure of $\sim$),
since they consist of elements that are related by iterated cyclic shifts. Thus the results discussed above say that
each connected component of $K(\plac)$ consists of precisely the elements with a given evaluation, and that the diameter
of a connected component of $K(\plac_n)$ is at most $2n-2$. Note that there is no bound on the number of elements in a
connected component, despite there being a bound on diameters that is dependent only on the rank.

This paper studies the cyclic shift graph for analogues of the plactic monoid in which other combinatorial objects have
the role that Young tableaux play for the plactic monoid. For each monoid there are two central questions, motivated by
the results for the plactic monoid: (i) whether connected components consist of precisely the elements with a given
evaluation; (ii) what the maximum diameter of a connected component is in the rank $n$ case.

The monoids considered are the plactic monoid, which is celebrated for its ubiquity, arising in such diverse contexts as
symmetric functions \cite{macdonald_symmetric}, representation theory and algebraic combinatorics
\cite{fulton_young,lothaire_algebraic}, and musical theory \cite{jedrzejewski_plactic}; the hypoplactic monoid, whose
elements are quasi-ribbon tableaux and which arises in the theory of quasi-symmetric functions
\cite{krob_noncommutative4,krob_noncommutative5,novelli_hypoplactic}; the sylvester monoid, whose elements are binary
search trees \cite{hivert_sylvester}; the taiga monoid, whose elements are binary search trees with multiplicities
\cite{priez_lattice}; the stalactic monoid, whose elements are stalactic tableaux
\cite{hivert_commutative,priez_lattice}; and the Baxter monoid, whose elements are pairs of twin binary search trees
\cite{giraudo_baxter,giraudo_baxter2}, and which is linked to the theory of Baxter permutations. (See
\fullref{Table}{tbl:monoids}.) Each of these monoids arises by factoring the free monoid $\aA^*$ over the ordered
alphabet $\aA = \set{1 < 2 < \ldots}$ by a congruence $\equiv$ that can be defined in two equivalent ways:
\begin{itemize}
\item[C1] \textit{Insertion.} $\equiv$ relates those words that yield the same combinatorial object as the result of
  some insertion algorithm.
\item[C2] \textit{Defining relations.} $\equiv$ is defined to be the congruence generated by some set of defining
  relations $\drel{R}$.
\end{itemize}
Each of these monoids also has a rank-$n$ version (where $n \in \nset$), which arises by factoring the free monoid
$\aA_n^*$ over the finite ordered alphabet $\aA_n = \set{1 < 2 < \ldots < n}$ by the natural restriction of
$\equiv$. Each of these monoids is discussed in its own section, and the equivalent definitions will be recalled at the
start of the relevant section. For the present, note that these monoids are \emph{multihomogeneous}: if two words
over $\aA^*$ represent the same element of the monoid (that is, are related by $\equiv$) then they have the same
evaluation (that is, they contain the same number of each symbol in $\aA$ and, in particular, have the same
length). Thus it is sensible to consider the evaluation of an \emph{element} of the monoid to be the evaluation of any
word that represents it. The relation $\evrel$ holds between elements that have the same evaluation; clearly
$\evrel$ is an equivalence relation and $\evrel$-classes are finite.

\begin{table}[t]
  \centering
  \caption{Monoids and corresponding combinatorial objects.}
  \label{tbl:monoids}
  \begin{tabular}{llll}
    \toprule
    \textit{Monoid} & \textit{Sym.} & \textit{Combinatorial object}          & \textit{See}              \\
    \midrule
    Plactic         & $\plac$         & Young tableau                          & \fullref{\S}{sec:plactic}     \\
    Hypoplactic     & $\hypo$         & Quasi-ribbon tableau                   & \fullref{\S}{sec:hypoplactic} \\
    Sylvester       & $\sylv$         & Binary search tree                     & \fullref{\S}{sec:sylvester}   \\
    Stalactic       & $\stal$         & Stalactic tableau                      & \fullref{\S}{sec:stalactic}   \\
    Taiga           & $\taig$         & Binary search tree with multiplicities & \fullref{\S}{sec:taiga}   \\
    Baxter          & $\baxt$         & Pair of twin binary search trees       & \fullref{\S}{sec:baxter}      \\
    % Bell            & $\bell$         & Set partition                          & \cite{cgms_bell}              \\
    \bottomrule
  \end{tabular}
\end{table}

This paper shows that in the cyclic shift graph of the hypoplactic, sylvester, and taiga monoids, each connected component
does consist of precisely those elements with a given evaluation. In the case of the stalactic monoid, an alternative
characterization is given of when two elements lie in the same connected component. (The result for the sylvester monoid was
previously proved in a different way by the present authors \cite[Theorem~3.4]{cm_conjugacy}.)

Furthermore, just as for the rank-$n$ plactic monoid, there is a bound on the maximum diameters of connected components
in the cyclic shift graphs of the rank-$n$ hypoplactic, sylvester, and taiga monoids, and this bound is only dependent on
$n$. It is worth emphasizing how remarkable this is: although there is no global bound on the number of elements in a
component (or, equivalently, which have the same evaluation), any two elements in the same component are related by a
number of cyclic shifts that is dependent only on $n$. \fullref{Table}{tbl:components} shows the current state of
knowledge for all the monoids considered in this paper. All the exact values and bounds shown in this table are new
results (although the upper bound of $2n-3$ in the case of the plactic monoid follows by a minor modification of the
reasoning that yields the Choffrut--Merca\c{s} bound of $2n-2$).

Experimentation using the computer algebra software Sage \cite{sage} strongly suggests that in the cases of the rank-$n$
plactic, sylvester, and taiga monoids, the maximum diameter of a connected component is $n-1$.

Also, although the monoids considered are all multihomogeneous, \fullref{Section}{sec:multihomogeneous} exhibits a
rank~$4$ multihomogeneous monoid for which there is no bound on the diameter of connected components in its cyclic shift
graph. Thus, the bound on diameters is \emph{not} a general property of multihomogeneous monoids: rather, it seems to
dependent on the underlying combinatorial objects.  This also is of interest because cyclic shifts are a possible
generalization of conjugacy from groups to monoids; thus the combinatorial objects are here linked closely to the
algebraic structure of the monoid.

\begin{table}[t]
  \centering
  \caption{Maximum diameter of a connected component of cyclic shift graph for rank-$n$ monoids.}
  \label{tbl:components}
  \begin{tabular}{lccccc}
    \toprule
                    &                       & \multicolumn{4}{c}{\textit{Maximum diameter}}                                          \\
    \cmidrule(lr){3-6}
                    &                       &                      &                     & \multicolumn{2}{c}{\textit{Known bounds}} \\
    \cmidrule(lr){5-6}
    \textit{Monoid} & ${\cyc^*} = {\evrel}$ & \textit{Known value} & \textit{Conjecture} & \textit{Lower} & \textit{Upper}           \\
    \midrule
    $\plac_n$       & Y                     & ?                    & $n-1$               & $n-1$          & $2n-3$                   \\
    $\hypo_n$       & Y                     & $n-1$                & ---                 & ---            & ---                      \\
    $\sylv_n$       & Y                     & ?                & $n-1$               & $n-1$          & $n$                      \\
    $\stal_n$       & N                     & $\begin{cases} n-1 & \text{if $n < 3$} \\ 3 & \text{if $n \geq 3$}\end{cases}$    & --- & --- & ---                      \\
    $\taig_n$       & Y                     & ?                    & $n-1$               & $n-1$          & $n$                      \\
    $\baxt_n$       & N                     & ?                    & ?                   & ?              & ?                        \\
    % $\bell_n$     & N                     & ?                    & ?                   & ?              & ?                        \\
    \bottomrule
  \end{tabular}
\end{table}

[The results in this article have already been announced in the conference paper \cite{cm_cyclicshifts1}.]

\section{Preliminaries}

\subsection{Alphabets and words}

This subsection recalls some terminology and fixes notation for presentations. For background on the free monoid, see
\cite{howie_fundamentals}; for semigroup presentations, see \cite{higgins_techniques,ruskuc_phd}.

For any alphabet $X$, the free monoid (that is, the set of all words, including the empty word) on the
alphabet $X$ is denoted $X^*$. The empty word is denoted $\emptyword$. For any $u \in X^*$, the length of $u$ is denoted
$|u|$, and, for any $x \in X$, the number of times the symbol $x$ appears in $u$ is denoted $|u|_x$.

The \defterm{evaluation} (also called the \defterm{content}) of a word $u \in X^*$, denoted $\ev{u}$, is the $|X|$-tuple
of non-negative integers, indexed by $X$, whose $x$-th element is $|u|_x$; thus this tuple describes the number of each
symbol in $X$ that appears in $u$. If two words $u,v \in X^*$ have the same evaluation, this is denoted $u \evrel v$.
Notice that $\evrel$ is an equivalence relation (and indeed a congruence). Note further that there are clearly only
finitely many words with a given evaluation, and so $\evrel$-classes are finite.

When $X$ represents a generating set for a monoid $M$, every element of $X^*$ can be interpreted either as a word or as
an element of $M$. For words $u,v \in X^*$, write $u=v$ to indicate that $u$ and $v$ are equal as words and
$u \equiv_M v$ to denote that $u$ and $v$ represent the same element of the monoid $M$.  A presentation is a pair
$\pres{X}{\drel{R}}$ where $\drel{R}$ is a binary relation on $X^*$, which defines [any monoid isomorphic to]
$X^*/\cgen{\drel{R}}$, where $\cgen{\drel{R}}$ denotes the congruence generated by $\drel{R}$.

The presentation $\pres{X}{\drel{R}}$ is \defterm{homogeneous} (respectively, \defterm{multihomogeneous}) if for every
$(u,v) \in \drel{R}$ we have $|u| = |v|$ (respectively, $u \evrel v$). That is, in a homogeneous presentation,
defining relations preserve length of words; in a multihomogenous presentation, defining relations preserve
evaluations. A monoid is \defterm{homogeneous} (respectively, \defterm{multihomogeneous}) if it admits a homogeneous
(respectively, multihomogeneous) presentation. Suppose $M$ is a multihomogeneous monoid defined by a multihomogeneous
presentation $\pres{X}{\drel{R}}$. Since every word in $X^*$ that represents a given element of $M$ has the same
evaluation, it makes sense to define the evaluation of an element of $M$ to be the evaluation of any word representing
it, and to write $s \evrel t$ if $s,t \in M$ have the same evaluation.

\subsection{`Plactic-like' monoids}

Throughout the paper, $\aA$ denotes the infinite ordered alphabet $\set{1 < 2 < \ldots}$ (that is, the set of natural
numbers, viewed as an alphabet), $\aA_n$ the finite ordered alphabet $\set{1 < 2 < \ldots < n}$ (that is, the first $n$
natural numbers, viewed as an alphabet). A word $u \in \aA_n^*$ is \defterm{standard} if $u \evrel 123\cdots |u|$. That
is, $u$ is standard if it contains each symbol in $\set{1,\ldots,|u|}$ exactly once.

This paper is mainly concerned with `plactic-like' monoids, whose elements can be identified with some kind of
combinatorial object. Each such monoid $\mathsf{M}$ has an associated insertion algorithm, which takes a combinatorial
object of the relevant type and a letter of the alphabet $\aA$ and computes a new combinatorial object. Thus one can
compute from a word $u \in \aA^*$ a combinatorial object $\plit_{\mathsf{M}}(u)$ of the type associated to $\mathsf{M}$ by starting with
the empty combinatorial object and inserting the symbols of $u$ one-by-one using the appropriate insertion algorithm and
proceeding through the word $u$ either left-to-right or right-to-left. (The procedure is slightly different for the
Baxter monoid; this will be discussed in \fullref{Section}{sec:baxter}.) One then defines a relation
$\equiv_{\mathsf{M}}$ as the kernel of the map $u \mapsto \plit_{\mathsf{M}}(u)$. In each case, the relation
$\equiv_{\mathsf{M}}$ is a congruence, and $\mathsf{M}$ is the factor monoid $\aA^*\!/{\equiv_{\mathsf{M}}}$; the rank-$n$
analogue is the factor monoid $\aA_n^*/{\equiv_{\mathsf{M}}}$, where $\equiv_{\mathsf{M}}$ is naturally restricted to
$\aA_n^* \times \aA_n^*$. Since each element of $\mathsf{M}$ is an equivalence class of words that give the same
combinatorial object, elements of $\mathsf{M}$ can be identified with the corresponding combinatorial objects.

Each of the combinatorial objects and insertion algorithms considered in this paper is such that the number of each
symbol from $\aA$ in the word $u$ is the same as the number of symbols $\aA$ in the object $\plit_{\mathsf{M}}(u)$. It
follows that each of the corresponding monoids is multihomogeneous, and it makes sense to define an element of a
rank-$n$ monoid to be \defterm{standard} if it is represented by a standard word (and thus \emph{only} by standard
words).

\subsection{Cyclic shifts}

Recall that two elements $s,t \in M$ are related by a cyclic shift, denoted $s \cyc t$, if and only if there exist
$x,y \in M$ such that $s = xy$ and $t = yx$. If $X$ represents a generating set for $M$, then $s \cyc t$ if and only if
there exist $u,v \in X^*$ such that $uv$ represents $s$ and $vu$ represents $t$. Notice that the relation $\cyc$ is
reflexive (because it is possible that $x$ or $y$ in the definition of $\cyc$ can be the identity) and symmetric. For
$k \in \nset$, let $\cyc^k$ be the $k$-fold composition of the relation $\cyc$: that is,
\[
\cyc^k = \underbrace{\cyc \circ \cyc \circ \ldots \circ \cyc}_{\text{$k$ times}}.
\]
Note that $\cyc^*$, the transitive closure of $\cyc$, is $\bigcup_{k=1}^\infty \cyc^k$. Note further that
${\cyc^k} \subseteq {\cyc^{k+1}}$ since $\cyc$ is reflexive. Thus $\cyc^{k}$ relates elements of $M$ that differ by
at most $k$ cyclic shifts.

The following result is immediate from the definition of $\cyc$:

\begin{lemma}
  \label{lem:basicprops}
  In any multihomogeneous monoid, ${\cyc^*} \subseteq {\evrel}$.
\end{lemma}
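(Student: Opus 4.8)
The plan is to unwind the definitions and use multihomogeneity directly. First I would observe that it suffices to show $s \cyc t$ implies $s \evrel t$, since $\evrel$ is transitive (indeed a congruence), so that closure under composition is automatic and $\cyc^* = \bigcup_k \cyc^k$ is then contained in $\evrel$.

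So suppose $s \cyc t$ in a multihomogeneous monoid $M$. Fix a multihomogeneous presentation $\pres{X}{\drel{R}}$ for $M$. By the characterization of $\cyc$ in terms of a generating set recalled just before the lemma, there exist words $u, v \in X^*$ such that $uv$ represents $s$ and $vu$ represents $t$. Now $\ev{uv} = \ev{u} + \ev{v} = \ev{vu}$ (addition of evaluation tuples componentwise; concatenation adds letter-counts, and this does not depend on the order of concatenation). Since $M$ is multihomogeneous, any two words representing the same element of $M$ have the same evaluation, so $\ev{s}$ is well-defined as $\ev{uv}$ and $\ev{t}$ as $\ev{vu}$; hence $\ev{s} = \ev{t}$, i.e. $s \evrel t$.

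There is no real obstacle here — the only point requiring a word of care is that ``evaluation of an element'' is well-defined, which is exactly the consequence of multihomogeneity already spelled out in the preliminaries, so it may be invoked rather than reproved. I would keep the argument to two or three lines: reduce to one step of $\cyc$, write $s = xy$, $t = yx$ at the level of words, note $\ev{uv} = \ev{vu}$, and conclude. The phrase ``immediate from the definition'' in the statement signals that the intended proof is precisely this short observation.
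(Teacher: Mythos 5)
Your proof is correct and is precisely the short argument the paper leaves implicit by labelling the lemma ``immediate from the definition of $\cyc$'': reduce to a single cyclic shift, pass to words $uv$ and $vu$, observe $\ev{uv}=\ev{vu}$, and invoke the well-definedness of evaluation on elements of a multihomogeneous monoid. No discrepancy with the paper's (omitted) proof.
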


For any monoid $M$, define the \defterm{cyclic shift graph} $K(M)$ to be the undirected graph with vertex set $M$ and,
for all $s,t \in M$, an edge between $s$ and $t$ if and only if $s \cyc t$. (See \cite{harary_graph} for
graph-theoretical definitions and terminology.) Two elements $s,t \in M$ are a distance at most $k$ apart in $K(M)$ if
and only if $s \cyc^k t$. Connected components of $K(M)$ are $\cyc^*$-classes, since they consist of elements that are
related by iterated cyclic shifts. The connected component of $K(M)$ containing an element $s \in M$ is denoted
$K(M,s)$. If $M$ is multihomogenous, ${\cyc^*} \subseteq {\evrel}$, and thus connected components of $K(M)$ are finite.

Since $\cyc$ is reflexive, there is a loop at every vertex of $K(M)$. Throughout the paper, illustrations of graphs
$K(M)$ will, for clarity, omit these loops.

\subsection{Cocharge sequences}
\label{subsec:cocharge}

This subsection introduces `cocharge sequences', which will be used in several places to establish lower bounds on the
maximum diameters of connected components of the cyclic shift graphs of finite-rank monoids.

Let $u \in \aA^*$ be a standard word. The \defterm{cocharge sequence} of $u$, denoted $\cochseq(u)$, is a sequence (of
length $|u|$) calculated from $u$ as follows:
\begin{enumerate}
\item Draw a circle, place a point $*$ somewhere on its circumference, and, starting from $*$, write $u$ anticlockwise around the circle.
\item Label the symbol $1$ with $0$.
\item Iteratively, after labelling some $i$ with $k$, proceed clockwise from $i$ to the symbol $i+1$:
  \begin{itemize}
  \item if the symbol $i+1$ is reached \emph{before} $*$, label $i+1$ by $k+1$;
  \item if the symbol $i+1$ is reached \emph{after} $*$, label $i+1$ by $k$.
  \end{itemize}
\item The sequence $\cochseq(u)$ is the sequence whose $i$-th term is the label of $i$.
\end{enumerate}

Note that at steps 2 and 3, the symbols $1$ and $i+1$ are known to be in $u$ because $u$ is a standard word.

For example, for the word $1246375$, the labelling process gives:
\[
\begin{tikzpicture}
  \draw (0,0) circle[radius=10mm];
  \draw[->] (-170:8mm) arc[radius=8mm,start angle=-170,end angle=-10];
  \draw[decorate,decoration={text effects along path,text={Word},text color=gray,text align={align=center}},text effects={text along path,character widths={inner xsep =.2mm}}] (-170:7mm) arc[radius=7mm,start angle=-170,end angle=-10];
  \draw[gray,thick,->] (-10:20mm) arc[radius=20mm,start angle=-10,end angle=-170];
  \draw[gray,decorate,decoration={text along path,text={Labelling},text color=gray,text align={align=center}}] (-170:24mm) arc[radius=24mm,start angle=-170,end angle=-10];
  \foreach\i/\ilabel in {0/*,9/1,8/2,7/4,6/6,5/3,4/7,3/5} {
    \node at ($ (90-\i*30:12mm) $) {$\ilabel$};
  };
  \foreach\i/\ilabel in {9/0,8/0,5/0,7/1,3/1,6/2,4/2} {
    \node[font=\footnotesize,gray] at ($ (90-\i*30:16mm) $) {$\ilabel$};
  };
\end{tikzpicture}
\]
and it follows that $\cochseq(u) = (0,0,0,1,1,2,2)$. Notice that the first term of a cocharge sequence is always $0$,
and that each term in the sequence is either the same as its predecessor or greater by $1$. Thus the $i$-th term in the
sequence always lies in the set $\set{0,1,\ldots,i-1}$.

The usual notion of cocharge is obtained by summing the cocharge sequence (see \cite[\S~5.6]{lothaire_algebraic}). Note,
however, that cocharge is defined for all words, whereas this section defines the cocharge sequence only for standard words.

\begin{lemma}
  \label{lem:cochseqcycles}
  \begin{enumerate}
  \item Let $u \in \aA^*$ and $a \in \aA_n \setminus \set{1}$ be such that $ua$ is a standard word. Then $\cochseq(ua)$ is
    obtained from $\cochseq(au)$ by adding $1$ to the $a$-th component.
  \item Let $xy \in \aA^*$ be a standard word such that $x$ does not contain the symbol $1$. Then $\cochseq(yx)$ is
    obtained from $\cochseq(xy)$ by adding $1$ to the $a$-th component for each symbol $a$ that appears in $x$.
  \item Let $xy \in \aA^*$ be a standard word such that $y$ does not contain the symbol $1$. Then $\cochseq(yx)$ is
    obtained from $\cochseq(xy)$ by subtracting $1$ from the $a$-th component for each symbol $a$ that appears in $y$.
  \end{enumerate}
\end{lemma}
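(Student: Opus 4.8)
The plan is to replace the dynamic circle‑and‑marker procedure by a static formula for the cocharge sequence and then read off all three statements from it. The key observation is that, for a standard word $w$ of length $m$ and any $j$ with $1\le j\le m$,
\[
  \cochseq(w)_j=\bigl|\{\,i:1\le i<j,\ \text{symbol }i+1\text{ occurs before symbol }i\text{ in }w\,\}\bigr|.
\]
To justify this I would trace through step~3 of the definition: reading the circle clockwise from a given symbol is the same as reading the letters of $w$ in reverse order, with the marker $*$ occupying the unique gap between the last letter of $w$ and the first. Hence, proceeding clockwise from symbol $i$, one meets symbol $i+1$ before reaching $*$ exactly when $i+1$ stands to the left of $i$ in $w$ (no wrap‑around is needed), and reaches $*$ first otherwise. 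Thus the label of $i+1$ exceeds that of $i$ by $1$ precisely in the former case, and summing these increments from $1$ up to $j$ (the label of $1$ being $0$) gives the displayed count.

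Part~(1) is then immediate: passing between $au$ and $ua$ moves the single symbol $a$ between the front and the back of the word, reversing the relative order of $a$ with every other symbol and leaving all other relative orders unchanged. Consequently the only values $i<j$ whose membership in the set counted above can change are $i=a-1$ and $i=a$, and these change in opposite senses, so the net effect on $\cochseq(\cdot)_j$ is confined to $j=a$ and is a change of exactly $1$, in the direction the analysis dictates. For parts~(2) and~(3) I would reduce to part~(1): the cyclic shift $xy\mapsto yx$ is realised by moving the letters of $x$ one at a time from the front to the back (part~(2)), or the letters of $y$ one at a time from the back to the front (part~(3)); since $x$ (respectively $y$) contains no occurrence of the symbol $1$, each such single‑letter move is an instance of part~(1), and since every intermediate word is a rearrangement of the standard word $xy$ it is again standard, so $\cochseq$ is defined throughout. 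Composing these effects — each symbol of the moved factor being shifted exactly once, as $xy$ is standard — yields the asserted cumulative change. Alternatively, parts~(2) and~(3) follow directly from the closed form: on passing from $xy$ to $yx$ the indicator ``symbol $i+1$ precedes symbol $i$'' changes by $\pm(\mathbf{1}_S(i)-\mathbf{1}_S(i+1))$, where $S$ is the set of symbols in the moved factor; summing over $i<j$ telescopes to $\pm(\mathbf{1}_S(1)-\mathbf{1}_S(j))$, and the hypothesis that the factor omits the symbol $1$ kills the boundary term, leaving exactly $\pm\mathbf{1}_S(j)$.

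The real content, and the step I expect to need the most care, is establishing the closed form correctly — in particular getting straight the direction of the labelling sweep relative to the direction in which $w$ is written, and the exact role of the marker $*$ in the wrap‑around. Once that formula is secured, all three parts are bookkeeping: a short case analysis for part~(1), and either a telescoping sum or an iteration of part~(1) for parts~(2) and~(3).
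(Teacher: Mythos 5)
Your closed form
\[
\cochseq(w)_j=\bigl|\{\,i:1\le i<j,\ i+1\text{ occurs before }i\text{ in }w\,\}\bigr|
\]
is correct, and recasting the circle procedure as this static count is a genuine change of route from the paper: the paper's proof traces the labelling dynamically through the circle for the single shift $ua\leftrightarrow au$ and then declares parts~(2) and~(3) immediate, whereas you collapse the entire content of the definition into one formula and reduce all three parts to bookkeeping. Your iteration-of-part-(1) argument for parts~(2) and~(3) is the same decomposition the paper has in mind (each single-letter move being an instance of part~(1), with standardness preserved throughout); the telescoping alternative is cleaner and self-contained, and I would regard it as the better writeup.

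The one genuine soft spot is that you leave the sign undetermined at exactly the place where care is needed: ``a change of exactly $1$, in the direction the analysis dictates'' and the later ``$\pm$'' defer the only nontrivial piece of the computation. If you carry it out, the signs come out \emph{opposite} to those printed in the lemma. Concretely, with $u=13$ and $a=2$ one has $\cochseq(ua)=\cochseq(132)=(0,0,1)$ and $\cochseq(au)=\cochseq(213)=(0,1,1)$, so $\cochseq(ua)$ is obtained from $\cochseq(au)$ by \emph{subtracting} $1$ from the $a$-th component, not adding. From your closed form, moving $a$ from front to back turns the indicator at $i=a-1$ off and the one at $i=a$ on, so
\[
\cochseq(ua)_j-\cochseq(au)_j=-[\,j>a-1\,]+[\,j>a\,]=-[\,j=a\,].
\]
Propagating this via your telescoping sum: letting $X$ be the set of letters of $x$, the change at index $i$ in passing from $xy$ to $yx$ is $\mathbf{1}_X(i)-\mathbf{1}_X(i+1)$, and summing over $i<j$ gives $\mathbf{1}_X(1)-\mathbf{1}_X(j)$, which is $-\mathbf{1}_X(j)$ when $1\notin X$ (so part~(2) should say \emph{subtract}) and is $\mathbf{1}_{\aA_n\setminus X}(j)$ when $1\in X$ (so part~(3) should say \emph{add}). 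Notice that this agrees with the paper's own proof of part~(1), which concludes $\cochseq(ua)_a=k$ while $\cochseq(au)_a=k+1$; the printed statement of the lemma therefore has all three directions reversed relative to both your computation and the paper's own argument. The error is harmless downstream --- the only use made of the lemma, in the lower-bound argument of \fullref{Proposition}{prop:placticbounds}, is that each cyclic shift moves each component of the cocharge sequence by at most one --- but you should pin the signs down explicitly rather than defer them, not least because doing so exposes this inconsistency.
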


\begin{proof}
  Consider how $a$ is labelled during the calculation of $\cochseq(ua)$ and $\cochseq(au)$:
  \[
  \cochseq(ua):
  \begin{tikzpicture}[baseline=-1mm]
    \draw (0,0) circle[radius=4mm];
    \draw[gray,thick,->] (-10:8mm) arc[radius=8mm,start angle=-10,end angle=-170];
    %\draw[gray,decorate,decoration={text along path,text={Labelling},text color=gray,text align={align=center}}] (-170:12mm) arc[radius=12mm,start angle=-170,end angle=-10];
    %
    \foreach\i/\ilabel in {0/*,6/u,3/a} {
      \node at ($ (90-\i*30:6mm) $) {$\ilabel$};
    };
  \end{tikzpicture}
  \qquad
  \cochseq(au):
  \begin{tikzpicture}[baseline=-1mm]
    \draw (0,0) circle[radius=4mm];
    \draw[gray,thick,->] (-10:8mm) arc[radius=8mm,start angle=-10,end angle=-170];
    %\draw[gray,decorate,decoration={text along path,text={Labelling},text color=gray,text align={align=center}}] (-170:12mm) arc[radius=12mm,start angle=-170,end angle=-10];
    %
    \foreach\i/\ilabel in {0/*,9/a,6/u} {
      \node at ($ (90-\i*30:6mm) $) {$\ilabel$};
    };
  \end{tikzpicture}
  \]
  In the calculation of $\cochseq(ua)$, the symbol $a-1$ receives a label $k$, and then $a$ is reached \emph{after} $*$
  is passed; hence $a$ also receives the label $k$. (If a symbol $a+1$ is present, it receives the label $a+1$.) In the
  calculation of $\cochseq(au)$, the symbols $1,\ldots,a-1$ receive the same labels as they do in the calculation of
  $\cochseq(ua)$, but after labelling $a-1$ by $k$ the symbol $a$ is reached \emph{before} $*$ is passed; hence $a$
  receives the label $k+1$ (and if a symbol $a+1$ is present, it also receives the label $k+1$ since it is reached after
  $*$ is passed); after this point, labelling proceeds in the same way. Parts~2) and~3) are now immediate consequences
  of part~1).
\end{proof}

\section{General multihomogeneous monoids}
\label{sec:multihomogeneous}

In order to set in context the results below on the diameters of connected components of cyclic shift graphs, this
section gives an example of a multihomogeneous monoid for which the connected components of the cyclic shift graph have
unbounded diameter. This shows that the results for the `plactic-like' monoids discussed in the rest of the paper are
not simply consequences of some more general result that holds for all multihomogeneous monoids.

\begin{example}
  Let $M$ be the monoid defined by the presentation
  \[
  \pres{a,b,x,y}{(bxy,xyb),(byx,yxb),(axyb,byxa)}.
  \]
  Notice that $M$ is multihomogeneous. Let $\alpha \in \nset$. Then
  \begin{align*}
    a(xy)^\alpha b \equiv_M{}& axyb(xy)^{\alpha-1} \\
    \equiv_M{}& byxa(xy)^{\alpha-1} \displaybreak[0]\\
    \cyc{}& yxa(xy)^{\alpha-1}b \displaybreak[0]\\
    \equiv_M{}& yxaxyb(xy)^{\alpha-2} \displaybreak[0]\\
    \equiv_M{}& yxbyxa(xy)^{\alpha-2} \displaybreak[0]\\
    \equiv_M{}& b(yx)^2a(xy)^{\alpha-2} \displaybreak[0]\\
    \cyc{}& (yx)^2a(xy)^{\alpha-2}b \\
    & \quad\vdots \\
    \cyc{}& b(yx)^{\alpha}a
  \end{align*}
  Thus the elements $a(xy)^\alpha b$ and $b(yx)^\alpha a$ lie in the same connected component of $K(M)$. The aim is now
  to prove that the distance between $a(xy)^\alpha b$ and $b(yx)^\alpha a$ in $K(M)$ is at least $\alpha$. This
  necessitates defining an invariant, reminiscent of cocharge, but tailored specifically to the monoid $M$.

  Let $L = \gset[\big]{u \in \set{a,b,xy,yx}^*}{|u|_a = |u|_b = 1}$. Define a map
  $\mu : L \to \nset\cup\set{0}$, where $\mu(u)$ is calculated as follows:
  \begin{enumerate}
  \item Draw a circle, place a point $*$ somewhere on itse circumference, and write $u$ anticlockwise aroung the circle.
  \item Temporarily ignoring the symbol $b$, let $k(u)$ be the number of consecutive cyclic factors $xy$ following
    $a$. (Equivalently, let $k(u)$ be the number of consecutive words $xy$ (ignoring $b$) following the symbol $a$,
    proceeding anticlockwise around the circle.)
  \item Let $\mu(u)$ be $k(u)$ if starting from $a$ and proceeding anticlockwise, one encounters $b$ before $*$, and
    otherwise let $\mu(u)$ be $k(u)+1$.
  \end{enumerate}
  It is now necessary to show that $\mu(\cdot)$ is invariant under applications of defining relations. Clearly, the set
  $L$ is closed under applying defining relations. Further, applying a defining relation $(bxy,xyb)$ or $(byx,yxb)$ does
  not alter $k(\cdot)$, and does not alter the relative positions of $a$, $b$, and $*$. So applying these defining
  relations does not alter $\mu(\cdot)$. So suppose that $u,v \in L$ differ by a single application of a defining relation
  $(axyb,byxa)$. Interchanging $u$ and $v$ if neceessary, suppose $u = paxybq$ and $v = pbyxaq$. Let
  $m \in \nset\cup\set{0}$ be maximal such that $(xy)^m$ is a prefix of $qp$; thus either $qp = (xy)^m$ or
  $qp = (xy)^myxw$ for some $w \in \set{xy,yx}^*$. Applying the procedure above to $u$ and $v$, one sees that
  $k(u) = m + 1 = k(v) + 1$, but $b$ is encountered before $*$ for $u$ but not for $v$; hence
  $\mu(u) = k(u) = k(v) + 1 = \mu(v)$.

  Thus if two words in $L$ represent the same element of $M$, they have the same image under $\mu$. If $u,v \in L$ are
  such that $u \cyc v$, then $k(u) = k(v)$, since applying $\cyc$ does not alter the number of cyclic factors $xy$
  following $a$. Thus $\mu(u)$ and $\mu(v)$ differ by at most one. Hence, since $\mu(a(xy)^\alpha b) = \alpha$ and
  $\mu(b(yx)^\alpha a) = 1$, it follows that $a(xy)^\alpha b$ and $b(yx)^\alpha a$ are a distance at least $\alpha-1$
  apart in $K(M)$.

  Since $\alpha$ was arbitrary, this shows that there is no bound on the diameters of connected components in $K(M)$.
\end{example}

\section{Plactic monoid}
\label{sec:plactic}

This section recalls the essential facts about the plactic monoid; for proofs and further reading, see
\cite[Ch.~5]{lothaire_algebraic}.

A \defterm{Young tableau} is an array with rows left aligned and of non-increasing length from top to bottom, filled
with symbols from $\aA$ so that the entries in each row are non-decreasing from left to right, and the entries in each
column are [strictly] increasing from top to bottom. An example of a Young tableau is
\begin{equation}
\label{eq:youngtableaueg}
\tableau{
1 \& 2 \& 2 \& 2 \& 4 \\
2 \& 3 \& 5 \\
4 \& 4 \\
5 \& 6 \\
}.
\end{equation}

The associated insertion algorithm is as follows:

\begin{algorithm}[Schensted's algorithm]
\label{alg:placinsertone}
~\par\nobreak
\textit{Input:} A Young tableau $T$ and a symbol $a \in \aA$.

\textit{Output:} A Young tableau $T \leftarrow a$.

\textit{Method:}
\begin{enumerate}

\item If $a$ is greater than or equal to every entry in the topmost row of $T$, add $a$ as an entry at the rightmost end of
  $T$ and output the resulting tableau.

\item Otherwise, let $z$ be the leftmost entry in the top row of $T$ that is strictly greater than $a$. Replace $z$ by
  $a$ in the topmost row and recursively insert $z$ into the tableau formed by the rows of $T$ below the topmost. (Note
  that the recursion may end with an insertion into an `empty row' below the existing rows of $T$.)

\end{enumerate}
\end{algorithm}

Thus one can compute, for any word $u = u_1\cdots u_k \in \aA^*$, a Young tableau $\pplac{u}$ by starting with an empty
tableau and successively inserting the symbols of $u$, proceeding left-to-right through the word. Define the relation
$\placcong$ by
\[
u \placcong v \iff \pplac{u} = \pplac{v}
\]
for all $u,v \in \aA^*$. The relation $\placcong$ is a congruence, and the \defterm{plactic monoid}, denoted $\plac$, is
the factor monoid $\aA^*\!/{\placcong}$; the \defterm{plactic monoid of rank $n$}, denoted $\plac_n$, is the factor
monoid $\aA_n^*/{\placcong}$ (with the natural restriction of $\placcong$). Each element $[u]_{\placcong}$ (where
$u \in \aA^*$) can be identified with the Young tableau $\pplac{u}$.

The monoid $\plac$ is presented by
$\pres{\aA}{\drel{R}_\plac}$, where
\begin{align*}
\drel{R}_\plac ={}& \gset{(acb,cab)}{a,b,c \in \aA, a \leq b < c} \\
&\cup \gset{(bac, bca)}{a,b,c \in \aA, a < b \leq c};
\end{align*}
the defining relations in $\drel{R}_\plac$ are often called the \defterm{Knuth relations}
\cite[\S~5.2]{lothaire_algebraic}. The monoid $\plac_n$ is presented by $\pres{\aA_n}{\drel{R}_\plac}$, where the set of
defining relations $\drel{R}_\plac$ is naturally restricted to $\aA_n^*\times \aA_n^*$. Notice in particular that
$\plac$ and $\plac_n$ are multihomogeneous.

Lascoux \& Sch\"{u}tzenberger proved that if two elements of $\plac$ have the same evaluation, then it is possible to
transform one to the other using cyclic shifts \cite[\S~4]{lascoux_plaxique}. In the terms of this paper, they proved
that ${\evrel} \subseteq {\cyc^*}$ in $\plac$. Since the opposite inclusion holds in general, it follows that
${\evrel} = {\cyc^*}$ in $\plac$. Thus connected components of the cyclic shift graph $K(\plac)$ are $\evrel$-classes.

Choffrut \& Merca\c{s} showed that if two elements of $\plac_n$ have the same evaluation, then it is possible to
transform one to the other using at most $2n-2$ cyclic shifts \cite[Theorem~17]{choffrut_lexicographic}. Thus the maximum
diameter of a connected component of $K(\plac_n)$ is at most $2n-2$.

This section gives a lower bound on the maximum diameter of a connected component of $K(\plac_n)$, makes a slight
improvement on the upper bound of Choffrut \& Merca\c{s}, and conjecture the exact value on the basis of experimentation
using computer algebra software.

Establish the lower bound requires the use of cocharge sequences (defined in \fullref{Subsection}{subsec:cocharge}
above). It is first of all necessary to show that cocharge sequences are well-defined on standard elements of $\plac$:

\begin{proposition}
  \label{prop:cochseqplactic}
  Let $w,w' \in \aA^*$ be standard words such that $w \placcong w'$. Then $\cochseq(w) = \cochseq(w')$.
\end{proposition}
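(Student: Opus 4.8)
The plan is to recast $\cochseq(w)$ in a form that is manifestly stable under Knuth moves. The first step is to unwind the definition and show that $\cochseq(w)$ depends only on a very coarse piece of combinatorial data: namely, for each $i \in \{1, \dots, |w| - 1\}$, whether the symbol $i+1$ occurs to the left or to the right of the symbol $i$ in $w$ (both occur, since $w$ is standard). Indeed, because $w$ is written anticlockwise around the circle starting immediately after $*$, travelling clockwise from an occurrence of $i$ one meets first the letters that precede $i$ in $w$ (in reverse), then $*$, then the letters that follow $i$; hence $*$ is passed before $i+1$ is reached precisely when $i+1$ lies to the right of $i$ in $w$. Thus the label of $i+1$ equals the label of $i$, increased by $1$ exactly when $i+1$ lies to the left of $i$; since the label of $1$ is $0$, induction gives
\[
\cochseq(w)_i = \#\{\, j \in \{1,\dots,i-1\} : \text{$j+1$ precedes $j$ in $w$} \,\}.
\]

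Next I would reduce to a single Knuth move. Since $\placcong$ is the congruence generated by $\drel{R}_\plac$ and this presentation is multihomogeneous, $w$ and $w'$ are joined by a finite chain $w = w^{(0)} \equivrl \cdots \equivrl w^{(k)} = w'$ of single relation applications, in which every $w^{(\ell)}$ is again standard (it has the same evaluation as $w$). So it suffices to show that one Knuth move applied to a standard word preserves, for each $i$, the relative order of $i$ and $i+1$. A Knuth move rewrites a factor $acb \leftrightarrow cab$ or $bac \leftrightarrow bca$; since $w$ is standard the three letters are distinct, so the inequalities defining $\drel{R}_\plac$ sharpen to $a < b < c$, and in particular $c \geq a+2$. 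In either move the only change to positions is that the unique occurrences of $a$ and $c$ swap their two (adjacent) slots, while $b$ and every other letter stay put.

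It then remains to observe that such a swap changes the relative order of exactly one pair of distinct symbols, namely $\{a,c\}$ (from ``$a$ before $c$'' to ``$c$ before $a$''): pairs disjoint from $\{a,c\}$ are untouched, and for a pair $\{a,z\}$ or $\{c,z\}$ with $z \notin \{a,c\}$ the (fixed) position of $z$ is not one of the two slots involved in the swap, hence lies on the same side of both, so that comparison is unchanged. Because $c \neq a+1$, the one affected pair $\{a,c\}$ is never of the form $\{i,i+1\}$, so none of the data in the displayed formula changes, and therefore $\cochseq(w) = \cochseq(w')$, as required.

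The heart of the argument is the first reformulation; after that the proof is routine, and the only thing needing a little care is the position bookkeeping in the last paragraph (checking that swapping $a$ and $c$ leaves the relative order of every consecutive-value pair intact). An alternative would be to identify $\cochseq(w)$ with a reading of the descent set of $w^{-1}$ and cite the classical fact that Knuth-equivalent permutations share a $P$-symbol and hence an inverse descent set; but the direct verification above keeps the section self-contained.
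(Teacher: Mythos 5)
Your proof is correct and follows essentially the same route as the paper: reduce to a single Knuth move, observe that such a move only transposes the unique occurrences of $a$ and $c$ at adjacent positions with $c \geq a+2$, and conclude that the labelling is unchanged. The one genuine improvement is that you isolate the explicit invariant $\cochseq(w)_i = \#\{\, j < i : j+1 \text{ precedes } j \text{ in } w \,\}$, which the paper leaves implicit; making that reformulation explicit is what turns the paper's somewhat terse ``interchanging $a$ and $c$ does not alter the resulting labelling'' into a fully checkable assertion, and it is also exactly the lemma reused for \fullref{Propositions}{prop:cochseqhypoplactic} and~\ref{prop:cochseqsylvester}.
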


\begin{proof}
  It suffices to prove the result when $w$ and $w'$ differ by a single application of a defining relation. Assume that
  $w$ and $w'$ differ by an application of a defining relation $(acb,cab) \in \drel{R}_\plac$ where $a \leq b < c$. So
  $w = pacbq$ and $w' = pcabq$, where $p,q \in \aA_n^*$ and $a,b,c \in \aA_n$ with $a \leq b < c$. Since $w$ and $w'$ are
  standard words, $a < b$.

  Consider how labels are assigned to the symbols $a$, $b$, and $c$ when calculating $\cochseq(w)$ as described in
  \fullref{Subsection}{subsec:cocharge}:
  \[
  \begin{tikzpicture}
    \draw (0,0) circle[radius=4mm];
    \draw[gray,thick,->] (-10:8mm) arc[radius=8mm,start angle=-10,end angle=-170];
    %\draw[gray,decorate,decoration={text along path,text={Labelling},text color=gray,text align={align=center}}] (-170:19mm) arc[radius=19mm,start angle=-170,end angle=-10];
    %
    \foreach\i/\ilabel in {0/*,7/a,6/c,5/b} {
      \node at ($ (90-\i*30:6mm) $) {$\ilabel$};
    };
  \end{tikzpicture}
  \]
  Among these three symbols, $a$ will receive a label first, then $b$, then $c$. Thus, after $a$, the labelling process
  will pass $*$ at least once to visit $b$ and only then visit $c$. Thus interchanging $a$ and $c$ does not alter
  the resulting labelling. Hence $\cochseq(w) = \cochseq(w')$. Similar reasoning shows that if $w$ and $w'$ differ by an
  application of a defining relation $(bac,bca) \in \drel{R}_\plac$, then $\cochseq(w) = \cochseq(w')$.
\end{proof}

For any standard tableau $T$ in $\plac$, define $\cochseq(T)$ to be $\cochseq(u)$ for any standard word $u
\in \aA^*$ such that $T = \P{u}$. By \fullref{Proposition}{prop:cochseqplactic}, $\cochseq(T)$ is well-defined.

\begin{proposition}
  \label{prop:placticbounds}
  \begin{enumerate}
  \item Connected components of $K(\plac)$ coincide with $\evrel$-classes of $\plac$.
  \item Let $k_n$ be the maximum diameter of a connected component of $K(\plac_n)$. Then $n-1 \leq k_n \leq 2n-3$.
  \end{enumerate}
\end{proposition}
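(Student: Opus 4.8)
The plan for part~(1) is immediate: Lascoux \& Sch\"{u}tzenberger \cite[\S~4]{lascoux_plaxique} proved ${\evrel} \subseteq {\cyc^*}$ in $\plac$, while \fullref{Lemma}{lem:basicprops} gives the reverse inclusion ${\cyc^*} \subseteq {\evrel}$; hence the connected components of $K(\plac)$ are precisely the (finite) $\evrel$-classes. One consequence to record for part~(2): the standard elements of $\plac_n$, i.e.\ those of evaluation $(1,1,\ldots,1)$, all lie in a single connected component $C$ of $K(\plac_n)$.

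For the lower bound in part~(2), the plan is to use the last entry of the cocharge sequence as an invariant that changes slowly along edges of $C$. The key step is to show: \emph{if $s,t \in C$ and $s \cyc t$, then the $n$-th entries of $\cochseq(s)$ and $\cochseq(t)$ differ by at most~$1$.} To see this, write $s = [uv]$ and $t = [vu]$ for words $u,v \in \aA_n^*$; since $s$ is standard the word $uv$ is standard, and --- assuming $u,v$ are both non-empty, as otherwise $s=t$ --- the symbol $1$ occurs in exactly one of $u$, $v$. If $1$ does not occur in $v$, then \fullref{Lemma}{lem:cochseqcycles}(3), applied with $x=u$ and $y=v$, shows that $\cochseq(vu)$ is obtained from $\cochseq(uv)$ by subtracting~$1$ from the $a$-th component for each symbol $a$ appearing in $v$; in particular the $n$-th entries change by at most~$1$. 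If instead $1$ does not occur in $u$, the symmetric argument using \fullref{Lemma}{lem:cochseqcycles}(2) gives the same conclusion. Granting this, observe that every vertex on a path in $K(\plac_n)$ joining two elements of $C$ again lies in $C$ (because ${\cyc^*} \subseteq {\evrel}$), so the $n$-th entry of the cocharge sequence changes by at most~$1$ at each step. It then remains to exhibit two elements of $C$ whose cocharge sequences have $n$-th entries differing by $n-1$: I would take $\pplac{12\cdots n}$, the one-row tableau, for which $\cochseq(\pplac{12\cdots n}) = (0,0,\ldots,0)$, and $\pplac{n(n-1)\cdots 1}$, the one-column tableau, for which $\cochseq(\pplac{n(n-1)\cdots 1}) = (0,1,2,\ldots,n-1)$ (both computations are short, using \fullref{Proposition}{prop:cochseqplactic} for well-definedness). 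These two elements lie in $C$, hence in the same component, so their distance in $K(\plac_n)$ is at least $(n-1)-0 = n-1$; thus $k_n \geq n-1$.

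For the upper bound (for $n \geq 2$), the plan is to revisit the proof of the Choffrut--Merca\c{s} bound $k_n \leq 2n-2$ \cite[Theorem~17]{choffrut_lexicographic}. That argument shows, in effect, that every element of $\plac_n$ with a given evaluation can be carried by at most $n-1$ cyclic shifts to a single canonical element determined by the evaluation, and then connects any two elements of the same evaluation by splicing two such reductions, for a total of at most $2n-2$ shifts. I would examine this reduction closely and argue that one of these $2n-2$ shifts is always unnecessary --- for instance by checking that the canonical form can be reached in $n-2$ rather than $n-1$ shifts once one suitably chosen symbol has been put in place, or by arranging the two spliced halves of the path to share their last step --- which yields $k_n \leq 2n-3$. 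I expect this to be the main obstacle: part~(1) and the lower bound are essentially bookkeeping on top of \fullref{Lemma}{lem:cochseqcycles} and \fullref{Proposition}{prop:cochseqplactic}, whereas removing a single cyclic shift from the Choffrut--Merca\c{s} construction requires following their combinatorial reduction carefully enough to be certain the constant is $2n-3$ and not $2n-2$.
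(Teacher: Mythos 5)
Parts (1) and the lower bound are correct and match the paper's approach. For (1) you cite Lascoux--Sch\"utzenberger for ${\evrel}\subseteq{\cyc^*}$ and \fullref{Lemma}{lem:basicprops} for the converse, exactly as the paper does. For the lower bound you track how the last entry of the cocharge sequence changes along an edge of $K(\plac_n)$, using \fullref{Lemma}{lem:cochseqcycles}(2)--(3) after observing that in a standard word $uv$ the symbol $1$ lies in exactly one of $u$, $v$; the paper tracks all components simultaneously and reaches the same conclusion from the same pair of extremal tableaux $\pplac{12\cdots n}$ and $\pplac{n(n-1)\cdots 1}$, so the lower-bound arguments are the same in substance.

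The upper bound is where you leave a genuine gap. You correctly identify that the plan is to shave one shift off the Choffrut--Merca\c{s} construction, and your second suggestion (``arranging the two spliced halves of the path to share their last step'') points in the right direction, but you do not supply the reason the merge is possible, which is the whole content of the improvement. The observation you are missing: let $R$ be the unique row-shaped tableau with $T\evrel R\evrel U$. By the Choffrut--Merca\c{s} argument, $T\cyc^{n-1}R$ and $U\cyc^{n-1}R$, so there are $T'$, $U'$ with $T\cyc^{n-2}T'$, $U\cyc^{n-2}U'$, and $T'\cyc R\cyc U'$. Now the crucial point is that $R$, being a single row, is represented by exactly \emph{one} word $w\in\aA_n^*$. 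Hence $T'=\pplac{t}$ and $U'=\pplac{u}$ for some words $t$, $u$ that are cyclic shifts of $w$; but any two cyclic shifts of the same word are cyclic shifts of each other, so $T'\cyc U'$ directly. Concatenating $T\cyc^{n-2}T'\cyc U'\cyc^{n-2}U$ gives $T\cyc^{2n-3}U$. Without this uniqueness-of-reading-word observation for row tableaux, neither of your two proposed routes actually produces the bound: the first (``the canonical form can be reached in $n-2$ rather than $n-1$ shifts'') is simply false in general, and the second remains a hope rather than a proof.
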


\begin{proof}
  The first part is the result of Lascoux \& Sch\"utzenberger \cite[\S~4]{lascoux_plaxique}, restated in the terms used
  of this paper. It remains to prove the second part.

  To establish the lower bound on $k_n$, it is necessary to exhibit a pair of elements of $\plac_n$ that lie in the same
  connected component of $K(\plac_n)$ but are a distance at least $n-1$ apart.

  Let $t = 12\cdots (n-1)n$ and $u = n(n-1)\cdots 21$, and let
  \[
  T = \pplac{t}= \tableau{ 1 \& 2 \& |[dottedentry]| \null \& n \\}
  \quad\text{ and }
  U = \pplac{u} = \tableau{ 1 \\ 2 \\ |[dottedentry]| \null \\ n \\}
  \]
  Since $T \evrel U$, the elements $T$ and $U$ are in the same connected component of $K(\plac_n)$.  Let
  $T = T_0,T_1,\ldots,T_{m-1},T_m = U$ be a path in $K(\plac_n)$ from $T$ to $U$. Then
  \[
  T = T_0 \cyc T_1\cyc \ldots \cyc T_{m-1} \cyc T_m = U.
  \]
  Thus for $i = 0,\ldots,m-1$, there are words $u_i,v_i \in \aA_n^*$ such that $T_i = \pplac{u_iv_i}$ and
  $T_{i+1} = \pplac{v_iu_i}$. For each $i$, at least one of $u_i$ and $v_i$ does not contain the symbol $1$, and by parts~2)
  and~3) of \fullref{Lemma}{lem:cochseqcycles}, $\cochseq(T_i)$ and $\cochseq(T_{i+1})$ differ by adding $1$ to certain
  components or subtracting $1$ from certain components. Hence corresponding components of $\cochseq(T)$ and
  $\cochseq(U)$ differ by at most $m$. Since $\cochseq(T) = (0,0,\ldots,0,0)$ and $\cochseq(U) = (0,1,\ldots,n-2,n-1)$,
  it follows that $m \geq n-1$. Hence $T$ and $U$ are a distance at least $n-1$ apart in $K(\plac_n)$.

  To establish the upper bound for $k_n$, we proceed as follows. Let $T,U \in \plac_n$ be such that $T \evrel U$. Let
  $R$ be the unique tableau of row shape such that $T \evrel R \evrel U$. By \cite[Proof of
  Theorem~17]{choffrut_lexicographic}, $T \cyc^{n-1} R$ and $U \cyc^{n-1} R$. Thus there exist $T',U' \in \plac_n$ such
  that $T \cyc^{n-2} T'$ and $U \cyc^{n-2} U'$, and $T' \cyc R \cyc U'$.  Since there is only one word $w \in \aA_n^*$
  that represents the row $R$, it follows that there are words $t$ and $u$ that are cyclic shift of $w$ with
  $T' = \pplac{t}$ and $U' = \pplac{u}$. Since $t$ and $u$ are both cyclic shifts of $w$, they are cyclic shifts of each
  other and so $T' \cyc U'$. Hence $T \cyc^{2n-3} U$.
\end{proof}

Experimentation using the computer algebra software Sage \cite{sage} suggests the following conjecture on maximum
diameters of connected components of $K(\plac_n)$; see \fullref{Figure}{fig:kplac12345}:

\begin{conjecture}
  The maximum diameter of a connected component of $K(\plac_n)$ is $n-1$.
\end{conjecture}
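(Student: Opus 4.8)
The lower bound $k_n\ge n-1$ is already furnished by \fullref{Proposition}{prop:placticbounds}, so the whole problem is to replace the upper bound $2n-3$ by $n-1$: to show that any two Young tableaux of the same evaluation in $\plac_n$ are joined by at most $n-1$ cyclic shifts. The Choffrut--Merca\c{s} trick of routing both $T$ and $U$ through the unique row tableau $R$ of their common evaluation is doomed to fail here, since already $\operatorname{dist}(T,R)=n-1$ when $T$ is the column tableau (the cocharge sequences are $(0,1,\dots,n-1)$ and $(0,\dots,0)$, so \fullref{Lemma}{lem:cochseqcycles} forces at least $n-1$ steps); connecting that $T$ to any other tableau by way of $R$ therefore costs at least $n$. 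So the conjecture is really the assertion that an adaptively chosen, much shorter path always exists, and a genuinely new construction is needed. My plan has two parts: (i) on standard elements, prove that the cyclic-shift distance equals the componentwise sup-distance of cocharge sequences, which pins $k_n\le n-1$ there; (ii) reduce the general case to the standard one.

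For (i): \fullref{Lemma}{lem:cochseqcycles} already gives $\operatorname{dist}(\pplac{w},\pplac{w'})\ge\max_i\lvert\cochseq(w)_i-\cochseq(w')_i\rvert$ for standard words $w,w'$ of the same evaluation — one cyclic shift writes $w=xy$ with $1$ absent from $x$ or from $y$ and then moves $\cochseq$ by $\pm 1$ on a block of components. The new content is the reverse inequality, which I would try to prove by induction on $\delta:=\max_i\lvert\cochseq(w)_i-\cochseq(w')_i\rvert$: whenever $\delta\ge 1$, exhibit a single cyclic shift $T\cyc T''$ that reduces $\delta$ by one. Here the neighbourhood of $T$ is very rich — by the factorization $uv\mapsto vu$ it contains $\pplac{v}$ for every cyclic rotation of every reading word of $T$ — and \fullref{Lemma}{lem:cochseqcycles}(2,3) says exactly how each such rotation perturbs the cocharge sequence, so the inductive step becomes a scheduling problem: pick a reading word of $T$ and a cut point so that the resulting rotation slides the cocharge sequence one notch toward that of $T'$ without overshooting in any coordinate. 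Since a cocharge sequence of a length-$m$ word has all entries in $\{0,\dots,m-1\}$ and a standard element of $\plac_n$ has length $m\le n$, this gives $\operatorname{dist}(T,T')\le m-1\le n-1$.

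For (ii): if $T\evrel U$ have evaluation supported on $d\le n$ distinct symbols, relabel that sub-alphabet as $1<\dots<d$ to regard $T,U$ as elements of $\plac_d$, so it suffices to prove $\operatorname{dist}(T,U)\le d-1$. In the multiplicity-free case this is (i). In general one wants a cocharge-type invariant for arbitrary evaluations — extending $\cochseq$ through the subword decomposition that underlies $\coch$, establishing its plactic-invariance (the analogue of \fullref{Proposition}{prop:cochseqplactic}) and its behaviour under $uv\mapsto vu$ (the analogue of \fullref{Lemma}{lem:cochseqcycles}), with the spread of this invariant over a plactic class bounded by $d-1$. A standardization argument is tempting but subtle: ordinary standardization is \emph{not} compatible with $uv\mapsto vu$, because equal letters split between $u$ and $v$ are ranked in opposite orders in $uv$ and in $vu$, so one must track carefully how a repeated letter's avatars redistribute under a cyclic shift.

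The decisive obstacle, I expect, is the reverse inequality in (i). Every presently available device — the Choffrut--Merca\c{s} flattening, the cocharge bookkeeping of \fullref{Lemma}{lem:cochseqcycles} — moves one statistic at a time, and there is no obvious way to run these in parallel over all coordinates of the cocharge sequence simultaneously; realizing the claimed short paths seems to need a new insertion-theoretic mechanism, quite plausibly one related to promotion-type operations on tableaux, under which cyclic rotation of a reading word admits a clean description. The multiplicity reduction in (ii) is a secondary but real difficulty for the reason just indicated.
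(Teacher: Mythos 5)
The statement is labelled a conjecture in the paper and is supported only by computer experimentation; the paper has no proof of it, and the best rigorous bound it actually establishes is the upper bound $2n-3$ in \fullref{Proposition}{prop:placticbounds}. What you submitted is explicitly a research plan rather than a proof, and some of your observations are sound. In particular, the remark that routing both tableaux through the row tableau $R$ can never yield a bound below $n$ is correct: the column tableau is already at distance $\geq n-1$ from $R$ by the cocharge argument, so any through-$R$ path from the column to any $U\neq R$ has length $\geq n$, which is why the Choffrut--Merca\c{s} strategy saturates near $2n$.

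However, the cornerstone of your step (i) is false, already for $n=4$. You propose to show that on standard tableaux $\operatorname{dist}(T,T')=\max_i\lvert\cochseq(T)_i-\cochseq(T')_i\rvert$. But the cocharge sequence of a standard word of length $n$ starts at $0$ and at each subsequent position either repeats the previous entry or increments it by $1$, so there are only $2^{n-1}$ possible cocharge sequences --- strictly fewer than the number of standard Young tableaux once $n\geq 4$. Concretely, $\pplac{3124}$ (shape $(3,1)$) and $\pplac{3412}$ (shape $(2,2)$) are distinct standard elements of $\plac_4$, yet both have cocharge sequence $(0,0,1,1)$; your claimed equality would force their distance to be $0$, which is absurd. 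Cocharge therefore can only supply the one-sided lower bound the paper already has, and the conjectured $n-1$ upper bound, if it is to be proved, must come from a genuinely finer invariant or from a direct construction of short paths. The difficulties you raise in step (ii) about standardization failing to commute with cyclic shifts are genuine, but they are downstream of (i) and cannot be assessed until a correct replacement for (i) is found.
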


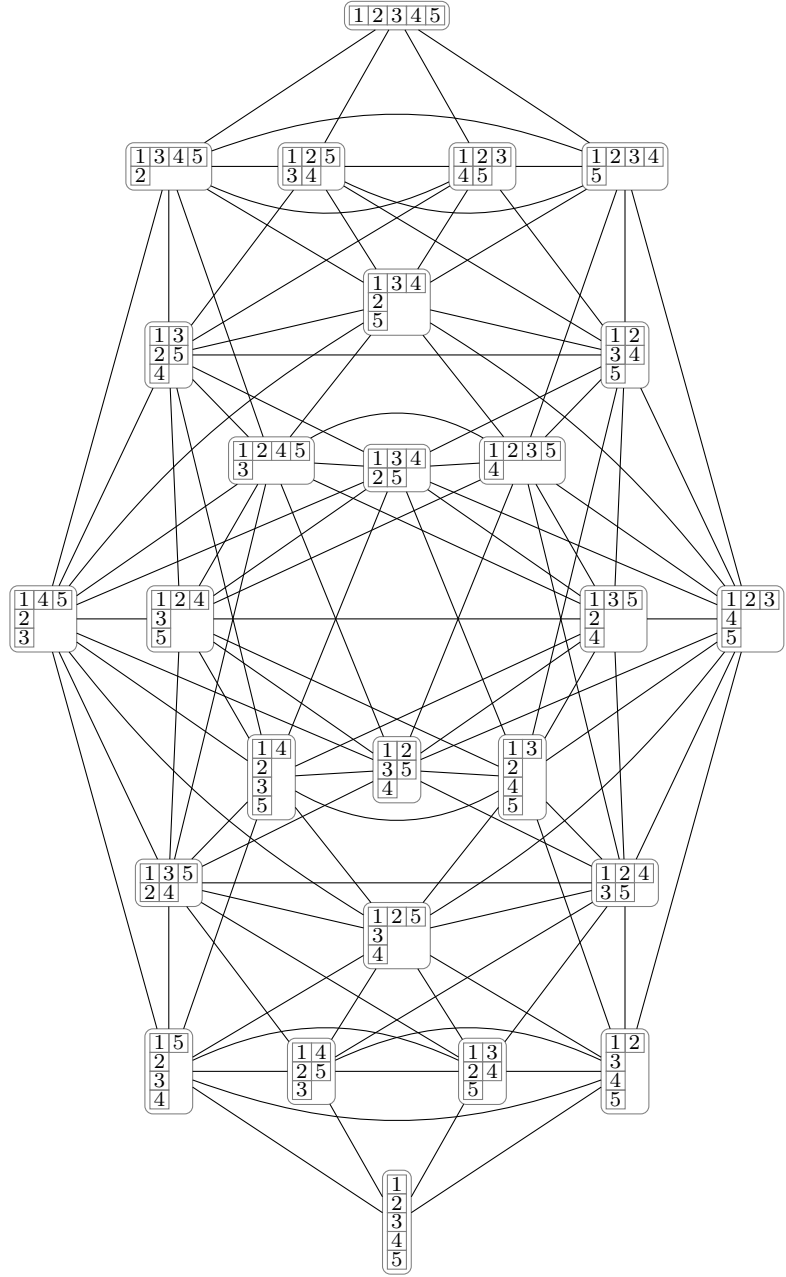
\begin{figure}[tp]
\centering
\begin{tikzpicture}[x=15mm,y=20mm]
  %
  % \begin{scope}[vertex/.style={draw=gray,rectangle,inner sep=1mm}]
  \begin{scope}[vertex/.style={draw=gray,rectangle with rounded corners,inner sep=-.5mm}]
    \node[vertex] (12345) at (0,4) {\tikz[smalltableau]\matrix{1 \& 2 \& 3 \& 4 \& 5\\};};
    \node[vertex] (21345) at (-2,3) {\tikz[smalltableau]\matrix{1 \& 3 \& 4 \& 5\\ 2 \\};};
    \node[vertex] (34125) at (-0.75,3) {\tikz[smalltableau]\matrix{1 \& 2 \& 5\\3 \& 4 \\};};
    \node[vertex] (45123) at (0.75,3) {\tikz[smalltableau]\matrix{1 \& 2 \& 3\\4 \& 5 \\};};
    \node[vertex] (51234) at (2,3) {\tikz[smalltableau]\matrix{1 \& 2 \& 3 \& 4\\5 \\};};
    \node[vertex] (42513) at (-2,1.75) {\tikz[smalltableau]\matrix{1 \& 3\\ 2 \& 5 \\4 \\};};
    \node[vertex] (52134) at (0,2.1) {\tikz[smalltableau]\matrix{1 \& 3 \& 4\\ 2 \\ 5 \\};};
    \node[vertex] (53412) at (2,1.75) {\tikz[smalltableau]\matrix{1 \& 2\\ 3 \& 4 \\ 5 \\};};
    \node[vertex] (31245) at (-1.1,1.05) {\tikz[smalltableau]\matrix{1 \& 2 \& 4 \& 5\\3 \\};};
    \node[vertex] (25134) at (0,1) {\tikz[smalltableau]\matrix{1 \& 3 \& 4\\ 2 \& 5 \\};};
    \node[vertex] (41235) at (1.1,1.05) {\tikz[smalltableau]\matrix{1 \& 2 \& 3 \& 5\\ 4 \\};};
    \node[vertex] (32145) at (-3.1,0) {\tikz[smalltableau]\matrix{1 \& 4 \& 5\\ 2 \\ 3 \\};};
    \node[vertex] (53124) at (-1.9,0) {\tikz[smalltableau]\matrix{1 \& 2 \& 4\\ 3 \\ 5 \\};};
    \node[vertex] (42135) at (1.9,0) {\tikz[smalltableau]\matrix{1 \& 3 \& 5\\ 2 \\ 4 \\};};
    \node[vertex] (54123) at (3.1,0) {\tikz[smalltableau]\matrix{1 \& 2 \& 3\\ 4 \\ 5 \\};};
    \node[vertex] (53214) at (-1.1,-1.05) {\tikz[smalltableau]\matrix{1 \& 4 \\ 2 \\ 3 \\ 5\\};};
    \node[vertex] (43512) at (0,-1) {\tikz[smalltableau]\matrix{1 \& 2\\ 3 \& 5 \\ 4 \\};};
    \node[vertex] (54213) at (1.1,-1.05) {\tikz[smalltableau]\matrix{1 \& 3\\ 2 \\ 4 \\ 5 \\};};
    \node[vertex] (24135) at (-2,-1.75) {\tikz[smalltableau]\matrix{1 \& 3 \& 5\\ 2 \& 4 \\};};
    \node[vertex] (43125) at (0,-2.1) {\tikz[smalltableau]\matrix{1 \& 2 \& 5\\ 3 \\ 4 \\};};
    \node[vertex] (35124) at (2,-1.75) {\tikz[smalltableau]\matrix{1 \& 2 \& 4\\ 3 \& 5 \\};};
    \node[vertex] (43215) at (-2,-3) {\tikz[smalltableau]\matrix{1 \& 5\\ 2 \\ 3 \\ 4 \\};};
    \node[vertex] (32514) at (-0.75,-3) {\tikz[smalltableau]\matrix{1 \& 4\\ 2 \& 5 \\ 3 \\};};
    \node[vertex] (52413) at (0.75,-3) {\tikz[smalltableau]\matrix{1 \& 3\\ 2 \& 4 \\ 5 \\};};
    \node[vertex] (54312) at (2,-3) {\tikz[smalltableau]\matrix{1 \& 2\\ 3 \\ 4 \\ 5 \\};};
    \node[vertex] (54321) at (0,-4) {\tikz[smalltableau]\matrix{1 \\ 2 \\ 3 \\ 4 \\ 5\\};};
  \end{scope}
  \begin{scope}
    \draw (52413) edge (54312);
    \draw (52413) edge (54321);
    \draw (54312) edge (54321);
    \draw (53124) edge (54213);
    \draw (53124) edge (53214);
    \draw (45123) edge (51234);
    \draw (45123) edge (53412);
    \draw (45123) edge (52134);
    \draw (53412) edge (54123);
    \draw (53412) edge (54213);
    \draw (31245) edge (42513);
    \draw (31245) edge (52134);
    \draw (31245) edge (32145);
    \draw (31245) edge[bend left=30] (41235);
    \draw (31245) edge (43512);
    \draw (31245) edge (53124);
    \draw (31245) edge (42135);
    \draw (32145) edge (42513);
    \draw (32145) edge[bend right=10] (43125);
    \draw (32145) edge[bend left=10] (52134);
    \draw (32145) edge (43215);
    \draw (32145) edge (43512);
    \draw (32145) edge (53124);
    \draw (32145) edge (53214);
    \draw (41235) edge (51234);
    \draw (41235) edge (53412);
    \draw (41235) edge (54123);
    \draw (41235) edge (52134);
    \draw (41235) edge (43512);
    \draw (41235) edge (53124);
    \draw (41235) edge (42135);
    \draw (42135) edge (53412);
    \draw (42135) edge (54123);
    \draw (42135) edge (54213);
    \draw (42135) edge (43512);
    \draw (42135) edge (53124);
    \draw (42135) edge (53214);
    \draw (42513) edge (53412);
    \draw (42513) edge (52134);
    \draw (42513) edge (45123);
    \draw (42513) edge (53124);
    \draw (42513) edge (53214);
    \draw (54213) edge (54312);
    \draw (21345) edge (31245);
    \draw (21345) edge[bend left=20] (51234);
    \draw (21345) edge (42513);
    \draw (21345) edge (52134);
    \draw (21345) edge (32145);
    \draw (21345) edge[bend right=25] (45123);
    \draw (21345) edge (34125);
    \draw (43512) edge (54123);
    \draw (43512) edge (54213);
    \draw (43512) edge (53124);
    \draw (43512) edge (53214);
    \draw (35124) edge (54123);
    \draw (35124) edge (43125);
    \draw (35124) edge (54213);
    \draw (35124) edge (54312);
    \draw (35124) edge (43512);
    \draw (35124) edge (41235);
    \draw (35124) edge (52413);
    \draw (35124) edge (42135);
    \draw (25134) edge (31245);
    \draw (25134) edge (53412);
    \draw (25134) edge (42513);
    \draw (25134) edge (54123);
    \draw (25134) edge (54213);
    \draw (25134) edge (32145);
    \draw (25134) edge (41235);
    \draw (25134) edge (53124);
    \draw (25134) edge (42135);
    \draw (25134) edge (53214);
    \draw (43125) edge[bend right=10] (54123);
    \draw (43125) edge (54213);
    \draw (43125) edge (54312);
    \draw (43125) edge (43215);
    \draw (43125) edge (52413);
    \draw (43125) edge (53214);
    \draw (53214) edge[bend right=30] (54213);
    \draw (12345) edge (51234);
    \draw (12345) edge (21345);
    \draw (12345) edge (45123);
    \draw (12345) edge (34125);
    \draw (51234) edge (53412);
    \draw (51234) edge (54123);
    \draw (51234) edge (52134);
    \draw (32514) edge (35124);
    \draw (32514) edge (43125);
    \draw (32514) edge (43215);
    \draw (32514) edge[bend left=25] (54312);
    \draw (32514) edge (54321);
    \draw (32514) edge (52413);
    \draw (34125) edge (53412);
    \draw (34125) edge[bend right=25] (51234);
    \draw (34125) edge (42513);
    \draw (34125) edge (52134);
    \draw (34125) edge (45123);
    \draw (52134) edge (53412);
    \draw (52134) edge[bend left=10] (54123);
    \draw (24135) edge (31245);
    \draw (24135) edge (35124);
    \draw (24135) edge (43125);
    \draw (24135) edge (32145);
    \draw (24135) edge (43215);
    \draw (24135) edge (43512);
    \draw (24135) edge (52413);
    \draw (24135) edge (32514);
    \draw (24135) edge (53124);
    \draw (24135) edge (53214);
    \draw (43215) edge[bend right=20] (54312);
    \draw (43215) edge (54321);
    \draw (43215) edge[bend left=25] (52413);
    \draw (43215) edge (53214);
    \draw (54123) edge (54213);
    \draw (54123) edge (54312);
  \end{scope}
\end{tikzpicture}
\caption{The connected component $K(\plac_5,\pplac{12345})$ of the cyclic shift graph. Note that its diameter is
  $4$. (As discussed following the definition of the cyclic shift graph, the loops at each vertex are not shown.)}
\label{fig:kplac12345}
\end{figure}

\section{Hypoplactic monoid}
\label{sec:hypoplactic}

Following the course of the previous section, only the essential facts about the hypoplactic monoid are recalled here;
for proofs and further reading, see \cite{novelli_hypoplactic}.

A \defterm{quasi-ribbon tableau} is a finite array of symbols from $\aA$, with rows non-decreasing from left to right
and columns strictly increasing from top to bottom, that does not contain any $2\times 2$ subarray (that is, of the form
$\tikz[shapetableau]\matrix{ \null \& \null \\ \null \& \null \\};$). An example of a quasi-ribbon tableau is:
\begin{equation}
\label{eq:qrteg}
\tableau
{
 1 \& 1 \& 2 \&   \&   \&   \\
   \&   \& 3 \& 4 \& 4 \&   \\
   \&   \&   \&   \& 5 \&   \\
   \&   \&   \&   \& 6 \& 6 \\
}.
\end{equation}
Notice that the same symbol cannot appear in two different rows of a quasi-ribbon tableau.

The insertion algorithm is as follows:

\begin{algorithm}[{\cite[\S~7.2]{krob_noncommutative4}}]
\label{alg:hypoinsertone}
~\par\nobreak
\textit{Input:} A quasi-ribbon tableau $T$ and a symbol $a \in \aA$.

\textit{Output:} A quasi-ribbon tableau $T\leftarrow a$.

\textit{Method:} If there is no entry in $T$ that is less than or equal to $a$, output the quasi-ribbon tableau obtained
by creating a new entry $a$ and attaching (by its top-left-most entry) the quasi-ribbon tableau $T$ to the bottom of $a$.

If there is no entry in $T$ that is greater than $a$, output the quasi-ribbon tableau obtained by creating a new entry $a$ and attaching
(by its bottom-right-most entry) the quasi-ribbon tableau $T$ to the left of $a$.

Otherwise, let $x$ and $z$ be the adjacent entries of the quasi-ribbon tableau $T$ such that $x \leq a < z$.
(Equivalently, let $x$ be the right-most and bottom-most entry of $T$ that is less than or equal to $a$, and let $z$ be
the left-most and top-most entry that is greater than $a$. Note that $x$ and $z$ could be either horizontally or
vertically adjacent.) Take the part of $T$ from the top left down to and including $x$, put a new entry $a$ to
the right of $x$ and attach the remaining part of $T$ (from $z$ onwards to the bottom right) to the bottom of the new
entry $a$, as illustrated here:
\begin{align*}
\tikz[tableau]\matrix
{
 \null \& x \&   \& \\
 \& z \& \null \& \null \\
 \&   \&   \& \null \\
}; \leftarrow a &=
\tikz[tableau]\matrix
{
 \null \& x \& a  \& \\
 \& \& z \& \null \& \null \\
 \& \& \&   \& \null \\
}; &&
\begin{array}{l}
\text{[where $x$ and $z$ are} \\
\text{\phantom{[}vertically adjacent]}
\end{array}
\displaybreak[0]\\
\tikz[tableau]\matrix
{
 \null \& \null \&   \& \\
 \& x \& z \& \null \\
 \&   \&   \& \null \\
}; \leftarrow a &=
\tikz[tableau]\matrix
{
 \null \& \null \\
 \& x \& a \\
 \&  \& z \& \null \\
 \& \& \& \null \\
}; &&
\begin{array}{l}
\text{[where $x$ and $z$ are} \\
\text{\phantom{[}horizontally adjacent]}
\end{array}
\end{align*}
Output the resulting quasi-ribbon tableau.
\end{algorithm}

Thus one can compute, for any word $u \in \aA^*$, a quasi-ribbon tableau $\phypo{u}$ by starting with an
empty quasi-ribbon tableau and successively inserting the symbols of $u$, proceeding left-to-right through the word. Define the relation
$\hypocong$ by
\[
u \hypocong v \iff \phypo{u} = \phypo{v}
\]
for all $u,v \in \aA^*$. The relation $\hypocong$ is a congruence, and the \defterm{hypoplactic monoid}, denoted
$\hypo$, is the factor monoid $\aA^*\!/{\hypocong}$; the \defterm{hypoplactic monoid of rank $n$}, denoted $\hypo_n$, is
the factor monoid $\aA_n^*/{\hypocong}$ (with the natural restriction of $\hypocong$). Each element $[u]_{\hypocong}$
(where $u \in \aA^*$) can be identified with the quasi-ribbon tableau $\phypo{u}$. For two quasi-ribbon tableaux $T$ and
$U$, denote the product of $T$ and $U$ in $\hypo$ by $T \circ U$.

The monoid $\hypo$ is presented by
$\pres{A}{\drel{R}_\hypo}$, where
\begin{align*}
\drel{R}_\hypo ={}& \drel{R}_\plac \\
&\cup \gset[\big]{(cadb,acbd)}{a \leq b < c \leq d} \\
&\cup \gset[\big]{(bdac,dbca)}{a < b \leq c < d};
\end{align*}
see \cite[\S~4.1]{novelli_hypoplactic} or \cite[\S~4.8]{krob_noncommutative4}. The monoid $\hypo_n$ is presented by
$\pres{\aA_n}{\drel{R}_\hypo}$, where the set of defining relations $\drel{R}_\hypo$ is naturally restricted to
$\aA_n^*\times \aA_n^*$. Notice that $\hypo$ and $\hypo_n$ are multihomogeneous.

It seems that cyclic shifts of elements of $\hypo$ have not been explicitly discussed in the existing
literature. However, it is clear from the defining relations that $\hypo$ is a quotient of $\plac$ under the natural
homomorphism. If two elements are $\cyc$-related in $\plac$, they are $\cyc$-related in $\hypo$. Furthermore, since
an element of $\plac$ and its image in $\hypo$ have the same evaluation, and since connected components of $K(\plac)$
coincide with $\evrel$-classes, it follows that connected components of $K(\hypo)$ also coincide with
$\evrel$-classes. That is, ${\cyc^*} = {\evrel}$ in $\hypo$.

In contrast to $K(\plac_n)$, it is possible to give an exact value for the maximum diameter of a connected component in
$K(\hypo_n)$: the aim is to prove that it is $n-1$. The proof that it cannot be smaller than $n-1$ is similar to
the proof of the lower bound in \fullref{Proposition}{prop:placticbounds}. Again, cocharge sequences are the key:

\begin{proposition}
  \label{prop:cochseqhypoplactic}
  Let $u,v \in \aA^*$ be standard words such that $u \hypocong v$. Then $\cochseq(u) = \cochseq(v)$.
\end{proposition}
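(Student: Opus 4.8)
The plan is to reduce everything to \fullref{Proposition}{prop:cochseqplactic}. Since $\hypocong$ is the congruence generated by $\drel{R}_\hypo = \drel{R}_\plac \cup \gset{(cadb,acbd)}{a \le b < c \le d} \cup \gset{(bdac,dbca)}{a < b \le c < d}$, the words $u$ and $v$ are joined by a finite chain of single applications of relations from $\drel{R}_\hypo$; and since $\hypo$ is multihomogeneous and $u$ is standard, every word occurring in such a chain has the same evaluation as $u$ and is therefore again standard. So it suffices to treat the case where $u$ and $v$ differ by one relation application. If that relation lies in $\drel{R}_\plac$, then $\cochseq(u) = \cochseq(v)$ by \fullref{Proposition}{prop:cochseqplactic}. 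It remains to handle the two new families; and since all words involved are standard, the hypotheses $a \le b < c \le d$ and $a < b \le c < d$ both collapse to $a < b < c < d$, so these relations act, inside a four-position window of the word, by rearranging four \emph{distinct} letters, $cadb \leftrightarrow acbd$ and $bdac \leftrightarrow dbca$, while fixing everything outside the window.

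The main tool I would use is a reformulation of the cocharge sequence. Unwinding the labelling procedure of \fullref{Subsection}{subsec:cocharge}: if $i$ receives label $k$, then going clockwise from $i$ one reaches $i+1$ before $*$ precisely when $i+1$ lies to the left of $i$ in the linear word, in which case $i+1$ receives label $k+1$, and otherwise $i+1$ receives label $k$. Starting from label $0$ on the symbol $1$, this yields
\[
  \cochseq(u)_m = \#\gset[\big]{i \in \set{1,\ldots,m-1}}{\text{$i+1$ precedes $i$ in $u$}}.
\]
Hence $\cochseq(u)$ is completely determined by the bits $\delta_i(u) \in \set{0,1}$, for $1 \le i < \abs{u}$, where $\delta_i(u) = 1$ iff $i+1$ precedes $i$ in $u$; so it is enough to show each $\delta_i$ is preserved by the two rearrangements. (One could equally argue directly with the circle picture, as in the proof of \fullref{Proposition}{prop:cochseqplactic}.)

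The last step is a short case analysis. A rearrangement that only permutes the letters $a,b,c,d$ among four fixed positions can change $\delta_i$ only if \emph{both} $i$ and $i+1$ belong to $\set{a,b,c,d}$: if at most one of them does, then since $u$ is standard the other occurs at a single position, wholly to the left or wholly to the right of the window, while the moved letter stays inside it, so the order of $i$ and $i+1$ is unchanged and $\delta_i$ is unaffected. As $a < b < c < d$, the only pairs of consecutive integers that can occur among $a,b,c,d$ are $(a,b)$, $(b,c)$, and $(c,d)$, giving just three pairs to inspect for each of the two relations. Reading off the relevant two letters from $cadb$ versus $acbd$ (respectively $bdac$ versus $dbca$) shows that in every case the order of the pair is the same in both words, so all of $\delta_a$, $\delta_b$, $\delta_c$ that are defined agree; hence $\cochseq(u) = \cochseq(v)$, and chaining over the relation applications proves the proposition.

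The only point requiring any care — and hence the main obstacle, though it is a mild one — is precisely this bookkeeping: ruling out the pairs $(i,i+1)$ with exactly one endpoint in the moved window, and checking the few pairs that can have both endpoints there. Everything else is a mechanical unwinding of the labelling algorithm together with the already-established plactic case.
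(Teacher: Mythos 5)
Your proof is correct. The overall skeleton is the same as the paper's: reduce to a single application of a defining relation (legitimate, since multihomogeneity forces every intermediate word to remain standard), reuse \fullref{Proposition}{prop:cochseqplactic} for the Knuth relations, and check the two new families of hypoplactic relations directly. Where you depart from the paper is in how that final check is organized. The paper argues informally in the circle picture, reasoning about when the labelling process ``passes $*$'' on its way from $a$ to $b$ to $c$ to $d$, and leaves it to the reader to see why the two placements of the four letters give the same label sequence. You instead extract the underlying invariant explicitly: $\cochseq(u)_m = \#\gset{i<m}{\text{$i{+}1$ precedes $i$ in $u$}}$, so the whole cocharge sequence is determined by the descent bits $\delta_i$. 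This reduces everything to checking, for the three consecutive pairs $(a,b)$, $(b,c)$, $(c,d)$ that can possibly sit among $\set{a,b,c,d}$, that their relative order is the same on both sides of each relation --- a purely mechanical tabulation --- together with the observation that any pair with at most one endpoint in the moved window cannot change order. This is a cleaner and more self-contained version of the same idea; it also makes transparent why the argument works uniformly for \fullref{Propositions}{prop:cochseqplactic}, \ref{prop:cochseqhypoplactic}, and \ref{prop:cochseqsylvester}, whereas the paper leaves more implicit.
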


\begin{proof}
  It suffices to prove the result when $u$ and $v$ differ by a single application of a defining relation. Assume that
  $u$ and $v$ differ by an application of a defining relation $(cadb,acbd) \in \drel{R}_\hypo \setminus \drel{R}_\plac$
  where $a \leq b < c \leq d$. The reasoning for defining relations of the form $(bdac,dbca)$ is similar, and for
  defining relations in $\drel{R}_\plac$, one can proceed in the same way as in the proof of
  \fullref{Proposition}{prop:cochseqplactic}. So $u = pcadbq$ and $v = pacbdq$, where $p,q \in \aA_n^*$ and
  $a,b,c,d \in \aA_n$ with $a \leq b < c \leq d$. Since $u$ and $v$ are standard words, $a < b$ and $c < d$.

  Consider how labels are assigned to the symbols $a$, $b$, $c$, and $d$ when calculating $\cochseq(w)$ as described in
  \fullref{Subsection}{subsec:cocharge}:
  \[
  \begin{tikzpicture}
    \draw (0,0) circle[radius=4mm];
    \draw[gray,thick,->] (-10:8mm) arc[radius=8mm,start angle=-10,end angle=-170];
    %\draw[gray,decorate,decoration={text along path,text={Labelling},text color=gray,text align={align=center}}] (-170:19mm) arc[radius=19mm,start angle=-170,end angle=-10];
    %
    \foreach\i/\ilabel in {0/*,8/c,7/a,6/d,5/b} {
      \node at ($ (90-\i*30:6mm) $) {$\ilabel$};
    };
  \end{tikzpicture}
  \]
  Among these four symbols, $a$ will receive a label first, then $b$, then $c$, then $d$. Thus, after $a$, the labelling
  process will pass $*$ at least once to visit $b$ and (perhaps after passing $*$ more times) only then visit $c$. After
  visiting $b$, it must visit $c$ first and thus must pass $*$ at least once before visiting $d$. Thus interchanging
  $a$ and $c$ and interchanging $b$ and $d$ does not alter the resulting labelling. Hence $\cochseq(u) = \cochseq(v)$.
\end{proof}

For any standard quasi-ribbon tableau $T$ in $\hypo$, define $\cochseq(T)$ to be $\cochseq(u)$ for any standard word $u
\in \aA^*$ such that $T = \phypo{u}$. By \fullref{Proposition}{prop:cochseqhypoplactic}, $\cochseq(T)$ is well-defined.

\begin{lemma}
\label{lem:hypoplacticlowerbound}
There is a connected component in $K(\hypo_n)$ with diameter at least $n-1$.
\end{lemma}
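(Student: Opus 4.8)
The plan is to follow the lower-bound argument in the proof of \fullref{Proposition}{prop:placticbounds} almost verbatim, with quasi-ribbon tableaux in place of Young tableaux and \fullref{Proposition}{prop:cochseqhypoplactic} in place of \fullref{Proposition}{prop:cochseqplactic}.

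First I would pin down the two elements to use. Take the standard words $t = 12\cdots n$ and $u = n(n-1)\cdots 21$ over $\aA_n$. Running \fullref{Algorithm}{alg:hypoinsertone} shows that $\phypo{t}$ is the single-row quasi-ribbon tableau with entries $1,2,\ldots,n$ from left to right, and $\phypo{u}$ is the single-column quasi-ribbon tableau with entries $1,2,\ldots,n$ from top to bottom. Since $t \evrel u$ and ${\cyc^*} = {\evrel}$ in $\hypo_n$ (this holds for $\hypo_n$ for the same reason it holds in $\hypo$: $\hypo_n$ is a quotient of $\plac_n$, in which connected components of the cyclic shift graph are $\evrel$-classes), the elements $\phypo{t}$ and $\phypo{u}$ lie in a common connected component $C$ of $K(\hypo_n)$. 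Every element of $C$ shares the evaluation $(1,1,\ldots,1)$ of $t$ and is therefore standard, so by \fullref{Proposition}{prop:cochseqhypoplactic} the sequence $\cochseq$ is well-defined at every vertex of $C$.

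Next I would run the cocharge-sequence argument along an arbitrary path $\phypo{t} = T_0 \cyc T_1 \cyc \cdots \cyc T_m = \phypo{u}$ in $K(\hypo_n)$. For each $i$ choose $u_i, v_i \in \aA_n^*$ with $T_i = \phypo{u_iv_i}$ and $T_{i+1} = \phypo{v_iu_i}$; since $u_iv_i$ is standard, the symbol $1$ appears in exactly one of $u_i,v_i$, so parts~2) and~3) of \fullref{Lemma}{lem:cochseqcycles} apply and show that $\cochseq(T_{i+1})$ is obtained from $\cochseq(T_i)$ by adding $1$ to, or subtracting $1$ from, the components indexed by the symbols of the factor not containing $1$, leaving the other components unchanged. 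In particular each component changes by at most $1$ per step, so corresponding components of $\cochseq(\phypo{t})$ and $\cochseq(\phypo{u})$ differ by at most $m$. A direct computation (or repeated application of \fullref{Lemma}{lem:cochseqcycles}) gives $\cochseq(\phypo{t}) = (0,0,\ldots,0)$ and $\cochseq(\phypo{u}) = (0,1,2,\ldots,n-1)$, whose last entries differ by $n-1$; hence $m \geq n-1$, so $C$ has diameter at least $n-1$.

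All the ingredients are routine: computing the two insertion tableaux, evaluating the two cocharge sequences, and reading off $u_i,v_i$ from the definition of $\cyc$. The one point that needs a little attention — and the closest thing to an obstacle — is ensuring that $\cochseq$ stays applicable along the whole path, i.e.\ that every intermediate $T_i$ is standard; but this is automatic once one notes that $C$ lies inside the single $\evrel$-class of the standard word $t$.
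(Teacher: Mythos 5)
Your proof is correct and follows the paper's argument essentially verbatim: the paper's proof of this lemma also takes $t=12\cdots n$, $u=n(n-1)\cdots 21$, and defers to the cocharge-sequence argument already laid out for Proposition~\ref{prop:placticbounds}, which is exactly what you reproduce. The observation that every intermediate tableau is standard (so that $\cochseq$ is defined throughout the path) is a useful bit of bookkeeping that the paper leaves implicit.
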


\begin{proof}
  To prove this result, it suffices to exhibit two elements that lie in the same connected component of $K(\hypo_n)$),
  but that are a distance strictly at least $n-1$ apart. Let $t = 12\cdots (n-1)n$ and $u = n(n-1)\cdots 21$, and let
  \[
  T = \phypo{t}= \tikz[tableau]\matrix{ 1 \& 2 \& |[dottedentry]| \& n \\};
  \quad\text{ and }
  U = \phypo{u} = \tikz[tableau]\matrix{ 1 \\ 2 \\ |[dottedentry]| \\ n \\};
  \]
  Then $T \evrel U$ and so $T$ and $U$ are in the same connected component of $K(\hypo_n)$. Reasoning similar to the
  proof of \fullref{Proposition}{prop:placticbounds} shows that a path from $T$ to $U$ in $K(\hypo_n)$ must have length
  at least $n-1$.
\end{proof}

Having shown that there is a connected component of $K(\hypo_n)$ with diameter at least $n-1$, the next step is to prove
that connected components of $K(\hypo_n)$ have diameter at most $n-1$. To do this, some more definitions are necessary:
\begin{itemize}
\item The \defterm{column reading} of a quasi-ribbon tableau $T$, denoted $\colreading{T}$, is the word in $\aA^*$
  obtained by reading each column from bottom to top, proceeding left to right through the columns. So the column
  reading of \eqref{eq:qrteg} is $1\;1\;32\;4\;654\;6$ (where the spaces are simply for clarity, to show readings of
  individual columns):
  \[
  \begin{tikzpicture}[baseline=0]
    \matrix[tableaumatrix] (qrteg)
    {
      1 \& 1 \& 2 \&   \&   \&   \\
      \&   \& 3 \& 4 \& 4 \&   \\
      \&   \&   \&   \& 5 \&   \\
      \&   \&   \&   \& 6 \& 6 \\
    };
    \draw[gray,thick,rounded corners=3mm,->]
    ($ (qrteg-1-1) + (-1.5mm,-20mm) $) -- ($ (qrteg-1-1) + (-1.5mm,7mm) $) --
    ($ (qrteg-1-2) + (-1.5mm,-7mm) $) -- ($ (qrteg-1-2) + (-1.5mm,7mm) $) --
    ($ (qrteg-2-3) + (-1.5mm,-7mm) $) -- ($ (qrteg-1-3) + (-1.5mm,7mm) $) --
    ($ (qrteg-2-4) + (-1.5mm,-7mm) $) -- ($ (qrteg-2-4) + (-1.5mm,7mm) $) --
    ($ (qrteg-4-5) + (-1.5mm,-7mm) $) -- ($ (qrteg-2-5) + (-1.5mm,7mm) $) --
    ($ (qrteg-4-6) + (-1.5mm,-7mm) $) -- ($ (qrteg-4-6) + (-1.5mm,7mm) $) --
    ($ (qrteg-4-6) + (-1.5mm,20mm) $);
  \end{tikzpicture}
  \]
\item The \defterm{row reading} of a quasi-ribbon tableau $T$, denoted $\rowreading{T}$, is the word in $\aA^*$ obtained
  by reading the each row from left to right, proceeding through the rows from bottom to top. So the row reading of
  \eqref{eq:qrteg} is $66\;5\;344\;112$ (where the spaces are simply to show readings of individual rows):
  \[
  \begin{tikzpicture}[baseline=0]
    \matrix[tableaumatrix] (qrteg)
    {
      1 \& 1 \& 2 \&   \&   \&   \\
      \&   \& 3 \& 4 \& 4 \&   \\
      \&   \&   \&   \& 5 \&   \\
      \&   \&   \&   \& 6 \& 6 \\
    };
    \draw[gray,thick,rounded corners=3mm,->]
    ($ (qrteg-4-5) + (-25mm,-2mm) $) --
    ($ (qrteg-4-5) + (-7mm,-2mm) $) -- ($ (qrteg-4-6) + (7mm,-2mm) $) --
    ($ (qrteg-3-5) + (-7mm,-2mm) $) -- ($ (qrteg-3-5) + (7mm,-2mm) $) --
    ($ (qrteg-2-3) + (-7mm,-2mm) $) -- ($ (qrteg-2-5) + (7mm,-2mm) $) --
    ($ (qrteg-1-1) + (-7mm,-2mm) $) -- ($ (qrteg-1-3) + (7mm,-2mm) $) --
    ($ (qrteg-1-3) + (20mm,-2mm) $);
  \end{tikzpicture}
  \]
\end{itemize}

Let $w \in \aA^*$ and let $a_1 < a_2 < \ldots < a_k$ be the symbols in $\aA_n$ that appear in $w$. The word $w$ contains
an \defterm{$(a_i,a_{i+1})$-inversion} if it contains a symbol $a_{i+1}$ somewhere to the left of a symbol $a_i$. The
following lemma is immediate from the statement of \fullref{Algorithm}{alg:hypoinsertone}:

\begin{lemma}
  \label{lem:inversions}
  Let $w \in \aA^*$ and let $a_1 < a_2 < \ldots < a_k$ be the symbols in $\aA_n$ that appear in $w$. Then $w$ contains
  an $(a_i,a_{i+1})$-inversion if and only if $a_i$ and $a_{i+1}$ are on different rows of $\phypo{w}$.
\end{lemma}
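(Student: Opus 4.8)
The statement is, as the surrounding text indicates, close to a direct reading of \fullref{Algorithm}{alg:hypoinsertone}, so the plan is an induction on $|w|$ that tracks a single insertion step. Write $w = w'c$ with $c \in \aA_n$, so that $\phypo{w} = \phypo{w'} \leftarrow c$; the inductive hypothesis is that the lemma holds for $w'$ and $T := \phypo{w'}$, and I would show it is preserved when $c$ is inserted to form $T' := \phypo{w}$, the base case $w = \emptyword$ being vacuous.

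The structural fact that makes this work is that in a quasi-ribbon tableau the entries, read row by row from the top (left to right within each row), form a word that is weakly increasing and is strictly increasing at each passage from one row to the next (columns increase strictly, rows weakly, and consecutive rows of a ribbon meet in a single vertical step). Hence all occurrences of a fixed value form a contiguous block in this reading, the blocks of two consecutive present values are adjacent, and $a_i$ and $a_{i+1}$ lie in different rows of $T$ exactly when a vertical step of the ribbon separates their blocks. In particular, when the generic (third) branch of \fullref{Algorithm}{alg:hypoinsertone} inserts $c$, the selected pair $x \le c < z$ is the last cell of the block of the greatest present value $\le c$ and the first cell of the block of the least present value $> c$.

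I would then split into the cases the algorithm presents: (i) $c$ already occurs in $w'$; (ii) $c$ is smaller than every value of $w'$; (iii) $c$ is larger than every value of $w'$; (iv) $c$ is a new value lying strictly between two consecutive present values $a_i < a_{i+1}$. In each case the algorithm describes $T'$ explicitly --- a new cell with $T$ hung beneath it in (ii), a new cell with $T$ attached to its left in (iii), and in (i) and (iv) a new cell in the row of $x$ with the part of $T$ from $z$ onwards re-hung as a fresh row below it --- so one reads off the change in the row partition and compares it with the change in the inversion data caused by appending one letter $c$ at the \emph{end} of $w'$. The comparison is immediate once one observes that, $c$ being the last letter of $w$, appending it can never make $c$ the larger member of an inversion: the only inversion it can create is the one between $c$ and the value immediately above it (present precisely when that value already occurs in $w'$), while in case~(iv) the former consecutive pair $(a_i,a_{i+1})$ is simply replaced, as a consecutive pair of $w$, by $(a_i,c)$ and $(c,a_{i+1})$. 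Every consecutive pair not incident to $c$ has both its order in the word and its position in the tableau untouched, so the conclusion for those pairs is exactly the inductive hypothesis (using that re-hanging a block as a new row translates it rigidly and so preserves the row structure inside it).

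I do not expect a genuine obstacle; the point that most repays care is case~(i) when $c$ equals an already-present value $a_i$ and $T$ has an entry greater than $c$. Then inserting the new copy of $a_i$ detaches the $a_{i+1}$-block and everything after it and re-hangs it one row lower, so $a_i$ and $a_{i+1}$ become separated by a vertical step in $T'$ even if they shared a row in $T$; one must check that this matches the word side, namely that $w = w'c$ then has an $(a_i,a_{i+1})$-inversion --- which holds because the algorithm entered this branch only because a value greater than $a_i$ was present, so $a_{i+1}$ occurs in $w'$ and hence to the left of the trailing copy of $a_i$ in $w$.
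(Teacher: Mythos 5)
Your argument is correct, and it matches the paper's approach in the only sense available: the paper offers no proof, asserting the lemma to be immediate from the insertion algorithm, and your induction on $|w|$ tracking how a single insertion alters the row structure is precisely the natural unpacking of that assertion. One small point to tidy: in case~(i), if $c$ is present in $w'$ and is also its maximum, the algorithm's second branch (append $c$ to the right end of the bottom row) applies rather than the generic third branch you describe for (i), but this subcase is immediate --- no row break is created or removed, and appending a maximal letter creates no inversion --- so the induction step holds there as well.
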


\begin{proposition}
Let $T$ be a quasi-ribbon tableau. Then $\phypo{\colreading{T}} = \phypo{\rowreading{T}} = T$.
\end{proposition}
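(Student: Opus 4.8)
The plan is to use the inversion characterisation of $\phypo{\,\cdot\,}$ supplied by \fullref{Lemma}{lem:inversions}. Both readings $\colreading{T}$ and $\rowreading{T}$ manifestly contain the same symbols, with the same multiplicities, as $T$, so $\phypo{\colreading{T}}$, $\phypo{\rowreading{T}}$ and $T$ all have the same evaluation; write $a_1 < a_2 < \cdots < a_k$ for the symbols appearing. By \fullref{Lemma}{lem:inversions}, for each $i$ the symbols $a_i$ and $a_{i+1}$ lie on different rows of $\phypo{\colreading{T}}$ (respectively $\phypo{\rowreading{T}}$) exactly when $\colreading{T}$ (respectively $\rowreading{T}$) contains an $(a_i,a_{i+1})$-inversion. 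So it suffices to prove (a) that each of these readings contains an $(a_i,a_{i+1})$-inversion precisely when $a_i$ and $a_{i+1}$ lie on different rows of $T$, and (b) that a quasi-ribbon tableau is determined by its evaluation together with the set $D = \set{i : a_i, a_{i+1} \text{ lie on different rows}}$; these combine to give $\phypo{\colreading{T}} = \phypo{\rowreading{T}} = T$.

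First I would record the structural facts about a quasi-ribbon tableau $T$ that make (b) and the geometric parts of (a) go through. Because the shape is a ribbon (connectedness together with the absence of a $2\times 2$ block), consecutive rows $r$ and $r+1$ overlap in exactly one column, with the rightmost cell of row $r$ sitting directly above the leftmost cell of row $r+1$; since columns strictly increase and rows weakly increase, $\max(\text{row } r) < \min(\text{row } r+1)$, so the rows are ``sorted'' (every entry of row $r$ is smaller than every entry of row $r+1$) and the set of symbols occurring in any single row is an interval of $a_1 < \cdots < a_k$. In particular, if $a_i$ and $a_{i+1}$ lie on different rows then the copies of $a_i$ fill the right end of some row $r$ (so $a_i = \max(\text{row } r)$ and its unique copy in the overlap column $m$ of rows $r,r+1$ is directly above $\ldots$), the copies of $a_{i+1}$ fill the left end of row $r+1$ (so $a_{i+1} = \min(\text{row } r+1)$, its unique copy lying in column $m$), and these two copies are vertically adjacent. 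Fact (b) then follows because, given the evaluation and $D$, the rows of $T$ are forced as consecutive blocks of the weakly increasing list of all entries, split exactly at the positions in $D$, after which the ribbon geometry is forced by the overlap rule.

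Next I would check (a) for each reading. For $\rowreading{T}$: if $a_i,a_{i+1}$ share a row, reading that row left to right puts every copy of $a_i$ before every copy of $a_{i+1}$ (weak increase plus the interval property), and no other row contains either symbol, so there is no inversion; if they lie on different rows, then $a_{i+1}$ lies on a strictly lower row (sortedness), which is read earlier in the bottom-to-top row reading, so some $a_{i+1}$ precedes some $a_i$. For $\colreading{T}$: if $a_i,a_{i+1}$ share a row, the cells of $a_i$ occupy a block of columns strictly to the left of the block occupied by $a_{i+1}$, and neither symbol occurs in any other column, so the left-to-right column reading places all copies of $a_i$ before all copies of $a_{i+1}$; if they lie on different rows then, by the structural analysis, every copy of $a_i$ sits in a column $\le m$ and every copy of $a_{i+1}$ in a column $\ge m$, while in column $m$ the unique $a_{i+1}$ lies directly below the unique $a_i$, so reading column $m$ bottom to top emits that $a_{i+1}$ before that $a_i$ — an $(a_i,a_{i+1})$-inversion. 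Combining (a) and (b) with \fullref{Lemma}{lem:inversions} as above finishes the proof.

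The main obstacle is the column-reading half of step (a): one must pin down that for a ``broken'' consecutive pair the relevant two cells are precisely the two cells of the overlap column of the two rows, which is where the ribbon (no-$2\times2$) hypothesis and the sortedness of rows are genuinely used; the row-reading case and part (b) are essentially bookkeeping. An alternative route that bypasses \fullref{Lemma}{lem:inversions} is a direct induction showing that inserting $\colreading{T}$ column by column rebuilds $T$ one column at a time (and dually for $\rowreading{T}$ and rows), but this requires a more laborious case analysis of \fullref{Algorithm}{alg:hypoinsertone}.
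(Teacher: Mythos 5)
Your proposal is correct and follows essentially the same approach as the paper's proof: invoke Lemma~\ref{lem:inversions}, observe that the readings have the same evaluation as $T$ and the same pattern of ``$a_i$ and $a_{i+1}$ on different rows'', and conclude by the fact that a quasi-ribbon tableau is determined by its evaluation together with that pattern. The paper states the latter two facts as immediate (``by definition'' / ``since a quasi-ribbon tableau is determined by\dots'') where you take care to justify them via the ribbon geometry (unique overlap column, sortedness of rows, interval property), so your writeup is a more explicit version of the same argument rather than a different one.
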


[The fact that $\phypo{\colreading{T}} = T$ follows from \cite[Note~4.5]{novelli_hypoplactic}, but the following proof
treats $\phypo{\colreading{T}}$ in parallel.]

\begin{proof}
  Let $a_1 < a_2 < \ldots < a_k$ be the symbols in $\aA_n$ that appear in $T$.  By definition, $\colreading{T}$ and
  $\rowreading{T}$ will contain an $(a_i,a_{i+1})$-inversion if and only if $a_i$ and $a_{i+1}$ are on different rows of
  $T$. Hence, by \fullref{Lemma}{lem:inversions}, symbols $a_i$ and $a_{i+1}$ are on different rows of
  $\phypo{\colreading{T}}$ and $\phypo{\rowreading{T}}$ if and only if they are on different rows of $T$. Furthermore,
  $\phypo{\colreading{T}}$ and $\phypo{\rowreading{T}}$ both have the same evaluation as $T$. Since quasi-ribbon tableau
  is determined by its evaluation and whether adjacent symbols are on different rows, it follows that
  $\phypo{\colreading{T}} = \phypo{\rowreading{T}} = T$.
\end{proof}

\begin{lemma}
  \label{lem:hypoplacticupperbound}
  Every connected component of $K(\hypo_n)$ has diameter at most $n-1$.
\end{lemma}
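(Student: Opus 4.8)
The plan is to recognise a connected component of $K(\hypo_n)$ as carrying a hypercube structure and to realise each hypercube edge by a single cyclic shift. Fix an evaluation $e$ and let $a_1 < a_2 < \cdots < a_k$ be the symbols of $\aA_n$ occurring in it, so $k \le n$. For a quasi-ribbon tableau $T$ of evaluation $e$ put $D(T) = \{\, i \in \{1,\dots,k-1\} : a_i \text{ and } a_{i+1} \text{ are on different rows of } T \,\}$. As used in the proof that $\phypo{\colreading{T}} = \phypo{\rowreading{T}} = T$, a quasi-ribbon tableau is determined by its evaluation together with the set $D(T)$, and — reading off the ribbon built from $e$ with a row break exactly at each prescribed position — every subset of $\{1,\dots,k-1\}$ occurs. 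Thus $T \mapsto D(T)$ is a bijection from the quasi-ribbon tableaux of evaluation $e$ onto the power set of $\{1,\dots,k-1\}$; write $T_D$ for the tableau with $D(T_D) = D$. Also, by \fullref{Lemma}{lem:inversions}, for any $w \in \aA_n^*$ of evaluation $e$ one has $D(\phypo{w}) = \{\, i : w \text{ has an } (a_i,a_{i+1})\text{-inversion}\,\}$. Since, by \fullref{Lemma}{lem:basicprops}, every connected component of $K(\hypo_n)$ lies in a single $\evrel$-class, it suffices to show that $T_D \cyc^{n-1} T_{D'}$ for all $D, D' \subseteq \{1,\dots,k-1\}$; this simultaneously shows each $\evrel$-class is one component and bounds its diameter.

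I would first prove the key claim: for every $D$ and every $j \in \{1,\dots,k-1\}$, $T_D \cyc T_{D \triangle \{j\}}$. By symmetry of $\cyc$ assume $j \notin D$ and prove $T_D \cyc T_{D \cup \{j\}}$. Choose any $w \in \aA_n^*$ with $\phypo{w} = T_D$ (for instance $\rowreading{T_D}$), let $u$ be the subword of $w$ formed by the letters $\le a_j$, and let $v$ be the subword formed by the letters $> a_j$; then $uv$ and $vu$ have evaluation $e$ and are cyclic shifts of one another. For the inversion computation I would use the identity, valid for all $x,y \in \aA_n^*$,
\[
\{\, i : xy \text{ has an } (a_i,a_{i+1})\text{-inversion}\,\}
 = \mathcal{I}(x)\,\cup\,\mathcal{I}(y)\,\cup\,\{\, i : a_{i+1}\in\mathrm{alph}(x)\text{ and }a_i\in\mathrm{alph}(y)\,\},
\]
where $\mathcal{I}(x)$ is the set of $i$ with an $(a_i,a_{i+1})$-inversion inside $x$. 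As $\mathrm{alph}(u)=\{a_1,\dots,a_j\}$ and $\mathrm{alph}(v)=\{a_{j+1},\dots,a_k\}$ are disjoint, and deleting letters from $w$ preserves every surviving inversion while $\mathcal{I}(w)=D(\phypo{w})=D$, one gets $\mathcal{I}(u)=D\cap\{1,\dots,j-1\}$ and $\mathcal{I}(v)=D\cap\{j+1,\dots,k-1\}$; the cross term for $uv$ is empty (it would force $i+1\le j$ and $i\ge j+1$) and the cross term for $vu$ is exactly $\{j\}$ (it forces $i\ge j$ and $i\le j$). Hence $D(\phypo{uv})=D$ and $D(\phypo{vu})=D\cup\{j\}$, so $\phypo{uv}=T_D$ and $\phypo{vu}=T_{D\cup\{j\}}$, i.e. $T_D\cyc T_{D\cup\{j\}}$. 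Given the claim, any $T_D$ and $T_{D'}$ are linked by toggling the elements of $D\triangle D'$ one at a time, a path of length $|D\triangle D'|\le k-1\le n-1$; so every component of $K(\hypo_n)$ has diameter at most $n-1$. Together with \fullref{Lemma}{lem:hypoplacticlowerbound} this shows the maximum diameter is exactly $n-1$.

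I expect the only genuinely nontrivial point to be the choice of cut of $w$: splitting at the \emph{value} $a_j$ (rather than at a position) makes $\mathrm{alph}(u)$ and $\mathrm{alph}(v)$ disjoint, which is precisely what forces the cross term to vanish in one order and to contribute the single new inversion $\{j\}$ in the other, so that the cyclic shift changes $D$ in exactly one coordinate. Everything else is routine bookkeeping: the concatenation identity for inversion sets, the observation that deletion cannot create inversions, and the fact — drawn from the discussion around \fullref{Lemma}{lem:inversions} — that a quasi-ribbon tableau is freely and faithfully encoded by its evaluation together with $D(T)$, which should be stated cleanly since the entire argument is organised around the resulting hypercube on a connected component.
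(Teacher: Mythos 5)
Your proof is correct, and it takes a recognisably different route from the paper's. The paper argues by induction on $i=1,\dots,n-1$, constructing $T_0,\dots,T_{n-1}$ so that after step $i$ the restriction of $T_i$ to symbols $\{1,\dots,i+1\}$ matches that of $U$; it branches into four cases according to whether $i$ and $i+1$ sit on the same or different rows of $T_{i-1}$ and $U$, and in the two nontrivial cases performs a cyclic shift by cutting the column reading (case 3) or row reading (case 4) of $T_{i-1}$ at the boundary between $i$ and $i+1$. Your argument instead makes the underlying structure explicit: a quasi-ribbon tableau of fixed evaluation is faithfully encoded by the descent set $D(T)\subseteq\{1,\dots,k-1\}$ of positions where consecutive symbols lie on different rows, the map $D\mapsto T_D$ is a bijection onto the power set, and you prove the clean edge lemma that $T_D\cyc T_{D\triangle\{j\}}$ for every $j$, obtained by splitting any reading $w$ of $T_D$ by the \emph{value} $a_j$ into $u$ (letters $\le a_j$) and $v$ (letters $>a_j$) and comparing the inversion sets of $uv$ and $vu$. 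The inversion bookkeeping works exactly as you claim: restricting $w$ to an interval of values cannot destroy an $(a_i,a_{i+1})$-inversion when both $a_i,a_{i+1}$ survive and cannot create one, and because $\mathrm{alph}(u)$ and $\mathrm{alph}(v)$ are the two contiguous value-intervals split at $a_j$, the cross term is empty for $uv$ and is exactly $\{j\}$ for $vu$. The cyclic shift you use is in fact the same one the paper uses: in case 3 the initial column-reading segment $s$ is a reading of the letters $\le i$, and in case 4 the initial row-reading segment $s$ is a reading of the letters $>i$, so both papers cut $w$ at the same value; what differs is the organisation. Your hypercube framing buys two things the paper's inductive bookkeeping does not state explicitly: the path length is exactly $\lvert D(T)\triangle D(U)\rvert$ (not just $\le n-1$), and the fact that each $\evrel$-class is a single connected component falls out directly rather than being inherited from $\plac$.
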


(While reading this proof, the reader may wish to look ahead to \fullref{Example}{eg:hypoplacticupperbound}, which illustrates
the strategy.)

\begin{proof}
  Let $T$ and $U$ be elements of the same connected component of $K(\hypo_n)$. Then $T \sim_{\evlit} U$. Let
  $a_1 < a_2 < \ldots < a_k$ be the symbols in $\aA_n$ that appear in $T$ and $U$. Since the defining relations in
  $\drel{R}_\hypo$ depend only on the relative order of symbols, it is clear that there is an isomorphism from
  $\gen{a_1,\ldots,a_k}$ to $\hypo_k$ extending $a_i \mapsto i$. Since $k \leq n$, it suffices to prove that the
  distance between $T$ and $U$ is at most $n-1$ when $T$ and $U$ contain every symbol in $\aA_n$.

  The aim is to construct a path in $K(\hypo_n)$ from $T$ to $U$ of length at most $n-1$. For simplicity, the aim is to
  find a sequence $T_0,T_1,\ldots,T_{n-1}$ such that $T_0 = T$ and $T_{n-1} = U$, and $T_{i} \cyc T_{i+1}$ for
  $i = 0,\ldots,n-2$. The construction is inductive, starting from $T_0 = T$ and building each $T_i$ so that the part of
  $T_i$ that contains only symbols from $\set{1,\ldots,i+1}$ is equal to the corresponding part of $U$.

  Set $T_0 = T$. Notice that since $T \sim_{\evlit} U$, both $T$ and $U$ contain the same number of symbols $1$, which
  must all lie at the left of the top rows of the two tableaux. Thus for $i = 0$, it holds that the part of $T_i$ that
  contains only symbols from $\set{1,\ldots,i+1}$ is equal to the corresponding part of $U$.

  Now suppose that for some $i$ with $1 \leq i < n$, it holds that the part of $T_{i-1}$ that contains only symbols $\set{1,\ldots,i}$
  is equal to the corresponding part of $U$.

  Consider the positions of symbols $i$ and $i+1$ in a quasi-ribbon tableau. One of two cases holds:
  \begin{itemize}
  \item[(S)] All symbols $i$ and $i+1$ are on the same row, with the rightmost symbol $i$ immediately to the left of the leftmost symbol $i+1$:
    \[
    \tableau{
      \null \& \null \\
      \& \null \& i \& |[font={\scriptsize}]| i{+}1 \& \null \& \null \\
      \& \& \& \& \& \null \\
      }.
    \]
  \item[(D)] The symbols $i$ and the symbols $i+1$ are on different rows, with the rightmost symbol $i$ immediately above the leftmost symbol $i+1$:
    \[
    \tableau{
      \null \& \null \\
      \& \null \& \rlap{\;\;$i$}\phantom{\text{\scriptsize{$i{+}1$}}} \\
      \& \& |[font={\scriptsize}]| i{+}1 \& \null \& \null \\
      \& \& \& \& \null \\
      }.
    \]
  \end{itemize}

  In each of the quasi-ribbon tableau $T_{i-1}$ and $U$, the symbols $i$ and $i+1$ may be in the same row or in
  different rows. There are thus four possible combinations of cases:
  \begin{enumerate}

  \item Suppose case~(S) holds in both $T_{i-1}$ and $U$. Set $T_i = T_{i-1}$; thus $T_{i-1} \cyc T_{i}$ by the
    reflexivity of $\cyc$. Since $T_i$ and $U$ contain the same number of symbols $i+1$, all of which must lie in the
    same row, the part of $T_i$ that contains only symbols $\set{1,\ldots,i+1}$ is equal to the corresponding part of
    $U$.

  \item Suppose case~(D) holds in both $T_{i-1}$ and $U$. Set $T_i = T_{i-1}$. By the same reasoning as above,
    $T_{i-1} \cyc T_{i}$ and the part of $T_i$ that contains only symbols $\set{1,\ldots,i+1}$ is equal to the
    corresponding part of $U$.

  \item Suppose case~(S) holds in $T_{i-1}$ but case~(D) holds in $U$. Let $\colreading{T_{i-1}} = st$, where $s$ is the
    column reading of $T_{i-1}$ up to and including the whole of the column containing the rightmost symbol $i$, and $t$ is the
    column reading of $T_{i-1}$ from (the whole of) the column containing the leftmost symbol $i+1$. Let $T_i = \phypo{ts}$; then
    $T_{i-1} \cyc T_i$.

    Notice that $s$ contains all the symbols $\set{1,\ldots,i}$ in the word $st$. Applying
    \fullref{Lemma}{lem:inversions} twice, one therefore sees that for any $j < i$, the symbols $j$ and $j+1$ are on
    different rows of $T_{i-1}$ if and only if $s$ contains a $(j+1,j)$-inversion, which is true if and only if $j$ and
    $j+1$ are on different rows of $T_i$. Thus the parts of $T_{i-1}$ and $T_i$ (and thus, by the induction hypothesis,
    $U$) that contain only symbols from $\set{1,\ldots,i}$ are equal. Furthermore, $ts$ also contains an $(i+1,i)$
    inversion, since $t$ contains at least one symbol $i+1$ and $s$ contains at least one symbol $i$, and so by
    \fullref{Lemma}{lem:inversions} the symbols $i$ and $i+1$ are on different rows of $T_i$. Hence the parts of $T_i$
    and $U$ that contain symbols from $\set{1,\ldots,i+1}$ are equal.

  \item Suppose case~(D) holds in $T_{i-1}$ but case~(S) holds in $U$. Let $\rowreading{T_{i-1}} = st$, where $s$ is the
    row reading of $T_{i-1}$ up to and including the whole of the row containing the symbols $i+1$, and $t$ is the row
    reading of $T_{i-1}$ from (the whole of) the row containing the symbols $i$. Let $T_i = \phypo{ts}$;
    then $T_{i-1} \cyc T_i$.

    Notice that $t$ contains all the symbols $\set{1,\ldots,i}$ in the word $st$. Applying
    \fullref{Lemma}{lem:inversions} twice, one therefore sees that for any $j < i$, the symbols $j$ and $j+1$ are on
    different rows of $T_{i-1}$ if and only if $t$ contains a $(j+1,j)$-inversion, which is true if and only if $j$ and
    $j+1$ are on different rows of $T_i$. Thus the parts of $T_{i-1}$ and $T_i$ (and thus, by the induction hypothesis,
    $U$) that contain only symbols from $\set{1,\ldots,i}$ are equal. Furthermore, $ts$ does not contains an $(i+1,i)$
    inversion, since $t$ contains all the symbols $i$ and $s$ contains all the symbols $i+1$, and so by
    \fullref{Lemma}{lem:inversions} the symbols $i$ and $i+1$ are on the same row of $T_i$. Hence the parts of $T_i$
    and $U$ that contain symbols from $\set{1,\ldots,i+1}$ are equal.

  \end{enumerate}
  By induction, the parts of the quasi-ribbon tableaux $T_{n-1}$ and $U$ that contain symbols from $\set{1,\ldots,n}$
  are equal. That is, $T_{n-1} = U$. Therefore $T$ and $U$ are a distance at most $n-1$ apart.
\end{proof}

Combining \fullref{Lemmata}{lem:hypoplacticupperbound} and \ref{lem:hypoplacticlowerbound} gives the following result:

\begin{theorem}
  \label{thm:hypoplacticbounds}
  \begin{enumerate}
  \item Connected components of $K(\hypo)$ coincide with $\evrel$-classes of $\hypo$.
  \item The maximum diameter of a connected component of $K(\hypo_n)$ is $n-1$.
  \end{enumerate}
\end{theorem}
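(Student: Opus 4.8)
The plan is to assemble the theorem from results that are already in place, since the two substantive lemmas immediately preceding it do essentially all the work. For part~(1), I would reproduce the quotient argument sketched in the discussion before \fullref{Proposition}{prop:cochseqhypoplactic}. The containment $\drel{R}_\plac \subseteq \drel{R}_\hypo$ means that the identity map on $\aA^*$ induces a surjective homomorphism $\phi\colon \plac \to \hypo$. This $\phi$ preserves evaluation (any word over $\aA^*$ has the same evaluation whether read as an element of $\plac$ or of $\hypo$), and it carries cyclic shifts to cyclic shifts: if $s = xy$ and $t = yx$ in $\plac$, then $\phi(s) = \phi(x)\phi(y)$ and $\phi(t) = \phi(y)\phi(x)$ in $\hypo$. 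So given $T \evrel U$ in $\hypo$, I would choose preimages $\widetilde T, \widetilde U \in \plac$ with the same (common) evaluation; by \fullref{Proposition}{prop:placticbounds} they lie in a single connected component of $K(\plac)$, hence $\widetilde T \cyc^* \widetilde U$, and applying $\phi$ gives $T \cyc^* U$ in $\hypo$. This shows ${\evrel} \subseteq {\cyc^*}$ in $\hypo$; the reverse inclusion is \fullref{Lemma}{lem:basicprops}. Hence ${\cyc^*} = {\evrel}$, i.e.\ the connected components of $K(\hypo)$ are exactly the $\evrel$-classes.

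For part~(2), I would simply combine the two lemmas just proved. \fullref{Lemma}{lem:hypoplacticupperbound} gives that every connected component of $K(\hypo_n)$ has diameter at most $n-1$, while \fullref{Lemma}{lem:hypoplacticlowerbound} exhibits a connected component — the one containing the one-row tableau $\phypo{12\cdots n}$ and the one-column tableau $\phypo{n(n-1)\cdots 21}$ — of diameter at least $n-1$. Together these pin the maximum diameter down to exactly $n-1$.

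There is no genuine obstacle at the level of the theorem: the content resides entirely in the two lemmas (cocharge sequences for the lower bound in \fullref{Lemma}{lem:hypoplacticlowerbound}, and the inductive case analysis on the relative row positions of the symbols $i$ and $i+1$ for the upper bound in \fullref{Lemma}{lem:hypoplacticupperbound}), both of which have already been carried out. The one place I would be slightly careful is part~(1): one must make sure the implication genuinely runs in the direction ``$\evrel$ in $\hypo$ $\Rightarrow$ ${\cyc^*}$ in $\hypo$'' by lifting to $\plac$, rather than attempting to push ${\cyc^*}$ forward along $\phi$, which would only recover the trivial inclusion. Once that is noted, the proof of the theorem is a two-line synthesis.
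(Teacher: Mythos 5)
Your proposal is correct and matches the paper's approach: part~(1) is obtained exactly by the quotient argument lifting to $\plac$ (which the paper sketches in the discussion following the presentation of $\hypo$), and part~(2) is obtained by combining \fullref{Lemma}{lem:hypoplacticlowerbound} and \fullref{Lemma}{lem:hypoplacticupperbound}, which is precisely what the paper does. Your explicit remark that one must lift along $\phi$ rather than push $\cyc^*$ forward is a correct and useful clarification of the direction of the argument, but it does not diverge from what the paper does.
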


\begin{figure}[tp]
\centering
\begin{tikzpicture}[x=15mm,y=20mm]
  %
  % \begin{scope}[vertex/.style={draw=gray,rectangle,inner sep=1mm}]
  \begin{scope}[vertex/.style={draw=gray,rectangle with rounded corners,inner sep=-.5mm}]
    \node[vertex] (12345) at (0,3) {\tikz[smalltableau]\matrix{1 \& 2 \& 3 \& 4 \& 4 \& 5\\};};
    \node[vertex] (21345) at (-2,2) {\tikz[smalltableau]\matrix{1 \\ 2 \& 3 \& 4 \& 4 \& 5 \\};};
    \node[vertex] (13245) at (-0.75,2) {\tikz[smalltableau]\matrix{1 \& 2 \\\& 3 \& 4 \& 4 \& 5\\};};
    \node[vertex] (12435) at (0.75,2) {\tikz[smalltableau]\matrix{1 \& 2 \& 3\\\& \& 4 \& 4 \& 5 \\};};
    \node[vertex] (12354) at (2,2) {\tikz[smalltableau]\matrix{1 \& 2 \& 3 \& 4\\ \& \& \& 5 \\};};
    \node[vertex] (21354) at (0,1) {\tikz[smalltableau]\matrix{1 \\ 2 \& 3 \& 4 \& 4 \\ \& \& \& 5 \\};};
    \node[vertex] (32145) at (-3,0) {\tikz[smalltableau]\matrix{1 \\ 2 \\ 3 \& 4 \& 4 \& 5\\};};
    \node[vertex] (13254) at (1.5,0) {\tikz[smalltableau]\matrix{1 \& 2 \\ \& 3 \& 4 \& 4 \\ \& \& \& 5 \\};};
    \node[vertex] (21435) at (-1.5,0) {\tikz[smalltableau]\matrix{1 \\ 2 \& 3 \\ \& 4 \& 4 \& 5\\};};
    \node[vertex] (12543) at (3,0) {\tikz[smalltableau]\matrix{1 \& 2 \& 3\\ \& \& 4 \& 4 \\ \& \& \& 5 \\};};
    \node[vertex] (14325) at (0,-1) {\tikz[smalltableau]\matrix{1 \& 2 \\ \& 3 \\ \& 4 \& 4 \& 5\\};};
    \node[vertex] (43215) at (-2,-2) {\tikz[smalltableau]\matrix{1 \\ 2 \\ 3 \\ 4 \& 4 \& 5\\};};
    \node[vertex] (32154) at (-0.75,-2) {\tikz[smalltableau]\matrix{1 \\ 2 \\ 3 \& 4 \& 4 \\ \& \& 5 \\};};
    \node[vertex] (21543) at (0.75,-2) {\tikz[smalltableau]\matrix{1 \\ 2 \& 3 \\ \& 4 \& 4 \\ \& \& 5 \\};};
    \node[vertex] (15432) at (2,-2) {\tikz[smalltableau]\matrix{1 \& 2\\ \& 3 \\ \& 4 \& 4 \\ \& \& 5 \\};};
    \node[vertex] (54321) at (0,-3) {\tikz[smalltableau]\matrix{1 \\ 2 \\ 3 \\ 4 \& 4 \\ \& 5\\};};
  \end{scope}
  \begin{scope}
    \draw (21543) edge (15432);
    \draw (21543) edge (54321);
    \draw (15432) edge (54321);
    \draw (13254) edge (21543);
    \draw (13254) edge[bend left=15] (32154);
    \draw (12435) edge (12354);
    \draw (12435) edge (13254);
    \draw (12435) edge (21354);
    \draw (13254) edge (12543);
    %\draw (13254) edge (21543);
    \draw (13245) edge (21435);
    \draw (13245) edge (21354);
    \draw (13245) edge (32145);
    \draw (13245) edge (12435);
    \draw (13245) edge (14325);
    \draw (13245) edge[bend left=15] (13254);
    %\draw (13245) edge (21435);
    \draw (32145) edge (21435);
    \draw (32145) edge (14325);
    \draw (32145) edge (21354);
    \draw (32145) edge (43215);
    %\draw (32145) edge (14325);
    \draw (32145) edge[bend left=20] (13254);
    \draw (32145) edge (32154);
    %\draw (12435) edge (12354);
    %\draw (12435) edge (13254);
    \draw (12435) edge (12543);
    %\draw (12435) edge (21354);
    \draw (12435) edge (14325);
    %\draw (12435) edge (13254);
    %\draw (12435) edge (21435);
    \draw (21435) edge (13254);
    \draw (21435) edge[bend right=20] (12543);
    \draw (21435) edge[bend right=15] (21543);
    \draw (21435) edge (14325);
    %\draw (21435) edge (13254);
    \draw (21435) edge (32154);
    %\draw (21435) edge (13254);
    \draw (21435) edge (21354);
    \draw (21435) edge[bend left=15] (12435);
    %\draw (21435) edge (13254);
    %\draw (21435) edge (32154);
    \draw (21543) edge (15432);
    \draw (21345) edge (13245);
    \draw (21345) edge[bend left=30] (12354);
    \draw (21345) edge (21435);
    \draw (21345) edge (21354);
    \draw (21345) edge (32145);
    \draw (21345) edge[bend right=30] (12435);
    %\draw (21345) edge (13245);
    \draw (14325) edge (12543);
    \draw (14325) edge (21543);
    \draw (14325) edge (13254);
    \draw (14325) edge (32154);
    \draw (13254) edge (12543);
    %\draw (13254) edge (14325);
    %\draw (13254) edge (21543);
    \draw (13254) edge (15432);
    %\draw (13254) edge (14325);
    %\draw (13254) edge (12435);
    %\draw (13254) edge (21543);
    %\draw (13254) edge (21435);
    %\draw (21354) edge (13245);
    %\draw (21354) edge (13254);
    %\draw (21354) edge (21435);
    \draw (21354) edge (12543);
    \draw (21354) edge (21543);
    %\draw (21354) edge (32145);
    %\draw (21354) edge (12435);
    \draw (21354) edge (13254);
    %\draw (21354) edge (21435);
    \draw (21354) edge (32154);
    %\draw (14325) edge (12543);
    %\draw (14325) edge (21543);
    \draw (14325) edge (15432);
    \draw (14325) edge (43215);
    %\draw (14325) edge (21543);
    %\draw (14325) edge (32154);
    \draw (32154) edge (21543);
    \draw (12345) edge (12354);
    \draw (12345) edge (21345);
    \draw (12345) edge (12435);
    \draw (12345) edge (13245);
    \draw (12354) edge (13254);
    \draw (12354) edge (12543);
    \draw (12354) edge (21354);
    %\draw (32154) edge (13254);
    %\draw (32154) edge (14325);
    \draw (32154) edge (43215);
    \draw (32154) edge[bend left=30] (15432);
    \draw (32154) edge (54321);
    \draw (32154) edge (21543);
    %\draw (13245) edge (13254);
    \draw (13245) edge[bend right=30] (12354);
    %\draw (13245) edge (21435);
    %\draw (13245) edge (21354);
    %\draw (13245) edge (12435);
    %\draw (21354) edge (13254);
    %\draw (21354) edge (12543);
    %\draw (21435) edge (13245);
    %\draw (21435) edge (13254);
    %\draw (21435) edge (14325);
    \draw (21435) edge (32145);
    \draw (21435) edge (43215);
    %\draw (21435) edge (14325);
    %\draw (21435) edge (21543);
    %\draw (21435) edge (32154);
    %\draw (21435) edge (13254);
    %\draw (21435) edge (32154);
    \draw (43215) edge[bend right=30] (15432);
    \draw (43215) edge (54321);
    \draw (43215) edge[bend left=30] (21543);
    \draw (43215) edge (32154);
    \draw (12543) edge (21543);
    \draw (12543) edge (15432);
  \end{scope}
\end{tikzpicture}
\caption{The connected component $K(\hypo_5,\phypo{123445})$. Note that its diameter is $4$.}
\label{fig:khypo12345}
\end{figure}
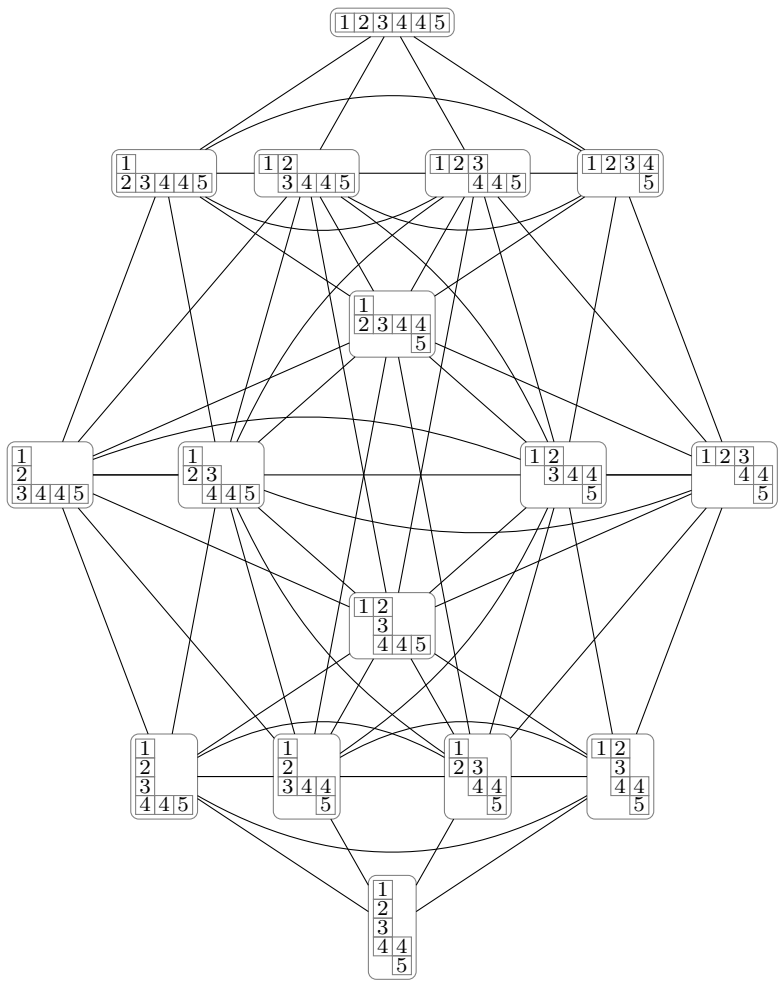

The following example illustrates how the construction in the proof of \fullref{Lemma}{lem:hypoplacticupperbound}:

\begin{example}
\label{eg:hypoplacticupperbound}
Consider the following elements of $\hypo_5$:
\[
T = \tableau
{
 1 \&   \\
 2 \& 3 \\
   \& 4 \& 4 \& 5\\
}\quad\text{ and }
U = \tableau
{
 1 \& 2 \\
   \& 3 \& 4 \& 4 \\
   \&   \&   \& 5 \\
}.
\]
Then $T$ and $U$ have the same evaluation, and so lie in the same connected component of $K(\hypo)$ by
\fullref[(1)]{Theorem}{thm:hypoplacticbounds}, and the distance between them is at most $4$ by
\fullref[(2)]{Theorem}{thm:hypoplacticbounds}. The connected component $K(\hypo_5,T)$ is shown in
\fullref{Figure}{fig:khypo12345}, and the construction in the proof of \fullref{Lemma}{lem:hypoplacticupperbound} yields
the following path between $T$ and $U$:
\begin{align*}
T = T_0 ={}& \tikz[tableau]\matrix
{
 1 \&   \\
 2 \& 3 \\
   \& 4 \& 4 \& 5\\
};\\
\hypocong{}&
\tikz[tableau]\matrix
{
 2 \& 3 \\
   \& 4 \& 4 \& 5\\
};
\circ
\tikz[tableau]\matrix
{
 1 \\
};
\cyc
\tikz[tableau]\matrix
{
 1 \\
};
\circ
\tikz[tableau]\matrix
{
 2 \& 3 \\
   \& 4 \& 4 \& 5\\
};
&&\text{(case 4)}\\
\hypocong T_1 ={}&
\tikz[tableau]\matrix
{
 1 \& 2 \& 3 \\
   \&   \& 4 \& 4 \& 5\\
};\\
\hypocong{}&
\tikz[tableau]\matrix
{
 1 \& 2 \\
};
\circ
\tikz[tableau]\matrix
{
3 \\
4 \& 4 \& 5\\
};
\cyc
\tikz[tableau]\matrix
{
3 \\
4 \& 4 \& 5\\
};
\circ
\tikz[tableau]\matrix
{
 1 \& 2 \\
}; &&\text{(case 3)}\displaybreak[0]\\
\hypocong T_2 ={}&
\tikz[tableau]\matrix
{
 1 \& 2 \\
   \& 3 \\
   \& 4 \& 4 \& 5 \\
};\\
\hypocong{}&
\tikz[tableau]\matrix
{
4 \& 4 \& 5 \\
};
\circ
\tikz[tableau]\matrix
{
 1 \& 2 \\
   \& 3 \\
};
\cyc
\tikz[tableau]\matrix
{
 1 \& 2 \\
   \& 3 \\
};
\circ
\tikz[tableau]\matrix
{
4 \& 4 \& 5 \\
};&&\text{(case 4)}\displaybreak[0]\\
\hypocong T_3 ={}&
\tikz[tableau]\matrix
{
 1 \& 2 \\
   \& 3 \& 4 \& 4 \& 5 \\
};\\
\hypocong{}&
\tikz[tableau]\matrix
{
 1 \& 2 \\
   \& 3 \& 4 \& 4 \\
};
\circ
\tikz[tableau]\matrix
{
5 \\
};
\cyc
\tikz[tableau]\matrix
{
5 \\
};
\circ
\tikz[tableau]\matrix
{
 1 \& 2 \\
   \& 3 \& 4 \& 4 \\
};&&\text{(case 3)}\\
\hypocong T_4 ={}&
\tikz[tableau]\matrix
{
 1 \& 2 \\
   \& 3 \& 4 \& 4 \\
   \&   \&   \& 5 \\
}; = U.
\end{align*}
This gives a path of length $4$ between $T$ and $U$ in $K(\hypo_5)$.

Notice, however, that $T$ and $U$ are actually a distance $1$ apart in $K(\hypo_5)$:
\begin{align*}
T = \tableau
{
 1 \&   \\
 2 \& 3 \\
   \& 4 \& 4 \& 5\\
  }
\hypocong{}& \tableau{2 \& 4 \& 4\\} \circ \tableau{1\\3\\5\\} \cyc \tableau{1\\3\\5\\} \circ \tableau{2 \& 4 \& 4\\} \\
\hypocong{}& \tableau
{
 1 \& 2 \\
   \& 3 \& 4 \& 4 \\
   \&   \&   \& 5 \\
} = U.
\end{align*}
\end{example}

\section{Sylvester monoid}
\label{sec:sylvester}

\subsection{Preliminaries}

Only the necessary facts about the sylvester monoid are recalled here; see \cite{hivert_sylvester} for further
background.

A \defterm{\textparens{right strict} binary search tree} is a labelled rooted binary tree where the label of each node is greater
than or equal to the label of every node in its left subtree, and strictly less than every node in its right subtree. An
example of a binary search tree is:
\begin{equation}
\label{eq:bsteg}
\begin{tikzpicture}[tinybst,baseline=-10mm]
  \node (root) {$4$}
    child[sibling distance=16mm] { node (0) {$2$}
      child { node (00) {$1$}
        child { node (000) {$1$} }
        child[missing]
      }
      child { node (01) {$4$} }
    }
    child[sibling distance=16mm] { node (1) {$5$}
      child { node (10) {$5$}
        child { node (100) {$5$} }
        child[missing]
      }
      child { node (11) {$6$}
        child[missing]
        child { node (111) {$7$} }
      }
    };
\end{tikzpicture}.
\end{equation}

\begin{figure}[ht]
  \centering
  \begin{tikzpicture}[x=15mm,y=15mm]
    \useasboundingbox (-.5,-3.5) -- (3.5,4.5);
    %
    % \begin{scope}[vertex/.style={draw=gray,rectangle,inner sep=1mm}]
    \begin{scope}[vertex/.style={draw=gray,rectangle with rounded corners,inner sep=1mm}]
      \node[vertex] (1234) at (1,4) {\tikz[microbst]\draw node {$4$}  child { node {$3$}  child { node {$2$}  child { node {$1$}  } child[missing] } child[missing] } child[missing];};
      \node[vertex] (2341) at (2,4) {\tikz[microbst]\draw node {$1$} child[missing] child { node {$4$}  child { node {$3$}  child { node {$2$}  } child[missing] } child[missing] };};
      \node[vertex] (2314) at (3,3) {\tikz[microbst]\draw node {$4$}  child { node {$1$} child[missing] child { node {$3$}  child { node {$2$}  } child[missing] }  } child[missing];};
      \node[vertex] (2134) at (0,2) {\tikz[microbst]\draw node {$4$}  child { node {$3$}  child { node {$1$} child[missing] child { node {$2$}  }  } child[missing] } child[missing];};
      \node[vertex] (1342) at (1,2) {\tikz[microbst]\draw node {$2$}  child { node {$1$}  }  child { node {$4$}  child { node {$3$}  } child[missing] };};
      \node[vertex] (1243) at (2,2) {\tikz[microbst]\draw node {$3$}  child { node {$2$}  child { node {$1$}  } child[missing] }  child { node {$4$}  };};
      \node[vertex] (2431) at (3,1) {\tikz[microbst]\draw node {$1$} child[missing] child { node {$3$}  child { node {$2$}  }  child { node {$4$}  }  };};
      \node[vertex] (1324) at (3,0) {\tikz[microbst]\draw node {$4$}  child { node {$2$}  child { node {$1$}  }  child { node {$3$}  }  } child[missing];};
      \node[vertex] (3421) at (0,-1) {\tikz[microbst]\draw node {$1$} child[missing] child { node {$2$} child[missing] child { node {$4$}  child { node {$3$}  } child[missing] }  };};
      \node[vertex] (2143) at (1,-1) {\tikz[microbst]\draw node {$3$}  child { node {$1$} child[missing] child { node {$2$}  }  }  child { node {$4$}  };};
      \node[vertex] (1432) at (2,-1) {\tikz[microbst]\draw node {$2$}  child { node {$1$}  }  child { node {$3$} child[missing] child { node {$4$}  }  };};
      \node[vertex] (3241) at (3,-2) {\tikz[microbst]\draw node {$1$} child[missing] child { node {$4$}  child { node {$2$} child[missing] child { node {$3$}  }  } child[missing] };};
      \node[vertex] (3214) at (1,-3) {\tikz[microbst]\draw node {$4$}  child { node {$1$} child[missing] child { node {$2$} child[missing] child { node {$3$}  }  }  } child[missing];};
      \node[vertex] (4321) at (2,-3) {\tikz[microbst]\draw node {$1$} child[missing] child { node {$2$} child[missing] child { node {$3$} child[missing] child { node {$4$}  }  }  };};
    \end{scope}
    \begin{scope}
      \draw (1234) edge (1243);
      \draw (1234) edge (1342);
      \draw (1234) edge (2341);
      % \draw[color=blue,thick] (current bounding box.south west) -- (current bounding box.north east);
      \draw (1243) edge[bend right=20] (1324);
      % \draw[color=green,thick] (current bounding box.south west) -- (current bounding box.north east);
      \draw (1243) edge (1342);
      \draw (1243) edge (1432);
      \draw (1243) edge (2314);
      \draw (1243) edge (2341);
      \draw (1243) edge (2431);
      \draw (1324) edge (1432);
      \draw (1324) edge[bend right=20] (2143);
      \draw (1324) edge (2431);
      \draw (1324) edge (3241);
      \draw (1342) edge (2134);
      \draw (1342) edge (2143);
      \draw (1342) edge[bend left=20] (2314);
      \draw (1342) edge (2341);
      \draw (1342) edge[bend right=20] (2431);
      \draw (1342) edge (3421);
      \draw (1432) edge (2143);
      \draw (1432) edge[bend left=20] (2431);
      \draw (1432) edge (3214);
      \draw (1432) edge (3241);
      \draw (1432) edge (4321);
      \draw (2134) edge (2143);
      \draw (2134) edge (3421);
      \draw (2143) edge (3214);
      \draw (2143) edge[bend right=20] (3241);
      \draw (2143) edge (3421);
      \draw (2143) edge (4321);
      \draw (2314) edge (2431);
      \draw (3214) edge (4321);
    \end{scope}
  \end{tikzpicture}
  \caption{The connected component $K(\sylv_4,\psylv{1234})$; note that its diameter is $3$.}
\end{figure}
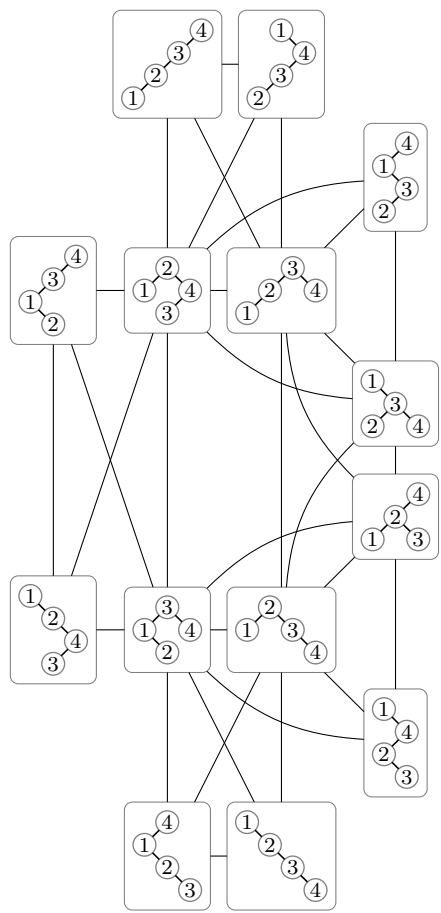

% The \defterm{left-to-right postfix traversal} of a binary search tree $\tree{T}$ is sequence that `visits' every node in
% the tree as follows: it recursively perform the left-to-right postfix traversal of the left subtree of the root of
% $\tree{T}$, then recursively perform the left-to-right postfix traversal of the right subtree of the root of $\tree{T}$,
% then visits the root of $\tree{T}$. The \defterm{left-to-right postfix reading} $\lrp(\tree{T})$ of a binary search tree
% $\tree{T}$ is defined to be the word obtained by listing the labels of the nodes visited during the left-to-right
% postfix traversal. A left-to-right postfix reading is clearly a reading as defined in the previous paragraph. For
% example, the left-to-right postfix reading of \eqref{eq:bsteg} is $113425567654$.

The \defterm{left-to-right postfix traversal}, or simply the \defterm{postfix traversal}, of a rooted binary tree $T$ is
the sequence that `visits' every node in the tree as follows: it recursively perform the postfix traversal
of the left subtree of the root of $T$, then recursively perform the postfix traversal of the right
subtree of the root of $T$, then visits the root of $T$. Thus the postfix traversal of any binary tree
with the same shape as the \eqref{eq:bsteg} visits nodes as follows:
\begin{equation}
\begin{tikzpicture}[tinybst,baseline=-7.5mm]
  \node (root) {}
    child[sibling distance=16mm] { node (0) {}
      child { node (00) {}
        child { node (000) {} }
        child[missing]
      }
      child { node (01) {} }
    }
    child[sibling distance=16mm] { node (1) {}
      child { node (10) {}
        child { node (100) {} }
        child[missing]
      }
      child { node (11) {}
        child[missing]
        child { node (111) {} }
      }
    };
  \begin{scope}[very thick,line cap=round]
    \draw (000.center) edge[bend left=30] (00.center);
    \draw (00.center) edge[bend right=30] (01.center);
    \draw (01.center) edge[bend left=30] (0.center);
    \draw (0.center) edge[bend left=40] (100.center);
    \draw (100.center) edge[bend right=30] (10.center);
    \draw (10.center) edge[bend left=10] (111.center);
    \draw (111.center) edge[bend right=30] (11.center);
    \draw (11.center) edge[bend right=30] (1.center);
    \draw (1.center) edge[bend right=30] (root.center);
  \end{scope}
  \draw[very thick] ($ (000.center) + (-4mm,0) $) -- (000.center);
  \draw[very thick,->] (root.center) -- ($ (root.center) + (0,4mm) $);
  \foreach\x in {000,00,01,0,100,10,111,11,1,root} {
    \draw[draw=black,fill=black] (\x.center) circle (.66mm);
  }
\end{tikzpicture}.
\end{equation}

The insertion algorithm for binary search trees adds the new symbol as a leaf node in the unique place that maintains
the property of being a binary search tree:

\begin{algorithm}[Right strict leaf insertion]
\label{alg:sylvinsertone}
~\par\nobreak
\textit{Input:} A binary search tree $T$ and a symbol $a \in \aA$.

\textit{Output:} A binary search tree $a \rightarrow T$.

\textit{Method:} If $T$ is empty, create a node and label it $a$. If $T$ is non-empty, examine the label $x$ of the root
node; if $a \leq x$, recursively insert $a$ into the left subtree of the root node; otherwise recursively insert $a$
into the right subtree of the root note. Output the resulting tree.
\end{algorithm}

Thus one can compute, for any word $u \in \aA^*$, a binary search tree $\psylv{u}$ by starting with an
empty binary search tree and successively inserting the symbols of $u$, proceeding right-to-left through the word. For
example $\psylv{5451761524}$ is \eqref{eq:bsteg}.

A \defterm{reading} of a binary search tree $\tree{T}$ is a word $u$ such that $\psylv{u} = T$. It is easy to see that a
reading of $T$ is a word formed from the symbols that appear in the nodes of $T$, arranged so that every symbol from a
parent node appears to the right of those from its children. For example, $1571456254$ is a reading of \eqref{eq:bsteg}.

Define the relation
$\sylvcong$ by
\[
u \sylvcong v \iff \psylv{u} = \psylv{v}.
\]
for all $u,v \in \aA^*$. The relation $\sylvcong$ is a congruence, and the \defterm{sylvester monoid}, denoted $\sylv$,
is the factor monoid $\aA^*\!/{\sylvcong}$; the \defterm{sylvester monoid of rank $n$}, denoted $\sylv_n$, is the factor
monoid $\aA_n^*/{\sylvcong}$ (with the natural restriction of $\sylvcong$). Each element $[u]_{\sylvcong}$ (where
$u \in \aA^*$) can be identified with the binary search tree $\psylv{u}$. The words in $[u]_{\sylvcong}$ are precisely
the readings of $\psylv{u}$.

The monoid $\sylv$ is presented by $\pres{\aA}{\drel{R}_\sylv}$, where
\[
\drel{R}_\sylv = \gset[\big]{(cavb,acvb)}{a \leq b < c,\; v \in \aA^*};
\]
the monoid $\sylv_n$ is presented by $\pres{\aA_n}{\drel{R}_\sylv}$, where the set of defining relations
$\drel{R}_\sylv$ is naturally restricted to $\aA_n^*\times \aA_n^*$. Notice that $\sylv$ and $\sylv_n$ are
multihomogeneous.

The present authors proved that the relations ${\evrel}$ and ${\cyc^*}$ coincide in $\sylv$
\cite[Theorem~3.4]{cm_conjugacy}. In the terms of this paper, this proves that connected components of $K(\sylv)$ are
$\evrel$-classes. The aim in this section is to prove that the maximum diameter of a connected component of $K(\sylv_n)$
is either $n-1$ or $n$. \fullref{Subsection}{subsec:sylvlowerbound} shows that $K(\sylv_n)$ has a connected component
with diameter at least $n-1$. \fullref{Subsections}{subsec:sylvupperbound1} to \ref{subsec:sylvupperbound3} show that
connected components of $K(\sylv_n)$ have diameter at most $n$.

\subsection{Lower bound for diameters}
\label{subsec:sylvlowerbound}

As in the cases of the plactic and hypoplactic monoids, cocharge sequences are the key to proving that there is a
connected component of $K(\sylv_n)$ with diameter at least $n-1$. Reasoning similar to the proofs of
\fullref{Propositions}{prop:cochseqplactic} and~\ref{prop:cochseqhypoplactic} establishes the following result:

\begin{proposition}
  \label{prop:cochseqsylvester}
  Let $u,v \in \aA^*$ be standard words such that $u \sylvcong v$. Then $\cochseq(u) = \cochseq(v)$.
\end{proposition}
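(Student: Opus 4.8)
The plan is to follow the template of \fullref{Propositions}{prop:cochseqplactic} and~\ref{prop:cochseqhypoplactic}. It suffices to treat the case in which $u$ and $v$ differ by a single application of a defining relation of $\drel{R}_\sylv$, so write $u = pcavbq$ and $v = pacvbq$ with $p,v,q \in \aA^*$ and $a \leq b < c$. Since $u$ and $v$ are standard words, every symbol occurs exactly once; in particular $a < b < c$, and no symbol of $v$ equals $a$, $b$, or $c$. The only effect of this relation is to transpose the adjacent symbols $a$ and $c$: the left-to-right order of every pair of symbols other than $\{a,c\}$ is the same in $u$ as in $v$.

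Next I would make precise which positional information the cocharge-sequence labelling actually uses. Since a standard word is written anticlockwise around the circle and the labels are read off by moving clockwise, the clockwise step from the symbol $i$ reaches $i+1$ before passing $*$ precisely when $i+1$ occurs to the left of $i$ in the word; so the label of $i+1$ is one greater than that of $i$ in that case and equal to it otherwise. Hence $\cochseq(u)$ is completely determined by the left-to-right order, in $u$, of each pair of consecutive values $i,i+1$. (This is the distilled form of the picture-chasing in the proofs of \fullref{Propositions}{prop:cochseqplactic} and~\ref{prop:cochseqhypoplactic}, where the two transposed symbols are always separated by a strictly intermediate value.)

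To conclude, it then suffices to note that $u$ and $v$ agree on the relative order of every pair of consecutive values: the only pair whose order the relation changes is $\{a,c\}$, and $a < b < c$ forces $c \geq a+2$, so $a$ and $c$ are never consecutive values. Therefore $\cochseq(u) = \cochseq(v)$. Since $\drel{R}_\sylv$ is a single family of relations, there are no further cases; the only point I would check carefully is that this labelling identity is genuinely unaffected at the steps whose source or target is $a$ or $c$ even in the degenerate situations $p = \emptyword$ or $q = \emptyword$, where $a$ or $c$ lies near $*$ on the circle. But $*$ sits between $q$ and $p$ around the circle while $a$ and $c$ lie strictly between $p$ and $v$, so transposing $a$ with $c$ never carries either symbol across $*$, and the conclusion holds.
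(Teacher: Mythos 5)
Your proof is correct and realizes the approach the paper gestures at (reduce to a single application of a relation in $\drel{R}_\sylv$; observe that swapping $a$ and $c$ does not affect the labelling), but you add a clean abstraction that the paper's plactic and hypoplactic arguments use only implicitly: the cocharge sequence is completely determined by, for each $i$, whether $i+1$ lies to the left or to the right of $i$ in the word, and transposing the two adjacent letters $a$ and $c$ changes the relative left-to-right order of the single pair $\{a,c\}$ only, which is never a consecutive pair since $a<b<c$ forces $c\geq a+2$. This packaging makes the step from ``one application of a relation'' to ``cocharge unchanged'' airtight with no picture-chasing or case analysis (and would equally streamline the plactic and hypoplactic proofs), whereas the paper's reasoning about when the labelling must pass $*$ is comparatively informal. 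Your closing paragraph worrying about $p=\emptyword$ or $q=\emptyword$ is unnecessary: the left-to-right formulation already subsumes those cases, since an adjacent transposition in the interior of the word cannot change any other pair's relative order regardless of where $*$ sits. One presentational point: you reuse $v$ both for the second word in the statement and for the middle factor in the relation $(cavb,acvb)$; the paper's definition of $\drel{R}_\sylv$ has the same clash, but in a proof about the pair $u,v$ you should rename the factor (say to $w$) so that a phrase like ``no symbol of $v$ equals $a$, $b$, or $c$'' is unambiguous.
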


For any standard binary tree $T$ in $\sylv$, define $\cochseq(T)$ to be $\cochseq(u)$ for any standard word
$u \in \aA^*$ such that $T = \psylv{u}$. By \fullref{Proposition}{prop:cochseqsylvester}, $\cochseq(T)$ is well-defined.

\begin{lemma}
  \label{lem:sylvesterlowerbound}
  There is a connected component in $K(\sylv_n)$ of diameter at least $n-1$.
\end{lemma}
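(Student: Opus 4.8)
The plan is to reuse, essentially verbatim, the strategy behind the lower bounds in \fullref{Proposition}{prop:placticbounds} and \fullref{Lemma}{lem:hypoplacticlowerbound}, now that \fullref{Proposition}{prop:cochseqsylvester} guarantees that $\cochseq$ is well-defined on standard binary search trees and that connected components of $K(\sylv_n)$ are $\evrel$-classes \cite[Theorem~3.4]{cm_conjugacy}. First I would fix the two witnesses: let $t = 12\cdots n$ and $u = n(n-1)\cdots 21$, and set $T = \psylv{t}$ and $U = \psylv{u}$. Both $t$ and $u$ are standard words with the same evaluation, so $T \evrel U$; hence $T$ and $U$ lie in the same connected component of $K(\sylv_n)$. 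The goal is then to show that any path from $T$ to $U$ in $K(\sylv_n)$ has length at least $n-1$.

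Next I would compute the relevant cocharge sequences. Running the circular labelling procedure of \fullref{Subsection}{subsec:cocharge} on $t$ and $u$ gives $\cochseq(t) = (0,0,\ldots,0)$ and $\cochseq(u) = (0,1,2,\ldots,n-1)$ --- exactly the computation already carried out in the plactic case --- so by \fullref{Proposition}{prop:cochseqsylvester} the sequences $\cochseq(T)$ and $\cochseq(U)$ differ by $n-1$ in their last coordinate. Now take any path $T = T_0 \cyc T_1 \cyc \cdots \cyc T_m = U$ in $K(\sylv_n)$. For each $i$ there are words $u_i, v_i \in \aA_n^*$ with $T_i = \psylv{u_i v_i}$ and $T_{i+1} = \psylv{v_i u_i}$; the word $u_i v_i$ is a reading of $T_i$, hence a standard word (as $T_i \evrel T$), and since the symbol $1$ occurs in it exactly once, at least one of $u_i, v_i$ avoids $1$. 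Applying part~2) or part~3) of \fullref{Lemma}{lem:cochseqcycles} accordingly shows that $\cochseq(T_i)$ and $\cochseq(T_{i+1})$ are obtained from one another by adding $1$ to, or subtracting $1$ from, certain coordinates; in particular each coordinate changes by at most $1$ at each step. Hence each coordinate of $\cochseq(T)$ and $\cochseq(U)$ differs by at most $m$, which forces $m \geq n-1$. Thus $T$ and $U$ are a distance at least $n-1$ apart, and the component $K(\sylv_n, T)$ has diameter at least $n-1$.

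I expect no real obstacle here beyond bookkeeping, since this is the third instance of the same template. The two points that require care are: (i) confirming that $\cochseq$ is invariant under $\sylvcong$ on standard words, which is \fullref{Proposition}{prop:cochseqsylvester} and whose proof parallels that of \fullref{Proposition}{prop:cochseqplactic} (one checks that applying a relation $(cavb,acvb)$ with $a\le b<c$ to a standard word leaves the circular labelling unchanged, because $a$ is labelled before $b$ and $b$ before $c$, so the passage past $*$ already happens between labelling $a$ and $c$); and (ii) verifying that the factorizations $u_iv_i$ produced by a cyclic shift are genuinely standard words eligible for \fullref{Lemma}{lem:cochseqcycles}, which holds because $T_i \evrel T \evrel t$ is standard. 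The evaluation of $\cochseq(t)$ and $\cochseq(u)$ needs no new work, being literally the computation already displayed for $\plac_n$.
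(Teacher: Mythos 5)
Your proposal is correct and follows exactly the same cocharge-sequence argument the paper uses (the paper simply says ``the same reasoning as in the proof of Proposition~\ref{prop:placticbounds} shows that $T$ and $U$ are not related by $\cyc^{\leq n-2}$,'' and you have spelled that reasoning out). No gaps; the two careful points you flag (well-definedness of $\cochseq$ on standard sylvester elements via Proposition~\ref{prop:cochseqsylvester}, and standardness of the words $u_iv_i$) are exactly what is needed.
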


\begin{proof}
  The strategy is the same as in the plactic and hypoplactic monoids: let $t = 12\cdots (n-1)n$ and
  $u = n(n-1)\cdots 21$, and let
\[
T = \psylv{t}=
\begin{tikzpicture}[tinybst,level distance=7mm,baseline=-10.5mm]
  \node {$n$}
  child { node {$n-1$}
    child[dotted] { node[solid] {$2$}
      child[solid] { node {$1$} }
      child[missing]
    }
    child[missing]
  }
  child[missing];
\end{tikzpicture}
\qquad\text{ and }U = \psylv{u} =
\begin{tikzpicture}[tinybst,level distance=7mm,baseline=-10.5mm]
  \node {$1$}
  child[missing]
  child { node {$2$}
    child[missing]
    child[dotted] { node[solid] {$n-1$}
      child[missing]
      child[solid] { node {$n$} }
    }
  };
\end{tikzpicture}
\]
The same reasoning as in the proof of \fullref{Proposition}{prop:placticbounds} shows that $T$ and $U$ are not related
by $\cyc^{\leq n-2}$ in $\sylv_n$.
\end{proof}

\subsection{Upper bound for diameters I: Overview}
\label{subsec:sylvupperbound1}

The proof that every connected component of $K(\sylv_n)$ has diameter at most $n$ is long and complicated.  To
illustrate the strategy, \fullref{Example}{eg:sylvupperbound} explicity constructs a path of length~$5$ between two
elements of $K(\sylv_5)$ that have the same evaluation. Note, however, that these elements are standard, and much of the
complexity of the general proof is due to having to consider multiple appearances of each symbol.

\begin{example}
  Let $T$ and $U$ be the following elements of $\sylv_5$:
  \label{eg:sylvupperbound}
  \[
    T =
    \begin{tikzpicture}[baseline=(0.base)]
      \begin{scope}[tinybst]
        \node (root) {$4$}
        child { node (0) {$2$}
          child { node (00) {$1$} }
          child { node (01) {$3$} }
        }
        child { node (00) {$5$} };
      \end{scope}
    \end{tikzpicture},\quad
    U =
    \begin{tikzpicture}[baseline=(1.base)]
      \begin{scope}[tinybst]
        \node (root) {$1$}
        child[missing]
        child { node (1) {$4$}
          child { node (10) {$3$}
            child { node (100) {$2$} }
            child[missing]
          }
          child { node (11) {$5$} }
        };
      \end{scope}
    \end{tikzpicture}\;\; \in \sylv_5
  \]
  Notice that $T \evrel U$.  The aim is to build a sequence $T = T_0 \cyc T_1 \cyc T_2 \cyc T_3 \cyc T_4 \cyc T_5 = U$.
  The strategy is build the path based upon a left-to-right postfix traversal of $U$. At any point in such a traversal,
  the set of vertices already encountered is a union of subtrees of $U$. By applying an appropriate cyclic shift to
  obtain $T_{i+1}$ from $T_i$ (for each $i$), one gradually builds copies of these subtrees within the $T_i$, arranged
  down the path of left child nodes descending from the root, with the `just completed' subtree at the root itself. In
  the example construction below, the left-hand column shows the $i$-th tree built so far, and the next column shows the
  $i$-th step of the left-to-right postfix traversal. The subtrees of $U$ containing the vertices encountered up to the
  $i$-th step are outlined, as are the corresponding vertices in the tree $T_i$. Note that cyclic shifts never break up
  the subwords (outlined) that represent the already-built subtrees.
  \begin{align*}
  T = T_0 ={}& \psylv{13254} \cyc{} \psylv{54132}\\
  =T_1 ={}&
    \begin{tikzpicture}[baseline=(0.base)]
      \begin{scope}[tinybst]
        \node (root) {$2$}
        child { node (1) {$1$} }
        child { node (1) {$3$}
          child[missing]
          child { node (11) {$4$}
            child[missing]
            child { node (110) {$5$} }
          }
        };
        \draw[bstoutline] (root) circle[radius=3mm];
      \end{scope}
    \end{tikzpicture}
    &
      \begin{tikzpicture}[baseline=(1.base)]
        \begin{scope}[tinybst]
          \node (root) {$1$}
          child[missing]
          child { node (1) {$4$}
            child { node (10) {$3$}
              child { node[fill=gray] (100) {$2$} }
              child[missing]
            }
            child { node (11) {$5$} }
          };
        \end{scope}
        \draw[bstoutline] (100) circle[radius=3mm];
      \end{tikzpicture}
          &
            \quad\parbox{5cm}{\raggedright One cyclic shift moves the first node visited by the traversal to the root. This will be the base of the induction in the proof.}
    \\
    ={}& \psylv{5431\olsubword{2}}  \cyc{} \psylv{1\olsubword{2}543}
    \displaybreak[0]\\
  =T_2 ={}&
    \begin{tikzpicture}[baseline=(0.base)]
      \begin{scope}[tinybst]
        \node (root) {$3$}
        child { node (0) {$2$}
          child { node (00) {$1$} }
          child[missing]
        }
        child { node (1) {$4$}
          child[missing]
          child { node (11) {$5$} }
        };
      \end{scope}
      \draw[bstoutline] \convexpath{root,0}{3mm};
    \end{tikzpicture}
    &
      \begin{tikzpicture}[baseline=(1.base)]
        \begin{scope}[tinybst]
          \node (root) {$1$}
          child[missing]
          child { node (1) {$4$}
            child { node[fill=gray] (10) {$3$}
              child { node (100) {$2$} }
              child[missing]
            }
            child { node (11) {$5$} }
          };
        \end{scope}
        \draw[bstoutline] \convexpath{10,100}{3mm};
      \end{tikzpicture}
          &
            \quad\parbox{5cm}{\raggedright The postfix traversal moves from a left child directly to its parent (which has empty right subtree). This will be the induction step, case~3.}
    \\
    ={}& \psylv{541\olsubword{23}}   \cyc{} \psylv{41\olsubword{23}5}
    \displaybreak[0]\\
  =T_3 ={}&
    \begin{tikzpicture}[baseline=(0.base)]
      \begin{scope}[tinybst]
        \node (root) {$5$}
        child { node (0) {$3$}
          child { node (00) {$2$}
            child { node (000) {$1$} }
            child[missing]
          }
          child { node (01) {$4$} }
        }
        child[missing];
      \end{scope}
      \draw[bstoutline] (root) circle[radius=3mm];
      \draw[bstoutline] \convexpath{0,00}{3mm};
    \end{tikzpicture}
    &
      \begin{tikzpicture}[baseline=(1.base)]
        \begin{scope}[tinybst]
          \node (root) {$1$}
          child[missing]
          child { node (1) {$4$}
            child { node (10) {$3$}
              child { node (100) {$2$} }
              child[missing]
            }
            child { node[fill=gray] (11) {$5$} }
          };
        \end{scope}
      \draw[bstoutline] (11) circle[radius=3mm];
      \draw[bstoutline] \convexpath{10,100}{3mm};
      \end{tikzpicture}
          &
            \quad\parbox{5cm}{\raggedright The postfix traversal moves from a left child to a node in the right subtree of its parent. This will be the induction step, case 1.}
    \\
    ={}& \psylv{41\olsubword{23}\olsubword{5}}  \cyc{} \psylv{1\olsubword{23}\olsubword{5}4}
    \displaybreak[0]\\
  =T_4 ={}&
    \begin{tikzpicture}[baseline=(0.base)]
      \begin{scope}[tinybst]
        \node (root) {$4$}
        child { node (0) {$3$}
          child { node (00) {$2$}
            child { node (000) {$1$} }
            child[missing]
          }
          child[missing]
        }
        child { node (1) {$5$} };
      \end{scope}
      \draw[bstoutline] \convexpath{root,1,00}{3mm};
    \end{tikzpicture}
    &
      \begin{tikzpicture}[baseline=(1.base)]
        \begin{scope}[tinybst]
          \node (root) {$1$}
          child[missing]
          child { node[fill=gray] (1) {$4$}
            child { node (10) {$3$}
              child { node (100) {$2$} }
              child[missing]
            }
            child { node (11) {$5$} }
          };
        \end{scope}
        \draw[bstoutline] \convexpath{1,11,100}{3mm};
      \end{tikzpicture}
          &
            \quad\parbox{5cm}{\raggedright The postfix traversal moves from a right child to a parent whose left subtree is non-empty. This well be the induction step, case 2.}
    \\
    ={}& \psylv{1\olsubword{2354}}   \cyc{} \psylv{\olsubword{2354}1}
    \displaybreak[0]\\
  =T_5 ={}&
      \begin{tikzpicture}[baseline=(1.base)]
        \begin{scope}[tinybst]
          \node (root) {$1$}
          child[missing]
          child { node (1) {$4$}
            child { node (10) {$3$}
              child { node (100) {$2$} }
              child[missing]
            }
            child { node (11) {$5$} }
          };
        \end{scope}
        \draw[bstoutline] \convexpath{root,11,100}{3mm};
      \end{tikzpicture} = U
    &
      \begin{tikzpicture}[baseline=(1.base)]
        \begin{scope}[tinybst]
          \node[fill=gray] (root) {$1$}
          child[missing]
          child { node (1) {$4$}
            child { node (10) {$3$}
              child { node (100) {$2$} }
              child[missing]
            }
            child { node (11) {$5$} }
          };
        \end{scope}
        \draw[bstoutline] \convexpath{root,11,100}{3mm};
      \end{tikzpicture}
          &
            \quad\parbox{5cm}{\raggedright The postfix traversal moves from a right child to a parent whose left subtree is empty. This well be the induction step, case 4.}
  \end{align*}
\end{example}

Before beginning the proof, it is necessary to set up some terminology and conventions for diagrams of binary search
trees. For brevity, write `the node $x$' instead of `the node labelled by $x$' and `the (sub)tree $\alpha$' instead of
`the (sub)tree with reading $\alpha$'. However, equalities and inequalties always refer to comparisons of labels:
for example, $x = y$ means that the nodes $x$ and $y$ have equal labels, not that they are the same node.

Let $x$ and $y$ be nodes of a binary tree. If $x$ is a descendent of $y$, then $x$ is \defterm{below} $y$ and $y$ is
\defterm{above} $x$. (Note that the terms `above' and `below' do not refer to levels of the tree: the right child of the
root is not above any node in the left subtree.) Let $v$ be the lowest common ancestor of $x$ and $y$. If $x$ is in the
left subtree of $v$ or is $v$ itself, and $y$ is in the right subtree of $v$ or is $v$ itself, and $x$ and $y$ are not
both $v$, then $x$ is \defterm{to the left} of $y$, and $y$ is \defterm{to the right} of $x$. Note that if $x$ is to the
left of $y$, then $x$ is less than or equal to $y$.

Note that it important to distinguish directional terms like `above' and `below' from order terms like `less than' and
`greater than'. The former refer to the position of nodes within the tree, whereas the latter refer to the order of
symbols in the alphabet $\aA$.

As used in this section, a \defterm{subtree} of a binary search tree will always be a rooted subtree. The
\defterm{complete subtree} at a node $x$ is the entire tree below and including $x$. Given a subtree $T'$ of a tree $T$,
the \defterm{left-minimal} subtree of $T'$ in $T$ is the complete subtree at the left child of the left-most node in
$T'$; the \defterm{right-maximal} subtree of $T'$ in $T$ is the complete subtree at the right child of the right-most
node in $T'$.  A node $x$ is \defterm{topmost} if it is above all other nodes labelled $x$. The \defterm{path of left
  child nodes} (respectively, \defterm{path of right child nodes}) from a node $x$ is the path obtained by starting at
the $x$ and entering left (respectively, right) child nodes until a node with no left (respectively, right) child is
encountered.

In diagrams, individual nodes are shown as round, while subtrees as shown as triangles. An edge emerging from the top of
a triangle is the edge running from the root of that subtree to its parent. A vertical edge joining a node to its parent
indicates that the node may be either a left or right child. An edge emerging from the bottom-left of a triangle is the
edge to its left-minimal subtree; an edge emerging from the bottom-right of a triangle is the edge to its right-maximal
subtree. For example, consider the following diagram:
\begin{equation}
\label{eq:bstconv}
\begin{tikzpicture}[smallbst,baseline=-7.5mm]
  \node (root) {$x$}
  child[sibling distance=15mm] { node[triangle] (0) {$\lambda$}
    child { node (00) {$z$} }
  }
  child[sibling distance=15mm] { node[triangle] (1) {$\rho$}
    child { node[triangle] (10) {$\sigma$} }
    child { node[triangle] (11) {$\tau$} }
  };
\end{tikzpicture};
\end{equation}
this shows a tree with root node $x$. Its left subtree consists of the subtree $\lambda$ and a single node $z$ (which
may be a left or right child) whose parent is some node in the subtree $\lambda$. The right subtree of $x$ consists of
subtrees with readings $\rho$, $\sigma$, and $\tau$, with $\sigma$ being the left-minimal subtree of $\rho$ and $\tau$
being the right-maximal subtree of $\rho$. Note that the tree $\rho$ may be deeper than $\sigma$ or $\tau$, in the
sense that the paths leading to its lowest nodes may longer than the paths to the lowest nodes in $\sigma$ or $\tau$.

The strategy in the proof of \fullref{Proposition}{prop:sylvupperbound} below is to pick $T,U \in \sylv_n$ such that
$T \evrel U$ and construct a path of length $n$ from $T$ to $U$ by using a left-to-right postfix traversal of
$U$. Specifically, one considers only the $n$ steps in the left to right postfix traversal that visit the topmost node
labelled by each symbol. (There is a unique topmost node labelled by each symbol by
\fullref{Lemma}{lem:repeatedsymbolssinglepath} below.) Just as in \fullref{Example}{eg:sylvupperbound}, if the node of
$U$ visited at the $h$-th step of the traversal has labelel $x$, then the $h$-th cyclic shift in this path moves a node
$x$ in the $h-1$-th tree in the path to become the root of the tree $h$-th tree. However, the situation is more
complicated because there are other nodes with the same label, and these may be distributed very differently in $T$ and
$U$. For example, the following two binary search trees have the same evaluation, but nodes with the same labels are
distributed differently:
\[
  \begin{tikzpicture}[tinybst,baseline=(000)]
    \node (root) {$7$}
    child { node (0) {$6$}
      child { node (00) {$6$}
        child { node (000) {$2$}
          child { node (0000) {$1$}
            child { node (00000) {$1$}
              child[missing]
              child { node (00001) {$2$} }
            }
            child[missing]
          }
          child { node (0001) {$3$}
            child[missing]
            child { node (00011) {$5$}
              child[missing]
              child { node (000111) {$6$}
                child { node (0001110) {$6$} }
                child[missing]
              }
            }
          }
        }
        child[missing]
      }
      child { node (010) {$7$} }
    }
    child { node (01) {$8$}
      child [missing]
      child { node (011) {$9$} }
    };
  \end{tikzpicture}\;\;,
  \qquad\qquad
  \begin{tikzpicture}[tinybst,baseline=(001)]
    \node (root) {$6$}
    child[sibling distance=20mm] { node (0) {$1$}
      child { node (00) {$1$}
        child[missing]
        child { node (001) {$2$}
          child { node (0010) {$2$} }
          child[missing]
        }
      }
      child { node (01) {$3$}
        child[missing]
        child { node (011) {$6$}
          child { node (0110) {$5$}
            child [missing]
            child { node (01101) {$6$}
              child { node (011010) {$6$} }
              child [missing]
            }
          }
          child[missing]
        }
      }
    }
    child[sibling distance=20mm] { node (1) {$7$}
      child { node (10) {$7$} }
      child { node (11) {$8$}
        child[missing]
        child { node (111) {$9$} }
      }
    };
  \end{tikzpicture}\;\;.
\]

\subsection{Upper bound for diameters II: Properties of trees}
\label{subsec:sylvupperbound2}

This section gathers some properties of trees, and in particular properties of how nodes with the same label are
arranged in binary search trees. These properties are mostly technical but simple to prove.

The \defterm{left-to-right infix traversal} (or simply the \defterm{infix traversal}) of a rooted binary tree $T$ is the
sequence that `visits' every node in the tree as follows: it recursively performs the infix traversal of
the left subtree of the root of $T$, then visits the root of $T$, then recursively performs the infix
traversal of the right subtree of the root of $T$. Thus the infix traversal of any binary tree with the same
shape as the right-hand tree in \eqref{eq:bsteg} visits nodes as follows:
\begin{equation}
\begin{tikzpicture}[tinybst,baseline=-7.5mm]
  \node (root) {}
    child[sibling distance=16mm] { node (0) {}
      child { node (00) {}
        child { node (000) {} }
        child[missing]
      }
      child { node (01) {} }
    }
    child[sibling distance=16mm] { node (1) {}
      child { node (10) {}
        child { node (100) {} }
        child[missing]
      }
      child { node (11) {}
        child[missing]
        child { node (111) {} }
      }
    };
  \begin{scope}[very thick,line cap=round]
    \draw (000.center) edge[bend left=30] (00.center);
    \draw (00.center) edge[bend left=30] (0.center);
    \draw (0.center) edge[bend right=30] (01.center);
    \draw (01.center) edge[bend left=20] (root.center);
    \draw (root.center) edge[bend left=10] (100.center);
    \draw (100.center) edge[bend right=30] (10.center);
    \draw (10.center) edge[bend right=30] (1.center);
    \draw (1.center) edge[bend left=30] (11.center);
    \draw (11.center) edge[bend left=30] (111.center);
  \end{scope}
  \draw[very thick] ($ (000.center) + (-4mm,0) $) -- (000.center);
  \draw[very thick,->] (111.center) -- ($ (111.center) + (4mm,0) $);
  \foreach\x in {000,00,01,0,100,10,111,11,1,root} {
    \draw[draw=black,fill=black] (\x.center) circle (.66mm);
  }
\end{tikzpicture}.
\end{equation}
The following result is immediate from the definition of a binary search tree, but it is used frequently:

\begin{proposition}
  \label{prop:infixreading}
  For any binary search tree $T$, if a node $x$ is encountered before a node $y$ in an infix traversal, then $x \leq y$.
\end{proposition}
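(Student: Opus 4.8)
The plan is to argue by structural induction on the binary search tree $T$. If $T$ has a single node there is nothing to prove. For the inductive step, write $r$ for the label of the root of $T$, and let $T_L$, $T_R$ be the left and right subtrees. By definition the infix traversal of $T$ lists the nodes of $T_L$ (in the order given by the infix traversal of $T_L$), then the root, then the nodes of $T_R$ (in the order given by the infix traversal of $T_R$). Suppose $x$ is encountered before $y$. There are six cases according to whether each of the two nodes lies in $T_L$, is the root, or lies in $T_R$; since $x$ precedes $y$, the only possibilities are: both in $T_L$; both in $T_R$; $x$ in $T_L$ and $y$ the root; $x$ the root and $y$ in $T_R$; $x$ in $T_L$ and $y$ in $T_R$; and $x=y$ the root.

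In the two cases where $x$ and $y$ lie in the same subtree, $x$ is encountered before $y$ already in the infix traversal of that subtree, so $x \leq y$ by the induction hypothesis. In the case $x=y$ there is nothing to prove. In the remaining three cases I invoke the defining property of a (right strict) binary search tree: every label in $T_L$ is $\leq r$, and every label in $T_R$ is $> r$ (hence in particular $\geq r$). Thus if $x$ lies in $T_L$ and $y$ is the root, $x \leq r = y$; if $x$ is the root and $y$ lies in $T_R$, $x = r < y$; and if $x$ lies in $T_L$ and $y$ lies in $T_R$, $x \leq r < y$. In every case $x \leq y$, completing the induction.

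There is no real obstacle here: the statement is essentially the definition of a binary search tree unwound one level at a time, and the proof is routine structural induction. The only points requiring a little care are (i) remembering that our convention is \emph{right strict}, so the comparison with nodes of the right subtree is strict but still yields the weak inequality $x \le y$ that the proposition asserts, and (ii) keeping the distinction, emphasised in the preceding discussion, between nodes and their labels — the induction is on the shape of the tree, but the inequalities compare labels, so equal labels on distinct nodes (e.g.\ a node equal to its left child) cause no difficulty. If one prefers a non-inductive formulation, the same argument can be phrased in terms of the lowest common ancestor $v$ of $x$ and $y$: $x$ precedes $y$ in the infix traversal precisely when $x$ is to the left of $y$ in the sense defined above, and then $x \le v \le y$ by the binary search tree property; but the structural induction is the cleanest route.
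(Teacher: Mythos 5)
Your proof is correct. The paper does not actually give a proof of this proposition — it is stated as ``immediate from the definition of a binary search tree'' — so there is no argument to compare against; your structural induction is a clean and entirely standard way of making that immediacy rigorous, and the case analysis (same subtree, root vs.\ subtree, opposite subtrees) covers everything, correctly using the weak inequality for the left subtree and the strict one for the right. The parenthetical alternative via the lowest common ancestor is also sound and in fact aligns with the ``to the left of'' terminology the paper sets up a few lines earlier, though it quietly relies on the observation that infix order agrees with the left-of relation, which itself needs a word or two of justification if spelled out fully.
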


Let $U$ be a binary search tree and let $a \in \aA$ be a symbol that appears in $U$.

\begin{lemma}
  \label{lem:repeatedsymbolssinglepath}
  Every node $a$ appears on a single path descending from the root to a leaf; thus there is a unique topmost node $a$.
\end{lemma}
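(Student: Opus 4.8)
The plan is to argue by contradiction using the defining inequalities of a right strict binary search tree. Suppose there were two distinct nodes of $U$, both labelled $a$, that do not both lie on a single path descending from the root to a leaf; equivalently, neither of the two nodes is an ancestor of the other. Call these nodes $p$ and $q$, and let $v$ be their lowest common ancestor. Since $v$ is the \emph{lowest} common ancestor and neither $p$ nor $q$ is an ancestor of the other, one of $p,q$ lies in the left subtree of $v$ and the other lies in the right subtree of $v$; without loss of generality, say $p$ lies in the left subtree of $v$ and $q$ lies in the right subtree.

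By the definition of a right strict binary search tree, the label of $v$ is greater than or equal to the label of every node in its left subtree, so $a \leq \ell(v)$, where $\ell(v)$ denotes the label of $v$; and the label of $v$ is strictly less than the label of every node in its right subtree, so $\ell(v) < a$. Together these give $a \leq \ell(v) < a$, a contradiction. Hence any two nodes of $U$ labelled $a$ are comparable under the ancestor relation.

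Consequently the set of all nodes of $U$ labelled $a$ is totally ordered by the ancestor relation, so it forms a chain in $U$; any chain in a rooted tree is contained in some path from the root to a leaf, which proves the first assertion. The node of this chain that is an ancestor of all the others is then the topmost node $a$, and it is unique: if two nodes labelled $a$ were each above all the others, each would be an ancestor of the other, forcing them to be the same node. The only point that needs any care is the claim that two nodes neither of which is an ancestor of the other are separated into the left and right subtrees of their lowest common ancestor; this is immediate from the definition of the lowest common ancestor, so there is no real obstacle in this argument.
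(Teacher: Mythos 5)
Your proof is correct and takes essentially the same approach as the paper: both take the lowest common ancestor $v$ of two incomparable nodes labelled $a$ and derive $a \leq v < a$ from the right-strict binary search tree inequalities. Your write-up is somewhat more verbose in spelling out the chain/total-order argument and the uniqueness of the topmost node, but the core idea is identical.
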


\begin{proof}
  Suppose the node $a$ appeared on two different paths. Let $v$ be the least common ancestor of two such
  appearances. Then $a$ is in both the left and right subtrees of $v$, and so $a \leq v < a$ by the definition of a
  binary search tree, which is a contradiction.
\end{proof}

\begin{lemma}
  \label{lem:onlytopmosthasnonemptyrightsubtree}
  If a node $a$ has a non-empty right subtree, it is the topmost node $a$.
\end{lemma}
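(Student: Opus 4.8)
The plan is to argue by contradiction, leaning entirely on \fullref{Lemma}{lem:repeatedsymbolssinglepath}. Suppose, for a contradiction, that some node labelled $a$ has a non-empty right subtree but is \emph{not} the topmost node $a$; call this node $x$, and let $y$ be the topmost node $a$. By \fullref{Lemma}{lem:repeatedsymbolssinglepath} all nodes labelled $a$ lie on a single path descending from the root, so $x$ is a proper descendant of $y$.

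First I would pin down on which side of $y$ the node $x$ lies. Being a proper descendant of $y$, the node $x$ lies in either the left or the right subtree of $y$. If $x$ were in the right subtree of $y$, the defining property of a binary search tree would force $x > y$, that is $a > a$, which is absurd. Hence $x$ lies in the left subtree of $y$; consequently the entire complete subtree at $x$ — in particular the right subtree of $x$ — lies in the left subtree of $y$.

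Next I would exploit the non-empty right subtree of $x$. Pick any node $z$ in the right subtree of $x$. On one hand, the binary search tree property at $x$ gives $z > x = a$. On the other hand, $z$ lies in the left subtree of $y$, so the binary search tree property at $y$ gives $z \leq y = a$. This contradiction shows that $x$ must have been the topmost node $a$ after all.

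I do not expect any genuine obstacle here: the only inputs are \fullref{Lemma}{lem:repeatedsymbolssinglepath}, which confines all copies of $a$ to one root-to-leaf path, together with two applications of the defining inequalities of a binary search tree. The single point that merits careful phrasing is the observation that a descendant of $y$ carrying the label $a = y$ cannot sit in the right subtree of $y$; this is precisely what forces $x$ into the left subtree of $y$ and thereby makes the second inequality $z \leq a$ available.
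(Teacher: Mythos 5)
Your argument is correct and is essentially the paper's own proof: pick a symbol in the right subtree of the non-topmost node $a$, derive that it is strictly greater than $a$ from being in that right subtree, and derive that it is at most $a$ from sitting in the left subtree of the topmost $a$, yielding a contradiction. You simply spell out more explicitly (via \fullref{Lemma}{lem:repeatedsymbolssinglepath} and the ``can't be in the right subtree of the topmost $a$'' observation) why the distinguished node must sit in the left subtree of the topmost one, a step the paper states without elaboration.
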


\begin{proof}
  Suppose a particular node $a$ has a non-empty right subtree; let $b$ be a symbol in this subtree. Then $a < b$, since
  $b$ is in the right subtree of $a$. If this node $a$ is not the topmost node $a$, then $b$ is also in the left subtree
  of the topmost node $a$ (since the distinguished node $a$ must be in the left subtree of the topmost $a$); hence
  $b \leq a$, which is a contradiction. Thus this node $a$ must be topmost.
\end{proof}

\begin{lemma}
  \label{lem:parentofleftchild}
  Let $x$ be a left child of a parent node $z$. Then the symbol $z$ is the least symbol in the tree $U$ greater than or
  equal to every node in the complete subtree at $x$. Furthermore, if $x$ is not topmost, then $x = z$.
\end{lemma}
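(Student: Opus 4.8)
The plan is to prove the two assertions in turn, both by unwinding the definition of a (right strict) binary search tree, with \fullref{Proposition}{prop:infixreading} and \fullref{Lemma}{lem:repeatedsymbolssinglepath} supplying the rest.

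For the first assertion, note first that, since $x$ is the \emph{left} child of $z$, the complete subtree at $x$ is exactly the left subtree of $z$; hence, by the defining inequality of a binary search tree, $z$ is greater than or equal to the label of every node of it, so $z$ is one symbol of $U$ with the required property. For minimality I would take a symbol $c$ of $U$ occurring at some node $w$ \emph{outside} the complete subtree at $x$ and satisfying $c \geq a$ for every node $a$ of that subtree, and show $c \geq z$ by a case split on the position of $w$ relative to $z$. If $w = z$ this is trivial; if $w$ lies in the right subtree of $z$ then $c > z$, since the right subtree of $z$ is strictly above $z$. Otherwise $w$ is not a descendant of $z$; letting $v$ be the lowest common ancestor of $w$ and $z$, either $v = w$ (so $z$ is a descendant of $w$) or $v$ is a proper ancestor of both and $w$ and $z$ sit in different subtrees of $v$. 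In each of these orientations the binary search tree inequalities force either $c \geq z$ directly or that $c$ is strictly below every node of the complete subtree at $x$ — which contradicts the hypothesis on $c$, since that subtree contains $x$ and is therefore non-empty. (Equivalently one may argue via the infix traversal: the complete subtree at $x$ occupies a contiguous block of the traversal of $U$, $z$ is the node visited immediately after this block, and \fullref{Proposition}{prop:infixreading} shows every node visited at or after $z$ is $\geq z$ while every node visited earlier is $\leq$ every node of the block.)

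For the ``furthermore'', suppose $x$ is not topmost and let $x^\ast$ be the topmost node labelled $x$, which lies strictly above $x$. By \fullref{Lemma}{lem:repeatedsymbolssinglepath} all nodes labelled $x$ lie on a single root-to-leaf path, and the portion of that path from the root to $x$ is forced; hence $x$, its parent $z$, and $x^\ast$ all lie on this path, with $x^\ast$ either equal to $z$ or strictly above it. In the first case $z$ is labelled $x$ and we are done. In the second case $z$ lies strictly between $x^\ast$ and $x$; since any node in the right subtree of $x^\ast$ has label strictly greater than $x^\ast = x$, the node $x$ lies in the left subtree of $x^\ast$, hence so does $z$, giving $z \leq x^\ast = x$; combining with $z \geq x$ (valid because $x$ lies in the left subtree of $z$) yields $z = x$.

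The step I expect to be the main obstacle is not the proof of the minimality clause — that is the short lowest-common-ancestor case analysis above — but getting its statement exactly right: the largest label occurring \emph{inside} the complete subtree at $x$ is itself $\geq$ every node of that subtree and may be strictly smaller than $z$, so ``the least symbol of $U$'' has to be read relative to occurrences at nodes outside that subtree. The case analysis on the lowest common ancestor (or, alternatively, the infix-traversal picture) is precisely what uses this restriction, and once it is in place no further subtleties arise; the only repeated-symbol bookkeeping is in the second assertion, and it is absorbed into \fullref{Lemma}{lem:repeatedsymbolssinglepath}.
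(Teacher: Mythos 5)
Your proof is correct, and it takes a genuinely different route from the paper's for the first assertion. The paper argues entirely through the infix traversal: $z$ is the first node visited after the contiguous block occupied by the complete subtree at $x$, and since the traversal is weakly increasing (\fullref{Proposition}{prop:infixreading}) this immediately pins $z$ down; the ``furthermore'' is handled the same way, noting that when $x$ is not topmost its right subtree is empty so $z$ is visited immediately after $x$, while the topmost $x$ is visited later, giving $x \leq z \leq x$. You instead unwind the minimality claim into an explicit lowest-common-ancestor case split on the position of the competing node $w$ relative to $z$, showing in each configuration that either $c \geq z$ directly or else $c$ is strictly less than every label of the subtree at $x$, which is absurd; you sketch the infix argument only as a parenthetical alternative. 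Your ``furthermore'' goes through \fullref{Lemma}{lem:repeatedsymbolssinglepath} and the binary-search-tree inequalities rather than the traversal order, which is slightly longer but equivalent. The paper's argument is more compact and reuses \fullref{Proposition}{prop:infixreading} uniformly; your case analysis is more elementary and spells out the mechanics. One observation of yours deserves emphasis: as literally phrased, the lemma is imprecise, since the maximum label appearing \emph{inside} the complete subtree at $x$ is itself $\geq$ every node of that subtree and can be strictly smaller than $z$. The quantifier has to range over occurrences at nodes \emph{outside} that subtree (which is exactly what the paper's infix-traversal proof implicitly delivers, and what you make explicit when you take $w$ outside the subtree). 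You correctly identified and handled this, whereas the paper glosses over it.
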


\begin{proof}
  In the infix traversal of $U$, the node $z$ is the first node visited after visiting all the nodes in the complete
  subtree at $x$. Since the infix traversal visit nodes in weakly increasing order, $z$ is certainly the least symbol
  greater than or equal to every node in the complete subtree at $x$. Suppose further $x$ is not topmost. Then its right
  subtree is empty and so $z$ is visited immediately after $x$. Since the topmost node $x$ is visited at some later
  point, and since nodes are visited in weakly increasing order, $x \leq z \leq x$ and so $x = z$.
\end{proof}

\begin{lemma}
  \label{lem:leftchildnottopmost}
  Suppose $z$ is not a topmost node. Then it is a left child if and only if its parent is another node $z$.
\end{lemma}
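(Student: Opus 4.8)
The plan is to prove the two implications separately; both are short. The key preliminary observation is that, since $z$ is not a topmost node, there is a node labelled $z$ strictly above it, so in particular $z$ is not the root of $U$ (the root is above every other node, hence would be topmost, by the definition of ``topmost'' and \fullref{Lemma}{lem:repeatedsymbolssinglepath}). Consequently $z$ has a parent, and $z$ is either a left child or a right child of that parent.

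For the forward direction, I would suppose $z$ is a left child, say of the node $p$, and apply \fullref{Lemma}{lem:parentofleftchild} to the pair $(z,p)$, so that $z$ plays the role of the ``$x$'' in that lemma and $p$ plays the role of the node there called ``$z$''. Since $z$ is not topmost by hypothesis, the ``furthermore'' clause of that lemma yields that $p$ has label $z$; that is, the parent of $z$ is another node $z$, as required.

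For the converse, suppose the parent $p$ of $z$ is another node labelled $z$. As noted above, $z$ is a left child or a right child of $p$. If $z$ were a right child of $p$, then the defining property of a binary search tree would force the label of $p$ to be strictly less than the label of $z$, contradicting $p = z$ (as labels). Hence $z$ must be a left child, which completes the argument. I do not expect any genuine obstacle here; the only points needing care are confirming that the non-topmost node $z$ is not the root, so that ``parent'' makes sense, and remembering throughout that ``the parent is another node $z$'' is an assertion about labels rather than about nodes being identical, in keeping with the conventions fixed earlier in this section.
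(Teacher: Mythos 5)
Your proof is correct and follows essentially the same route as the paper's: the forward direction invokes the "furthermore" clause of \fullref{Lemma}{lem:parentofleftchild}, and the converse uses the defining inequalities of a (right strict) binary search tree. The only difference is that you spell out the preliminary observation that a non-topmost node cannot be the root, which the paper leaves implicit; that is a harmless bit of extra care.
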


\begin{proof}
  If $z$ is a left child, then its parent is another node $z$ by \fullref{Lemma}{lem:parentofleftchild}.  If the
  parent of $z$ is another node $z$, then the child node must be a left child by the definition of a binary search tree.
\end{proof}

\begin{lemma}
  \label{lem:topmostlessthantopmost}
  Suppose $x$ is a node and let $y$ be a symbol such that $y \leq x$ and $y$ labels some node in the complete
  subtree at $x$. Then the topmost node $y$ is also in the complete subtree at $x$.
\end{lemma}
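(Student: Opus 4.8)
The plan is to argue by contradiction, with \fullref{Lemma}{lem:repeatedsymbolssinglepath} supplying the structural backbone. Let $\hat y$ denote the topmost node labelled $y$; it exists and is unique by \fullref{Lemma}{lem:repeatedsymbolssinglepath}. Suppose, for contradiction, that $\hat y$ does not lie in the complete subtree at $x$. By hypothesis some node $y'$ labelled $y$ \emph{does} lie in the complete subtree at $x$. If $y' = \hat y$ there is nothing left to prove, so assume $y' \neq \hat y$; since all nodes labelled $y$ lie on a single root-to-leaf path and $\hat y$ is the highest of them, $y'$ is then a proper descendant of $\hat y$. The path from the root down to $y'$ therefore passes through $\hat y$ and, since $y'$ lies in the complete subtree at $x$, also through $x$; as $\hat y$ itself is not in that subtree it must occur strictly above $x$ on this path. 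Hence $x$ is a proper descendant of $\hat y$, lying either in its left subtree or in its right subtree.

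The right-subtree case is immediate: if $x$ lies in the right subtree of $\hat y$, then the whole complete subtree at $x$ is contained in the right subtree of $\hat y$, so every label occurring in it is strictly greater than $\hat y = y$. This contradicts the presence of the node $y'$, which is labelled $y$, in that subtree. So $x$ must lie in the left subtree of $\hat y$.

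Ruling out the left-subtree case is the step I expect to be the main obstacle. Here the complete subtree at $x$ lies inside the left subtree of $\hat y$, so every label occurring in it --- that of $x$ included --- is $\le \hat y = y$; in particular $x \le y$, and together with the standing hypothesis $y \le x$ this forces $x$ and $y$ to denote the same symbol. This is precisely the point at which the inequality $y \le x$ does real work. The delicate remaining task is to derive a contradiction from the configuration in which $x$ is an occurrence of the symbol $y$ lying strictly below the topmost occurrence $\hat y$ of $y$: my approach would be to walk up the common path of the nodes labelled $y$ from $x$ towards $\hat y$ and use \fullref{Lemma}{lem:parentofleftchild} and \fullref{Lemma}{lem:leftchildnottopmost} to constrain the labels of the intervening nodes, forcing $x$ to coincide with $\hat y$ and so contradicting that $x$ is a \emph{proper} descendant of $\hat y$. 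Once this case is settled the proof is complete; everything else is the routine translation between the ``above/below'' relation in a binary search tree and the order on $\aA$, in the spirit of \fullref{Proposition}{prop:infixreading}.
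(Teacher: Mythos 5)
Your Case~A, where $x$ falls in the right subtree of $\hat y$, is correct and in substance matches the paper's argument: the paper reaches the same conclusion by noting that the infix traversal visits nodes in weakly increasing order, but the essential content is the same BST inequality you use (everything in $\hat y$'s right subtree is strictly greater than $y$, contradicting the label of $y'$). For the strict inequality $y < x$ your argument is complete and, if anything, slightly more elementary than the paper's, since you do not need to invoke the infix traversal at all.

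The trouble is Case~B, and it is more serious than a ``delicate remaining task.'' Having deduced $x = y$ as symbols, you are trying to show that a node labelled $y$ cannot lie strictly below $\hat y$ in $\hat y$'s left subtree --- but that is precisely what a \emph{non-topmost} node labelled $y$ looks like, and such nodes exist perfectly happily by \fullref{Lemma}{lem:leftchildnottopmost} (root $5$ with left child $5$, say). Walking up the path with \fullref{Lemma}{lem:parentofleftchild} will not force $x$ to coincide with $\hat y$; all the intervening nodes can be legitimate non-topmost occurrences. In fact this configuration is a genuine counterexample to the lemma as literally stated: take the tree whose root is labelled $5$ with left child labelled $5$, let $x$ be the child and $y = 5$; then $y \le x$, the node $x$ itself carries label $y$ and lies in the complete subtree at $x$, yet the topmost node $5$ is the root, which is not in that subtree. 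The paper's own proof dismisses $y = x$ with ``the result holds trivially,'' which is accurate only when $x$ is itself the topmost node $y$ --- as it happens, every use of this lemma in the paper has either $y < x$ or $x$ topmost, so nothing downstream breaks. The honest resolution is therefore not to hunt for a contradiction in Case~B (there is none), but to recognise that the statement needs the hypothesis $y < x$, or alternatively that $x$ be topmost; under $y < x$ your Case~A already finishes the proof, since Case~B is then ruled out immediately by $x \le y < x$.
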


\begin{proof}
  The result holds trivially if $y = x$, so suppose $y < x$. Then the node $y$ is in the left subtree of $x$ and the
  infix traversal visits the node $y$ in the complete subtree at $x$ before it visits $x$ itself. Since the infix
  traversal visits nodes in weakly increasing order, it must visit the topmost node $y$ before visiting any node
  $x$. Hence the topmost node $y$ must also be in the left subtree of $x$.
\end{proof}

\begin{lemma}
  \label{lem:rightchildnottopmost}
  Suppose $x$ is a topmost node and its right child $z$ is not a topmost node. Then $z$ is the least symbol that is
  greater than every topmost node in the complete subtree at $x$.
\end{lemma}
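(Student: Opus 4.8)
The plan is to isolate one substantial claim: letting $p$ be the largest symbol of $U$ that is strictly smaller than $z$ (such a symbol exists, as $x$ itself is a symbol of $U$ with $x<z$), show that \emph{the topmost node $p$ lies in the complete subtree at $x$}. Everything else is then easy bookkeeping. Indeed, every label occurring in the complete subtree at $x$ is at most $z$ — the left subtree of $x$ and the node $x$ contribute labels $\le x<z$, while the complete subtree rooted at the node $z$ contributes labels $\le z$ — and the topmost node $z$ is a strict ancestor of $x$: by \fullref{Lemma}{lem:repeatedsymbolssinglepath} it lies on the same root-to-leaf path as the node $z$ and, since the node $z$ is not topmost, strictly above it, hence it is an ancestor of the parent $x$ of the node $z$, and it differs from $x$ by comparison of labels. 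Consequently no topmost node in the complete subtree at $x$ is labelled $z$, so every topmost node there has label $<z$ and $z$ exceeds all of them; and if some symbol $s$ of $U$ with $s<z$ also exceeded all of them, then in particular $s>p$ (using the claim that the topmost node $p$ is in the complete subtree at $x$), giving $p<s<z$ and contradicting the maximality of $p$. Thus $z$ is the least symbol of $U$ greater than every topmost node in the complete subtree at $x$.

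For the claim itself I would work with the left-to-right infix traversal of $U$, which visits nodes in weakly increasing order of label (\fullref{Proposition}{prop:infixreading}) and visits the complete subtree at any node as a single contiguous block. Let $z^{*}$ be the first $z$-labelled node this traversal visits. Since all $z$-labelled nodes lie on one root-to-leaf path (\fullref{Lemma}{lem:repeatedsymbolssinglepath}) and, among equally labelled nodes on such a path, a deeper one is visited before a shallower one (the shallower is an ancestor, so the deeper lies in its left subtree and is visited first), $z^{*}$ is the deepest $z$-labelled node; as the node $z$ itself lies on that path, $z^{*}$ is a descendant of, or equal to, the node $z$, so $z^{*}$ lies in the complete subtree at $x$. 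It is not the first node of that block, which has label $\le x<z$, so the node preceding $z^{*}$ in the traversal also lies in the complete subtree at $x$. Finally, since $p$ is the largest symbol of $U$ below $z$, no symbol of $U$ lies strictly between $p$ and $z$, so in the traversal the block of $p$-labelled nodes is immediately followed by the block of $z$-labelled nodes; hence the node preceding $z^{*}$ is the last $p$-labelled node — which, the $p$-labelled nodes again lying on one path with the topmost visited last, is the topmost node $p$. Therefore the topmost node $p$ lies in the complete subtree at $x$, as claimed.

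The main obstacle, and the point where a naive argument breaks, is that the node $z$ need \emph{not} be the deepest $z$-labelled node: it may have further $z$-labelled nodes inside its left subtree (\fullref{Lemma}{lem:onlytopmosthasnonemptyrightsubtree} only forbids them in its right subtree), so one cannot simply say that the node visited just before the node $z$ in the infix traversal lies in the subtree at $x$. The device that rescues the argument is to pass to the genuine deepest $z$-labelled node $z^{*}$, which nonetheless sits inside the subtree rooted at the node $z$ precisely because \fullref{Lemma}{lem:repeatedsymbolssinglepath} confines all $z$-labelled nodes to a single root-to-leaf path passing through the node $z$.
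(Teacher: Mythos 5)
Your proof is correct and rests on the same core device as the paper's: the infix traversal visits the complete subtree at $x$ as a contiguous block, so the predecessor symbol $p$ (the largest symbol of $U$ below $z$) necessarily has its topmost node inside that block, which forces $z$ to be least. Where you go further is in making the paper's compressed closing sentence rigorous: you correctly observe that the node $z$ itself need not be the first $z$-labelled node the traversal meets (its left subtree may contain more $z$-labelled nodes), and you repair this by passing to the deepest $z$-labelled node $z^{*}$ and showing, via \fullref{Lemma}{lem:repeatedsymbolssinglepath}, that $z^{*}$ still lies in the complete subtree at the node $z$; this is a genuine gap in the paper's one-line argument that you fill. One small omission: when you assert that the complete subtree rooted at the node $z$ contributes labels at most $z$, you are implicitly using that this node's right subtree is empty, which is \fullref{Lemma}{lem:onlytopmosthasnonemptyrightsubtree} applied to the non-topmost node $z$; you cite that lemma later for a different purpose, but it is also needed at this earlier step.
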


\begin{proof}
  Since the node $z$ is not topmost, its right subtree is empty. Thus it is the maximum symbol in the complete subtree
  rooted at $x$. For any node $y$ in the left subtree of $x$, the topmost node $y$ is also in the left subtree of $x$ by
  \fullref{Lemma}{lem:topmostlessthantopmost}. For any node $y'$ in the left subtree of $z$, the topmost node $y'$ is
  also in the left subtree of $z$ by \fullref{Lemma}{lem:topmostlessthantopmost}. So $z$ is greater than every topmost
  node in the complete subtree at $x$. Since the symbols labelling these topmost nodes are the ones visited by the infix
  traversal immediately before it visits nodes labelled by $z$, it follows that $z$ is the least symbol that is greater
  than every topmost node in the complete subtree at $x$.
\end{proof}

These results give information about how repeated symbols can appear in a binary search tree. Consider a symbol $z$ that
appears more than once in $U$. If one chooses a node $z$, then one of the following holds:
\begin{itemize}
\item the node $z$ is topmost;
\item the node $z$ is not a topmost node, and is the left child of another node $z$ (by
  \fullref{Lemma}{lem:leftchildnottopmost});
\item the node $z$ is not a topmost node, and the right child of a topmost node $x$, and $z$ is the least symbol greater
  than every topmost node in the complete subtree at $x$ (by \fullref{Lemma}{lem:rightchildnottopmost}).
\end{itemize}

For example, consider the following binary search tree and the repeated symbol $5$:
\begin{equation}
\label{eq:bstrepeatedeg}
\begin{tikzpicture}[tinybst,baseline=-15mm]
  \node (root) {$5$}
    child { node (0) {$5$}
      child { node (00) {$2$}
        child { node[triangle] (000) {} }
        child { node (001) {$5$}
          child { node (0010) {$4$}
            child { node[triangle] (00100) {} }
            child { node (00101) {$5$}
              child { node (001010) {$5$}
                child { node (0010100) {$5$} }
                child[missing]
              }
              child[missing]
            }
          }
          child[missing]
        }
      }
      child[missing]
    }
    child { node[triangle] (1) {} };
\end{tikzpicture}
\end{equation}

The \defterm{primary} nodes $a$ are those nodes labelled by $a$ that are consecutive with the topmost node $a$,
including the topmost node itself; in \eqref{eq:bstrepeatedeg} there are two primary nodes $5$. Any node $a$ that has no
children, and any node $a$ consecutive with it, provided they are not primary, are the \defterm{tertiary} nodes $a$; in
\eqref{eq:bstrepeatedeg} there are three tertiary nodes $5$. All other nodes $a$ are \defterm{secondary}. Note that in
each group of consecutive secondary nodes $a$, the uppermost node is a right child of some non-$a$ node, and the
lowermost node has as its left child another non-$a$ node. (Secondary and tertiary nodes always have empty right
subtrees, since they are never topmost.) In \eqref{eq:bstrepeatedeg}, there is one secondary node $5$.

In constructing a path of length $n$ from $T$ to $U$, the $h$-th cyclic shift, corresponding to visiting a topmost node
$y$ of $U$, will move a node $y$ of $T_{h-1}$ to form the root of $T_h$, and must simultaneously deal with any secondary
nodes $z$ that are attached at the right child of $y$ in $U$. Tertiary nodes $z$ either fall into place naturally or are
dealt with during the cyclic shift that moves $z$ to the root. The difficulty is in proving that there always exists a
cyclic shift that performs these tasks. This is the reason for the slightly complicated conditions that form the
inductive statement in the proof of \fullref{Proposition}{prop:sylvupperbound} below.

\subsection{Upper bound for diameters III: Result}
\label{subsec:sylvupperbound3}

This subsection is dedicated entirely to \fullref{Proposition}{prop:sylvupperbound} and its proof. Extensive use will be
made of the concepts and results in \fullref{Subsections}{subsec:sylvupperbound1} and \ref{subsec:sylvupperbound2}.

\begin{proposition}
  \label{prop:sylvupperbound}
  The diameter of any connected component of $K(\sylv_n)$ is at most $n$.
\end{proposition}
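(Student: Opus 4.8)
The plan is to follow the strategy previewed in \fullref{Example}{eg:sylvupperbound}. Fix $T, U \in \sylv_n$ in the same connected component, so that $T \evrel U$; the goal is to build an explicit path $T = T_0 \cyc T_1 \cyc \cdots \cyc T_n = U$ of length at most $n$ in $K(\sylv_n)$, directed by a left-to-right postfix traversal of $U$. As a first reduction, if only the symbols $a_1 < a_2 < \cdots < a_k$ of $\aA_n$ occur in $T$ and $U$, then because the relations in $\drel{R}_\sylv$ depend only on the relative order of letters, the submonoid $\gen{a_1, \ldots, a_k}$ is isomorphic to $\sylv_k$ under $a_i \mapsto i$; since $k \le n$, it therefore suffices to prove the bound $n$ under the assumption that $T$ and $U$ contain every symbol of $\aA_n$.

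The heart of the argument is an induction over the $n$ steps of the postfix traversal of $U$ at which a \emph{topmost} node is visited --- one for each symbol $1, \ldots, n$, these being well-defined by \fullref{Lemma}{lem:repeatedsymbolssinglepath}. Suppose the $h$-th such step visits the topmost node $y$. The inductive statement pins down the shape of $T_{h-1}$: the subtrees of $U$ spanned by the vertices the traversal has visited so far occur as complete subtrees of $T_{h-1}$, strung down the path of left child nodes descending from its root with the most recently completed one at the root itself, while the vertices not yet reached hang below in a controlled configuration that records exactly where the primary, secondary, and tertiary occurrences of each not-yet-processed symbol currently sit. The step is to exhibit a factorization $T_{h-1} = \psylv{u_h v_h}$ for which $T_h := \psylv{v_h u_h}$ again satisfies the statement, with $v_h u_h$ chosen so that the subwords that are readings of already-built subtrees are never broken apart. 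As in the example, there are four cases, according to how the postfix traversal moves at this step: from a left child to its parent (which then has empty right subtree); from a left child into the right subtree of its parent; from a right child to a parent with non-empty left subtree; and from a right child to a parent with empty left subtree. In each case one cuts a reading of $T_{h-1}$ at the position dictated by the traversal --- using the infix-order properties of \fullref{Proposition}{prop:infixreading} together with \fullref{Lemma}{lem:parentofleftchild} and \fullref{Lemma}{lem:rightchildnottopmost} to locate that position --- and the cyclic shift rotates the block containing $y$ (and whatever newly attached material belongs with it) to the front.

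The principal obstacle, and the reason for the machinery assembled in \fullref{Subsection}{subsec:sylvupperbound2}, is that a symbol $z$ can occur many times and its occurrences may be arranged quite differently in $T$ than in $U$: besides the primary nodes $z$, there may be secondary nodes $z$ (each the right child of a non-$z$ node, grouped in runs with empty right subtrees) and tertiary nodes $z$ (leaves and their consecutive companions). When the $h$-th cyclic shift installs a node $y$ as the new root it must \emph{simultaneously} arrange that every secondary node $z$ attached at the right child of $y$ in $U$ is moved into its correct position, while the tertiary nodes must be shown either to fall into place automatically or to be absorbed by the later shift dealing with their own symbol. Verifying, in each of the four cases, that a single cyclic shift really does achieve all of this --- equivalently, that the required word $v_h u_h$ genuinely is a rotation of some reading of $T_{h-1}$ --- is the delicate point, and it is what forces the inductive statement to carry such detailed information about the current placement of the primary, secondary, and tertiary nodes. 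Granting the invariant, each of the four case verifications is a long but essentially routine application of the tree lemmas of \fullref{Subsection}{subsec:sylvupperbound2}; after $n$ steps one has $T_n = U$, so $T \cyc^n U$, and since $T$ and $U$ were arbitrary elements of a connected component, its diameter is at most $n$.
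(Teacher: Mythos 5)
Your plan correctly identifies the overall architecture of the argument --- reduce to the case where every symbol of $\aA_n$ appears, walk the topmost nodes of $U$ in postfix order, and at each step rotate the appropriate node to the root --- and you correctly flag the real source of difficulty, namely the multiplicity of symbols. But the writeup never actually delivers the piece that does all the work, and the one attempt it makes at describing it is not quite right. You say the inductive invariant is that ``the subtrees of $U$ spanned by the vertices the traversal has visited so far occur as complete subtrees of $T_{h-1}$, strung down the path of left child nodes.'' Taken literally this is false: since a symbol may occur with the same total multiplicity in $T$ and $U$ but be distributed very differently between the primary, secondary, and tertiary positions, the complete subtree $B_h$ of $U$ at $u_h$ generally does \emph{not} embed as a subtree of any $T_h$ built along the way. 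What the paper actually maintains at the root is the modified tree $E_h$, obtained from $B_h$ by collapsing the repeated minimal symbols $m_h$ to a single node, deleting the tertiary $q_h$'s, and (when $B_h$ contains a secondary $q_h$) re-inserting enough $q_h$'s to make the count agree with $U$. Without this replacement the invariant you state is not preserved by any cyclic shift.

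The second, related gap is that you state the invariant only as ``a controlled configuration that records exactly where the primary, secondary, and tertiary occurrences of each not-yet-processed symbol currently sit,'' and then conclude with ``Granting the invariant, each of the four case verifications is a long but essentially routine application of the tree lemmas.'' The precise invariant (the paper's P1--P4, in particular P3 about left-minimal/right-maximal subtrees and P4 forbidding any $m_{i_j}$ from lying below a $p_{i_j}$) is exactly what makes the required factorization of a reading of $T_{h-1}$ possible, and verifying it is where essentially all the length of the proof lies: each of the four cases splits into two or more sub-cases (and sub-sub-cases) according to whether $E_h = D_h$ and how many $q_h$ and $u_{h+1}$ nodes sit above or below $D_g$, and in each one a different cyclic shift must be exhibited and checked. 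Asserting that this is ``routine'' is precisely the unjustified step; a referee would be unable to verify the claim from what is written. As it stands this is an accurate description of the shape of the argument, not a proof of the proposition.
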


\begin{proof}
  Let $T$ and $U$ be in the same connected component of $K(\sylv_n)$. Then $T \evrel U$ and so $T$ and $U$ contain the
  same number of nodes labelled by each symbol. Without loss of generality, assume that every symbol in $\aA_n$ appears
  in $T$ and $U$.

  \bigskip
  \noindent\textit{Preliminaries.}
  Consider the left-to-right postfix traversal of $U$. Modify this traversal so that it only visits topmost nodes; for
  the purposes of this proof, call this the \defterm{topmost traversal}. Since $U$ contains every symbol in $\aA_n$,
  there are exactly $n$ steps in this traversal. Let $u_i$ be the $i$-th node visited in this modified traversal.

  For $h = 1,\ldots,n$, define $U_h = \set{u_1,\ldots,u_h}$ and let $U_h^\uparrow$ be the set of nodes in $U_h$ that do
  not lie below any other node in $U_h$. Since a later step in a left-to-right postfix traversal is never below an
  earlier step, $u_h \in U_h^\uparrow$ for all $h$. (The set $U_h^\uparrow$ will turn out to be the roots of the
  complete subtrees that have been `built' in $T_h$.)

  Let $B_h$ be the complete subtree of $U$ at $u_h$; see \fullref{Figure}{fig:sylvupperbounddefs} for an example of this
  and later definitions.

  Define $m_h$ to be the minimum symbol in $B_h$; thus $m_h$ is the minimum symbol below and including the node $u_h$ in
  $U$. Note that the topmost node $m_h$ is in the subtree $B_h$ by \fullref{Lemma}{lem:topmostlessthantopmost}, and
  must (by minimality) be on the path of left child nodes from $u_h$.

  Define $q_h$ to be the minimum symbol that is greater than every topmost node in $B_h$, with $q_h$ undefined if
  there is no such symbol.
  % If $q_h$ is defined, `the node $q_h$' refers to the topmost node $q_h$, and $B_h$ is in the
  % left subtree of this node.
  Note that $q_h$ can be found by following the path from $u_h$ to the root: $q_h$ will be symbol labelling the first
  node entered from a left child, because this node is the node visited by the infix traversal of $U$ immediately after
  the nodes in the subtree $B_h$ have been visited. In particular, $q_h$ is defined precisely when $u_h$ does not lie on
  the path of right child nodes from the root of $U$.

  Define $p_h$ to be the maximum symbol that is less than every symbol in $B_h$, with $p_h$ undefined if there is no
  such symbol.
  % If $p_h$ is defined, `the node $p_h$' refers to the topmost node $p_h$, and $B_h$ is in the right subtree
  % of this node.
  Note that $p_h$ can be found by following the path from $u_h$ to the root: $p_h$ will be the symbol labelling the
  first node entered from a right child, because this node is the node visited by the infix traversal of $U$ immediately
  before visiting nodes in the subtree $B_h$. (This process always locates the \emph{topmost} node $p_h$, since it has a
  non-empty right subtree.) In particular, $p_h$ is defined precisely when $u_h$ does not lie on the path of left child nodes
  from the root of $U$.

  Note that $p_h < x \leq q_h$ for all symbols $x$ in $B_h$, ignoring the inequalities involving $p_h$ or $q_h$ when
  these are undefined. Note that the symbol $q_h$ may appear in $B_h$, labelling a secondary or tertiary node. In
  particular, $p_h < m_h \leq u_h < q_h$, since $u_h$ is a topmost symbol.

  Define $C_h$ to the the tree obtained from $B_h$ by deleting all nodes $m_h$ except the topmost. (Note that this
  leaves no `orphaned' subtrees, since only the topmost $m_h$ can have a right subtree, and the bottommost $m_h$ has
  empty left subtree by the minimality of $m_h$.)

  Define $D_h$ to be the tree obtained from $C_h$ by deleting any tertiary nodes $q_h$ from $C_h$.

  If $D_h$ contains no node $q_h$, define $E_h = D_h$. If $D_h$ contains a node $q_h$, define $E_h$ to be the tree
  obtained from $D_h$ by inserting $s$ symbols $q_h$ into $D_h$ using \fullref{Algorithm}{alg:sylvinsertone}, where $s$
  is the difference between the number of nodes $q_h$ in $U$ and the the number of nodes $q_h$ in $D_h$.

  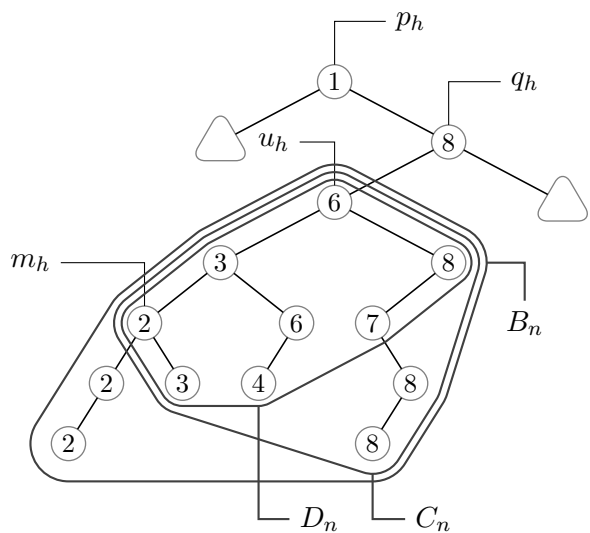
\begin{figure}[t]
    \begin{tikzpicture}
      \begin{scope}[smallbst]
        \node (root) at (0,0) {$1$}
        child[sibling distance=30mm] { node[triangle] {} }
        child[sibling distance=30mm] { node (1) {$8$}
          child[sibling distance=30mm] { node (10)  {$6$}
            child[sibling distance=30mm] { node (100) {$3$}
              child[sibling distance=20mm] { node (1000) {$2$}
                child[sibling distance=10mm] { node (10000) {$2$}
                  child { node (100000) {$2$} }
                  child[missing]
                }
                child[sibling distance=10mm] { node (10001) {$3$} }
              }
              child[sibling distance=20mm] { node (1001) {$6$}
                child[sibling distance=10mm] { node (10010) {$4$} }
                child[missing]
              }
            }
            child[sibling distance=30mm] { node (101) {$8$}
              child[sibling distance=20mm] { node (1010) {$7$}
                child[missing]
                child[sibling distance=10mm] { node (10101) {$8$}
                  child { node (101010) {$8$} }
                  child[missing]
                }
              }
              child[missing]
            }
          }
          child[sibling distance=30mm] { node[triangle] {} }
        };
      \end{scope}
      \node (ph) at ($ (root) + (10mm,8mm) $) {$p_h$};
      \node (zh) at ($ (1) + (10mm,8mm) $) {$q_h$};
      \node (uh) at ($ (10) + (-8mm,8mm) $) {$u_h$};
      \node (mh) at ($ (1000) + (-15mm,8mm) $) {$m_h$};
      \draw (root) |- (ph);
      \draw (1) |- (zh);
      \draw (10) |- (uh);
      \draw (1000) |- (mh);
      \draw[bstoutline] \convexpath{10,101,1010,10010,10001,1000,100}{3mm};
      \draw[bstoutline] \convexpath{10,101,10101,101010,10001,1000,100}{4mm};
      \draw[bstoutline] \convexpath{10,101,10101,101010,100000,1000,100}{5mm};
      \node (bn) at ($ (101) + (10mm,-8mm) $) {$B_n$};
      \node (cn) at ($ (101010) + (8mm,-10mm) $) {$C_n$};
      \node (dn) at ($ (10010) + (8mm,-18mm) $) {$D_n$};
      \draw[bstoutline] ($ (101) + (5mm,0) $) -| (bn);
      \draw[bstoutline] ($ (101010) + (0,-4mm) $) |- (cn);
      \draw[bstoutline] ($ (10010) + (0,-3mm) $) |- (dn);
    \end{tikzpicture}
    \caption{Example tree $U$ illustrating definitions of notation. Here, $u_h$ is the topmost node $5$, and thus
      $p_h = 1$ and $q_h = 7$. The subtree $B_h$ consists of the complete subtree at $u_h$, and the minimum symbol in
      this subtree is $m_h = 2$. The subtree $C_h$ is obtained from $B_h$ by deleting all nodes $2$ except the topmost,
      and $D_h$ is obtained from $C_h$ by deleting the tertiary nodes $8$. Since there is a [secondary] node $8$ in
      $D_h$, the tree $E_h$ is obtained from $D_h$ by inserting three symbols $8$ using
      \fullref{Algorithm}{alg:sylvinsertone}, since there are three nodes $8$ in the tree $U$ outside $D_h$.}
    \label{fig:sylvupperbounddefs}
  \end{figure}

  Notice that the set of symbols in $E_h$ is the same as the set of symbols in $D_h$ (in either case), and is contained
  in the set of symbols in $C_h$ (this containment will be strict if in $C_h$ there are tertiary nodes $q_h$ but no
  secondary nodes $q_h$), which in turn is equal to the set of symbols in $B_h$.

  Suppose $B_h$ contains a node $x$, but that $B_h$ does not contain the topmost node $x$. Then by
  \fullref{Lemma}{lem:repeatedsymbolssinglepath}, $B_h$ is below the topmost node $x$ and must be in its left subtree
  since it contains a node $x$. In following the path from $u_h$ to the root, $q_h$ labels the first node entered from a
  left child. Hence $q_h \leq x$. However, $x$ appears in $B_h$, so $x \leq q_h$. Hence $x = q_h$. Thus $q_h$ is the
  only symbol that can label a node in $B_h$ but whose corresponding topmost node lies outside $B_h$. By
  their definitions, the same applies to $C_h$, $D_h$, and $E_h$.

  \bigskip
  \noindent\textit{Statement of induction.}
  The aim is to construct inductively a sequence $T = T_0, T_1, \ldots, T_n = U$ with $T_i \cyc T_{i+1}$ for
  $i \in \set{1,\ldots,n-1}$. Let $h = 1,\ldots,n$ and suppose $U_h^\uparrow = \set{u_{i_1},\ldots,u_{i_k}}$ (where
  $i_1 < \ldots < i_k = h$). Then the tree $T_h$ will satisfy the following four conditions P1--P4:
  \begin{itemize}
  \item[P1] The subtree $E_{i_k}$ appears at the root of $T_h$.
  \item[P2] The subtrees $E_{i_k},\ldots,E_{i_1}$ appear, in that order, on the path of left child nodes from the root of $T_h$. (These subtrees may be separated by other nodes on the path of left child nodes.)
  \item[P3] For $j = 1,\ldots,k$, every node below $E_{i_j}$ is in its left-minimal or right-maximal subtrees in $T_h$.
  \item[P4] For $j = 1,\ldots,k$, no node $m_{i_j}$ is below a node $p_{i_j}$ in $T_h$.
  \end{itemize}
  (Note that conditions P1--P4 do not apply to $T_0 = T$, which is an arbitrary element of $\sylv_n$.)

  \bigskip
  \noindent\textit{Base of induction.} The base of the induction is to apply a cyclic shift to $T_0$ and obtain a tree
  $T_1$ that satisfies conditions P1--P4.

  Note first that $U_1^\uparrow = U_1 = \set{u_1}$. By the definition of the topmost traversal, there can be no topmost
  nodes below $u_1$ in $U$. Since $m_1 \leq u_1$, the topmost node $m_1$ is in $B_1$ by
  \fullref{Lemma}{lem:topmostlessthantopmost}, and thus $m_1 = u_1$. Thus $B_1$ consists only of symbols $u_1$ and
  possibly tertiary nodes $q_1$. Thus $C_1$ consists only of the topmost node $m_1 = u_1$ and possibly tertiary nodes
  $q_1$, and so $D_1$ consists only of the single node $u_1$. Thus $E_1 = D_1$ since $D_1$ does not contain a symbol
  $q_1$.

  There are two cases to consider:

  \medskip
  \noindent\textit{Case 1.} Suppose that there is some node with label $u_1$ below some node with label $p_1$ in $T_0$. (Note that this case can
  only hold when $p_1$ is defined, or, equivalently, if $u_1$ is not on the path of left child nodes from the root of
  $U$.)

  In $T_0$, distinguish the uppermost node with label $u_1$ that lies below some node with label $p_1$. (Note that
  although $u_1$ and $p_1$ are defined in terms of the tree $U$, here they are used to pick out nodes with the same
  labels in the tree $T_0$.) Since $p_1 < u_1$, this node $u_1$ must be in the right subtree of the node $p_1$. Thus
  this node $p_1$ must be a topmost node since it has non-empty right subtree. As shown in
  \fullref{Figure}{fig:sylvbasecase1}, let $\zeta$ be a reading of the part of $T_0$ outside the complete subtree at the
  topmost node $p_1$, let $\alpha$ be a reading of the left subtree of the topmost node $p_1$, let $\delta$ be a reading
  of the right subtree of the topmost node $p_1$ outside the complete subtree at the distinguished node $u_1$, and let
  $\beta$ and $\gamma$ be readings of the left and right subtrees of this node $u_1$. All nodes $p_1$ other than the
  distinguished topmost node $p_1$ must be in $\alpha$. There are no nodes $u_1$ in $\delta$, by the choice of the
  distinguished node $u_1$. It is possible that there may be nodes $u_1$ in $\beta$ or in $\zeta$. (If there is a node
  $u_1$ in $\zeta$, then the distinguished node $u_1$ is not topmost and so $\gamma$ is empty.)
  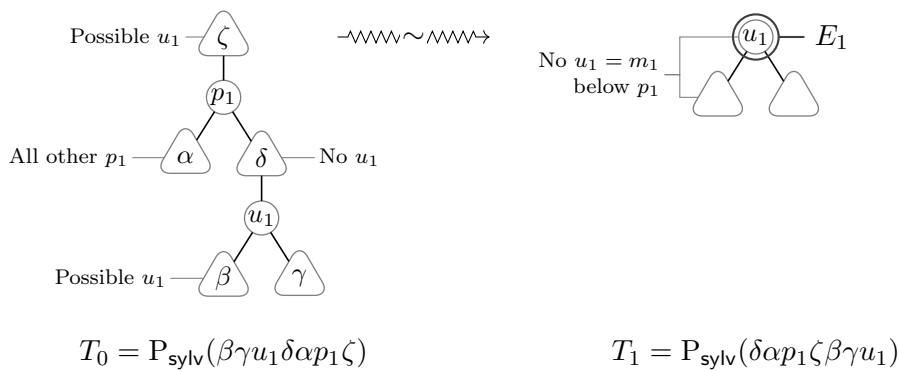
\begin{figure}[tb]
    \centering
    \begin{tikzpicture}
      \begin{scope}[smallbst]
        \node[triangle] (aroot) at (0,0) {$\zeta$}
        child { node (a0) {$p_1$}
          child { node[triangle] (a00) {$\alpha$} }
          child { node[triangle] (a01) {$\delta$}
            child { node (a010) {$u_1$}
              child { node[triangle] (a0100) {$\beta$} }
              child { node[triangle] (a0101) {$\gamma$} }
            }
          }
        };
      \end{scope}
      \node[lcomment] (arootcomment) at ($ (aroot) + (-5mm,0) $) {Possible $u_1$};
      \node[lcomment] (a00comment) at ($ (a00) + (-7mm,0) $) {All other $p_1$};
      \node[lcomment] (a0100comment) at ($ (a0100) + (-7mm,0) $) {Possible $u_1$};
      \node[rcomment] (a01comment) at ($ (a01) + (7mm,0) $) {No $u_1$};
      \draw[gray] (arootcomment) -- (aroot);
      \draw[gray] (a00comment) -- (a00);
      \draw[gray] (a0100comment) -- (a0100);
      \draw[gray] (a01comment) -- (a01);
      \node at ($ (aroot) + (0,-42mm) $) {$T_0 = \psylv{\beta\gamma u_1\delta\alpha p_1\zeta}$};
      %
      % %%%%%
      %
      \begin{scope}[smallbst]
        \node (broot) at (70mm,0) {$u_1$}
        child { node[triangle] (b0) {} }
        child { node[triangle] (b1) {} };
      \end{scope}
      \node[lcomment] (brootb0comment) at ($ (b0) + (-7mm,3mm) $) {No $u_1 = m_1$\\below $p_1$};
      \draw[gray] (brootb0comment.east) -- ++(2mm,0mm) |- (broot);
      \draw[gray] (brootb0comment.east) -- ++(2mm,0mm) |- (b0);
      \draw[bstoutline] (broot) circle[radius=3mm];
      \node (e1) at ($ (broot) + (10mm,0mm) $) {$E_1$};
      \draw[bstoutline] ($ (broot) + (3mm,0) $) -- (e1);
      \node at ($ (broot) + (0,-42mm) $) {$T_1 = \psylv{\delta\alpha p_1\zeta\beta\gamma u_1}$};
      \draw ($ (aroot) + (15mm,0) $) edge[mogrifyarrow] node[fill=white,anchor=mid,inner xsep=.25mm,inner ysep=1mm] {$\sim$} ($ (broot) + (-35mm,0) $);
    \end{tikzpicture}
    \caption{Base of induction, case 1: some node $u_1$ lies below some node $p_1$ in $T_0$.}
    \label{fig:sylvbasecase1}
  \end{figure}

  Thus $T_0 = \psylv{\beta\gamma u_1\delta\alpha p_1\zeta}$. Let $T_1 = \psylv{\delta\alpha p_1\zeta\beta\gamma u_1}$;
  note that $T_0 \cyc T_1$.

  In computing $T_1$, the symbol $u_1$ is inserted first and becomes the root node. Since other symbols $u_1$ can only
  appear in $\beta$ and $\zeta$, and other symbols $p_1$ can only appear in $\alpha$, all symbols $m_1 = u_1$ are
  inserted before symbols $p_1$. Thus there is no node $m_1$ below a node $p_1$; thus $T_1$ satisfies P4. Since $E_1$
  consists only of a node $u_1$, the tree $E_1$ appears at the root of $T_1$ and so $T_1$ satisfies P1 and
  P2. Furthermore, every node below $E_1$ is either in the left-minimal subtree of $E_1$ (that is, the left subtree of
  the root node $u_1$) or the right-maximal subtree of $E_1$ (that is, right subtree of $u_1$), and so $T_1$ satisfies
  P3.

  \medskip
  \noindent\textit{Case 2.} Suppose that no node labelled $u_1$ lies below a node with label $p_1$ in $T_0$. (This case always holds when
  $p_1$ is undefined.)

  Distinguish the topmost node $u_1$ in $T_0$. As shown in \fullref{Figure}{fig:sylvbasecase2}, let $\zeta$ be a reading
  of the part of $T_0$ outside the complete subtree at the topmost node $u_1$. Since no node $u_1$ lies below a node
  $p_1$, there exists a reading $\beta$ of the left subtree of the topmost node $u_1$ in which all symbols $p_1$ appear
  before all symbols $u_1$. Let $\gamma$ be a reading of the right subtree of the topmost node $u_1$.
  \begin{figure}[tb]
    \centering
    \begin{tikzpicture}
      \begin{scope}[smallbst]
        \node[triangle] (aroot) at (0,0) {$\zeta$}
        child { node (a0) {$u_1$}
          child { node[triangle] (a00) {$\beta$} }
          child { node[triangle] (a01) {$\gamma$} }
        };
      \end{scope}
      \node[lcomment] (arootb0comment) at ($ (a00) + (-7mm,8mm) $) {No $u_1 = m_1$\\below $p_1$};
      \draw[gray] (arootb0comment.east) -- ++(2mm,0mm) |- (aroot);
      \draw[gray] (arootb0comment.east) -- ++(2mm,0mm) |- (a0);
      \draw[gray] (arootb0comment.east) -- ++(2mm,0mm) |- (a00);
      \node at ($ (aroot) + (0,-26mm) $) {$T_0 = \psylv{\beta\gamma u_1\zeta}$};
      %
      % %%%%%
      %
      \begin{scope}[smallbst]
        \node (broot) at (70mm,0) {$u_1$}
        child { node[triangle] (b0) {} }
        child { node[triangle] (b1) {} };
      \end{scope}
      \node[lcomment] (brootb0comment) at ($ (b0) + (-7mm,3mm) $) {No $u_1 = m_1$\\below $p_1$};
      \draw[gray] (brootb0comment.east) -- ++(2mm,0mm) |- (broot);
      \draw[gray] (brootb0comment.east) -- ++(2mm,0mm) |- (b0);
      \draw[bstoutline] (broot) circle[radius=3mm];
      \node (e1) at ($ (broot) + (10mm,0mm) $) {$E_1$};
      \draw[bstoutline] ($ (broot) + (3mm,0) $) -- (e1);
      \node at ($ (broot) + (0,-26mm) $) {$T_1 = \psylv{\zeta\beta\gamma u_1}$};
      \draw ($ (aroot) + (15mm,0) $) edge[mogrifyarrow] node[fill=white,anchor=mid,inner xsep=.25mm,inner ysep=1mm] {$\sim$} ($ (broot) + (-35mm,0) $);
    \end{tikzpicture}
    \caption{Base of induction, case 2: no node $u_1$ lies below a node $p_1$ in $T_0$.}
    \label{fig:sylvbasecase2}
  \end{figure}
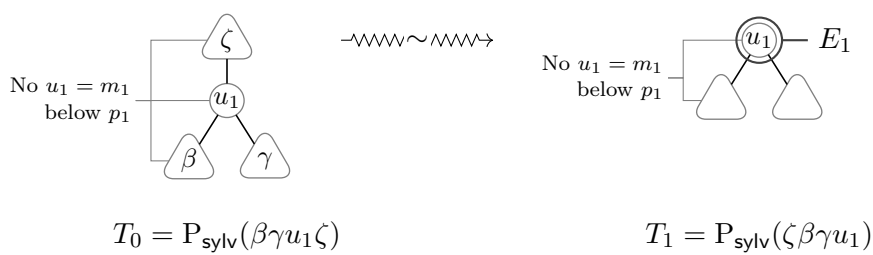

  Thus $T_0 = \psylv{\beta\gamma u_1\zeta}$. Let $T_1 = \psylv{\zeta\beta\gamma u_1}$; note that $T_0 \cyc T_1$.

  In computing $T_1$, the symbol $u_1$ is inserted first and becomes the root node. Since other symbols $u_1$ must
  appear after symbols $p_1$ in $\beta$, all symbols $m_1 = u_1$ are inserted before symbols $p_1$. Thus there is no
  node $m_1$ below a node $p_1$; thus $T_1$ satisfies P4. Since $E_1$ consists only of a node $u_1$, the tree $T_1$
  satisfies P1--P3 by the same reasoning as in Case~1.

  \medskip
  This completes the base of the induction: the tree $T_1$ satisfies P1--P4 and $T_0 \cyc T_1$.

  \bigskip
  \noindent\textit{Induction step.} Let $h \in \set{1,\ldots,n-1}$ and suppose that
  $U_h^\uparrow = \set{u_{i_1},\ldots,u_{i_k}}$ (where $i_1 < \ldots < i_k$). Recall that $h = i_k$; for brevity, let
  $g = i_{k-1}$. Suppose that the tree $T_h$ satisfies conditions P1--P4. The aim is to apply a cyclic shift to $T_h$
  and obtain a tree $T_{h+1}$ that satisfies conditions P1--P4.

  There are four cases, depending on the relatives positions of $u_h$ and $u_{h+1}$ in $U$:
  \begin{enumerate}
  \item $u_h$ is in the left subtree of $v$ and $u_{h+1}$ is in the right subtree of $v$, where $v$ is the lowest common
    ancestor of $u_h$ and $u_{h+1}$ in $U$;
  \item $u_h$ is in the left subtree of $u_{h+1}$;
  \item $u_h$ is in the right subtree of $u_{h+1}$, and there is no node $u_i$ in the left subtree of $u_{h+1}$;
  \item $u_h$ is in the right subtree of $u_{h+1}$, and there is some node $u_i$ in the left subtree of $u_{h+1}$.
  \end{enumerate}

  \medskip
  \noindent\textit{Case 1.} Suppose that, in $U$, the node $u_h$ is in the left subtree of $v$ and $u_{h+1}$ is in the
  right subtree of $v$, where $v$ is the lowest common ancestor of $u_h$ and $u_{h+1}$ in $U$.

  In this case, $U_{h+1}^\uparrow = U_h^\uparrow \cup \set{u_{h+1}}$.  By the definition of the topmost traversal, there
  are no topmost nodes below $u_{h+1}$ in $U$. Since $m_{h+1} \leq u_{h+1}$, the topmost node $m_{h+1}$ is in $B_{h+1}$ by
  \fullref{Lemma}{lem:topmostlessthantopmost}, and thus $m_{h+1} = u_{h+1}$. Thus $B_{h+1}$ consists only of symbols $u_{h+1}$ and
  possibly tertiary nodes $q_{h+1}$. Thus $C_{h+1}$ consists only of the topmost node $m_{h+1} = u_{h+1}$ and possibly tertiary nodes
  $q_{h+1}$, and so $D_{h+1}$ consists only of the single node $u_{h+1}$. Thus $E_{h+1} = D_{h+1}$ since $D_{h+1}$ does not contain a symbol
  $q_{h+1}$.

  There are two sub-cases:

  \smallskip
  \noindent\textit{Sub-case 1(a).} Suppose that there is some node labelled $u_{h+1}$ below some node with label $p_{h+1}$ in
  $T_h$ and that $q_h \neq p_{h+1}$.

  Since $u_{h+1}$ is in the right subtree of $v$, the symbol $p_{h+1}$ is greater than or equal to $v$. The symbol $v$
  is greater than or equal to every symbol in $B_h$. It is impossible that $p_{h+1}$ is equal to some symbol in $B_h$,
  since this would require $p_{h+1} = v = q_h$, which is excluded from this sub-case. Hence $p_{h+1}$ is strictly
  greater than every symbol in $B_h$ and thus in $E_h$.

  The tree $E_h$ appears at the root of $T_h$ by P1. Since $p_{h+1}$ is strictly
  greater than every symbol in $E_h$, every symbol $p_{h+1}$ must be in the right-maximal subtree of $E_h$ in
  $T_h$ by P3. Distinguish the uppermost node $u_{h+1}$ that lies below some node $p_{h+1}$; since $p_{h+1} < u_{h+1}$,
  this node $u_{h+1}$ must be in the right subtree of the node $p_{h+1}$. Thus this node $p_{h+1}$ must be a topmost
  node since it has non-empty right subtree.

  As shown in \fullref{Figure}{fig:sylvinductioncase1a}, let $\lambda$ be a reading of the left-minimal subtree of
  $E_h$; note that $\lambda$ contains all the subtrees $E_{i_j}$ by P2. Let $\zeta$ be a reading of the right-maximal
  subtree of $E_h$ outside the complete subtree at the topmost node $p_{h+1}$, let $\alpha$ be a reading of
  the left subtree of the topmost node $p_{h+1}$, let $\delta$ be a reading of the right subtree of the topmost node $p_{h+1}$
  outside the complete subtree at the distinguished node $u_{h+1}$, and let $\beta$ and $\gamma$ be readings of the left and
  right subtrees of this node $u_{h+1}$. All other nodes $p_{h+1}$ must be in $\alpha$. There are no nodes $u_{h+1}$ in $\delta$, by
  the choice of the distinguished node $u_{h+1}$. It is possible that there may be nodes $u_{h+1}$ in $\beta$ or in $\zeta$. (If
  there is a node $u_{h+1}$ in $\zeta$, then the distinguished node $u_{h+1}$ is not topmost and so $\gamma$ is empty.)
  \begin{figure}[tb]
    \centering
    \begin{tikzpicture}
      \begin{scope}[smallbst]
        \node[triangle] (aroot) at (0,0) {$E_h$}
        child { node[triangle] (a0) {$\lambda$} }
        child { node[triangle] (a1) {$\zeta$}
          child { node (a10) {$p_{h+1}$}
            child { node[triangle] (a100) {$\alpha$} }
            child { node[triangle] (a101) {$\delta$}
              child { node (a1010) {$u_{h+1}$}
                child { node[triangle] (a10100) {$\beta$} }
                child { node[triangle] (a10101) {$\gamma$} }
              }
            }
          }
        };
      \end{scope}
      \node[lcomment] (aroota0comment) at ($ (a0) + (-7mm,4mm) $) {No $m_{i_j}$\\below $p_{i_j}$};
      \node[rcomment] (a1comment) at ($ (a1) + (7mm,0) $) {Possible $u_{h+1}$};
      \node[lcomment] (a100comment) at ($ (a100) + (-7mm,0) $) {All other $p_{h+1}$};
      \node[lcomment] (a10100comment) at ($ (a10100) + (-7mm,0) $) {Possible $u_{h+1}$};
      \node[rcomment] (a101comment) at ($ (a101) + (7mm,0) $) {No $u_{h+1}$};
      \draw[gray] (aroota0comment.east) -- ++ (2mm,0) |- (aroot);
      \draw[gray] (aroota0comment.east) -- ++ (2mm,0) |- (a0);
      \draw[gray] (a1comment) -- (a1);
      \draw[gray] (a100comment) -- (a100);
      \draw[gray] (a10100comment) -- (a10100);
      \draw[gray] (a101comment) -- (a101);
      \node at ($ (aroot) + (0,-50mm) $) {$T_h = \psylv{\beta\gamma u_{h+1}\delta\alpha p_{h+1}\zeta\lambda E_h}$};
      %
      % %%%%%
      %
      \begin{scope}[smallbst]
        \node (broot) at (70mm,0) {$u_{h+1}$}
        child { node[triangle] (b0) {}
          child { node[triangle] (b00) {$E_h$}
            child { node[triangle] (b000) {$\lambda$} }
            child { node[triangle] (b001) {} }
          }
          child[missing]
        }
        child { node[triangle] (b1) {} };
      \end{scope}
      \node[lcomment] (brootb0comment) at ($ (b0) + (-7mm,4mm) $) {No $u_{h+1} = m_{h+1}$\\below $p_{h+1}$};
      \node[lcomment] (b00b000comment) at ($ (b000) + (-7mm,4mm) $) {No $m_{i_j}$\\below $p_{i_j}$};
      \draw[gray] (brootb0comment.east) -- ++ (2mm,0) |- (broot);
      \draw[gray] (brootb0comment.east) -- ++ (2mm,0) |- (b0);
      \draw[gray] (b00b000comment.east) -- ++ (2mm,0) |- (b00);
      \draw[gray] (b00b000comment.east) -- ++ (2mm,0) |- (b000);
      \coordinate (brootw) at ($ (broot) + (-2mm,0) $);
      \coordinate (broote) at ($ (broot) + (2mm,0) $);
      \draw[bstoutline] \convexpath{brootw,broote}{3mm};
      \node (e1) at ($ (broot) + (13mm,0mm) $) {$E_{h+1}$};
      \draw[bstoutline] ($ (broot) + (5mm,0) $) -- (e1);
      \node at ($ (broot) + (0,-50mm) $) {$T_{h+1} = \psylv{\delta\alpha p_{h+1}\zeta\lambda E_h \beta\gamma u_{h+1}}$};
      \draw ($ (aroot) + (15mm,0) $) edge[mogrifyarrow] node[fill=white,anchor=mid,inner xsep=.25mm,inner ysep=1mm] {$\sim$} ($ (broot) + (-40mm,0) $);
    \end{tikzpicture}
    \caption{Induction step, sub-case 1(a): $E_h = D_h$ and some node $u_{h+1}$ lies below some node $p_{h+1}$ in $T_h$.}
    \label{fig:sylvinductioncase1a}
  \end{figure}

  Thus
  \[
    T_h = \psylv{\beta\gamma u_{h+1}\delta\alpha p_{h+1}\zeta\lambda E_h}.
  \]
  Let
  \[
    T_{h+1} = \psylv{\delta\alpha p_{h+1}\zeta\lambda E_h\beta\gamma u_{h+1}};
  \]
  notice that $T_h \cyc T_{h+1}$.

  In computing $T_{h+1}$, the symbol $u_{h+1}$ is inserted first and becomes the root node. Since other symbols
  $u_{h+1}$ can only appear in $\beta$ and $\zeta$, all symbols $m_{h+1} = u_{h+1}$ are inserted before symbols
  $p_{h+1}$. Thus there is no node $m_{h+1}$ below a node $p_{h+1}$. Since $E_{h+1}$ consists only of a node $u_{h+1}$,
  the tree $T_{h+1}$ satisfies P1. Since every symbol in the trees $\lambda$ and $E_h$ are strictly less than every
  other symbol, these trees reinserted on the path of left child nodes in the same way; hence $T_{h+1}$ satisfies P2
  since $T_h$ does, and satisfies P4 since $T_h$ does and since there is no node $m_{h+1}$ below a node
  $p_{h+1}$. Finally, $T_{h+1}$ satisfies P3 because all the $E_{i_j}$ in $\lambda$ satisfy the condition in P3 (since
  $T_h$ satisfies P3), and trivially $E_{h+1}$ satisfies the condition in P3.

  \smallskip
  \noindent\textit{Sub-case 1(b).} Suppose that that no node $u_{h+1}$ lies below a node $p_{h+1}$ in
  $T_h$, or that $p_{h+1} = q_h$.

  The tree $E_h$ appears at the root of $T_h$ by P1. The symbol $u_{h+1}$ is strictly greater than every symbol in $E_h$
  and so every node $u_{h+1}$ must be in the right-maximal subtree of $E_h$ in $T_h$ by P3. Distinguish the topmost node
  $u_{h+1}$ in $T_h$. As shown in \fullref{Figure}{fig:sylvinductioncase1b}, let $\lambda$ be a reading of the
  left-minimal subtree of $E_h$; note that $\lambda$ contains all the subtrees $E_{i_j}$ by P2. Let $\zeta$ be a reading
  of the right-maximal subtree of $E_h$ outside the complete subtree at the topmost node $u_{h+1}$. Note that no symbol
  $u_{h+1}$ appears in $\zeta$. If no node $u_{h+1}$ lies below a node $p_{h+1}$ in $T_h$, choose a reading $\beta$ of
  the left subtree of the topmost node $u_{h+1}$ in which all symbols $p_{h+1}$ appear before all symbols $u_{h+1}$. On
  the other hand, if $p_{h+1} = z_h$, so that every node with this label in $T_h$ is in the subtree $E_h$, then fix any
  reading $\beta$ of the left subtree of the topmost node $u_{h+1}$; then $\beta$ vacuously has the same property (since
  it contains no symbols $p_{h+1}$ at all). Let $\gamma$ be a reading of the right subtree of the topmost node $u_{h+1}$.
  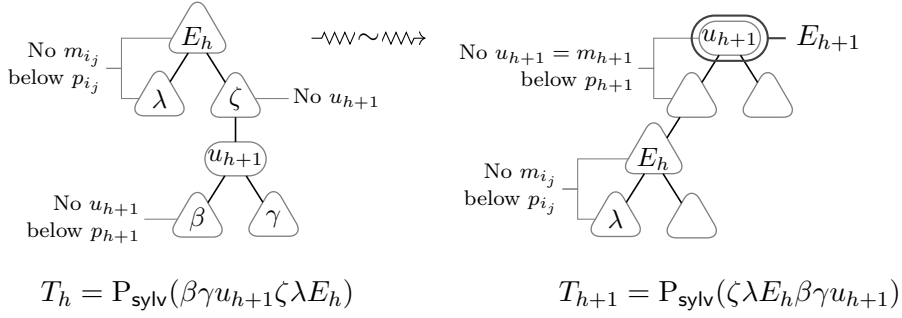
\begin{figure}[tb]
    \centering
    \begin{tikzpicture}
      \begin{scope}[smallbst]
        \node[triangle] (aroot) at (0,0) {$E_h$}
        child { node[triangle] (a0) {$\lambda$} }
        child { node[triangle] (a1) {$\zeta$}
          child { node (a10) {$u_{h+1}$}
            child { node[triangle] (a100) {$\beta$} }
            child { node[triangle] (a101) {$\gamma$} }
          }
        };
      \end{scope}
      \node[lcomment] (aroota0comment) at ($ (a0) + (-7mm,4mm) $) {No $m_{i_j}$\\below $p_{i_j}$};
      \node[rcomment] (a1comment) at ($ (a1) + (7mm,0) $) {No $u_{h+1}$};
      \node[lcomment] (a100comment) at ($ (a100) + (-7mm,0) $) {No $u_{h+1}$\\below $p_{h+1}$};
      \draw[gray] (aroota0comment.east) -- ++ (2mm,0) |- (aroot);
      \draw[gray] (aroota0comment.east) -- ++ (2mm,0) |- (a0);
      \draw[gray] (a1comment) -- (a1);
      \draw[gray] (a100comment) -- (a100);
      \node at ($ (aroot) + (0,-34mm) $) {$T_h = \psylv{\beta\gamma u_{h+1}\zeta\lambda E_h}$};
      %
      % %%%%%
      %
      \begin{scope}[smallbst]
        \node (broot) at (70mm,0) {$u_{h+1}$}
        child { node[triangle] (b0) {}
          child { node[triangle] (b00) {$E_h$}
            child { node[triangle] (b000) {$\lambda$} }
            child { node[triangle] (b001) {} }
          }
          child[missing]
        }
        child { node[triangle] (b1) {} };
      \end{scope}
      \node[lcomment] (brootb0comment) at ($ (b0) + (-7mm,4mm) $) {No $u_{h+1} = m_{h+1}$\\below $p_{h+1}$};
      \node[lcomment] (b00b000comment) at ($ (b000) + (-7mm,4mm) $) {No $m_{i_j}$\\below $p_{i_j}$};
      \draw[gray] (brootb0comment.east) -- ++ (2mm,0) |- (broot);
      \draw[gray] (brootb0comment.east) -- ++ (2mm,0) |- (b0);
      \draw[gray] (b00b000comment.east) -- ++ (2mm,0) |- (b00);
      \draw[gray] (b00b000comment.east) -- ++ (2mm,0) |- (b000);
      \coordinate (brootw) at ($ (broot) + (-2mm,0) $);
      \coordinate (broote) at ($ (broot) + (2mm,0) $);
      \draw[bstoutline] \convexpath{brootw,broote}{3mm};
      \node (e1) at ($ (broot) + (13mm,0mm) $) {$E_{h+1}$};
      \draw[bstoutline] ($ (broot) + (5mm,0) $) -- (e1);
      \node at ($ (broot) + (0,-34mm) $) {$T_{h+1} = \psylv{\zeta\lambda E_h \beta\gamma u_{h+1}}$};
      \draw ($ (aroot) + (15mm,0) $) edge[mogrifyarrow] node[fill=white,anchor=mid,inner xsep=.25mm,inner ysep=1mm] {$\sim$} ($ (broot) + (-40mm,0) $);
    \end{tikzpicture}
    \caption{Induction step, sub-case (b): $E_h = D_h$ and no node $u_{h+1}$ lies below a node $p_{h+1}$ in $T_h$.}
    \label{fig:sylvinductioncase1b}
  \end{figure}

  Thus
  \[
    T_h = \psylv{\beta\gamma u_{h+1}\zeta\lambda E_h}.
  \]
  Let
  \[
    T_{h+1} = \psylv{\zeta\lambda E_h\beta\gamma u_{h+1}};
  \]
  note that $T_h \cyc T_{h+1}$.

  In computing $T_{h+1}$, the symbol $u_{h+1}$ is inserted first and becomes the root node. Since other symbols
  $u_{h+1}$ can only appear in $\beta$, and any symbols $u_{h+1}$ in $\beta$ must appear after symbols $p_{h+1}$, all
  symbols $m_{h+1} = u_{h+1}$ are inserted before symbols $p_{h+1}$. Thus there is no node $m_{h+1}$ below a node
  $p_{h+1}$. Since $E_{h+1}$ consists only of a node $u_{h+1}$, the tree $T_{h+1}$ satisfies P1. Since every symbol in
  the trees $\lambda$ and $E_h$ are strictly less than every other symbol, these trees are re-inserted on the path of
  left child nodes in the same way; hence $T_{h+1}$ satisfies P2 since $T_h$ does, and satisfies P4 since $T_h$ does and
  since there is no node $m_{h+1}$ below a node $p_{h+1}$. Finally, $T_{h+1}$ satisfies P3 because all the $E_{i_j}$ in
  $\lambda$ satisfy the condition in P3 because $T_h$ satisfies P3, and trivially $E_{h+1}$ satisfies the condition in
  P3.

  \medskip
  \noindent\textit{Case 2.} Suppose that, in $U$, the node $u_h$ is in the left subtree of $u_{h+1}$. By the definition of the topmost
  traversal, the right subtree of $u_{h+1}$ contains no node $u_i$.

  In this case, $U_{h+1}^\uparrow = \parens[\big]{U_h^\uparrow \setminus \set{u_h}} \cup \set{u_{h+1}}$. It is immediate
  from the definitions that $p_{h+1} = p_h$ and $m_{h+1} = m_h$. Finally, $q_h$ is defined and $q_h = u_{h+1}$, since
  $u_{h+1}$ will be the next topmost node that the infix traversal of $U$ visits after visiting all nodes in $B_h$, and
  so $u_{h+1}$ must be the topmost node $q_h$ since the infix traversal visits nodes in weakly increasing order.

  \smallskip
  \noindent\textit{Sub-case 2(a).} Suppose that $E_h \neq D_h$. Then $D_h$ contains nodes with label $q_h = u_{h+1}$. By
  the definition of $q_h$, the symbol $u_{h+1}$ is greater than or equal to every symbol in $D_h$. Thus $u_{h+1}$ is the
  rightmost symbol in $D_h$ and this is the node where the right-maximal subtree of $D_h$ (and $E_h$) is attached in
  $T_h$. The tree $E_h$ consists of $D_h$ with $s$ nodes $u_{h+1}$ inserted, where $s$ is the number of nodes $u_{h+1}$
  that appear in $U$ outside of the complete subtree at $u_h$.

  As shown in \fullref{Figure}{fig:sylvinductioncase2a}, let $\lambda$ be a reading of the left-minimal subtree of
  $D_h$; note that the subtree $\lambda$ contains all the $E_{i_j}$ except $E_h$ by P2. Let $\beta$ be a reading of the right-maximal
  subtree of $D_h$.
  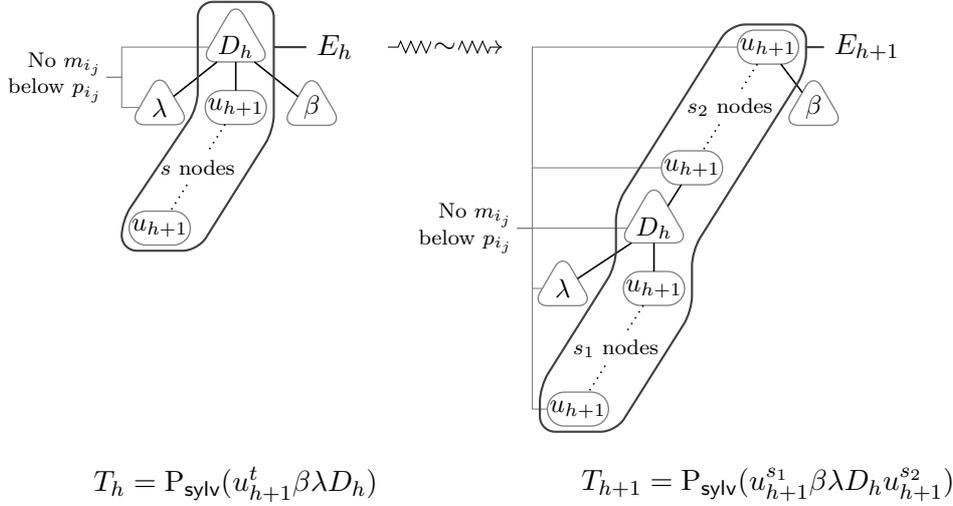
\begin{figure}[tb]
    \centering
    \begin{tikzpicture}
      \begin{scope}[smallbst]
        \node[triangle] (aroot) at (0,0) {$D_h$}
        child { node[triangle] (a0) {$\lambda$} }
        child { node (a1) {$u_{h+1}$}
          child[dotted] { node[nodecount] (a10) {$s$ nodes}
            child { node[solid] (a100) {$u_{h+1}$} }
            child[missing]
          }
          child[missing]
        }
        child { node[triangle] (a2) {$\beta$} };
      \end{scope}
      \node[lcomment] (aroota0comment) at ($ (a0) + (-7mm,4mm) $) {No $m_{i_j}$\\below $p_{i_j}$};
      \draw[gray] (aroota0comment.east) -- ++ (2mm,0) |- (aroot);
      \draw[gray] (aroota0comment.east) -- ++ (2mm,0) |- (a0);
      \draw[bstoutline,rounded corners=2mm]
      ($ (aroot) + (0mm,6mm) $) -|
      ($ (aroot) + (5mm,1mm) $) --
      ($ (a1) + (5mm,-3mm) $) --
      ($ (a100) + (5mm,-3mm) $) --
      ($ (a100) + (0mm,-3mm) $) -|
      ($ (a100) + (-5mm,1mm) $) --
      ($ (a1) + (-5mm,0mm) $) --
      ($ (aroot) + (-5mm,1mm) $) |-
      ($ (aroot) + (0mm,6mm) $);
      \node (e0) at ($ (aroot) + (13mm,0mm) $) {$E_{h}$};
      \draw[bstoutline] ($ (aroot) + (5mm,0) $) -- (e0);
      \node at ($ (aroot) + (0,-58mm) $) {$T_h = \psylv{u_{h+1}^t\beta\lambda D_h}$};
      %
      % %%%%%
      %
      \begin{scope}[smallbst]
        \node (broot) at (70mm,0) {$u_{h+1}$}
        child[dotted] { node[nodecount] (b0) {$s_2$ nodes}
          child { node[solid] (b00) {$u_{h+1}$}
            child[solid] { node[triangle] (b000) {$D_h$}
              child[sibling distance=12mm] { node[triangle] (b0000) {$\lambda$} }
              child { node (b0001) {$u_{h+1}$}
                child[dotted] { node[nodecount] (b00010) {$s_1$ nodes}
                  child { node[solid] (b000100) {$u_{h+1}$} }
                  child[missing]
                }
                child[missing]
              }
              child[missing]
            }
            child[missing]
          }
          child[missing]
        }
        child[sibling distance=12mm] { node[triangle] (b1) {$\beta$} };
      \end{scope}
      \node[lcomment] (brootb000100comment) at ($ (b000100) + (-8mm,24mm) $) {No $m_{i_j}$\\below $p_{i_j}$};
      \draw[gray] (brootb000100comment.east) -- ++ (2mm,0) |- (broot);
      \draw[gray] (brootb000100comment.east) -- ++ (2mm,0) |- (b00);
      \draw[gray] (brootb000100comment.east) -- ++ (2mm,0) |- (b000);
      \draw[gray] (brootb000100comment.east) -- ++ (2mm,0) |- (b0000);
      \draw[gray] (brootb000100comment.east) -- ++ (2mm,0) |- (b000100);
      \draw[bstoutline,rounded corners=2mm]
      ($ (broot) + (0mm,3mm) $) -|
      ($ (broot) + (5mm,-3mm) $) --
      ($ (b000) + (5mm,-3mm) $) --
      ($ (b0001) + (5mm,-3mm) $) --
      ($ (b000100) + (5mm,-3mm) $) --
      ($ (b000100) + (0mm,-3mm) $) -|
      ($ (b000100) + (-5mm,3mm) $) --
      ($ (b0001) + (-5mm,3mm) $) --
      ($ (b000) + (-5mm,3mm) $) --
      ($ (broot) + (-5mm,3mm) $) --
      ($ (broot) + (0mm,3mm) $);
      \node (e1) at ($ (broot) + (13mm,0mm) $) {$E_{h+1}$};
      \draw[bstoutline] ($ (broot) + (5mm,0) $) -- (e1);
      \node at ($ (broot) + (0,-58mm) $) {$T_{h+1} = \psylv{u_{h+1}^{s_1}\beta\lambda D_hu_{h+1}^{s_2}}$};
      \draw ($ (aroot) + (20mm,0) $) edge[mogrifyarrow] node[fill=white,anchor=mid,inner xsep=.25mm,inner ysep=1mm] {$\sim$} ($ (broot) + (-35mm,0) $);
    \end{tikzpicture}
    \caption{Induction step, sub-case 2(a): $E_h \neq D_h$.}
    \label{fig:sylvinductioncase2a}
  \end{figure}

  Thus
  \[
    T_h = \psylv{u_{h+1}^s\beta\lambda D_h}.
  \]
  Let
  \[
    T_{h+1} = \psylv{u_{h+1}^{s_1}\beta\lambda D_hu_{h+1}^{s_2}},
  \]
  where $s_2$ is the number of primary nodes $u_{h+1}$ in $U$, and where $s_1 = s - s_2$. Notice that
  $T_{h+1} \cyc T_h$.

  In computing $T_{h+1}$, the rightmost symbol $u_{h+1}$ is inserted first and becomes the root
  node. Then the next $s_2-1$ nodes $u_{h+1}$ are attached along the path of left child nodes. Since every symbol in
  $D_h$ or $\lambda$ is less than or equal to $u_{h+1}$, the subtrees $D_h$ and $\lambda$ are re-inserted at the left
  child of the bottommost node $u_{h+1}$. Since every symbol in $\beta$ is strictly greater than $u_{h+1}$, the subtree
  $\beta$ is re-inserted as the right child of the root node $u_{h+1}$. Finally, the remaining $s_1$ symbols $u_{h+1}$
  are re-inserted into $D_h$.

  All secondary nodes $u_{h+1}$ are inside $D_h$, so the $s$ nodes $u_{h+1}$ outside $D_h$ are either primary or
  tertiary. Since there are $s_2$ primary nodes $u_{h+1}$ in $U$, and since the evaluation of $U$ is
  the same as the evaluation of $T_h$, there are $s_1$ tertiary nodes $u_{h+1}$ in $U$. Hence $D_h$ and the other nodes
  $u_{h+1}$ in $T_{h+1}$ together make up $E_{h+1}$; thus $T_{h+1}$ satisfies P1. All the other $E_{i_j}$ are contained
  in $\lambda$, so $T_{h+1}$ satisfies P2. Since $T_h$ satisfies P3, and since $\lambda$ and $\beta$ are the
  left-minimal and right-maximal subtrees of $E_{h+1}$, it follows that $T_{h+1}$ satisfies P3. Finally, the relative
  positions of the $m_{i_j}$ and $p_{i_j}$ have not been altered, so $T_{h+1}$ satisfies P4.

  \smallskip
  \noindent\textit{Sub-case 2(b).} Suppose that $E_h = D_h$. Thus $D_h$ does not contain nodes labelled $q_h =
  u_{h+1}$. Since $u_{h+1}$ is greater than every node in $D_h$, it follows that all nodes $u_{h+1}$ are in the
  right-maximal subtree of $D_h$ (and $E_h$) in $T_h$. Furthermore, since $u_{h+1}$ is the smallest symbol greater than
  every symbol in $D_h$, only another node $u_{h+1}$ can be the left child of a node $u_{h+1}$ in $T_h$.

  As shown in \fullref{Figure}{fig:sylvinductioncase2b}, let $\lambda$ be a reading of the left-minimal subtree of
  $D_h$; note that the subtree $\lambda$ contains all the $E_{i_j}$ except $E_h$ by P2. Let $\delta$ be a reading of the
  right-maximal subtree of $D_h$ outside of the complete subtree at the topmost node $u_{h+1}$. (Note that $\delta$ may
  be empty.) Let $\beta$ be a reading of the right subtree of the topmost node $u_{h+1}$; note that the left subtree of
  this node can only contain other nodes $u_{h+1}$. Suppose there are $s$ nodes $u_{h+1}$ in $U$ (and so in $T_h$).
  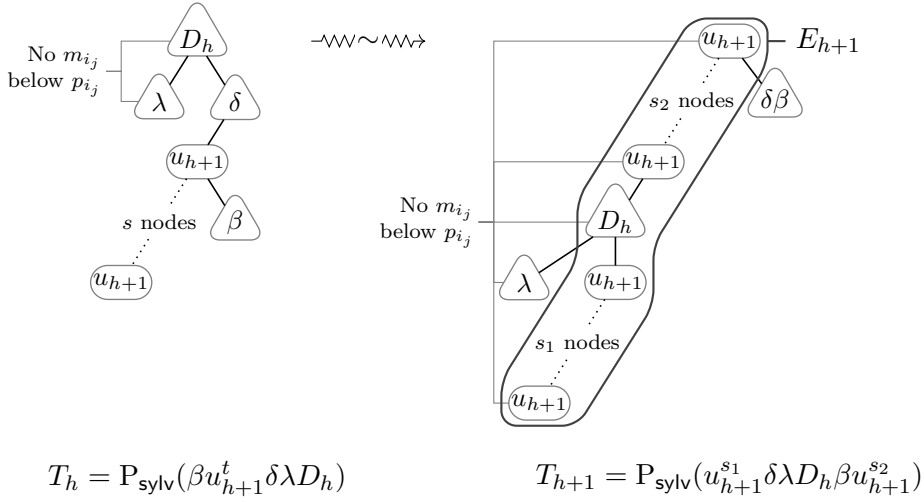
\begin{figure}[tb]
    \centering
    \begin{tikzpicture}
      \begin{scope}[smallbst]
        \node[triangle] (aroot) at (0,0) {$D_h$}
        child { node[triangle] (a0) {$\lambda$} }
        child { node[triangle] (a1) {$\delta$}
          child { node (a10) {$u_{h+1}$}
            child[dotted] { node[nodecount] (a100) {$s$ nodes}
              child { node[solid] (a1000) {$u_{h+1}$} }
              child[missing]
            }
            child { node[triangle] (a101) {$\beta$} }
          }
          child[missing]
        };
      \end{scope}
      \node[lcomment] (aroota0comment) at ($ (a0) + (-7mm,4mm) $) {No $m_{i_j}$\\below $p_{i_j}$};
      \draw[gray] (aroota0comment.east) -- ++ (2mm,0) |- (aroot);
      \draw[gray] (aroota0comment.east) -- ++ (2mm,0) |- (a0);
      \node at ($ (aroot) + (0,-58mm) $) {$T_h = \psylv{\beta u_{h+1}^t\delta\lambda D_h}$};
      %
      % %%%%%
      %
      \begin{scope}[smallbst]
        \node (broot) at (70mm,0) {$u_{h+1}$}
        child[dotted] { node[nodecount] (b0) {$s_2$ nodes}
          child { node[solid] (b00) {$u_{h+1}$}
            child[solid] { node[triangle] (b000) {$D_h$}
              child[sibling distance=12mm] { node[triangle] (b0000) {$\lambda$} }
              child { node (b0001) {$u_{h+1}$}
                child[dotted] { node[nodecount] (b00010) {$s_1$ nodes}
                  child { node[solid] (b000100) {$u_{h+1}$} }
                  child[missing]
                }
                child[missing]
              }
              child[missing]
            }
            child[missing]
          }
          child[missing]
        }
        child[sibling distance=12mm] { node[triangle] (b1) {$\delta\beta$} };
      \end{scope}
      \node[lcomment] (brootb000100comment) at ($ (b000100) + (-8mm,24mm) $) {No $m_{i_j}$\\below $p_{i_j}$};
      \draw[gray] (brootb000100comment.east) -- ++ (2mm,0) |- (broot);
      \draw[gray] (brootb000100comment.east) -- ++ (2mm,0) |- (b00);
      \draw[gray] (brootb000100comment.east) -- ++ (2mm,0) |- (b000);
      \draw[gray] (brootb000100comment.east) -- ++ (2mm,0) |- (b0000);
      \draw[gray] (brootb000100comment.east) -- ++ (2mm,0) |- (b000100);
      \draw[bstoutline,rounded corners=2mm]
      ($ (broot) + (0mm,3mm) $) -|
      ($ (broot) + (5mm,-3mm) $) --
      ($ (b000) + (5mm,-3mm) $) --
      ($ (b0001) + (5mm,-3mm) $) --
      ($ (b000100) + (5mm,-3mm) $) --
      ($ (b000100) + (0mm,-3mm) $) -|
      ($ (b000100) + (-5mm,3mm) $) --
      ($ (b0001) + (-5mm,3mm) $) --
      ($ (b000) + (-5mm,3mm) $) --
      ($ (broot) + (-5mm,3mm) $) --
      ($ (broot) + (0mm,3mm) $);
      \node (e1) at ($ (broot) + (13mm,0mm) $) {$E_{h+1}$};
      \draw[bstoutline] ($ (broot) + (5mm,0) $) -- (e1);
      \node at ($ (broot) + (0,-58mm) $) {$T_{h+1} = \psylv{u_{h+1}^{s_1}\delta\lambda D_h\beta u_{h+1}^{s_2}}$};
      \draw ($ (aroot) + (15mm,0) $) edge[mogrifyarrow] node[fill=white,anchor=mid,inner xsep=.25mm,inner ysep=1mm] {$\sim$} ($ (broot) + (-40mm,0) $);
    \end{tikzpicture}
    \caption{Induction step, sub-case 2(b): $E_h = D_h$.}
    \label{fig:sylvinductioncase2b}
  \end{figure}

  Thus $T_h = \psylv{\beta u_{h+1}^s\delta\lambda D_h}$. Let
  $T_{h+1} = \psylv{u_{h+1}^{s_1}\delta\lambda D_h\beta u_{h+1}^{s_2}}$, where $s_2$ is the number of primary nodes
  $u_{h+1}$ in $U$, and where $s_1 = s - s_2$. In computing $T_{h+1}$, the rightmost symbol $u_{h+1}$ is inserted first
  and becomes the root node. Then the next $s_2-1$ nodes $u_{h+1}$ are attached along the path of left child
  nodes. Since every symbol in $\beta$ is strictly greater than $u_{h+1}$, the subtree $\beta$ is re-inserted as the
  right child of the root node $u_{h+1}$. Since every symbol in $D_h$ or $\lambda$ is less than or equal to $u_{h+1}$,
  the subtrees $D_h$ and $\lambda$ are re-inserted as the left child of the bottommost node $u_{h+1}$. Since every
  symbol in $\delta$ is strictly greater than $u_{h+1}$, the symbols in $\delta$ are also inserted into the right
  subtree of the root node $u_{h+1}$. Finally, the remaining $s_1$ symbols $u_{h+1}$ are re-inserted into $D_h$.

  By the definition of $D_h$ and the fact that there are $s_2$ primary nodes $u_{h+1}$ in $U$, and the fact that the
  evaluation of $U$ is the same as the evaluation of $T_h$, there are $s_1$ tertiary nodes $u_{h+1}$ in $U$. Hence $D_h$
  and the other nodes $u_{h+1}$ in $T_{h+1}$ together make up $E_{h+1}$; thus $T_{h+1}$ satisfies P1. All the other
  $E_{i_j}$ are contained in $\lambda$, so $E_{h+1}$ satisfies P2. Since $T_h$ satisfies P3, and since $\lambda$ and
  $\delta\beta$ are the left-minimal and right-maximal subtrees of $E_{h+1}$, it follows that $T_{h+1}$ satisfies P3. Finally,
  the relative positions of $m_{i_j}$ and $p_{i_j}$ have not been altered, so $T_{h+1}$ satisfies P4.

  \medskip
  \noindent\textit{Case 3.} Suppose that, in $U$, the node $u_h$ is in the right subtree of $u_{h+1}$, and there is no node $u_i$ in the
  left subtree of $u_{h+1}$. Note that $q_h$ is
  defined if and only if $q_{h+1}$ is defined, in which case $q_h = q_{h+1}$.

  In this case, $U_{h+1}^\uparrow = \parens[\big]{U_h^\uparrow \setminus \set{u_h}} \cup \set{u_{h+1}}$, and $p_h = u_{h+1}$.

  By definition, $B_{h+1}$ is the complete subtree of $U$ at $u_{h+1}$. So $C_{h+1}$ is $B_{h+1}$ with all but the
  topmost node $m_{h+1}$ deleted. Since the left subtree of $u_{h+1}$ in $U$ contains no nodes $u_i$, by
  \fullref{Lemma}{lem:topmostlessthantopmost} it only contains other nodes $u_{h+1}$ and so $m_{h+1} = u_{h+1}$. Thus
  $C_{h+1}$ consists of $u_{h+1}$ and its right subtree. By defintion, $q_{h+1}$ (if defined) is the least symbol
  greater than every topmost symbol in $B_{h+1}$; since $u_{h+1}$ is less than every symbol in $B_h$, this implies that
  $q_{h+1} = q_h$. By definition of the topmost traversal, there is no topmost node between $u_h$ and $u_{h+1}$, so only
  (non-topmost) nodes $q_{h+1}$ can lie between them. Thus $C_{h+1}$ consists of $u_{h+1}$, some number $s_2$ (possibly zero)
  of secondary nodes $q_{h+1}$, and the subtree $B_h$. Hence $D_{h+1}$ consists of $u_{h+1}$, these nodes $q_{h+1}$, and the
  subtree $B_h$ with its tertiary nodes $q_{h+1} = q_h$ deleted, since these tertiary nodes are the same in both $B_{h+1}$ and $B_h$.

  Notice that, in $T_h$, all nodes $m_h$ that are not in $E_h$ itself are in the left-minimal subtree of $E_h$, but may
  not be consecutive. Since no $m_h$ is below $p_h = u_{h+1}$, which is the greatest symbol less than $m_h$, it follows
  that the uppermost node $u_{h+1}$ in $T_h$ is in the left subtree of the lowest node $m_h$.

  Furthermore, as shown in \fullref{Figures}{fig:sylvinductioncase3a} and \ref{fig:sylvinductioncase3b} below, this
  uppermost node $u_{h+1}$ in $T_h$ must be on the path of right child nodes from the left child of the lowest node $m_h$,
  for otherwise it would be in a left subtree of some node $x$ below the lowest node $m_h$, which would imply
  $u_{h+1} = p_h < x < m_h$, which contradicts $p_h$ being the greatest symbol less than $m_h$.

  \smallskip
  \noindent\textit{Sub-case 3(a).} Suppose that $E_h \neq D_h$. Then $q_h$ is defined
  and $D_h$ contains nodes $q_h$. Since $q_h$ is greater than or equal to every node in $D_h$, it follows that the
  uppermost node $q_h$ is where the right-maximal subtree of $D_h$ (and $E_h$) is attached in $T_h$. The tree $E_h$
  consists of $D_h$ with $s$ nodes $q_{h}$ inserted, where $s$ is the number of nodes $q_{h}$ that appear in $U$
  outside of the subtree $D_h$. Suppose there are $r+1$ nodes $m_h$ in $U$, so there are $r$ nodes $m_h$
  outside $D_h$ in $U$.)

  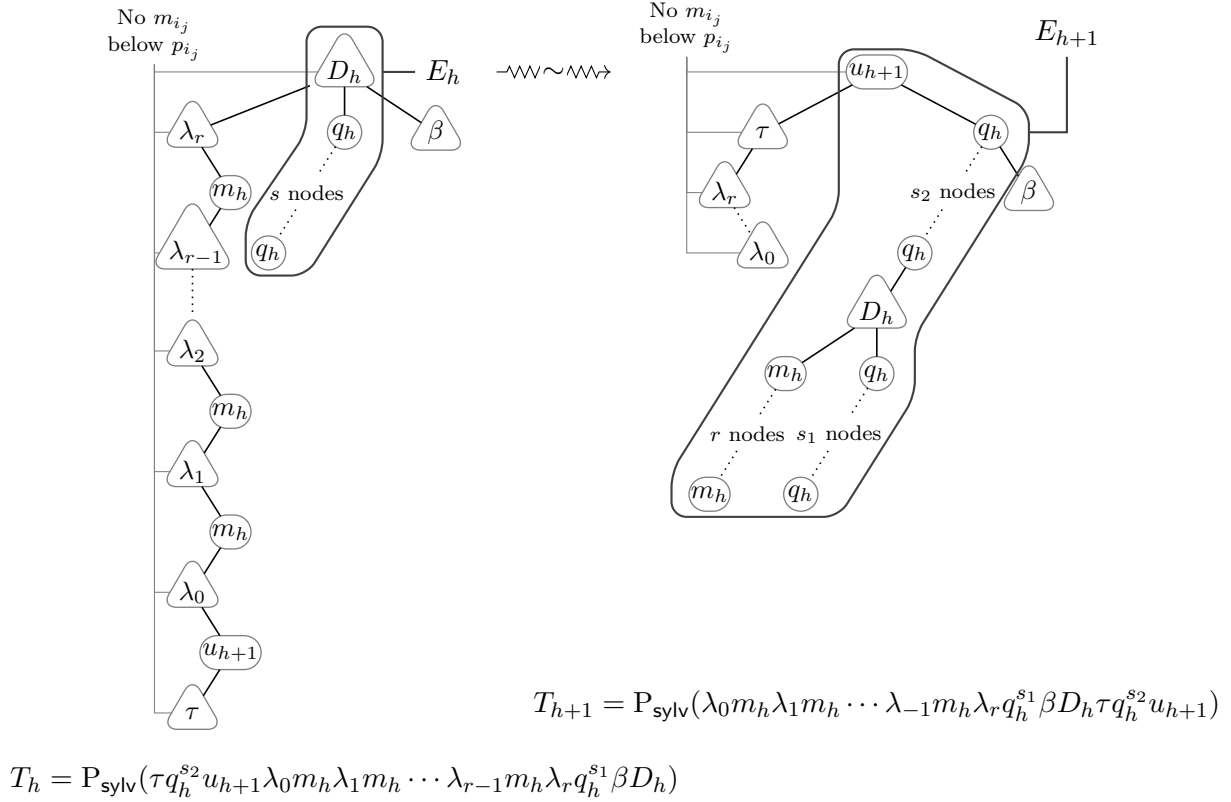
\begin{figure}[tb]
    \centerline{%
    \begin{tikzpicture}
      \begin{scope}[smallbst]
        \node[triangle] (aroot) at (0,0) {$D_h$}
        child[sibling distance=20mm] { node[triangle] (a0) {$\lambda_r$}
          child[missing]
          child { node (a01) {$m_h$}
            child[solid] { node[triangle] (a010) {$\lambda_{r{-}1}$\kern-1mm}
              child[dotted,level distance=13mm] { node[solid,triangle] (a0100) {$\lambda_2$}
                child[missing]
                child[solid,level distance=8mm] { node (a01001) {$m_h$}
                  child { node[solid,triangle] (a010010) {$\lambda_1$}
                    child[missing]
                    child[solid] { node (a0100101) {$m_h$}
                      child[solid] { node[triangle] (a01001010) {$\lambda_0$}
                        child[missing]
                        child { node (a010010101) {$u_{h+1}$}
                          child { node[triangle] (a0100101010) {$\tau$} }
                          child[missing]
                        }
                      }
                      child[missing]
                    }
                  }
                  child[missing]
                }
              }
            }
            child[missing]
          }
        }
        child[sibling distance=20mm] { node (a1) {$q_h$}
          child[dotted] { node[nodecount] (a10) {$s$ nodes}
            child { node[solid] (a100) {$q_h$} }
            child[missing]
          }
          child[missing]
        }
        child[sibling distance=12mm] { node[triangle] (a2) {$\beta$} };
      \end{scope}
      \node[tcomment] (aroota0100101010comment) at ($ (a0100101010) + (-5mm,86mm) $) {No $m_{i_j}$\\below $p_{i_j}$};
      \draw[gray] (aroota0100101010comment.south) |- (aroot);
      \draw[gray] (aroota0100101010comment.south) |- (a0);
      \draw[gray] (aroota0100101010comment.south) |- (a010);
      \draw[gray] (aroota0100101010comment.south) |- (a0100);
      \draw[gray] (aroota0100101010comment.south) |- (a010010);
      \draw[gray] (aroota0100101010comment.south) |- (a01001010);
      \draw[gray] (aroota0100101010comment.south) |- (a0100101010);
      \draw[bstoutline,rounded corners=2mm]
      ($ (aroot) + (0mm,6mm) $) -|
      ($ (aroot) + (5mm,1mm) $) --
      ($ (a1) + (5mm,-3mm) $) --
      ($ (a100) + (5mm,-3mm) $) --
      ($ (a100) + (1mm,-3mm) $) -|
      ($ (a100) + (-3mm,4.2mm) $) --
      ($ (a1) + (-5mm,0mm) $) --
      ($ (aroot) + (-5mm,1mm) $) |-
      ($ (aroot) + (0mm,6mm) $);
      \node (e0) at ($ (aroot) + (13mm,0mm) $) {$E_{h}$};
      \draw[bstoutline] ($ (aroot) + (5mm,0) $) -- (e0);
      \node at ($ (aroot) + (0,-94mm) $) {$T_h = \psylv{\tau q_h^{s_2}u_{h+1}\lambda_0m_h\lambda_1m_h\cdots\lambda_{r-1}m_h\lambda_rq_h^{s_1}\beta D_h}$};
      %
      % %%%%%
      %
      \begin{scope}[smallbst]
        \node (broot) at (70mm,0) {$u_{h+1}$}
        child[sibling distance=30mm] { node[triangle] (b0) {$\tau$}
          child { node[triangle] (b00) {$\lambda_r$}
            child[missing]
            child[dotted] { node[solid,triangle] (b001) {$\lambda_0$} }
          }
          child[missing]
        }
        child[sibling distance=30mm] { node[solid] (b1) {$q_h$}
          child[dotted] { node[nodecount] (b10) {$s_2$ nodes}
            child { node[solid] (b100) {$q_{h}$}
              child[solid] { node[triangle] (b1000) {$D_h$}
                child[sibling distance=12mm] { node (b10000) {$m_h$}
                  child[sibling distance=10mm,dotted] { node[nodecount] (b100000) {$r$ nodes}
                    child { node[solid] (b1000000) {$m_h$} }
                    child[missing]
                  }
                  child[missing]
                }
                child { node (b10001) {$q_h$}
                  child[dotted] { node[nodecount] (b100010) {$s_1$ nodes}
                    child { node[solid] (b1000100) {$q_h$} }
                    child[missing]
                  }
                  child[missing]
                }
                child[missing]
              }
              child[missing]
            }
            child[missing]
          }
          child { node[triangle] (b2) {$\beta$} }
        };
      \end{scope}
      \node[tcomment] (b0b000comment) at ($ (b00) + (-5mm,18mm) $) {No $m_{i_j}$\\below $p_{i_j}$};
      \draw[gray] (b0b000comment.south) |- (broot);
      \draw[gray] (b0b000comment.south) |- (b0);
      \draw[gray] (b0b000comment.south) |- (b00);
      \draw[gray] (b0b000comment.south) |- (b001);
      \draw[bstoutline,rounded corners=2mm]
      ($ (broot) + (0,3mm) $) --
      ($ (broot) + (5mm,3mm) $) --
      ++(15mm,-8mm) --
      ($ (b1) + (5mm,-3mm) $) --
      ++(-15mm,-24mm) --
      ($ (b10001) + (5mm,-3mm) $) --
      ($ (b1000100) + (5mm,-3mm) $) -|
      ($ (b1000000) + (-5mm,3mm) $) --
      ++(22mm,35.2mm) |-
      ($ (broot) + (0,3mm) $);
      \node[anchor=south] (e1) at ($ (b1) + (10mm,10mm) $) {$E_{h+1}$};
      \draw[bstoutline] ($ (b1) + (5mm,0) $) -| (e1);
      \node at ($ (broot) + (0,-84mm) $)
      {$T_{h+1} = \psylv{\lambda_0m_h\lambda_1m_h\cdots \lambda_{-1}m_h\lambda_rq_h^{s_1}\beta
          D_h\tau q_h^{s_2}u_{h+1}}$};
      \draw ($ (aroot) + (20mm,0) $) edge[mogrifyarrow] node[fill=white,anchor=mid,inner xsep=.25mm,inner ysep=1mm] {$\sim$} ($ (broot) + (-35mm,0) $);
    \end{tikzpicture}}
    \caption{Induction step, sub-case 3(a): $E_h \neq D_h$.}
    \label{fig:sylvinductioncase3a}
  \end{figure}

  As shown in \fullref{Figure}{fig:sylvinductioncase3a}, let $\lambda_r$ be reading of the left-minimal subtree of $D_h$
  outside of the complete subtree at the uppermost $m_h$ below $D_h$. (Note that $\lambda_r$ may be empty.) For
  $i = r-1,\ldots,2$, let $\lambda_i$ be readings of the left subtree of the $i$-th node $m_h$ (counting from the
  lowermost to the uppermostmost) outside of the complete subtree of the $i-1$-th. (Note that $\lambda_i$ will be empty
  if the $i$-th node $m_h$ is the left child of the $i+1$-th.) Let $\lambda_0$ be a reading of the left subtree of the
  bottommost node $m_h$ outside the complete subtree at the uppermost node $u_{h+1}$. Let $\tau$ be a reading of the
  left subtree of the uppermost node $u_{h+1}$.  Note that the uppermost non-empty subtree $\lambda_i$ or $\tau$
  contains all the $E_{i_j}$ except $E_h$ by P2. Let $\beta$ be a reading of the right-maximal subtree of $D_h$.

  Thus
  \[
    T_h = \psylv{\tau q_h^{s_2}u_{h+1}\lambda_0m_h\lambda_1m_h\cdots\lambda_{r-1}m_h\lambda_rq_h^{s_1}\beta D_h};
  \]
  where $s_1 = s - s_2$. (Recall that $s_2$ was defined above as the number of secondary nodes $q_h$ between $u_h$ and
  $p_h = u_{h+1}$ in $U$.) Let
  \[
    T_{h+1} = \psylv{\lambda_0m_h\lambda_1m_h\cdots \lambda_{r-1}m_h\lambda_rq_h^{s_1}\beta D_h\tau q_h^{s_2}u_{h+1}},
  \]
  Notice that $T_h \cyc T_{h+1}$.

  In computing $T_{h+1}$, the symbol $u_{h+1}$ is inserted first and becomes the root node. Then the $s_2$ symbols $q_h$
  are inserted into the right subtree of the root nodes, since $q_h > p_h = u_{h+1}$. Every symbol in $\tau$ is less than or
  equal to $u_{h+1}$ and so is inserted into the left subtree of the root note $u_{h+1}$. Every symbol in $D_h$ is
  greater than $u_{h+1}$ and less than or equal to $q_h$, so the tree $D_h$ is re-inserted at the left child of the
  bottommost $q_h$. Every symbol in $\beta$ is greater than $q_h$ (since in $T_h$ the subtree $\beta$ is the right child
  of a node $q_h$, as discussed above), so $\beta$ is inserted as the right subtree of the topmost $q_h$. The remaining
  $s_1$ symbols $q_h$ are re-inserted below $D_h$, attached at the same position as in $T_h$. The symbol $m_h$ is the
  smallest symbol greater than $u_{h+1}$ and all symbols $m_h$ are inserted into the left-minimal subtree of $D_h$
  (which is attached at the single node $m_h$ in $D_h$). Every symbol in every $\lambda_i$ is less than every symbol in
  $\tau$ and so are inserted into the left-minimal subtree of $\tau$. Since every symbol in $\lambda_i$ is greater than
  every symbol in $\lambda_{i+1}$, the former is inserted into the right-maximal subtree of the latter.

  As noted above, $D_{h+1}$ consists of $u_{h+1}$ with empty left subtree, $s_2$ nodes $q_{h+1} = q_h$, and the subtree
  $B_h$ with its tertiary nodes $q_{h+1} = q_h$ deleted. Hence, since $D_h$ contains secondary nodes $q_h$, so does $D_{h+1}$, and
  so $E_{h+1}$ consists of $D_{h+1}$ with $s_1$ nodes $q_{h+1} = q_h$ inserted, as shown in \fullref{Figure}{fig:sylvinductioncase3a}.

  Therefore $E_{h+1}$ appears at the root of $U$ and so $T_{h+1}$ satisfies P1. The other trees $E_{i_j}$ in $T_h$ were in
  the uppermost non-empty $\lambda_i$ or $\tau$; this still holds and so $T_{h+1}$ satisfies P2. Since $T_h$ satisfies
  P3 and the insertions into the left-minimal and right-maximal subtrees of $E_{h+1}$, the tree $T_{h+1}$ satisfies
  P3. Finally, $T_{h+1}$ satisfies P4 since $T_h$ does. (Note that $m_h$ is below $p_{h} = u_{h+1}$, but this does not
  matter since $u_h$ is not in $U_{h+1}^\uparrow$.)

  \smallskip
  \noindent\textit{Sub-case 3(b).} Suppose that $E_h = D_h$. Then either $q_h$ is undefined, or is defined but $D_h$ does not
  contain nodes $q_h$.

  Suppose that $q_h$ is defined. Let $s$ be the number of nodes $q_h$ that appear in $U$ outside of the subtree
  $D_h$. %(The case when $q_h$ is undefined is similar and will be discussed later.)

  Since $q_h$ is greater than every node in $D_h$, it follows that the $s$ nodes $q_h$ are all
  in the right-maximal subtree of $D_h$ (and $E_h$) in $T_h$. Since $q_h$ is the least symbol greater than every symbol in $D_h$, in $T_h$
  the left child of a node $q_h$ can only be another node $q_h$.

  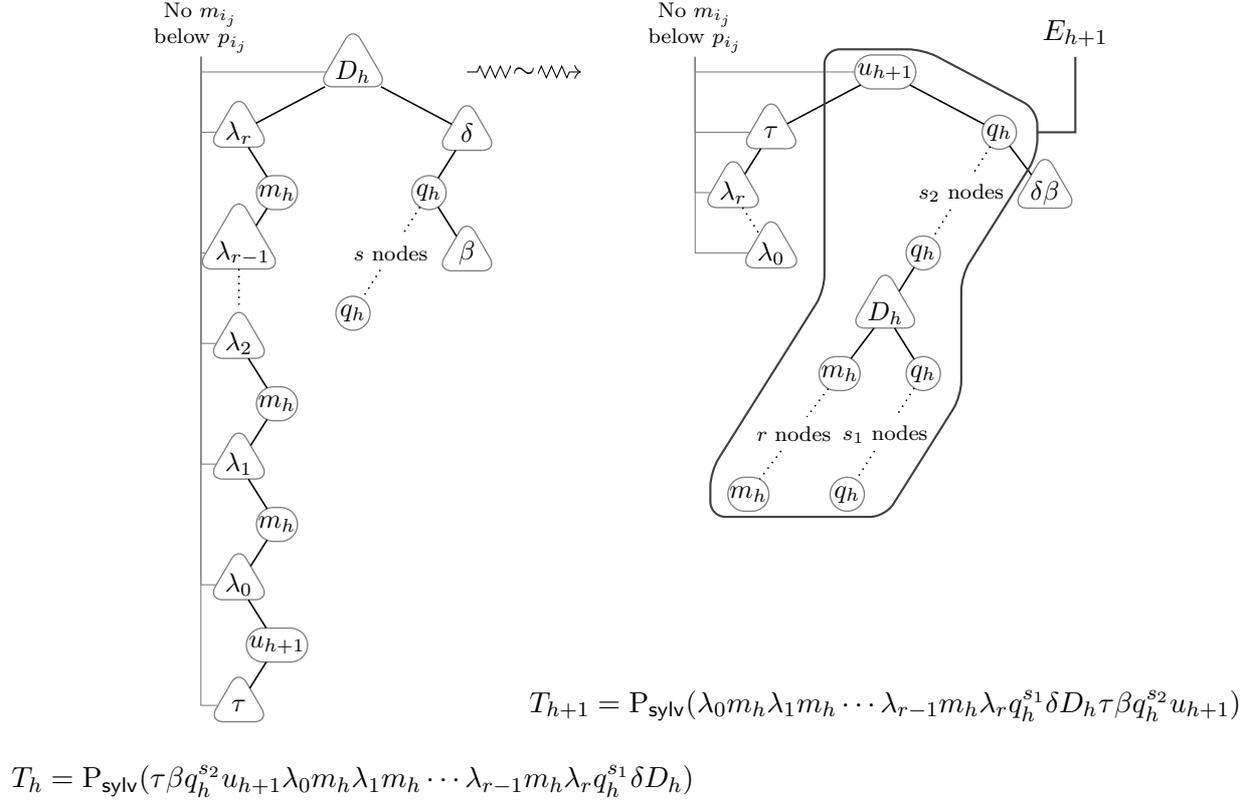
\begin{figure}[tb]
    \centerline{%
    \begin{tikzpicture}
      \begin{scope}[smallbst]
        \node[triangle] (aroot) at (0,0) {$D_h$}
        child[sibling distance=30mm] { node[triangle] (a0) {$\lambda_r$}
          child[missing]
          child { node (a01) {$m_h$}
            child[solid] { node[triangle] (a010) {$\lambda_{r{-}1}$\kern-1mm}
              child[dotted,level distance=12mm] { node[solid,triangle] (a0100) {$\lambda_2$}
                child[missing]
                child[solid,level distance=8mm] { node (a01001) {$m_h$}
                  child { node[solid,triangle] (a010010) {$\lambda_1$}
                    child[missing]
                    child[solid] { node (a0100101) {$m_h$}
                      child[solid] { node[triangle] (a01001010) {$\lambda_0$}
                        child[missing]
                        child { node (a010010101) {$u_{h+1}$}
                          child { node[triangle] (a0100101010) {$\tau$} }
                          child[missing]
                        }
                      }
                      child[missing]
                    }
                  }
                  child[missing]
                }
              }
            }
            child[missing]
          }
        }
        child[sibling distance=30mm] { node[triangle] (a1) {$\delta$}
          child { node (a10) {$q_h$}
            child[dotted] { node[nodecount] (a100) {$s$ nodes}
              child { node[solid] (a1000) {$q_h$} }
              child[missing]
            }
            child { node[triangle] (a101) {$\beta$} }
          }
          child[missing]
        };
      \end{scope}
      \node[tcomment] (aroota0100101010comment) at ($ (a0100101010) + (-5mm,86mm) $) {No $m_{i_j}$\\below $p_{i_j}$};
      \draw[gray] (aroota0100101010comment.south) |- (aroot);
      \draw[gray] (aroota0100101010comment.south) |- (a0);
      \draw[gray] (aroota0100101010comment.south) |- (a010);
      \draw[gray] (aroota0100101010comment.south) |- (a0100);
      \draw[gray] (aroota0100101010comment.south) |- (a010010);
      \draw[gray] (aroota0100101010comment.south) |- (a01001010);
      \draw[gray] (aroota0100101010comment.south) |- (a0100101010);
      \node at ($ (aroot) + (0,-94mm) $) {$T_h = \psylv{\tau\beta q_h^{s_2}u_{h+1}\lambda_0m_h\lambda_1m_h\cdots\lambda_{r-1}m_h\lambda_rq_h^{s_1}\delta D_h}$};
      %
      % %%%%%
      %
      \begin{scope}[smallbst]
        \node (broot) at (70mm,0) {$u_{h+1}$}
        child[sibling distance=30mm] { node[triangle] (b0) {$\tau$}
          child { node[triangle] (b00) {$\lambda_r$}
            child[missing]
            child[dotted] { node[solid,triangle] (b001) {$\lambda_0$} }
          }
          child[missing]
        }
        child[sibling distance=30mm] { node[solid] (b1) {$q_h$}
          child[dotted] { node[nodecount] (b10) {$s_2$ nodes}
            child { node[solid] (b100) {$q_{h}$}
              child[solid] { node[triangle] (b1000) {$D_h$}
                child[sibling distance=12mm] { node (b10000) {$m_h$}
                  child[dotted] { node[nodecount] (b100000) {$r$ nodes}
                    child { node[solid] (b1000000) {$m_h$} }
                    child[missing]
                  }
                  child[missing]
                }
                child { node (b10001) {$q_h$}
                  child[dotted] { node[nodecount] (b100010) {$s_1$ nodes}
                    child { node[solid] (b1000100) {$q_h$} }
                    child[missing]
                  }
                  child[missing]
                }
              }
              child[missing]
            }
            child[missing]
          }
          child[sibling distance=12mm] { node[triangle] (b2) {$\delta\beta$} }
        };
      \end{scope}
      \node[tcomment] (b0b000comment) at ($ (b00) + (-5mm,18mm) $) {No $m_{i_j}$\\below $p_{i_j}$};
      \draw[gray] (b0b000comment.south) |- (broot);
      \draw[gray] (b0b000comment.south) |- (b0);
      \draw[gray] (b0b000comment.south) |- (b00);
      \draw[gray] (b0b000comment.south) |- (b001);
      \draw[bstoutline,rounded corners=2mm]
      ($ (broot) + (0,3mm) $) --
      ($ (broot) + (5mm,3mm) $) --
      ++(15mm,-8mm) --
      ($ (b1) + (5mm,-3mm) $) --
      ++(-10mm,-16mm) --
      ($ (b10001) + (5mm,-3mm) $) --
      ($ (b1000100) + (5mm,-3mm) $) -|
      ($ (b1000000) + (-5mm,3mm) $) --
      ++(15mm,24mm) |-
      ($ (broot) + (0,3mm) $);
      \node[anchor=south] (e1) at ($ (b1) + (10mm,10mm) $) {$E_{h+1}$};
      \draw[bstoutline] ($ (b1) + (5mm,0) $) -| (e1);
      \node at ($ (broot) + (0,-84mm) $)
      {$T_{h+1} = \psylv{\lambda_0m_h\lambda_1m_h\cdots\lambda_{r-1}m_h\lambda_rq_h^{s_1}\delta D_h\tau\beta q_h^{s_2}u_{h+1}}$};
      \draw ($ (aroot) + (15mm,0) $) edge[mogrifyarrow] node[fill=white,anchor=mid,inner xsep=.25mm,inner ysep=1mm] {$\sim$} ($ (broot) + (-40mm,0) $);
    \end{tikzpicture}}
    \caption{Induction step, sub-case 3(b): $E_h = D_h$.}
    \label{fig:sylvinductioncase3b}
  \end{figure}

  Now return to the general situation, where $q_h$ may or may not be defined.  As shown in
  \fullref{Figure}{fig:sylvinductioncase3b}, let $\lambda_i$ and $\tau$ be as in sub-case~3(b). Let $\delta$ be a
  reading of the right-maximal subtree of $D_h$ outside the complete subtree at the uppermost node $q_h$, and let
  $\beta$ be a reading of the right subtree of the topmost node $q_h$. (If $q_h$ is undefined, $\beta$ is empty.)

  Now,
  \[
    T_h = \psylv{\tau\beta q_h^{s_2}u_{h+1}\lambda_0m_h\lambda_1m_h\cdots\lambda_{r-1}m_h\lambda_rq_h^{s_1}\delta D_h},
  \]
  where $s_1 = s - s_2$. (Recall that $s_2$ was defined above as the number of secondary nodes $q_h$ between $u_h$ and
  $p_h = u_{h+1}$ in $U$.) Let
  \[
    T_{h+1} = \psylv{\lambda_0m_h\lambda_1m_h\cdots\lambda_{r-1}m_h\lambda_rq_h^{s_1}\delta D_h\tau\beta q_h^{s_2}u_{h+1}}.
  \]
  Notice that $T_h \cyc T_{h+1}$.

  Assume for the moment that $q_h$ is defined and $s_2 > 0$.  In computing $T_{h+1}$, the symbol $u_{h+1}$ is inserted
  first and becomes the root node. Then the $s_2$ symbols $q_h$ are inserted into the right subtree of the root nodes,
  since $q_h > u_{h+1}$. Every symbol in $\beta$ is greater than $q_h$, so $\beta$ is inserted as the right subtree of
  the topmost $q_h$. Every symbol in $\tau$ is less than or equal to $u_{h+1}$ and so is inserted into the left subtree
  of the root node $u_{h+1}$. Every symbol in $D_h$ is greater than $u_{h+1}$ and less than $q_h$, so the tree $D_h$ is
  re-inserted as the left child of the bottommost $q_h$. The remaining $s_1$ symbols $q_h$ are re-inserted below $D_h$,
  attached at the right-maximal subtree of $D_h$. The symbol $m_h$ is the smallest symbol greater than $p_h = u_{h+1}$
  and so all symbols $m_h$ are inserted into the the left-minimal subtree of $D_h$ (which is attached at the single node
  $m_h$ in $T_h$). Every symbol in every $\lambda_i$ is less than every symbol in $\tau$ and so the $\lambda_i$ are
  inserted into the left-minimal subtree of $\tau$. Since every symbol in $\lambda_i$ is greater than every symbol in
  $\lambda_{i+1}$, the former is inserted into the right-maximal subtree of the latter.

  As noted above, $D_{h+1}$ consists of $u_{h+1}$ with empty left subtree, $s_2$ nodes $q_{h+1} = q_h$, and the subtree
  $B_h$ with its tertiary nodes $q_{h+1} = q_h$ deleted. The tree $D_{h+1}$ contains $s_2$ nodes $q_{h+1} = q_h$, and so
  $E_{h+1}$ consists of $D_{h+1}$ with $s_1$ nodes $q_{h+1} = q_h$ inserted, as shown in
  \fullref{Figure}{fig:sylvinductioncase3b}. Thus $E_{h+1}$ appears at the root of $T_{h+1}$ and every node below it is
  in its left-minimal or right-maximal subtrees.

  If, on the other hand, $q_h$ is defined and $s_2 = 0$, then, by near-identical reasoning, $D_h$ is inserted at the
  right child of $u_{h+1}$ and $\delta\beta$ becomes the right-maximal subtree of $D_h$, and the $s = s_1$ nodes $q_h$
  are inserted into the tree $\delta\beta$.

  In this case, $D_{h+1}$ does not contain any nodes $q_{h+1} = q_h$ and so $E_{h+1} = D_{h+1}$ consists of $u_{h+1}$
  and the subtree $B_h$ with its tertiary nodes $q_{h+1} = q_h$ deleted and so $E_{h+1}$ appears at the root of
  $T_{h+1}$ and every node below it is in its left-minimal or right-maximal subtrees.

  Finally, if $q_h$ is undefined, then again $D_h$ is inserted at the right child of $u_{h+1}$ and $\delta\beta$ becomes
  the right-maximal subtree of $D_h$.

  In this case, $q_{h+1}$ is also undefined and so there are no tertiary nodes in $B_h$. Hence $E_{h+1} = D_{h+1}$ consists precisely of
  $u_{h+1}$ with right subtree $B_h$. So $E_{h+1}$ appears at the root of
  $T_{h+1}$ and every node below it is in its left-minimal or right-maximal subtrees.

  Therefore $T_{h+1}$ satisfies P1. The other trees $E_{i_j}$ in $T_h$ were in the uppermost non-empty $\lambda_i$ or
  $\tau$; this still holds and so $T_{h+1}$ satisfies P2. Since $T_h$ satisfies P3 and all nodes below $E_{h+1}$ are in
  its left-minimal and right-maximal subtrees, the tree $T_{h+1}$ satisfies P3. Finally, $T_{h+1}$ satisfies P4 since
  $T_h$ does. (Note that $m_h$ is below $p_{h+1}$, but this does not matter since $u_h$ is not in $U_{h+1}^\uparrow$.)

  \medskip
  \noindent\textit{Case 4.} Suppose that, in $U$, the node $u_h$ is in the right subtree of $u_{h+1}$, and there is some node $u_i$ in the
  left subtree of $u_{h+1}$.

  Suppose there are topmost nodes $u_{j}$ and $u_{j'}$ in this left subtree. Then their lowest common ancestor $v$ is
  also in this left subtree; $v$ must have both subtrees non-empty and so be a topmost node and thus lie in $U_h$. Thus
  there is a unique node in $U_h^\uparrow$ in the left subtree of $v$; clearly this is $u_g$, which is the rightmost
  node in $U_h^\uparrow \setminus \set{u_h}$. (Recall that, for brevity, $h = i_k$ and $g = i_{k-1}$.) Furthermore, this
  node is on the path of left child nodes from $u_{h+1}$ (since only topmost nodes have right subtrees). Therefore,
  $U_{h+1}^\uparrow = \parens[\big]{U_h^\uparrow \setminus \set{u_h,u_i}} \cup \set{u_{h+1}}$, where $u_i$ is the unique
  node from $U_h^\uparrow$ in the left subtree of $u_{h+1}$. As in case~3, $p_h = u_{h+1}$. The smallest symbol in the
  tree $B_{h+1}$ is the smallest symbol in the left subtree of $u_{h+1}$ (which is certainly non-empty in this case) and
  so $m_{h+1} = m_g$. The least symbol less than or equal to every symbol in $B_{h+1}$ is the least symbol less than or
  equal to every symbol in the right subtree of $u_{h+1}$ (which is certainly non-empty in this case) and so
  $q_h = q_{h+1}$ (or $q_{h+1}$ is undefined if $q_h$ is undefined). The only nodes between $u_g$ and the topmost node
  $u_{h+1}$ are other nodes $u_{h+1}$, so $q_g = u_{h+1}$ (in particular, $q_g$ is defined). Finally, $u_{h+1}$ is the
  first node entered from a right child on ascending from $u_h$ to the root (by the definition of the topmost traversal)
  so and $u_{h+1} = p_h$.

  Let $s_2$ be the number of secondary nodes $q_h$ between $u_h$ and $u_{h+1}$ in $U$, and let $t_2$ is the number of
  primary nodes $u_{h+1}$ and let $s_1 = s - s_2$.

  \smallskip
  \noindent\textit{Sub-case 4(a).} Suppose that $E_h \neq D_h$ and $E_g \neq D_g$. Then $q_h$ is defined,
  $D_h$ contains nodes $q_h$, and $D_g$ contains nodes $q_g$. Since $q_h$ is greater than or equal to every node in
  $D_h$, it follows that the topmost node $q_h$ is where the right-maximal subtree of $D_h$ (and $E_h$) is attached in
  $T_h$. Similarly, since $q_g$ is greater than or equal to every node in $D_g$, it follows that the topmost node $q_g$
  is where the right-maximal subtree of $D_g$ (and $E_g$) is attached in $T_h$. The tree $E_h$ consists of $D_h$ with
  $s$ nodes $q_h$ inserted, where $s$ is the number of nodes $q_h$ that appear in $U$ outside of the subtree $D_h$. The
  tree $E_g$ consists of $D_g$ with $t$ nodes $q_g$ inserted, where $t$ is the number of nodes $q_g = u_{h+1}$ that
  appear in $U$ outside of the subtree $D_g$.

  By P2, the subtree $E_g$ appears on the path of left child nodes and thus in the left-minimal subtree of $D_h$ (and
  $E_h$). The only symbols that are less than or equal to $m_h$ (the minimum symbol in $E_h$) and greater than or equal
  to $q_g = p_h = u_{h+1}$ (the maximum symbol in $E_g$) are the symbols $m_h$ and $q_g = p_h = u_{h+1}$
  themselves.

  By P4, no node $m_h$ appears below a node
  $p_h = q_g$, so nodes $m_h$ cannot appear in the right-maximal subtree of $E_g$. Thus the nodes $m_h$ (except for the single
  node $m_h$ in $E_h$) are precisely the nodes on the path of left child nodes between $E_h$ and $E_g$.

  As shown in \fullref{Figure}{fig:sylvinductioncase4a}, let $\lambda$ be a reading of the left-minimal subtree of $D_g$
  and let $\beta$ be a reading of the right-maximal subtree of $D_h$.

  \begin{figure}[tb]
    \centerline{%
    \begin{tikzpicture}
      \begin{scope}[smallbst]
        \node[triangle] (aroot) at (0,0) {$D_h$}
        child[sibling distance=12mm] { node (a0) {$m_h$}
          child[dotted] { node[nodecount] (a00) {$r$ nodes}
            child[dotted] { node[solid] (a000) {$m_h$}
              child[solid] { node[triangle] (a0000) {$D_g$}
                child { node[triangle] (a00000) {$\lambda$} }
                child { node[empty] (a00001) {$u_{h+1}$}
                  child[dotted] { node[nodecount] (a000010) {$t$ nodes}
                    child[dotted] { node[solid] {$u_{h+1}$} }
                    child[missing]
                  }
                  child[missing]
                }
                child[missing]
              }
              child[missing]
            }
            child[missing]
          }
          child[missing]
        }
        child[sibling distance=12mm] { node (a1) {$q_h$}
          child[dotted] { node[nodecount] (a10) {$s$ nodes}
            child { node[solid] (a100) {$q_h$} }
            child[missing]
          }
          child[missing]
        }
        child[sibling distance=12mm] { node[triangle] (a2) {$\beta$} };
      \end{scope}
      \node[tcomment] (aroota00010comment) at ($ (a0000) + (-5mm,34mm) $) {No $m_{i_j}$\\below $p_{i_j}$};
      \draw[gray] (aroota00010comment.south) |- (aroot);
      \draw[gray] (aroota00010comment.south) |- (a0);
      \draw[gray] (aroota00010comment.south) |- (a00);
      \draw[gray] (aroota00010comment.south) |- (a000);
      \draw[gray] (aroota00010comment.south) |- (a0000);
      % \draw[gray] (aroota00010comment.south) |- (a00010);
      %
      \node at ($ (aroot) + (0,-74mm) $) {$T_h = \psylv{q_h^{s_2}u_{h+1}^t\lambda D_g \beta m_h^rq_h^{s_1}D_h}$};
      %
      % %%%%%
      %
      \begin{scope}[smallbst]
        \node (broot) at (70mm,0) {$u_{h+1}$}
        child[sibling distance=25mm,dotted] { node[nodecount] (b0) {$t_2$ nodes}
          child { node[solid] (b00) {$u_{h+1}$}
            child[solid] { node[triangle] (b000) {$D_g$}
              child { node[triangle] (b0000) {$\lambda$} }
              child { node (b0001) {$u_{h+1}$}
                child[dotted] { node[nodecount] (b00010) {$t_1$ nodes}
                  child { node[solid] (b000100) {$u_{h+1}$} }
                  child[missing]
                }
                child[missing]
              }
              child[missing]
            }
            child[missing]
          }
          child[missing]
        }
        child[sibling distance=25mm] { node (b1) {$q_h$}
          child[dotted] { node[nodecount] (b10) {$s_2$ nodes}
            child[dotted] { node[solid] (b100) {$q_h$}
              child[solid] { node[triangle] (b1000) {$D_h$}
                child[sibling distance=12mm] { node (b10000) {$m_h$}
                  child[dotted] { node[nodecount] (b100000) {$r$ \strut nodes}
                    child[dotted] { node[solid] (b1000000) {$m_h$} }
                    child[missing]
                  }
                  child[missing]
                }
                child[sibling distance=12mm] { node (b10001) {$q_h$}
                  child[dotted] { node[nodecount] (b100010) {$s_1$ nodes}
                    child[dotted] { node[solid] (b1000100) {$q_h$} }
                    child[missing]
                  }
                  child[missing]
                }
                child[missing]
              }
              child[missing]
            }
            child[missing]
          }
          child { node[triangle] (b11) {$\beta$} }
        };
      \end{scope}
      \node[lcomment] (brootb000100comment) at ($ (b000100) + (-9mm,24mm) $) {No $m_{i_j}$\\below $p_{i_j}$};
      \draw[gray] (brootb000100comment.east) -- ++ (2mm,0) |- (broot);
      \draw[gray] (brootb000100comment.east) -- ++ (2mm,0) |- (b00);
      \draw[gray] (brootb000100comment.east) -- ++ (2mm,0) |- (b000);
      \draw[gray] (brootb000100comment.east) -- ++ (2mm,0) |- (b0000);
      \draw[gray] (brootb000100comment.east) -- ++ (2mm,0) |- (b000100);
      \draw[bstoutline,rounded corners=2mm]
      ($ (broot) + (0,3mm) $) --
      ($ (broot) + (5mm,3mm) $) --
      ($ (b1) + (5mm,0) $) --
      +(-4mm,-6.6mm) |-
      ($ (b1000000) + (5mm,-3mm) $) --
      ($ (b1000000) + (-5mm,-3mm) $) -|
      ($ (b000100) + (-5mm,-3mm) $) --
      ($ (b000100) + (-5mm,3mm) $) --
      ($ (b00010) + (-5mm,3mm) $) -|
      ($ (b000) + (-5mm,3mm) $) --
      ($ (b0) + (-5mm,3mm) $) --
      ($ (broot) + (-5mm,3mm) $) --
      ($ (broot) + (0,3mm) $);
      \node (e1) at ($ (b1) + (10mm,00mm) $) {$E_{h+1}$};
      \draw[bstoutline] ($ (b1) + (5mm,0) $) -- (e1);
      \node at ($ (broot) + (0,-74mm) $) {$T_{h+1} = \psylv{u_{h+1}^{t_1}\lambda D_g \beta m_h^rq_h^{s_1}D_hq_h^{s_2}u_{h+1}^{t_2}}$};
    \end{tikzpicture}}
    \caption{Induction step, sub-case 4(a): $E_h \neq D_h$ and $E_g \neq D_g$.}
    \label{fig:sylvinductioncase4a}
  \end{figure}

  Then
  \[
    T_h = \psylv{q_h^{s_2}u_{h+1}^t\lambda D_g \beta m_h^rq_h^{s_1}D_h}.
  \]
  Let
  \[
    T_{h+1} = \psylv{u_{h+1}^{t_1}\lambda D_g \beta m_h^rq_h^{s_1}D_hq_h^{s_2}u_{h+1}^{t_2}}.
  \]
  Note that $T_h \cyc T_{h+1}$.

  In computing $T_{h+1}$, the rightmost symbol $u_{h+1}$ is inserted first and becomes the root
  node, with the remaining $t_2 - 1$ symbols descending from it on the path of left child nodes. Since
  $q_h = q_{h+1} > u_{h+1}$, the $s_2$ symbols $q_h$ are inserted into the right subtree of the root node
  $u_{h+1}$. Every symbol in $D_h$ is greater than $u_{h+1}$ and less than or equal to $q_h$, so the subtree $D_h$ is
  re-inserted as the left child node of the bottommost node $q_h$. The remaining $s_1$ symbols $q_h$ are inserted below
  $D_h$ (attached at the same place as in $T_h$). The symbol $m_h$ is the smallest symbol greater than $u_{h+1}$ and so
  the $r$ symbols $m_h$ is re-inserted into the left-minimal subtree of $D_h$ (attached as the left child of the node
  $m_h$ in $D_h$). Every symbol in $\beta$ is greater than $q_h$, so $\beta$ is re-inserted as the right child of the
  topmost $q_h$. Every symbol in $D_g$ and $\lambda$ is less than or equal to $u_{h+1}$ and so $D_g$ and $\lambda$ are
  re-inserted at the left child of the bottommost node $u_{h+1}$. The remaining $t_1$ nodes $u_{h+1}$ are into the right-maximal subtree of $D_g$ (in
  the same place as they were attached in $T_h$).

  The subtree $D_h$, the nodes $m_h$, and possibly some of the $q_h$ below $D_h$ make up $B_h$. This tree $B_h$,
  together with the nodes $q_h$ above $D_h$, the nodes $u_{h+1}$ and $D_{g}$ together make up $D_{h+1}$. Adding the
  remaining nodes $q_h = q_{h+1}$ below $D_h$ gives the tree $E_{h+1}$. So $T_{h+1}$ satisfies P1. The other trees
  $E_{i_j}$ in $T_h$ were in $\lambda$; this still holds and so $T_{h+1}$ satisfies P2. Every node not in $E_{h+1}$ is
  in its left-minimal or right-maximal subtree; together with the fact that $T_h$ satisfies P3, this shows that
  $T_{h+1}$ satisfies P3. Finally, $T_{h+1}$ satisfies P4 since $T_h$ does. (Note that $m_h$ is
  below $p_h = u_{h+1}$, but this does not matter since $u_h$ is not in $U_{h+1}^\uparrow$.)

  \smallskip
  \noindent\textit{Sub-case 4(b).} Suppose that $E_h \neq D_h$ and $E_g = D_g$. Then $q_h$ is defined and
  $D_h$ contains nodes $q_h$. Further, $q_g$ is defined and $q_g = u_{h+1}$, but $D_g$ does not contain nodes
  $q_g$. Since $q_h$ is greater than or equal to every node in $D_h$, it follows that the topmost node $q_h$ is where
  the right-maximal subtree of $D_h$ (and $E_h$) is attached. The tree $E_h$ consists of $D_h$ with $s$ nodes
  $q_h$ inserted, where $s$ is the number of nodes $q_h$ that appear in $U$ outside of the complete subtree at
  $u_h$.

  By P2, in $T_h$ the subtree $E_g$ appears on the path of left child nodes and thus in the left-minimal subtree of
  $E_h$ (and thus of $D_h$), which appears at the root of $T_h$ by P1. The only symbols that are less than or equal to
  $m_h$ (the minimum symbol in $E_h$) and greater than or equal to every symbol in $E_g$ are $m_h$ and
  $p_h = u_{h+1} = q_g$. Thus nodes $m_h$ and $u_{h+1}$ must appear either on the path of left child nodes between $E_h$
  and $E_g$, or in the right-maximal subtree of $E_g$.

  As shown in \fullref{Figure}{fig:sylvinductioncase4b}, let $\lambda$ be a reading of the left-minimal subtree of $D_g$
  and let $\beta$ be a reading of the right-maximal subtree of $D_h$. The nodes shown as empty consist of $r$ nodes $m_h$
  and $t$ nodes $u_{h+1}$ (for some $r$ and $t$), with the $m_h$ above the $u_{h+1}$ by condition P4 since $u_{h+1} = p_h$. Note, however, that
  the boundary between the $m_h$ and the $u_{h+1}$ may be either above or below $D_g$.

  \begin{figure}[tb]
    \centerline{%
    \begin{tikzpicture}
      \begin{scope}[smallbst]
        \node[triangle] (aroot) at (0,0) {$D_h$}
        child[sibling distance=12mm] { node[empty] (a0) {}
          child[dotted] { node[nodecount] (a00) {}
            child[dotted] { node[solid,empty] (a000) {}
              child[solid] { node[triangle] (a0000) {$D_g$}
                child { node[triangle] (a00000) {$\lambda$} }
                child { node[empty] (a00001) {}
                  child[dotted] { node[nodecount] (a000010) {}
                    child[dotted] { node[solid,empty] {} }
                    child[missing]
                  }
                  child[missing]
                }
              }
              child[missing]
            }
            child[missing]
          }
          child[missing]
        }
        child[sibling distance=12mm] { node (a1) {$q_h$}
          child[dotted] { node[nodecount] (a10) {$s$ nodes}
            child { node[solid] (a100) {$q_h$} }
            child[missing]
          }
          child[missing]
        }
        child[sibling distance=12mm] { node[triangle] (a2) {$\beta$} };
      \end{scope}
      \node[tcomment] (aroota00010comment) at ($ (a0000) + (-5mm,34mm) $) {No $m_{i_j}$\\below $p_{i_j}$};
      \draw[gray] (aroota00010comment.south) |- (aroot);
      \draw[gray] (aroota00010comment.south) |- (a0);
      \draw[gray] (aroota00010comment.south) |- (a00);
      \draw[gray] (aroota00010comment.south) |- (a000);
      \draw[gray] (aroota00010comment.south) |- (a0000);
      % \draw[gray] (aroota00010comment.south) |- (a00010);
      %
      \node[align=left] at ($ (aroot) + (-20mm,-74mm) $) {%
      (1)~$T_h = \psylv{q_h^{s_2}u_{h+1}^tm_h^{r_1}\lambda D_g m_h^{r_2}\beta q_h^{s_1}D_h}$;\\
      (2)~$T_h = \psylv{q_h^{s_2}u_{h+1}^{q_1}\lambda D_g u_{h+1}^{q_2}m_h^{r}\beta q_h^{s_1}D_h}$;\\
      (3)~$T_h = \psylv{q_h^{s_2}u_{h+1}^{q_1}\lambda D_g u_{h+1}^{q_2}m_h^{r}\beta q_h^{s_1}D_h}$.
      };
      %
      % %%%%%
      %
      \begin{scope}[smallbst]
        \node (broot) at (70mm,0) {$u_{h+1}$}
        child[sibling distance=30mm,dotted] { node[nodecount] (b0) {$t_2$ nodes}
          child { node[solid] (b00) {$u_{h+1}$}
            child[solid] { node[triangle] (b000) {$D_g$}
              child[sibling distance=12mm] { node[triangle] (b0000) {$\lambda$} }
              child[sibling distance=12mm] { node (b0001) {$u_{h+1}$}
                child[dotted] { node[nodecount] (b00010) {$t_1$ nodes}
                  child { node[solid] (b000100) {$u_{h+1}$} }
                  child[missing]
                }
                child[missing]
              }
            }
            child[missing]
          }
          child[missing]
        }
        child[sibling distance=30mm] { node (b1) {$q_h$}
          child[dotted] { node[nodecount] (b10) {$s_2$ nodes}
            child[dotted] { node[solid] (b100) {$q_h$}
              child[solid] { node[triangle] (b1000) {$D_h$}
                child[sibling distance=12mm] { node (b10000) {$m_h$}
                  child[dotted,sibling distance=10mm] { node[nodecount] (b100000) {$r$ \strut nodes}
                    child[dotted] { node[solid] (b1000000) {$m_h$} }
                    child[missing]
                  }
                  child[missing]
                }
                child[sibling distance=12mm] { node (b10001) {$q_h$}
                  child[dotted,sibling distance=10mm] { node[nodecount] (b100010) {$s_1$ nodes}
                    child[dotted] { node[solid] (b1000100) {$q_h$} }
                    child[missing]
                  }
                  child[missing]
                }
                child[missing]
              }
              child[missing]
            }
            child[missing]
          }
          child { node[triangle] (b11) {$\beta$} }
        };
      \end{scope}
      \node[lcomment] (brootb000100comment) at ($ (b000100) + (-9mm,24mm) $) {No $m_{i_j}$\\below $p_{i_j}$};
      \draw[gray] (brootb000100comment.east) -- ++ (2mm,0) |- (broot);
      \draw[gray] (brootb000100comment.east) -- ++ (2mm,0) |- (b00);
      \draw[gray] (brootb000100comment.east) -- ++ (2mm,0) |- (b000);
      \draw[gray] (brootb000100comment.east) -- ++ (2mm,0) |- (b0000);
      \draw[gray] (brootb000100comment.east) -- ++ (2mm,0) |- (b000100);
      \draw[bstoutline,rounded corners=2mm]
      ($ (broot) + (0,3mm) $) --
      ($ (broot) + (5mm,3mm) $) --
      ($ (b1) + (5mm,0) $) --
      +(-4mm,-6.6mm) |-
      ($ (b1000000) + (5mm,-3mm) $) --
      ($ (b1000000) + (-5mm,-3mm) $) -|
      ($ (b000100) + (-5mm,-3mm) $) --
      ($ (b000100) + (-5mm,3mm) $) --
      ($ (b0001) + (-5mm,3mm) $) --
      ($ (b000) + (-5mm,-3mm) $) --
      ($ (b000) + (-5mm,3mm) $) --
      ($ (b0) + (-5mm,3mm) $) --
      ($ (broot) + (-5mm,3mm) $) --
      ($ (broot) + (0,3mm) $);
      \node (e1) at ($ (b1) + (10mm,0mm) $) {$E_{h+1}$};
      \draw[bstoutline] ($ (b1) + (5mm,0) $) -- (e1);
      \node[align=left] at ($ (broot) + (0,-74mm) $) {%
      (1)~$T_{h+1} = \psylv{u_{h+1}^{t_1}m_h^{r_1}\lambda D_g m_h^{r_2}\beta q_h^{s_1}D_hq_h^{s_2}u_{h+1}^{t_2}}$; \\
      (2)~$T_{h+1} = \psylv{u_{h+1}^{q_2 - t_2}m_h^{r}\beta q_h^{s_1}D_hq_h^{s_2}u_{h+1}^{q_1}\lambda D_g u_{h+1}^{t_2}}$; \\
      (3)~$T_{h+1} = \psylv{u_{h+1}^{q_1-t_2+q_2}\lambda D_g u_{h+1}^{q_2}m_h^{r}\beta q_h^{s_1}D_hq_h^{s_2}u_{h+1}^{t_2-q_2}}$.
      };
    \end{tikzpicture}}
    \caption{Induction step, sub-case 4(b): $E_h \neq D_h$ and $E_g = D_g$; sub-sub-cases (1)--(3) use different cyclic shifts.}
    \label{fig:sylvinductioncase4b}
  \end{figure}

  Consider three sub-sub-cases:
  \begin{enumerate}

  \item There are no nodes $u_{h+1}$ above $D_g$. Suppose there are $r_1$ nodes $m_h$ below $D_g$ and $r_2$ above. Then
    \[
      T_h = \psylv{q_h^{s_2}u_{h+1}^tm_h^{r_1}\lambda D_g m_h^{r_2}\beta q_h^{s_1}D_h}.
    \]
    Let
    \[
      T_{h+1} = \psylv{u_{h+1}^{t_1}m_h^{r_1}\lambda D_g m_h^{r_2}\beta q_h^{s_1}D_hq_h^{s_2}u_{h+1}^{t_2}}.
    \]
    Note that $T_h \cyc T_{h+1}$.

    In computing $T_{h+1}$, the rightmost symbol $u_{h+1}$ is inserted first and becomes the root node, with the
    remaining $t_2 - 1$ symbols descending from it on the path of left child nodes. Since $q_h = q_{h+1} > u_{h+1}$, the
    $s_2$ symbols $q_h$ are inserted into the right subtree of the root node $u_{h+1}$. Every symbol in $D_h$ is greater
    than $u_{h+1}$ and less than or equal to $q_h$, so if $s_2 > 0$, the subtree $D_h$ is re-inserted as the left child
    node of the bottommost node $q_h$, while if $s_2 = 0$, the subtree $D_h$ is inserted as the right child of
    $u_{h+1}$. The remaining $s_1$ symbols $q_h$ are inserted below $D_h$ (attached at the same place as in
    $T_h$). Every symbol in $\beta$ is greater than $q_h$, so if $s_2 > 0$, the subtree $\beta$ is re-inserted as the
    right child of the topmost $q_h$, while if $s_2 = 0$, the subtree $\beta$ is inserted as the right-maximal
    subtree of $D_h$. The symbol $m_h$ is the smallest symbol greater than $u_{h+1}$ and so the $r_2$ symbols $m_h$ are
    inserted into the left-minimal subtree of $D_h$ (attached as the left child of the node $m_h$ in $D_h$). Every
    symbol in $D_g$ and $\lambda$ is less than or equal to $u_{h+1}$ and so $D_g$ and $\lambda$ are re-inserted at the
    left child of the bottommost node $u_{h+1}$. The remaining $r_1$ nodes $m_h$ are inserted at the left child of the
    bottommost node $m_h$ (so that the nodes $m_h$ are now consecutive). The remaining $t_1$ nodes $u_{h+1}$ are into
    the right-maximal subtree of $D_g$.

  \item There are $o_1$ nodes $u_{h+1}$ below $D_g$ and $o_2$ above, where $o_2 \geq t_2$. Then
    \[
      T_h = \psylv{q_h^{s_2}u_{h+1}^{o_1}\lambda D_g u_{h+1}^{o_2}m_h^{r}\beta q_h^{s_1}D_h}.
    \]
    Let
    \[
      T_{h+1} = \psylv{u_{h+1}^{o_2 - t_2}m_h^{r}\beta q_h^{s_1}D_hq_h^{s_2}u_{h+1}^{o_1}\lambda D_g u_{h+1}^{t_2}}.
    \]
    Note that $T_h \cyc T_{h+1}$.

    In computing $T_{h+1}$, the rightmost symbol $u_{h+1}$ is inserted first and becomes the root node, with the
    remaining $t_2 - 1$ symbols descending from it on the path of left child nodes. Every symbol in $D_g$ and $\lambda$
    is less than or equal to $u_{h+1}$ and so $D_g$ and $\lambda$ are re-inserted at the left child of the bottommost
    node $u_{h+1}$. The next $o_1$ nodes $u_{h+1}$ are inserted into the right-maximal subtree $D_g$. Since
    $q_h = q_{h+1} > u_{h+1}$, the $s_2$ symbols $q_h$ are inserted into the right subtree of the root node
    $u_{h+1}$. Every symbol in $D_h$ is greater than $u_{h+1}$ and less than or equal to $q_h$, so if $s_2 > 0$, the
    subtree $D_h$ is re-inserted as the left child node of the bottommost node $q_h$, while if $s_2 = 0$, the subtree
    $D_h$ is inserted as the right child of $u_{h+1}$. The remaining $s_1$ symbols $q_h$ are inserted below $D_h$
    (attached at the same place as in $T_h$). Every symbol in $\beta$ is greater than $q_h$, so if $s_2 > 0$, the
    subtree $\beta$ is re-inserted as the right child of the topmost $q_h$, while if $s_2 = 0$, the subtree $\beta$ is
    attached at as the right-maximal subtree of $D_h$. The symbol $m_h$ is the smallest symbol greater than $u_{h+1}$
    and so the $r$ symbols $m_h$ are inserted into the left-minimal subtree of $D_h$ (attached as the left child of the
    node $m_h$ in $D_h$). The remaining $o_2-t_2$ nodes $u_{h+1}$ are inserted at the left child of the bottommost node
    $u_{h+1}$ (so that the $o_1+o_2 - t_2 = t_1$ nodes $u_{h+1}$ below $D_g$ are now consecutive).

  \item There are $o_1$ nodes $u_{h+1}$ below $D_g$ and $o_2$ above, where $o_2 < t_2$. Then
    \[
      T_h = \psylv{q_h^{s_2}u_{h+1}^{o_1}\lambda D_g u_{h+1}^{o_2}m_h^{r}\beta q_h^{s_1}D_h}.
    \]
    Let
    \[
      T_{h+1} = \psylv{u_{h+1}^{o_1-t_2+o_2}\lambda D_g u_{h+1}^{o_2}m_h^{r}\beta q_h^{s_1}D_hq_h^{s_2}u_{h+1}^{t_2-o_2}}.
    \]
    Note that $o_1-t_2+o_2 = t_1$ and that $T_h \cyc T_{h+1}$.

    In computing $T_{h+1}$, the rightmost symbol $u_{h+1}$ is inserted first and becomes the root node, with the
    remaining $t_2-o_2 - 1$ symbols descending from it on the path of left child nodes. Since $q_h = q_{h+1} > u_{h+1}$,
    the $s_2$ symbols $q_h$ are inserted into the right subtree of the root node $u_{h+1}$. Every symbol in $D_h$ is
    greater than $u_{h+1}$ and less than or equal to $q_h$, so if $s_2 > 0$, the subtree $D_h$ is re-inserted as the
    left child node of the bottommost node $q_h$, while if $s_2 = 0$, the subtree $D_h$ is inserted as the right child
    of $u_{h+1}$. The remaining $s_1$ symbols $q_h$ are inserted below $D_h$ (attached at the same place as in
    $T_h$). Every symbol in $\beta$ is greater than $q_h$, so if $s_2 > 0$, the subtree $\beta$ is re-inserted as the
    right child of the topmost $q_h$, while if $s_2 = 0$, the subtree $\beta$ is attached at as the right-maximal
    subtree of $D_h$. The symbol $m_h$ is the smallest symbol greater than $u_{h+1}$ and so the $r$ symbols $m_h$ are
    inserted into the left-minimal subtree of $D_h$ (attached as the left child of the node $m_h$ in $D_h$). The next
    $o_2$ symbols $u_{h+1}$ are inserted into the left subtree of the bottommost node $u_{n+1}$, so that there are
    $t_2-o_2+o_2 = t_2$ consecutive nodes $u_{h+1}$.  Every symbol in $D_g$ and $\lambda$ is less than or equal to
    $u_{h+1}$ and so $D_g$ and $\lambda$ are re-inserted at the left child of the bottommost node $u_{h+1}$. The
    remaining $o_1-t_2+o_2 = t_1$ nodes $u_{h+1}$ are inserted into the right-maximal subtree of $D_g$.

  \end{enumerate}

  The subtree $D_h$, the nodes $m_h$, and possibly some of the $q_h$ below $D_h$ make up $B_h$. This tree $B_h$,
  together with the nodes $q_h$ above $D_h$, the nodes $u_{h+1}$ and $D_{g}$ together make up $D_{h+1}$. Adding the
  remaining nodes $q_h = q_{h+1}$ below $D_h$ gives the tree $E_{h+1}$. So $T_{h+1}$ satisfies P1. The other trees
  $E_{i_j}$ in $T_2$ were in $\lambda$; this still holds and so $T_{h+1}$ satisfies P2. Every node not in $E_{h+1}$ is
  in its left-minimal or right-maximal subtree; together with the fact that $T_h$ satisfies P3, this shows that
  $T_{h+1}$ satisfies P3. Finally, $T_{h+1}$ satisfies P4 since $T_h$ does. (Note that $m_h$ is
  below $p_h = u_{h+1}$, but this does not matter since $u_h$ is not in $U_{h+1}^\uparrow$.)

  \begin{figure}[tb]
    \centerline{%
    \begin{tikzpicture}
      \begin{scope}[smallbst]
        \node[triangle] (aroot) at (0,0) {$D_h$}
        child { node (a0) {$m_h$}
          child[dotted] { node[nodecount] (a00) {$r$ nodes}
            child[dotted] { node[solid] (a000) {$m_h$}
              child[solid] { node[triangle] (a0000) {$D_g$}
                child { node[triangle] (a00000) {$\lambda$} }
                child { node[empty] (a00001) {$u_{h+1}$}
                  child[dotted] { node[nodecount] (a000010) {$t$ nodes}
                    child[dotted] { node[solid] {$u_{h+1}$} }
                    child[missing]
                  }
                  child[missing]
                }
                child[missing]
              }
              child[missing]
            }
            child[missing]
          }
          child[missing]
        }
        child { node[triangle] (a1) {$\delta$}
          child { node (a10) {$q_h$}
            child[dotted] { node[nodecount] (a100) {$s$ nodes}
              child { node[solid] (a1000) {$q_h$} }
              child[missing]
            }
            child { node[triangle] (a101) {$\beta$} }
          }
          child[missing]
        };
      \end{scope}
      \node[tcomment] (aroota00010comment) at ($ (a0000) + (-5mm,34mm) $) {No $m_{i_j}$\\below $p_{i_j}$};
      \draw[gray] (aroota00010comment.south) |- (aroot);
      \draw[gray] (aroota00010comment.south) |- (a0);
      \draw[gray] (aroota00010comment.south) |- (a000);
      \draw[gray] (aroota00010comment.south) |- (a0000);
      % \draw[gray] (aroota00010comment.south) |- (a00010);
      %
      \node[align=left] at ($ (aroot) + (-10mm,-74mm) $) {
      (1)~$T_h = \psylv{\beta q_h^{s_2}u_{h+1}^t\lambda D_g m_h^rq_h^{s_1}\delta D_h}$;\\
      (2)~$T_h = \psylv{u_{h+1}^t\lambda D_g m_h^r\beta q_h^{s}\delta D_h}$.
      };
      %
      % %%%%%
      %
      \begin{scope}[smallbst]
        \node (broot) at (70mm,0) {$u_{h+1}$}
        child[sibling distance=25mm,dotted] { node[nodecount] (b0) {$t_2$ nodes}
          child { node[solid] (b00) {$u_{h+1}$}
            child[solid] { node[triangle] (b000) {$D_g$}
              child { node[triangle] (b0000) {$\lambda$} }
              child { node (b0001) {$u_{h+1}$}
                child[dotted] { node[nodecount] (b00010) {$t_1$ nodes}
                  child { node[solid] (b000100) {$u_{h+1}$} }
                  child[missing]
                }
                child[missing]
              }
              child[missing]
            }
            child[missing]
          }
          child[missing]
        }
        child[sibling distance=25mm] { node (b1) {$q_h$}
          child[dotted] { node[nodecount] (b10) {$s_2$ nodes}
            child[dotted] { node[solid] (b100) {$q_h$}
              child[solid] { node[triangle] (b1000) {$D_h$}
                child[sibling distance=12mm] { node (b10000) {$m_h$}
                  child[dotted,sibling distance=10mm] { node[nodecount] (b100000) {$r$ \strut nodes}
                    child[dotted] { node[solid] (b1000000) {$m_h$} }
                    child[missing]
                  }
                  child[missing]
                }
                child[sibling distance=12mm] { node (b10001) {$q_h$}
                  child[dotted,sibling distance=10mm] { node[nodecount] (b100010) {$s_1$ nodes}
                    child[dotted] { node[solid] (b1000100) {$q_h$} }
                    child[missing]
                  }
                  child[missing]
                }
              }
              child[missing]
            }
            child[missing]
          }
          child { node[triangle] (b11) {$\delta\beta$} }
        };
      \end{scope}
      \node[lcomment] (brootb000100comment) at ($ (b000100) + (-9mm,24mm) $) {No $m_{i_j}$\\below $p_{i_j}$};
      \draw[gray] (brootb000100comment.east) -- ++ (2mm,0) |- (broot);
      \draw[gray] (brootb000100comment.east) -- ++ (2mm,0) |- (b00);
      \draw[gray] (brootb000100comment.east) -- ++ (2mm,0) |- (b000);
      \draw[gray] (brootb000100comment.east) -- ++ (2mm,0) |- (b0000);
      \draw[gray] (brootb000100comment.east) -- ++ (2mm,0) |- (b000100);
      \draw[bstoutline,rounded corners=2mm]
      ($ (broot) + (0,3mm) $) --
      ($ (broot) + (5mm,3mm) $) --
      ($ (b1) + (5mm,0) $) --
      +(-4mm,-6.6mm) |-
      ($ (b1000000) + (5mm,-3mm) $) --
      ($ (b1000000) + (-5mm,-3mm) $) -|
      ($ (b000100) + (-5mm,-3mm) $) --
      ($ (b000100) + (-5mm,3mm) $) --
      ($ (b00010) + (-5mm,3mm) $) -|
      ($ (b000) + (-5mm,3mm) $) --
      ($ (b0) + (-5mm,3mm) $) --
      ($ (broot) + (-5mm,3mm) $) --
      ($ (broot) + (0,3mm) $);
      \node (e1) at ($ (b1) + (10mm,00mm) $) {$E_{h+1}$};
      \draw[bstoutline] ($ (b1) + (5mm,0) $) -- (e1);
      \node[align=left] at ($ (broot) + (0,-74mm) $) {
      (1)~$T_{h+1} = \psylv{u_{h+1}^{t_1}\lambda D_g m_h^r\beta q_h^{s}\delta D_h u_{h+1}^{t_2}}$;\\
      (2)~$T_{h+1} = \psylv{u_{h+1}^{t_1}\lambda D_g m_h^r\beta q_h^{s}\delta D_h u_{h+1}^{t_2}}$.
      };
    \end{tikzpicture}}
    \caption{Induction step, sub-case 4(c): $E_h = D_h$ and $E_g \neq D_g$; sub-sub-cases (1) and (3) use different cyclic shifts.}
    \label{fig:sylvinductioncase4c}
  \end{figure}

  \smallskip
  \noindent\textit{Sub-case 4(c).} Suppose that $E_h = D_h$ and $E_g \neq D_g$. Then $D_g$ contains nodes $q_g$. Since
  $q_g$ is greater than or equal to every node in $D_g$, it follows that the
  topmost node $q_g$ is where the right-maximal subtree of $D_g$ (and $E_g$) is attached in $T_h$. The tree $E_g$
  consists of $D_g$ with $t$ nodes $q_g$ inserted, where $t$ is the number of nodes $q_g = u_{h+1}$ that appear in $U$
  outside of the complete subtree at $u_g$.

  The subtree $E_g$ appears on the path of left child nodes and thus in the left-minimal subtree of $E_h$ (and
  $E_h$). The only symbols that is less than or equal to $m_h$ and greater than or equal to every symbol
  in $q_g$ are $m_h$ and $p_h = u_{h+1} = q_g$. By P4, no node $m_h$ appears below a node $p_h$, so nodes $m_h$ cannot appear
  in the right-maximal subtree of $E_g$. Thus the nodes $m_h$ (except for the single node $m_h$ in $D_h$) are precisely
  the nodes on the path of left child nodes between $D_h$ and $D_g$.

  As shown in \fullref{Figure}{fig:sylvinductioncase4c}, let $\lambda$ be a reading of the left-minimal subtree of
  $D_g$. Let $\delta$ be a reading of the right-maximal subtree of $D_h$ outside the complete subtree at the topmost
  node $q_h$ (if $q_h$ is defined) and let $\beta$ be a reading of the right-maximal subtree of the topmost $q_h$. (Note
  that $\beta$ is empty if $q_h$ is not defined.)

  Let $s_2$ be the number of secondary nodes $q_h$ between $u_h$ and $u_{h+1}$ in $U$, and let $t_2$ be the number of
  primary nodes $u_{h+1}$. Note that if $q_h$ is defined, then $s_2 < s$ since there must be at least one primary node
  $q_h$. Consider two sub-sub-cases:

  \begin{enumerate}
  \item Suppose $s_2 > 0$ and $q_h$ is defined.
    Then
    \[
      T_h = \psylv{\beta q_h^{s_2}u_{h+1}^t\lambda D_g m_h^rq_h^{s_1}\delta D_h}.
    \]
    Let
    \[
      T_{h+1} = \psylv{u_{h+1}^{t_1}\lambda D_g m_h^rq_h^{s_1}\delta D_h\beta q_h^{s_2}u_{h+1}^{t_2}}.
    \]
    Note that $T_h \cyc T_{h+1}$.

    In computing $T_{h+1}$, the rightmost symbol $u_{h+1}$ is inserted first and becomes the root node, with the
    remaining $t_2 - 1$ symbols descending from it on the path of left child nodes. Since $q_h = q_{h+1} > u_{h+1}$, the
    $s_2$ symbols $q_h$ are inserted into the right subtree of the root node $u_{h+1}$. Every symbol in $\beta$ is
    greater than $q_h$, so $\beta$ is re-inserted as the right child of the topmost $q_h$. Every symbol in $D_h$ is
    greater than $u_{h+1}$ and less than or equal to $q_h$, so the subtree $D_h$ is re-inserted as the left child node
    of the bottommost node $q_h$. Every symbol in $\delta$ is greater than $q_h$, so $\delta$ is inserted into the
    subtree $\beta$. The remaining $s_1$ symbols $q_h$ are inserted into the right-maximal subtree of $D_h$. The symbol
    $m_h$ is the smallest symbol greater than $u_{h+1}$ and so the $r$ symbols $m_h$ are re-inserted into the
    left-minimal subtree of $D_h$ (attached as the left child of the node $m_h$ in $D_h$). Every symbol in $D_g$ and
    $\lambda$ is less than or equal to $u_{h+1}$ and so $D_g$ and $\lambda$ are re-inserted at the left child of the
    bottommost node $u_{h+1}$. The remaining $t_1$ nodes $u_{h+1}$ are into $D_g$ (in the same place as they were
    attached in $T_h$).

  \item Suppose $s_2 = 0$ or $q_h$ is not defined.
    Then
    \[
      T_h = \psylv{u_{h+1}^t\lambda D_g m_h^r\beta q_h^{s}\delta D_h}.
    \]
    (If $q_h$ is undefined, formally treat $q_h$ and $\beta$ as empty and $s$ as $0$.) Let
    \[
      T_{h+1} = \psylv{u_{h+1}^{t_1}\lambda D_g m_h^r\beta q_h^{s}\delta D_h u_{h+1}^{t_2}},
    \]
    Note that $T_h \cyc T_{h+1}$.

    In computing $T_{h+1}$, the rightmost symbol $u_{h+1}$ is inserted first and becomes the root node, with the
    remaining $t_2 - 1$ symbols descending from it on the path of left child nodes. Every symbol in $D_h$ and $\delta$
    is greater than $u_{h+1}$, so the subtrees $D_h$ and $\delta$ are re-inserted as the right child node of the root
    node $u_{h+1}$. The $s$ symbols $q_h$ are inserted into te right-maximal subtree of $D_h$. Every
    symbol in $\beta$ is greater than every symbol in $D_h$, so $\beta$ is inserted into the subtree $\delta$. The symbol
    $m_h$ is the smallest symbol greater than $u_{h+1}$ and so the $r$ symbols $m_h$ are re-inserted into the
    left-minimal subtree of $D_h$ (attached as the left child of the node $m_h$ in $D_h$). Every symbol in $D_g$ and
    $\lambda$ is less than or equal to $u_{h+1}$ and so $D_g$ and $\lambda$ are re-inserted at the left child of the
    bottommost node $u_{h+1}$. The remaining $t_1$ nodes
    $u_{h+1}$ are into $D_g$ (in the same place as they were attached in $T_h$).

  \end{enumerate}

  (The key difference between the sub-sub-case is the different placement of $\beta$ in the reading of $T_h$. In
  sub-sub-case (2), the subword $\beta q_h^{s}\delta D_h$ makes up the entire part of $T_h$ outside of the left-minimal
  subtree of $D_h$, and since this subword is intact in the reading of $T_{h+1}$, it will appear as the right subtree of
  the root, as shown by the reasoning below. This observation will be used to abbreviate some of the sub-sub-cases in
  sub-case~4(d) below.)

  The subtree $D_h$, the nodes $m_h$, and possibly some of the $q_h$ below $D_h$ make up $B_h$. This tree $B_h$,
  together with any nodes $q_h$ above $D_h$, the nodes $u_{h+1}$ and $D_{g}$ together make up $D_{h+1}$. Adding the
  remaining nodes $q_h = q_{h+1}$ below $D_h$ gives the tree $E_{h+1}$. So $T_{h+1}$ satisfies P1. The other trees
  $E_{i_j}$ in $T_2$ were in $\lambda$; this still holds and so $T_{h+1}$ satisfies P2. Every node not in $E_{h+1}$ is
  in its left-minimal or right-maximal subtree; together with the fact that $T_h$ satisfies P3, this shows that
  $T_{h+1}$ satisfies P3. Finally, $T_{h+1}$ satisfies P4 since $T_h$ does. (Note that $m_h$ is
  below $p_h = u_{h+1}$, but this does not matter since $u_h$ is not in $U_{h+1}^\uparrow$.)

  \begin{figure}[tb]
    \centerline{%
    \begin{tikzpicture}
      \begin{scope}[smallbst]
        \node[triangle] (aroot) at (0,0) {$D_h$}
        child { node[empty] (a0) {}
          child[dotted] { node[nodecount] (a00) {}
            child[dotted] { node[solid,empty] (a000) {}
              child[solid] { node[triangle] (a0000) {$D_g$}
                child { node[triangle] (a00000) {$\lambda$} }
                child { node[empty] (a00001) {}
                  child[dotted] { node[nodecount] (a000010) {}
                    child[dotted] { node[solid,empty] {} }
                    child[missing]
                  }
                  child[missing]
                }
              }
              child[missing]
            }
            child[missing]
          }
          child[missing]
        }
        child { node[triangle] (a1) {$\delta$}
          child { node (a10) {$q_h$}
            child[dotted] { node[nodecount] (a100) {$s$ nodes}
              child { node[solid] (a1000) {$q_h$} }
              child[missing]
            }
            child { node[triangle] (a101) {$\beta$} }
          }
          child[missing]
        };
      \end{scope}
      \node[tcomment] (aroota00010comment) at ($ (a0000) + (-5mm,34mm) $) {No $m_{i_j}$\\below $p_{i_j}$};
      \draw[gray] (aroota00010comment.south) |- (aroot);
      \draw[gray] (aroota00010comment.south) |- (a0);
      \draw[gray] (aroota00010comment.south) |- (a000);
      \draw[gray] (aroota00010comment.south) |- (a0000);
      % \draw[gray] (aroota00010comment.south) |- (a00010);
      %
      \node[align=left] at ($ (aroot) + (-20mm,-90mm) $) {
        (1)~$T_h = \psylv{\beta q_h^{s_2}u_{h+1}^tm_h^{r_1}\lambda D_g m_h^{r_2}q_h^{s_1}\delta D_h}$; \\
        (2)~$T_h = \psylv{\beta q_h^{s_2}u_{h+1}^{o_1}\lambda D_g u_{h+1}^{o_2}m_h^{r} q_h^{s_1}\delta D_h}$; \\
        (3)~$T_h = \psylv{\beta q_h^{s_2}u_{h+1}^{o_1}\lambda D_g u_{h+1}^{o_2}m_h^{r} q_h^{s_1}\delta D_h}$; \\
        (4)~$T_h = \psylv{u_{h+1}^tm_h^{r_1}\lambda D_g m_h^{r_2}\beta q_h^{s}\delta D_h}$; \\
        (5)~$T_h = \psylv{u_{h+1}^{o_1}\lambda D_g u_{h+1}^{o_2}m_h^{r}\beta q_h^{s}\delta D_h}$; \\
        (6)~$T_h = \psylv{u_{h+1}^{o_1}\lambda D_g u_{h+1}^{o_2}m_h^{r} \beta q_h^{s}\delta D_h}$. \\
      };
      %
      % %%%%%
      %
      \begin{scope}[smallbst]
        \node (broot) at (70mm,0) {$u_{h+1}$}
        child[sibling distance=30mm,dotted] { node[nodecount] (b0) {$t_2$ nodes}
          child { node[solid] (b00) {$u_{h+1}$}
            child[solid] { node[triangle] (b000) {$D_g$}
              child[sibling distance=12mm] { node[triangle] (b0000) {$\lambda$} }
              child[sibling distance=12mm] { node (b0001) {$u_{h+1}$}
                child[dotted] { node[nodecount] (b00010) {$t_1$ nodes}
                  child { node[solid] (b000100) {$u_{h+1}$} }
                  child[missing]
                }
                child[missing]
              }
            }
            child[missing]
          }
          child[missing]
        }
        child[sibling distance=30mm] { node (b1) {$q_h$}
          child[dotted] { node[nodecount] (b10) {$s_2$ nodes}
            child[dotted] { node[solid] (b100) {$q_h$}
              child[solid] { node[triangle] (b1000) {$D_h$}
                child[sibling distance=12mm] { node (b10000) {$m_h$}
                  child[dotted,sibling distance=10mm] { node[nodecount] (b100000) {$r$ \strut nodes}
                    child[dotted] { node[solid] (b1000000) {$m_h$} }
                    child[missing]
                  }
                  child[missing]
                }
                child[sibling distance=12mm] { node (b10001) {$q_h$}
                  child[dotted,sibling distance=10mm] { node[nodecount] (b100010) {$s_1$ nodes}
                    child[dotted] { node[solid] (b1000100) {$q_h$} }
                    child[missing]
                  }
                  child[missing]
                }
              }
              child[missing]
            }
            child[missing]
          }
          child { node[triangle] (b11) {$\delta\beta$} }
        };
      \end{scope}
      \node[lcomment] (brootb000100comment) at ($ (b000100) + (-9mm,24mm) $) {No $m_{i_j}$\\below $p_{i_j}$};
      \draw[gray] (brootb000100comment.east) -- ++ (2mm,0) |- (broot);
      \draw[gray] (brootb000100comment.east) -- ++ (2mm,0) |- (b00);
      \draw[gray] (brootb000100comment.east) -- ++ (2mm,0) |- (b000);
      \draw[gray] (brootb000100comment.east) -- ++ (2mm,0) |- (b0000);
      \draw[gray] (brootb000100comment.east) -- ++ (2mm,0) |- (b000100);
      \draw[bstoutline,rounded corners=2mm]
      ($ (broot) + (0,3mm) $) --
      ($ (broot) + (5mm,3mm) $) --
      ($ (b1) + (5mm,0) $) --
      +(-4mm,-6.6mm) |-
      ($ (b1000000) + (5mm,-3mm) $) --
      ($ (b1000000) + (-5mm,-3mm) $) -|
      ($ (b000100) + (-5mm,-3mm) $) --
      ($ (b000100) + (-5mm,3mm) $) --
      ($ (b0001) + (-5mm,3mm) $) --
      ($ (b000) + (-5mm,-3mm) $) --
      ($ (b000) + (-5mm,3mm) $) --
      ($ (b0) + (-5mm,3mm) $) --
      ($ (broot) + (-5mm,3mm) $) --
      ($ (broot) + (0,3mm) $);
      \node (e1) at ($ (b1) + (10mm,0mm) $) {$E_{h+1}$};
      \draw[bstoutline] ($ (b1) + (5mm,0) $) -- (e1);
      \node[align=left] at ($ (broot) + (0,-90mm) $) {
      (1)~$T_{h+1} = \psylv{u_{h+1}^{t_1}m_h^{r_1}\lambda D_g m_h^{r_2}q_h^{s_1}\delta D_h\beta q_h^{s_2}u_{h+1}^{t_2}}$; \\
      (2)~$T_{h+1} = \psylv{u_{h+1}^{o_2 - t_2}m_h^{r}q_h^{s_1}\delta D_h\beta q_h^{s_2}u_{h+1}^{o_1}\lambda D_g u_{h+1}^{t_2}}$; \\
      (3)~$T_{h+1} = \psylv{u_{h+1}^{o_1-t_2+o_2}\lambda D_g u_{h+1}^{o_2}m_h^{r}q_h^{s_1}\delta D_h\beta q_h^{s_2}u_{h+1}^{t_2-o_2}}$; \\
      (4)~$T_{h+1} = \psylv{u_{h+1}^{t_1}m_h^{r_1}\lambda D_g m_h^{r_2}\beta q_h^{s}\delta D_hu_{h+1}^{t_2}}$; \\
      (5)~$T_{h+1} = \psylv{u_{h+1}^{o_2 - t_2}m_h^{r}\beta q_h^{s}\delta D_hu_{h+1}^{o_1}\lambda D_g u_{h+1}^{t_2}}$; \\
      (6)~$T_{h+1} = \psylv{u_{h+1}^{o_1-t_2+o_2}\lambda D_g u_{h+1}^{o_2}m_h^{r}\beta q_h^{s}\delta D_hu_{h+1}^{t_2-o_2}}$. \\
      };
    \end{tikzpicture}}
    \caption{Induction step, sub-case 4(d): $E_h = D_h$ and $E_g = D_g$; sub-sub-cases (1)--(6) use different cyclic shifts.}
    \label{fig:sylvinductioncase4d}
  \end{figure}

  \smallskip
  \noindent\textit{Sub-case 4(d).} Suppose that $E_h = D_h$ and $E_g = D_g$. If $q_h$ is defined then it is the least
  symbol greater than any symbol in $D_h$ and so all nodes $q_h$ must appear in the right-maximal subtree of $E_h$ in
  $T_h$, and the left child of a node $q_h$ can only be another node $q_h$.

  It is possible that $q_g$ is defined, in which case $q_g = u_{h+1}$, but $D_g$ does not contain nodes $q_g$. The
  subtree tree $D_g$ appears on the path of left child nodes and thus in the left-minimal subtree of $D_h$ (and
  $E_h$). The only symbols that is less than or equal to every symbol in $D_h$ and greater than or equal to every symbol
  in $D_g$ are $m_h$ and $p_h = u_{h+1}$ (which is equal to $q_g$, if $q_g$ is defined). Thus nodes $m_h$ and $u_{h+1}$
  must appear either on the path of left child nodes between $D_h$ and $D_g$, or in the right-maximal subtree of $D_g$.

  As shown in \fullref{Figure}{fig:sylvinductioncase4d}, let $\lambda$ be a reading of the left-minimal subtree of
  $D_g$. Let $\delta$ be a reading of the right-maximal subtree of $D_h$ outside the complete subtree at the topmost
  node $q_h$ (if $q_h$ is defined) and let $\beta$ be a reading of the right-maximal subtree of the topmost $q_h$. (Note
  that $\beta$ is empty if $q_h$ is not defined.) The empty nodes are filled with $r$ nodes $m_h$ and $t$ nodes
  $u_{h+1}$, with the $m_h$ above the $u_{h+1}$. Note, however, that the boundary between the $m_h$ and the $u_{h+1}$
  may be either above or below $D_g$.

  Let $s_2$ be the number of secondary nodes $q_h$ between $u_h$ and $u_{h+1}$ in $U$, and let $t_2$ is the number of
  primary nodes $u_{h+1}$. There are six sub-sub-cases: consider first the three sub-sub-cases where $s_2 > 0$:
  \begin{enumerate}

  \item There are no $u_{h+1}$ above $D_g$. Suppose there are $r_1$ nodes $m_h$ below $D_g$ and $r_2$ above. Then
    \[
      T_h = \psylv{\beta q_h^{s_2}u_{h+1}^tm_h^{r_1}\lambda D_g m_h^{r_2}q_h^{s_1}\delta D_h}.
    \]
    Let
    \[
      T_{h+1} = \psylv{u_{h+1}^{t_1}m_h^{r_1}\lambda D_g m_h^{r_2}q_h^{s_1}\delta D_h\beta q_h^{s_2}u_{h+1}^{t_2}},
    \]
    Note that $T_h \cyc T_{h+1}$.

    In computing $T_{h+1}$, the rightmost symbol $u_{h+1}$ is inserted first and becomes the root node, with the
    remaining $t_2 - 1$ symbols descending from it on the path of left child nodes. Since $q_h = q_{h+1} > u_{h+1}$, the
    $s_2$ symbols $q_h$ are inserted into the right subtree of the root node $u_{h+1}$. Every symbol in $\beta$ is
    greater than $q_h$, so the subtree $\beta$ is re-inserted as the right child of the topmost $q_h$.  Every symbol in
    $D_h$ is greater than $u_{h+1}$ and less than or equal to $q_h$, so the subtree $D_h$ is re-inserted as the left
    child node of the bottommost node $q_h$. Every symbol in $\delta$ is greater than $q_h$, so $\delta$ is inserted
    into the subtree $\beta$. The remaining $s_1$ symbols $q_h$ are inserted into the right-maximal subtree of
    $D_h$. The symbol $m_h$ is the smallest symbol greater than $u_{h+1}$ and so the $r_2$ symbols $m_h$ are inserted
    into the left-minimal subtree of $D_h$ (attached as the left child of the node $m_h$ in $D_h$). Every symbol in
    $D_g$ and $\lambda$ is less than or equal to $u_{h+1}$ and so $D_g$ and $\lambda$ are re-inserted at the left child
    of the bottommost node $u_{h+1}$. The remaining $r_1$ nodes $m_h$ are inserted at the left child of the bottommost
    node $m_h$ (so that the nodes $m_h$ are now consecutive). The remaining $t_1$ nodes $u_{h+1}$ are into the
    right-maximal subtree of $D_g$.

  \item There are $o_1$ nodes $u_{h+1}$ below $D_g$ and $o_2$ above, where $o_2 \geq t_2$. Then
    \[
      T_h = \psylv{\beta q_h^{s_2}u_{h+1}^{o_1}\lambda D_g u_{h+1}^{o_2}m_h^{r} q_h^{s_1}\delta D_h}.
    \]
    Let
    \[
      T_{h+1} = \psylv{u_{h+1}^{o_2 - t_2}m_h^{r}q_h^{s_1}\delta D_h\beta q_h^{s_2}u_{h+1}^{o_1}\lambda D_g u_{h+1}^{t_2}}.
    \]
    Note that $T_h \cyc T_{h+1}$.

    In computing $T_{h+1}$, the rightmost symbol $u_{h+1}$ is inserted first and becomes the root node, with the
    remaining $t_2 - 1$ symbols descending from it on the path of left child nodes. Every symbol in $D_g$ and $\lambda$
    is less than or equal to $u_{h+1}$ and so $D_g$ and $\lambda$ are re-inserted at the left child of the bottommost
    node $u_{h+1}$. The next $o_1$ nodes $u_{h+1}$ are inserted into the right-maximal subtree of $D_g$. Since
    $q_h = q_{h+1} > u_{h+1}$, the $s_2$ symbols $q_h$ are inserted into the right subtree of the root node
    $u_{h+1}$. Every symbol in $\beta$ is greater than $q_h$, so subtree $\beta$ is re-inserted as the right child of
    the topmost $q_h$. Every symbol in $D_h$ is greater than $u_{h+1}$ and less than or equal to $q_h$, so the subtree
    $D_h$ is re-inserted as the left child node of the bottommost node $q_h$. Every symbol in $\delta$ is greater than
    $q_h$ and so $\delta$ is inserted into the subtree $\beta$. The remaining $s_1$ symbols $q_h$ are inserted into the
    right-maximal subtree of $D_h$. The symbol $m_h$ is the smallest symbol greater than $u_{h+1}$ and so the $r$
    symbols $m_h$ are inserted into the left-minimal subtree of $D_h$ (attached as the left child of the node $m_h$ in
    $D_h$). The remaining $o_2-t_2$ nodes $u_{h+1}$ are inserted at the left child of the bottommost node $u_{h+1}$ (so
    that the $o_1+o_2 - t_2 = t_1$ nodes $u_{h+1}$ below $D_g$ are now consecutive).

  \item There are $o_1$ nodes $u_{h+1}$ below $D_g$ and $o_2$ above, where $o_2 < t_2$. Then
    \[
      T_h = \psylv{\beta q_h^{s_2}u_{h+1}^{o_1}\lambda D_g u_{h+1}^{o_2}m_h^{r} q_h^{s_1}\delta D_h}.
    \]
    Let
    \[
      T_{h+1} = \psylv{u_{h+1}^{o_1-t_2+o_2}\lambda D_g u_{h+1}^{o_2}m_h^{r}q_h^{s_1}\delta D_h\beta q_h^{s_2}u_{h+1}^{t_2-o_2}}.
    \]
    Note that $o_1-t_2+o_2 = t_1$ and that $T_h \cyc T_{h+1}$.

    In computing $T_{h+1}$, the rightmost symbol $u_{h+1}$ is inserted first and becomes the root node, with the
    remaining $t_2-o_2 - 1$ symbols descending from it on the path of left child nodes. Since $q_h = q_{h+1} > u_{h+1}$,
    the $s_2$ symbols $q_h$ are inserted into the right subtree of the root node $u_{h+1}$. Every symbol in $\beta$ is
    greater than $q_h$, so the subtree $\beta$ is re-inserted as the right child of the topmost $q_h$. Every symbol in
    $D_h$ is greater than $u_{h+1}$ and less than or equal to $q_h$, so the subtree $D_h$ is re-inserted as the left
    child node of the bottommost node $q_h$. Every symbol in $\delta$ is greater than $q_h$, so $\delta$ is inserted
    into the subtree $\beta$. The remaining $s_1$ symbols $q_h$ are inserted into the right-maximal subtree of
    $D_h$. The symbol $m_h$ is the smallest symbol greater than $u_{h+1}$ and so the $r$ symbols $m_h$ are inserted into
    the left-minimal subtree of $D_h$ (attached as the left child of the node $m_h$ in $D_h$). The next $o_2$ symbols
    $u_{h+1}$ are inserted into the left subtree of the bottommost node $u_{n+1}$, so that there are $t_2-o_2+o_2 = t_2$
    consecutive nodes $u_{h+1}$.  Every symbol in $D_g$ and $\lambda$ is less than or equal to $u_{h+1}$ and so $D_g$
    and $\lambda$ are re-inserted at the left child of the bottommost node $u_{h+1}$. The remaining $o_1-t_2+o_2 = t_1$
    nodes $u_{h+1}$ are inserted into the right-maximal subtree of $D_g$.

  \end{enumerate}

  The three sub-sub-cases where $s_2 = 0$ (see below) differ from the above three sub-sub-cases in the same way that the two
  sub-sub-cases in sub-case~4(c) differ: instead of taking a reading of $T_h$ with $\beta$ at the start, take one where
  $\beta$ appears just before the string $q_h^s$. Since the reasoning is so similar, these sub-sub-cases are thus
  treated in an abbreviated form:

  \begin{itemize}

  \item[(4)] There are no $u_{h+1}$ above $D_g$. Suppose there are $r_1$ nodes $m_h$ below $D_g$ and $r_2$ above. Then
    \[
      T_h = \psylv{u_{h+1}^tm_h^{r_1}\lambda D_g m_h^{r_2}\beta q_h^{s}\delta D_h}.
    \]
    Let
    \[
      T_{h+1} = \psylv{u_{h+1}^{t_1}m_h^{r_1}\lambda D_g m_h^{r_2}\beta q_h^{s}\delta D_hu_{h+1}^{t_2}},
    \]
    Note that $T_h \cyc T_{h+1}$.

  \item[(5)] There are $o_1$ nodes $u_{h+1}$ below $D_g$ and $o_2$ above, where $o_2 \geq t_2$. Then
    \[
      T_h = \psylv{u_{h+1}^{o_1}\lambda D_g u_{h+1}^{o_2}m_h^{r}\beta q_h^{s}\delta D_h}.
    \]
    Let
    \[
      T_{h+1} = \psylv{u_{h+1}^{o_2 - t_2}m_h^{r}\beta q_h^{s}\delta D_hu_{h+1}^{o_1}\lambda D_g u_{h+1}^{t_2}}.
    \]

  \item[(6)] There are $o_1$ nodes $u_{h+1}$ below $D_g$ and $o_2$ above, where $o_2 < t_2$. Then
    \[
      T_h = \psylv{u_{h+1}^{o_1}\lambda D_g u_{h+1}^{o_2}m_h^{r} \beta q_h^{s}\delta D_h}.
    \]
    Let
    \[
      T_{h+1} = \psylv{u_{h+1}^{o_1-t_2+o_2}\lambda D_g u_{h+1}^{o_2}m_h^{r}\beta q_h^{s}\delta D_hu_{h+1}^{t_2-o_2}}.
    \]
    Note that $o_1-t_2+o_2 = t_1$ and that $T_h \cyc T_{h+1}$.

  \end{itemize}

  The subtree $D_h$, the nodes $m_h$, and possibly some of the $q_h$ below $D_h$ make up $B_h$. This tree $B_h$,
  together with the nodes $q_h$ above $D_h$, the nodes $u_{h+1}$ and $D_{g}$ together make up $D_{h+1}$. Adding the
  remaining nodes $q_h = q_{h+1}$ below $D_h$ gives the tree $E_{h+1}$. So $T_{h+1}$ satisfies P1. The other trees
  $E_{i_j}$ in $T_2$ were in $\lambda$; this still holds and so $T_{h+1}$ satisfies P2. Every node not in $E_{h+1}$ is
  in its left-minimal or right-maximal subtree; together with the fact that $T_h$ satisfies P3, this shows that
  $T_{h+1}$ satisfies P3. Finally, $T_{h+1}$ satisfies P4 since $T_h$ does. (Note that $m_h$ is
  below $p_h = u_{h+1}$, but this does not matter since $u_h$ is not in $U_{h+1}^\uparrow$.)

  \bigskip
  \noindent{Conclusion.} The tree $T_n$ satisfies the conditions P1--P4. In particular, $E_n$ appears at the root of
  $T_n$. Since $u_n$ is the root of $U$ (since it is obviously the last node visited by the topmost traversal),
  $B_n = U$. Thus $C_n$ is $U$ with all nodes $m_n$ deleted except the topmost. Since $q_n$ is undefined,
  $E_n = D_n = C_n$. Hence $C_n$ appears at the roots of $T_n$ and $U$. However, the number of nodes $m_n$ in the trees
  $T_n$ and $U$ are equal, and since $m_n$ is the smallest symbol appearing in $T_n$ or $U$, all the nodes $m_n$ in
  $T_n$ and $U$ must appear in the paths of left child nodes in $T_n$ and $U$, in the left subtree of the single node
  $m_n$ in $E_n$. Hence $T_n = U$.

  Thus there is a sequence $T = T_0, T_1, \ldots, T_n = U$ with $T_i \cyc T_{i+1}$ for $i \in \set{1,\ldots,n-1}$. Since
  $T$ and $U$ were arbitrary elements of an abritrary connected component of $K(\sylv_n)$, the diameter of any connected
  component of $K(\sylv_n)$ is at most $n$. This completes the proof.
\end{proof}

\section{Stalactic monoid}
\label{sec:stalactic}

The stalactic monoid is primarily used in the definition of the more interesting taiga monoid (see
\fullref{Section}{sec:taiga}), but its cyclic shift graph exhibits a particularly simple structure. As usual, this section
recalls only the essentials background; for further reading, see \cite{priez_lattice}.

A \defterm{stalactic tableau} is a finite array of symbols from $\aA$ in which columns are top-aligned, and two symbols
appear in the same column if and only if they are equal. For example,
\begin{equation}
\label{eq:egstaltab1}
\tikz[tableau]\matrix{
3 \& 1 \& 2 \& 6 \& 5 \\
3 \& 1 \&   \& 6 \& 5 \\
  \& 1 \&   \&   \& 5 \\
  \& 1 \\
};
\end{equation}
is a stalactic tableau. The insertion algorithm is very straightforward:

\begin{algorithm}
\label{alg:stalinsertone}
~\par\nobreak
\textit{Input:} A stalactic tableau $T$ and a symbol $a \in \aA$.

\textit{Output:} A stalactic tableau $T \leftarrow a$.

\textit{Method:} If $a$ does not appear in $T$, add $a$ to the left of the top row of $T$. If $a$ does appear in $T$,
add $a$ to the bottom of the (by definition, unique) column in which $a$ appears. Output the new tableau.
\end{algorithm}

Thus one can compute, for any word $u \in \aA^*$, a stalactic tableau $\pstal{u}$ by starting with an empty stalactic
tableau and successively inserting the symbols of $u$, proceeding right-to-left through the word. For example
$\pstal{361135112565}$ is \eqref{eq:egstaltab1}. Notice that the order in which the symbols appear along the first row
in $\pstal{u}$ is the same as the order of the rightmost instances of the symbols that appear in $u$. Define the relation
$\stalcong$ by
\[
  u \stalcong v \iff \pstal{u} = \pstal{v}
\]
for all $u,v \in \aA^*$. The relation $\stalcong$ is a congruence, and the \defterm{stalactic monoid}, denoted $\stal$,
is the factor monoid $\aA^*\!/{\stalcong}$; The \defterm{stalactic monoid of rank $n$}, denoted $\stal_n$, is the factor
monoid $\aA_n^*/{\stalcong}$ (with the natural restriction of $\stalcong$). Each element $[u]_{\stalcong}$ (where
$u \in \aA^*$) can be identified with the stalactic tableau $\pstal{u}$. Note that if $T$ is a stalactic tableau
consisting of a single row (that is, will all columns having height $1$), then there is a unique word $u \in \aA^*$,
formed by reading the entries of $T$ left-to-right, such that $\pstal{u} = T$. Thus, if $T = \pstal{a_1\cdots a_k}$ and
$U \in stal$ is such that $U \cyc T$, then $U = \pstal{a_i\cdots a_ka_1\cdots a_{i-1}}$ for some $i$.

The monoid $\stal$ is presented by
$\pres{\aA}{\drel{R}_\stal}$, where
\[
\drel{R}_\stal = \gset[\big]{(bavb,abvb)}{a,b\in A,\; v \in A^*}.
\]
The monoid $\stal_n$ is presented by $\pres{\aA_n}{\drel{R}_\stal}$, where the set of defining relations
$\drel{R}_\stal$ is naturally restricted to $\aA_n^*\times \aA_n^*$. Notice that $\stal$ and $\stal_n$ are multihomogeneous.

[The stalactic monoid was originally defined by Hivert et al. \cite[\S~3.7]{hivert_commutative} using the defining
relations $\gset[\big]{(bvb,bbv)}{b\in A, v \in A^*}$; this would yield a monoid that is anti-isomorphic to $\stal$. The
definition of $\stal$ here follows Priez \cite[Example~3]{priez_lattice} so as to be compatible with the construction of
the taiga monoid below.]

\begin{proposition}
  \label{prop:stalproperlycontained}
  Connected components of $K(\stal)$ are properly contained in $\evrel$-classes.
\end{proposition}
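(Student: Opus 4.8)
The plan is to establish the two inclusions separately. First I would show that $\cyc^* \subseteq \evrel$, which is immediate from \fullref{Lemma}{lem:basicprops} since $\stal$ is multihomogeneous. So it remains to exhibit a single $\evrel$-class that contains two elements lying in distinct connected components of $K(\stal)$; this proves the containment is proper. The natural place to look is among elements whose stalactic tableaux consist of a single row, since for such tableaux the cyclic shift relation is completely transparent: if $T = \pstal{a_1\cdots a_k}$ has all columns of height $1$, then (as noted in the paragraph following the definition of $\stal$) the only elements $U$ with $U \cyc T$ are $U = \pstal{a_i\cdots a_k a_1\cdots a_{i-1}}$ for some $i$, i.e. the cyclic rotations of the word $a_1\cdots a_k$.

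Concretely, I would take $n \geq 3$ — say work in $\stal_3$, or just in $\stal$ with three distinct symbols — and consider the two single-row tableaux $T = \pstal{123}$ and $T' = \pstal{132}$. These have the same evaluation, so $T \evrel T'$. Now I claim $T$ and $T'$ lie in different connected components. The key observation is that applying a cyclic shift to a single-row tableau always produces another single-row tableau whose reading is a cyclic rotation of the original reading word: the cyclic rotations of $123$ are $\{123, 231, 312\}$, and one checks that the corresponding stalactic tableaux are pairwise distinct single-row tableaux (since each is the unique tableau with that one-row reading). Hence $K(\stal, T)$ consists precisely of $\pstal{123}, \pstal{231}, \pstal{312}$, and $\pstal{132} = T'$ is not among them. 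Therefore $T$ and $T'$ are in the same $\evrel$-class but different components, so the inclusion $\cyc^* \subseteq \evrel$ is strict.

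To make the argument airtight I would first record a short lemma (or just a couple of sentences): \emph{if $T$ is a stalactic tableau all of whose columns have height $1$, and $U \cyc T$, then $U$ also has all columns of height $1$ and its reading word is a cyclic rotation of that of $T$.} This follows because $U = \pstal{vu}$ where $T = \pstal{uv}$ and $uv$ is the reading word of $T$; since every symbol in $uv$ occurs exactly once, so does every symbol in $vu$, and thus $\pstal{vu}$ again has all columns of height $1$, with reading word equal to the rotation $vu$. Iterating, every element of $K(\stal,T)$ has reading word a cyclic rotation of $uv$, of which there are at most $|uv|$; in particular the component is finite and its size is bounded by the word length, whereas the $\evrel$-class of $T$ contains all $3! = 6$ single-row tableaux on $\{1,2,3\}$ (and more, once taller tableaux with the same content are allowed), which is strictly larger.

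I do not expect any serious obstacle here: the main point is simply to notice that single-row tableaux give a transparent toy model where cyclic shifts act by word rotation, and that word rotation on a word of pairwise-distinct letters has orbits of size at most the word length, far smaller than the full set of rearrangements once the length is at least $3$. The only mild care needed is to confirm that $\pstal{132}$ really is not a rotation of $123$ — that is, $132 \notin \{123, 231, 312\}$ — which is clear by inspection, and that the construction genuinely requires $n \geq 3$ (for $n \leq 2$ there are too few symbols for the $\evrel$-class to exceed a single rotation orbit), consistent with the entry for $\stal_n$ in \fullref{Table}{tbl:components}.
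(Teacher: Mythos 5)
Your proof is correct and takes essentially the same approach as the paper: both exploit the fact that a single-row stalactic tableau has a unique representing word, so its connected component consists exactly of the cyclic rotations of that word, and both then exhibit a non-rotation rearrangement with the same evaluation (your $123$ vs.\ $132$ plays the role of the paper's $12\cdots n$ vs.\ $213\cdots n$).
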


\begin{proof}
  By \fullref[(1)]{Lemma}{lem:basicprops}, ${\cyc} \subseteq {\evrel}$, so it remains to prove that equality does not
  hold.  Since no defining relations in $\drel{R}_\stal$ can be applied to a word without repeated letters, a stalactic
  tableau consisting of a single row is represented by exactly one word over $\aA^*$. In particular, an element of the
  form $\tikz[tableau]\matrix{k \& |[dottedentry]| \& n \& 1 \& |[dottedentry]| \& k{-}1 \\};$ is represented by the
  unique word $k\cdots n1\cdots (k{-}1)$. Thus the elements of this form are all $\cyc$-related and form a
  $\cyc^*$-class and thus a connected component of $K(\stal)$. Thus, for $n \geq 3$, the elements
  $\tikz[tableau]\matrix{1 \& 2 \& 3 \& |[dottedentry]| \& n \\};$ and
  $\tikz[tableau]\matrix{2 \& 1 \& 3 \& |[dottedentry]| \& n \\};$ are $\evrel$-related but in different connected
  components of $K(\stal)$.
\end{proof}

However, it is possible to characterize connected components of $K(\stal)$. For any stalactic tableau $T$, let $\iota(T)$ be the
word obtained by reading from left to right the symbols that appear in columns of height $1$. Define $\kappa(T)$ to be
the pair $\parens[\big]{[\iota(T)]_{\cyc},\ev{T}}$. (Notice that $[\iota(T)]_{\cyc} = [\iota]_{\cyc^*}$.)

\begin{proposition}
  \begin{enumerate}
  \item Two elements of $\stal$ lie in the same connected component of $K(\stal)$ if and only if they have the same
    image under the map $\kappa$.
  \item The maximum diameter of a connected component of $K(\stal)$ is $3$.
  \item The maximum diameter of a connected component of $K(\stal_n)$ is $3$ if $n \geq 3$, and is respectively $1$ and
    $0$ for $n = 2$ and $n = 1$.
  \end{enumerate}
\end{proposition}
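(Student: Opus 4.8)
The plan is to pin down the two invariants of a cyclic shift, show they are complete, and then extract the sharp diameter from a single combinatorial description of all one-step cyclic shifts. For the ``only if'' direction of part~(1), observe that in $\pstal{w}$ the column of a letter $\ell$ has height $|w|_\ell$, so the columns of height~$1$ read left to right are exactly the letters occurring once in $w$, in order of occurrence; that is, $\iota(\pstal{w})$ is the subword of $w$ on its multiplicity-$1$ letters. Hence if $T=\pstal{xy}$, $U=\pstal{yx}$ and $S$ is the common set of multiplicity-$1$ letters, then $\iota(T)=(x|_S)(y|_S)$ and $\iota(U)=(y|_S)(x|_S)$ are cyclic shifts of one another, so $[\iota(T)]_{\cyc}=[\iota(U)]_{\cyc}$; combined with $\ev{T}=\ev{U}$ this gives $\kappa(T)=\kappa(U)$, and so $\cyc^{*}$ refines equality of $\kappa$.

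The engine for the rest is a description of single cyclic shifts. Identify a stalactic tableau of evaluation $e$ with its top-row word (a linear order of $\mathrm{supp}(e)$) and let $M$ be the set of multiplicity-$\ge2$ letters. I would prove: for tableaux with $\ev{T}=\ev{U}$, one has $T\cyc U$ if and only if the top row of $T$ splits as $\beta\gamma$ so that the top row of $U$ equals $(\gamma|_{B'})\rho$, where $A$ is any set of letters of $\gamma$ contained in $M$, $B'$ is the set of letters of $\gamma$ not in $A$, and $\rho$ is any linear order of $A\cup\mathrm{letters}(\beta)$ restricting to $\beta$ on $\mathrm{letters}(\beta)$. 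The proof is a direct computation: for words $x,y$ the top row of $\pstal{xy}$ is the letters occurring only in $x$ (ordered by last occurrence in $x$) followed by the letters occurring in $y$ (ordered by last occurrence in $y$); comparing $\pstal{xy}$ with $\pstal{yx}$ and choosing $x,y$ appropriately produces exactly these shifts, with $A\subseteq M$ forced because a letter of $A$ must occur in both $x$ and $y$. Getting this characterization exactly right---the constraint $A\subseteq M$, and the realizability of every listed shift by a genuine factorisation $xy\mapsto yx$---is the main obstacle; the rest is bookkeeping.

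Granting the lemma, the upper bound is short. Let $m_1<\dots<m_q$ be the multiplicity-$\ge2$ letters. Applying the lemma to $T$ with $\beta=\emptyword$, $A=M$, $\rho=m_1\cdots m_q$ gives $T_1$ with top row $(t|_S)(m_1\cdots m_q)$, where $t$ is the top row of $T$; likewise $U\cyc U_1$ with top row $(u|_S)(m_1\cdots m_q)$. As $t|_S$ and $u|_S$ both represent the necklace $[\iota(T)]_{\cyc}$, writing $t|_S=d_1\cdots d_p$ we have $u|_S=d_{j+1}\cdots d_p d_1\cdots d_j$ for some $j$; the shift with $\beta=d_1\cdots d_j$, $A=M$, $\rho=d_1\cdots d_j m_1\cdots m_q$ then sends $T_1$ to $U_1$. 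So $T\cyc T_1\cyc U_1\cyc U$ has length at most~$3$, which proves the ``if'' direction of~(1) and the upper bounds in~(2) and~(3). When $q=0$ (all multiplicities $1$) or there are at most two letters the path collapses to a shorter one, recovering the rotation-class structure and the values $1$ for $\stal_2$ (witnessed by $\pstal{12}$ and $\pstal{21}$) and $0$ for $\stal_1$.

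For the lower bound with $n\ge3$, take inside $\stal_3$ the tableaux $T$ and $U$ with top rows $123$ and $132$ and the letter $1$ of multiplicity $2$; their height-$1$ readings $23$ and $32$ are cyclically equal, so $\kappa(T)=\kappa(U)$, but a short computation with the single-shift lemma shows the ball of radius~$2$ about $T$ consists only of the tableaux with top rows $123,231,312,213,321$, so $T$ and $U$ are at distance exactly~$3$. Because the defining relations of $\stal_n$ and of $\stal$ use only the letters present and these monoids are multihomogeneous, the submonoid generated by $\{1,2,3\}$ is isomorphic to $\stal_3$ and no cyclic shift escapes it, so the connected component of $T$ in $K(\stal_n)$ (or in $K(\stal)$) agrees, edges included, with its component in $K(\stal_3)$, and still has diameter $3$. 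Together with the upper bound this finishes (2) and~(3).
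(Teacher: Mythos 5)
Your argument follows the same contour as the paper's: the ``only if'' direction of part~(1) deletes the repeated letters from a factorisation $xy \mapsto yx$, and the upper bound builds the same intermediate tableaux $T_1$ and $U_1$ (all height-$\geq 2$ columns pushed to a common right-hand block, then one rotation of the height-one prefix), giving $T \cyc T_1 \cyc U_1 \cyc U$. What you do differently is to front-load a single-shift lemma characterising all one-step cyclic shifts on top-row words, whereas the paper never isolates such a statement and instead writes down the explicit representing words $t'b_1\cdots b_k$ and $u'b_1\cdots b_k$ ad hoc. The lemma is correct: the only-if half is the partition of letters according to which of $x,y$ holds their rightmost occurrence (forcing $A \subseteq M$), and the if half is realised by taking $x$ and $y$ to be products of blocks $a^{n_a}$ in the orders $\rho$ and $\gamma$ respectively, splitting each $a \in A$ as one copy in $x$ and $k_a-1$ copies in $y$. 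This buys you a cleaner lower-bound verification: the radius-$2$ ball about $T$ in $\stal_3$ becomes a finite search over admissible triples $(\beta,\gamma,A)$, where the paper instead exhibits the full connected component by direct computation.

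The one genuine gap is the one you flag yourself: the single-shift lemma is asserted but not proved, and both the length-$3$ path and the ball computation rest on it, so the realizability argument just sketched has to be written out. Two smaller points are worth making explicit: your lower-bound pair (top rows $123$ and $132$ with $1$ doubled) is a relabelling of the paper's $\pstal{1233}$ component, which is legitimate because the stalactic relations depend only on equality of symbols and not on their linear order, but this should be said; and the restriction to $\stal_3$ should appeal to multihomogeneity directly --- any factorisation in $\stal$ of an element supported on $\{1,2,3\}$ has both factors supported there --- which is what ``no cyclic shift escapes it'' really amounts to.
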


\begin{proof}
  Suppose $T,U \in \stal$ are such that $T \cyc U$. Then there exist $x,y \in \aA^*$ such that $xy$ represents $T$ and
  $yx$ represents $U$. Deleting from $x$ and $y$ every symbol $a$ such that $|x|_a + |y|_a > 1$ yields two words $x'$
  and $y'$ with $\iota(T) = x'y'$ and $\iota(U) = y'x'$; thus $\iota(T) \cyc \iota(U)$. Furthermore, since
  $T \cyc U$ it follows that $\ev{T} = \ev{U}$. Thus $\kappa(T) = \kappa(U)$. Iterating this reasoning shows that if
  $T$ and $U$ lie in the same connected component of $K(\stal)$, then $\kappa(T) = \kappa(U)$.

  Now suppose that $T,U \in \stal$ are such that $\kappa(T) = \kappa(U)$. Let $B = \set{b_1,\ldots,b_k}$ consist of
  exactly the symbols in $\aA$ that appear more than once in $U$ and thus in $T$. Choose any word $t$ such that $\pstal{t} = T$, and
  delete the leftmost appearance of each symbol in $B$ from $t$; call the resulting word $t'$. Let
  $t_0 = b_1\cdots b_k t' \in \aA^*$.  Since the order of columns in a stalactic tableau corresponding to a word is
  determined by the rightmost appearance of each symbol in that word, and since $\ev{t} = \ev{t_0}$, it follows that
  $\pstal{t_0} = T$. Let $T_1 = \pstal{t'b_1\cdots b_k}$, so that $T \cyc T_1$. Then $T_1$ is of the form
  \begin{equation}
    \label{eq:stalbounds1}
    \begin{tikzpicture}
      \matrix[tableaumatrix,name=maintableau,topalign]{
        a_1 \& |[dottedentry]| \null \& a_m \& b_1 \& b_2 \& |[dottedentry]| \null \& b_k \\
        \& \& \& |[dottedentry]| \null \& |[dottedentry]| \null \& |[dottedentry]| \null \& |[dottedentry]| \null \\
        \& \& \& b_1 \& \& |[dottedentry]| \null \& b_k \\
        \& \& \& b_1 \\
      };
      \node at ($ (maintableau-1-2) + (0mm,3.5mm) $) {$\overbrace{\hbox{\vrule width 14mm height 0cm depth 0cm}}$};
      \node[anchor=south] at ($ (maintableau-1-2) + (0mm,4mm) $) {Symbols that appear once};
      % \node at ($ (maintableau-1-5) + (2.5mm,3.5mm) $) {$\overbrace{\hbox{\vrule width 19mm height 0cm depth 0cm}}$};
      % \node[anchor=south west] at ($ (maintableau-1-5) + (2.5mm,5mm) $) {Symbols in $B$};
    \end{tikzpicture}.
  \end{equation}

  Similarly, choose any word $u$ with $\pstal{u} = U$, delete the leftmost appearance of each symbol in $B$ to obtain a
  word $u'$, and let $u_0 = b_1\cdots b_ku'$; then $\pstal{u_0} = U$. Let $U_1 = \pstal{b_1\cdots b_ku'}$. Then
  $\iota(U_1) \cyc \iota(U) \cyc \iota(T) \cyc  \iota(T_1)$, and so $\iota(U_1) \cyc \iota(T_1)$. Thus $U_1$ of the form
  \begin{equation}
    \label{eq:stalbounds2}
    \begin{tikzpicture}
      \matrix[tableaumatrix,name=maintableau,topalign]{
        a_{h+1} \& |[dottedentry]| \null \& a_m \& a_1 \& |[dottedentry]| \& a_h \& b_1 \& b_2 \& |[dottedentry]| \null \& b_k \\
        \& \& \& \& \& \& |[dottedentry]| \null \& |[dottedentry]| \null \& |[dottedentry]| \null \& |[dottedentry]| \null \\
        \& \& \& \& \& \& b_1 \& \& |[dottedentry]| \null \& b_k \\
        \& \& \& \& \& \& b_1 \\
      };
      \node at ($ (maintableau-1-3) + (.5mm,3.5mm) $) {$\overbrace{\hbox{\vrule width 29mm height 0cm depth 0cm}}$};
      \node[anchor=south] at ($ (maintableau-1-3) + (.5mm,4mm) $) {Symbols that appear once};
      % \node at ($ (maintableau-1-5) + (2.5mm,3.5mm) $) {$\overbrace{\hbox{\vrule width 19mm height 0cm depth 0cm}}$};
      % \node[anchor=south west] at ($ (maintableau-1-5) + (2.5mm,5mm) $) {Symbols in $B$};
    \end{tikzpicture}.
  \end{equation}
  Note that \eqref{eq:stalbounds1} and \eqref{eq:stalbounds2} differ only by a cyclic permutation of the columns of
  height $1$.

  Let $s \in \aA^*$ be such that $\pstal{a_{h+1}\cdots a_ma_1\cdots a_hs} = U_1$. Delete the leftmost appearance of each
  symbol in $B$ from the word $s$; call the resulting word $s'$. Again using the fact that the rightmost appearance of
  each symbol determines the order of columns, $\pstal{a_{h+1}\cdots a_mb_1\cdots b_ka_1\cdots a_hs'} = U_1$. Similarly,
  $\pstal{a_1\cdots a_hs'a_{h+1}\cdots a_mb_1\cdots b_k} = T_1$. Hence $U_1 \cyc T_1$.

  Thus $T \cyc T_1 \cyc U_1 \cyc U$, and so there is a path of length at most $3$ from $T$ to $U$. Hence $T$ and $U$ lie
  in the same connected component. This completes the proof of part~1).

  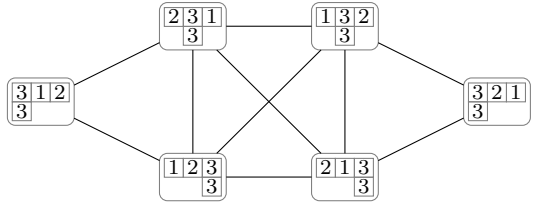
\begin{figure}[t]
    \centering
    \begin{tikzpicture}[x=20mm,y=20mm]
      %
      % \begin{scope}[every node/.style={draw=gray,rectangle,inner sep=1mm}]
      \begin{scope}[vertex/.style={draw=gray,rectangle with rounded corners,inner sep=-.5mm}]
        \node[vertex] (3312) at (-1.5,0) {\smalltableau{3 \& 1 \& 2\\3\\}};
        \node[vertex] (1233) at (-.5,-.5) {\smalltableau{1 \& 2 \& 3\\\&\& 3\\}};
        \node[vertex] (2331) at (-.5,.5) {\smalltableau{2 \& 3 \& 1\\\& 3\\}};
        \node[vertex] (1332) at (.5,.5) {\smalltableau{1 \& 3 \& 2\\\& 3\\}};
        \node[vertex] (2133) at (.5,-.5) {\smalltableau{2 \& 1 \& 3\\\&\& 3\\}};
        \node[vertex] (3321) at (1.5,0) {\smalltableau{3 \& 2 \& 1\\3\\}};
      \end{scope}
      \begin{scope}
        \draw (1233) edge (1332);
        \draw (1233) edge (2331);
        \draw (1233) edge (2133);
        \draw (1233) edge (3312);
        \draw (1332) edge (2331);
        \draw (1332) edge (2133);
        \draw (1332) edge (3321);
        \draw (3312) edge (2331);
        \draw (2331) edge (2133);
        \draw (3321) edge (2133);
      \end{scope}
    \end{tikzpicture}
    \caption{The connected component $K(\stal,\pstal{1233})$, which has diameter $3$.}
    \label{fig:kstal1233}
  \end{figure}
  Furthermore, this shows that connected components of $K(\stal)$ have diameter at most $3$. Direct calculation shows
  that the connected component $K(\stal,\pstal{1233})$ is as shown in \fullref{Figure}{fig:kstal1233} and thus has diameter
  $3$. This proves part~2). For part~3), note that connected components of $K(\stal_1)$ are singleton vertices, and the
  connected components of $K(\stal_2)$ have at most two vertices corresponding to the two possible orders of columns
  filled by symbols $1$ and $2$.
\end{proof}

\section{Taiga monoid}
\label{sec:taiga}

The taiga monoid is a quotient of the sylvester monoid that is associated with a modified notion of binary search
tree. As usual, this section only recalls the essential facts; see \cite[\S~5]{priez_lattice} for further
background.

A \defterm{binary search tree with multiplicities} is a labelled search tree in which each label appears at most once,
where the label of each node is greater than the label of every node in its left subtree, and less than the label of
every node in its right subtree, and where each node label is assigned a positive integer called its
\defterm{multiplicity}. An example of a binary search tree is:
\begin{equation}
\label{eq:bstmulteg}
\begin{tikzpicture}[tinybst,baseline=(0)]
  \node (root) {$4^2$}
    child { node (0) {$2^1$}
      child { node (00) {$1^2$} }
      child { node (01) {$3^1$} }
    }
    child { node (1) {$5^3$}
      child[missing]
      child { node (11) {$6^2$}
        child[missing]
        child { node (110) {$7^1$} }
      }
    };
\end{tikzpicture}.
\end{equation}
(The superscripts on the labels in each node denote the multiplicities.)

\begin{algorithm}
\strut\par\nobreak
\textit{Input:} A binary search tree with multiplicities $T$ and a symbol $a \in \aA$.

\textit{Output:} A binary search tree with multiplicities $T \leftarrow a$.

\textit{Method:} If $T$ is empty, create a node, label it by $a$, and assign it multiplicity $1$. If $T$ is non-empty,
examine the label $x$ of the root node; if $a < x$, recursively insert $a$ into the left subtree of the root node; if $a
> x$, recursively insert $a$ into the right subtree of the root node; if $a=x$, increment by $1$ the multiplicity of the node
label $x$.
\end{algorithm}

Thus one can compute, for any word $u \in \aA^*$, a binary search tree with multiplicities $\ptaig{u}$ by starting with an
empty binary search tree with multiplicities and successively inserting the symbols of $u$, proceeding right-to-left
through the word. For example $\ptaig{65117563254}$ is \eqref{eq:bstmulteg}.

Define the relation $\taigcong$ by
\[
u \taigcong v \iff \ptaig{u} = \ptaig{v},
\]
for all $u,v \in \aA^*$. The relation $\taigcong$ is a congruence, and the \defterm{taiga monoid}, denoted $\taig$, is
the factor monoid $\aA^*\!/{\taigcong}$; the \defterm{taiga monoid of rank $n$}, denoted $\taig_n$, is the factor monoid
$\aA_n^*/{\taigcong}$ (with the natural restriction of $\taigcong$). Each element $[u]_{\taigcong}$ can be identified
with the binary search tree with multiplicities $\ptaig{u}$.

As with [ordinary] binary search trees, a \defterm{reading} of a binary search tree with multiplicities $T$ is a word
$u$ such that $\ptaig{u} = T$. It is easy to see that a reading of $T$ is a word formed from the symbols that appear in
the nodes of $T$, with the number of times each symbol appears being its multiplicity, arranged so that the rightmost
symbol from a parent node appears to the right of the rightmost symbols from its children.  For example, $135671456254$
is a reading of \eqref{eq:bstmulteg}.

% A \defterm{collated reading} of $T$ is a reading in which all appearances of each
% symbol are consecutive. For example, $711366255544$ is a collated reading of \eqref{eq:bstmulteg}.

The monoid $\taig$ is presented by $\pres{\aA}{\drel{R}_\taig}$, where
$\drel{R}_\taig = \drel{R}_\sylv \cup \drel{R}_\stal$; the monoid $\taig_n$ is presented by
$\pres{\aA_n}{\drel{R}_\taig}$, where the set of defining relations $\drel{R}_\taig$ is naturally restricted to
$\aA_n^*\times \aA_n^*$. Notice that $\taig$ and $\taig_n$ are multihomogeneous.

The taiga monoid is a quotient of the sylvester monoid under the homomorphism $\tau : \sylv \to \taig$ sending
$[u]_\sylv$ to $[u]_\taig$ (or equivalently, $\psylv{u}$ to $\ptaig{u}$) for all $u \in \aA^*$. This homomorphism
naturally restricts to a surjective homomorphism $\tau : \sylv_n \to \taig_n$. This connection between $\sylv$ and
$\taig$ makes it possible to use the reasoning about the diameters of connected components in $K(\sylv_n)$ to prove the
corresponding results for $K(\taig_n)$.

Since defining relations in $\drel{R}_\stal$ involve repeated symbols, only relations in $\drel{R}_\sylv$ apply to
standard words. That is, standard words are related by $\taigcong$ if and only if they are related by $\sylvcong$. Thus
the proof of \fullref{Lemma}{lem:sylvesterlowerbound} applies in $\taig_n$ to establish the following result:

\begin{lemma}
  \label{lem:taiglowerbound}
  There is a connected component in $K(\taig_n)$ of diameter at least $n-1$.
\end{lemma}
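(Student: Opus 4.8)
The plan is to lift the lower-bound construction from the sylvester monoid to the taiga monoid via the homomorphism $\tau : \sylv_n \to \taig_n$, observing that on standard words nothing changes. Concretely, I would take the same two witnesses as in \fullref{Lemma}{lem:sylvesterlowerbound}: the words $t = 12\cdots(n-1)n$ and $u = n(n-1)\cdots 21$, and consider their images $T = \ptaig{t}$ and $U = \ptaig{u}$ in $\taig_n$. Since $t \evrel u$, these elements have the same evaluation, so by \fullref[(1)]{Lemma}{lem:basicprops} — or rather by the fact (noted just before the lemma) that $\taig$ is a quotient of $\sylv$ and connected components of $K(\sylv)$ are $\evrel$-classes — they lie in the same connected component of $K(\taig_n)$. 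Thus it remains only to bound the distance between them from below by $n-1$.

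For the distance bound, the key observation is that the defining relations in $\drel{R}_\taig = \drel{R}_\sylv \cup \drel{R}_\stal$ that can be applied to a \emph{standard} word are exactly those in $\drel{R}_\sylv$, because every relation in $\drel{R}_\stal$ has the form $(bavb, abvb)$ and hence involves a repeated symbol $b$. Consequently, two standard words are $\taigcong$-related if and only if they are $\sylvcong$-related, so the notion $\cochseq$ of a cocharge sequence is well-defined on standard elements of $\taig$ (by \fullref{Proposition}{prop:cochseqsylvester}, transported across this coincidence). Now I would run the same argument as in the proof of \fullref{Proposition}{prop:placticbounds}: given a path $T = T_0 \cyc T_1 \cyc \cdots \cyc T_m = U$ in $K(\taig_n)$, every step $T_i \cyc T_{i+1}$ is witnessed by words $u_iv_i$, $v_iu_i$, at least one of which avoids the symbol $1$; by parts (2) and (3) of \fullref{Lemma}{lem:cochseqcycles} the passage from $\cochseq(T_i)$ to $\cochseq(T_{i+1})$ changes each component by at most $1$. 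Since $\cochseq(T) = (0,0,\ldots,0)$ while $\cochseq(U) = (0,1,\ldots,n-2,n-1)$, the last component alone forces $m \geq n-1$, giving a connected component of diameter at least $n-1$.

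The main subtlety — though it is really a matter of careful bookkeeping rather than a genuine obstacle — is justifying that $\cochseq$ is well-defined on standard taiga elements, i.e.\ that applying a sylvester relation to a standard word leaves the cocharge sequence unchanged; but this is precisely the content of \fullref{Proposition}{prop:cochseqsylvester}, which the excerpt states holds by reasoning parallel to \fullref{Propositions}{prop:cochseqplactic} and~\ref{prop:cochseqhypoplactic}. One should also check that $T$ and $U$ really are standard elements of $\taig_n$, which is immediate since $t$ and $u$ are standard words and the monoid is multihomogeneous. Given all this, the proof is a two-line invocation: standard words in $\taig_n$ behave exactly as in $\sylv_n$, so the proof of \fullref{Lemma}{lem:sylvesterlowerbound} applies verbatim, and the pair $\parens{T, U}$ realizes a component of diameter at least $n-1$ in $K(\taig_n)$.
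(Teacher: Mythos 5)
Your proof is correct and takes essentially the same approach as the paper: you observe that every relation in $\drel{R}_\stal$ involves a repeated symbol, so on standard words $\taigcong$ coincides with $\sylvcong$, and hence the cocharge-sequence argument from \fullref{Lemma}{lem:sylvesterlowerbound} transfers verbatim to $\taig_n$. The only small wobble is the initial citation of \fullref{Lemma}{lem:basicprops} (which gives ${\cyc^*} \subseteq {\evrel}$, the wrong direction for showing $T$ and $U$ share a component), but you immediately self-correct with the right justification via the quotient map from $\sylv$.
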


\begin{lemma}
  \label{lem:taigupperbound}
  Every connected component of $K(\taig_n)$ has diameter at most $n$.
\end{lemma}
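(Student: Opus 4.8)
The plan is to transfer the bound already established for the sylvester monoid in \fullref{Proposition}{prop:sylvupperbound} across the surjective homomorphism $\tau\colon\sylv_n\to\taig_n$. The key observation is that monoid homomorphisms preserve cyclic shifts: if $\phi\colon M\to N$ is a homomorphism and $s\cyc t$ in $M$, witnessed by $s = xy$ and $t = yx$, then $\phi(s) = \phi(x)\phi(y)$ and $\phi(t) = \phi(y)\phi(x)$, so $\phi(s)\cyc\phi(t)$ in $N$. Hence a path $s = s_0\cyc s_1\cyc\cdots\cyc s_k = t$ in $K(M)$ maps to a walk $\phi(s_0)\cyc\phi(s_1)\cyc\cdots\cyc\phi(s_k)$ of length $k$ in $K(N)$, so the distance between $\phi(s)$ and $\phi(t)$ in $K(N)$ is at most $k$. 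This general fact, applied to $\tau$, is the whole engine of the proof.

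\textbf{Key steps.} First I would state and prove the homomorphism observation above. Second, let $T$ and $U$ lie in the same connected component of $K(\taig_n)$; since $\taig_n$ is multihomogeneous, \fullref[]{Lemma}{lem:basicprops} gives $T\evrel U$, so $T$ and $U$ contain the same number of each symbol. Third, I would lift: choose words $t,u\in\aA_n^*$ with $\ptaig{t} = T$ and $\ptaig{u} = U$, and form $\psylv{t},\psylv{u}\in\sylv_n$. Because the $\mathrm{P}$-maps and $\tau$ all preserve evaluations, $\psylv{t}\evrel\psylv{u}$, and $\tau(\psylv{t}) = \ptaig{t} = T$, $\tau(\psylv{u}) = U$. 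Fourth, the proof of \fullref{Proposition}{prop:sylvupperbound} shows precisely that any two elements of $\sylv_n$ with the same evaluation are joined by a path of length at most $n$ in $K(\sylv_n)$; apply this to $\psylv{t}$ and $\psylv{u}$. Finally, applying $\tau$ to that path and invoking the first step yields a walk of length at most $n$ from $T$ to $U$ in $K(\taig_n)$, so the distance between $T$ and $U$ is at most $n$. Since $T$ and $U$ were arbitrary elements of an arbitrary connected component, every connected component of $K(\taig_n)$ has diameter at most $n$.

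\textbf{Main obstacle.} Honestly there is no substantial obstacle: all the hard combinatorial work was done in \fullref{Proposition}{prop:sylvupperbound}, and this lemma is a short transfer argument. The only points requiring a moment's care are (i) confirming that the lifted trees $\psylv{t},\psylv{u}$ really do have equal evaluations — immediate, since evaluation is an invariant of $\placcong$-, $\sylvcong$-, and $\taigcong$-classes alike — and (ii) invoking \fullref{Proposition}{prop:sylvupperbound} in the sharper form ``equal evaluation $\Rightarrow$ distance at most $n$,'' which is exactly what its proof delivers (the proof constructs an explicit path of length $n$ between two arbitrary elements of a common $\evrel$-class). I would also remark in passing that the same surjectivity of $\tau$, together with the fact that $\evrel$-classes are connected in $K(\sylv_n)$, shows $\evrel$-classes are connected in $K(\taig_n)$, but that is not needed for the diameter bound itself.
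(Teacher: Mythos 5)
Your proof is correct, and it takes a genuinely different route from the paper's. The paper works in the ``wrong'' direction: it defines a map $\psi\colon\taig_n\to\sylv_n$ that strips multiplicities (so $\psi$ is a section-like map, not a homomorphism), applies \fullref{Proposition}{prop:sylvupperbound} to $\psi(T)$ and $\psi(U)$, and then manually lifts each cyclic shift on that path back to $\taig_n$ by re-inflating symbols according to $\ev{T}=\ev{U}$ in the readings $xy$, $yx$ that witness the shift. You instead pick arbitrary preimages $\psylv{t},\psylv{u}$ of $T,U$ under the canonical surjection $\tau\colon\sylv_n\to\taig_n$, note they have equal evaluation, apply \fullref{Proposition}{prop:sylvupperbound} to those, and then simply push the resulting path forward under $\tau$ using the elementary fact that monoid homomorphisms send $\cyc$-pairs to $\cyc$-pairs. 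Your version is conceptually tidier: it isolates the one-line general lemma (``homomorphic images of cyclic shifts are cyclic shifts''), which the paper never states, and avoids any hands-on manipulation of readings; the paper's version in return is entirely self-contained, never appealing to that general fact. Both correctly treat the only subtlety, namely that \fullref{Proposition}{prop:sylvupperbound}'s proof really delivers the sharper statement ``$\evrel$ implies distance at most $n$'' rather than only the headline statement about connected components, and you flag this explicitly. Note finally that the same push-forward argument actually reproves part (1) of \fullref{Theorem}{thm:taigbounds} for free, as you observe in passing.
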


\begin{proof}
  Define a map $\psi : \taig_n \to \sylv_n$ that maps a binary search tree with multiplicities $T$ to the standard
  binary search tree obtained by deleting the multiplicities of $T$. Note that, two elements $T,U \in \taig_n$ are equal
  if and only if $\psi(T) = \psi(U)$ and $\ev{T} = \ev{U}$.

  Suppose $P,Q \in \taig_n$ are such that $\psi(P) \cyc \psi(Q)$ (in $\sylv_n$) and $\ev{P} = \ev{Q}$. Then there
  are readings $xy$ of $\psi(P)$ and $yx$ of $\psi(Q)$. Replacing each symbol $a$ with $a^{k_a}$ for each $a \in \aA_n$,
  where $k_a$ is the $a$-th component of $\ev{P} = \ev{Q}$, gives readings $\hat{x}\hat{y}$ of $P$ and
  $\hat{y}\hat{x}$ of $Q$, so that $P \cyc Q$ (in $\taig_n$).

  Let $T$ and $U$ be elements of the same connected component of $K(\taig_n)$. Then $T \evrel U$. By the strategy for
  bulding a path in $K(\sylv_n)$ in the proof of \fullref{Proposition}{prop:sylvupperbound}, there is a
  path $\psi(T) = T_0,\ldots,T_n = \psi(U)$ in $K(\sylv_n)$. By the reasoning in the previous paragraph, this path lifts
  to a path $T = \hat{T}_0,\ldots,\hat{T}_n = U$ in $K(\taig_n)$. Thus the diameter of $K(\taig_n,T)$ is at most $n$.
\end{proof}

Combining \fullref{Lemmata}{lem:taiglowerbound} and \ref{lem:taigupperbound} gives the result:

\begin{theorem}
  \label{thm:taigbounds}
  \begin{enumerate}
  \item Connected components of $K(\taig)$ coincide with $\evrel$-classes in $\taig$.
  \item The maximum diameter of a connected component of $K(\taig_n)$ is $n-1$ or $n$.
  \end{enumerate}
\end{theorem}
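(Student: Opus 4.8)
The plan is to assemble the theorem from the two preceding lemmata together with the already-established structure of $K(\sylv)$, so that almost no genuinely new work is needed. The two parts are handled independently.

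For part~(1), I would argue exactly as in the hypoplactic case treated earlier in the paper. Recall that $\taig$ is a quotient of $\sylv$ under the homomorphism $\tau\colon\sylv\to\taig$, that $\tau$ preserves evaluation, and that $\tau$ carries $\cyc$-related elements to $\cyc$-related elements (since a relation $uv\mapsto vu$ in $\sylv$ maps to the corresponding relation in $\taig$). Now fix an $\evrel$-class $E$ of $\taig$. Since $\tau$ is surjective and evaluation is preserved, $E$ is the image under $\tau$ of the $\evrel$-class $E'$ of $\sylv$ with the same evaluation: given $t\in E$, pick a word $w$ representing $t$, and then $\psylv{w}\in E'$ maps to $t$. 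By the result of the present authors \cite[Theorem~3.4]{cm_conjugacy}, $E'$ is a single connected component of $K(\sylv)$, hence connected; since $\tau$ is a graph homomorphism $K(\sylv)\to K(\taig)$, the image $E$ is connected in $K(\taig)$. Combined with $\cyc^*\subseteq\evrel$ from \fullref[(1)]{Lemma}{lem:basicprops}, this gives that the connected components of $K(\taig)$ are exactly the $\evrel$-classes.

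For part~(2), I would simply combine the two lemmata: \fullref{Lemma}{lem:taiglowerbound} exhibits a connected component of $K(\taig_n)$ of diameter at least $n-1$, and \fullref{Lemma}{lem:taigupperbound} shows every connected component of $K(\taig_n)$ has diameter at most $n$. Therefore the maximum diameter of a connected component of $K(\taig_n)$ lies in $\{n-1,n\}$, which is the claim.

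The main obstacle has already been surmounted before reaching this statement: it is \fullref{Proposition}{prop:sylvupperbound} (the length-$n$ path construction in $K(\sylv_n)$ via the topmost traversal, with its four-condition induction) and the lifting of that construction to $K(\taig_n)$ carried out in \fullref{Lemma}{lem:taigupperbound}. Given those, the proof of \fullref{Theorem}{thm:taigbounds} itself is purely an assembly step and presents no difficulty; the only point to be careful about is the surjectivity argument in part~(1), namely that the $\evrel$-class of $\taig$ really is the \emph{entire} image of the corresponding $\evrel$-class of $\sylv$, which follows immediately from surjectivity of $\tau$ together with multihomogeneity.
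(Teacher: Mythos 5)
Your proof is correct. For part~(2), it is the same assembly step the paper performs. For part~(1), you take a slightly different and arguably cleaner route than the paper. You push connectivity down through the surjection $\tau\colon\sylv\to\taig$, exactly as the paper does earlier for $\hypo$ as a quotient of $\plac$: since $\tau$ preserves evaluation and maps $\cyc$-related pairs to $\cyc$-related pairs, the image of the connected subgraph on an $\evrel$-class of $\sylv$ is connected in $K(\taig)$, and surjectivity of $\tau$ plus multihomogeneity ensures this image is the whole $\evrel$-class of $\taig$. The paper, by contrast, is terse here: it cites only the two lemmata, and part~(1) is justified only implicitly because the \emph{proof} of \fullref{Lemma}{lem:taigupperbound} (via the ``forget multiplicities'' map $\psi\colon\taig_n\to\sylv_n$ and the lifting of the length-$n$ sylvester path) actually uses nothing beyond $T\evrel U$, hence establishes that $\evrel$-related elements of $\taig_n$ lie a distance at most $n$ apart, giving connectivity of $\evrel$-classes as a byproduct. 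Your argument has the advantage of making part~(1) explicit without leaning on the internal structure of the upper-bound proof, at the cost of not yielding any length bound on the connecting path (which you then correctly obtain separately from \fullref{Lemma}{lem:taigupperbound}).
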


\section{Baxter monoid}
\label{sec:baxter}

The Baxter monoid is a monoid of pairs of twin binary search trees. As in previous sections, only the essential facts
are given here; see \cite{giraudo_baxter2} for further background.

A \defterm{left strict binary search tree} is a labelled rooted binary tree where the label of each node is strictly
greater than the label of every node in its left subtree, and less than or equal to every node in its right subtree; see
the left tree shown in \eqref{eq:baxtpaireg} below for an example.

% \begin{figure}[t]
%   \caption{Example of a pair of twin binary search trees $(T_L,T_R)$. In the left strict binary search tree $T_L$, the
%     root has label $4$, so every label in the left subtree of the root is strictly less than $4$ and every label in the
%     right subtree of the root is greater than or equal to $4$ (and indeed the label $4$ does occur). The complementary
%     canopies of $T_L$ and $T_R$, as shown in grey, are $0110101$ and $1001010$, respectively. In fact, $(T_L,T_R) =
%     \pbaxt{42531643}$. Other readings of $(T_L,T_R)$ include $42356143$, $42564313$, \ldots.}
%   \label{fig:exbaxtpair}
% \end{figure}

The \defterm{canopy} of a binary tree $T$ is the word over $\set{0,1}$ obtained by traversing the empty subtrees of
the nodes of $T$ from left to right, except the first and the last, labelling an empty left subtree by $1$ and an empty
right subtree by $0$. (See \eqref{eq:baxtpaireg} below for examples of canopies.)

A \defterm{pair of twin binary search trees} consist of a left strict binary search tree $T_L$ and a right strict binary
search tree $T_R$, such that $T_L$ and $T_R$ contains the same symbols, and the canopies of $T_L$ and $T_R$ are
complementary, in the sense that the $i$-th symbol of the canopy of $T_L$ is $0$ (respectively $1$) if and only if the
$i$-th symbol of the canopy of $T_L$ is $1$ (respectively $0$). The following is an example of a pair of twin binary
search trees, with the complementary canopies $0110101$ and $1001010$ shown in grey:
\begin{equation}
  \label{eq:baxtpaireg}
  \left(
  \begin{tikzpicture}[baseline=-15mm]
    \begin{scope}[
      smallbst,
      level 1/.style={sibling distance=25mm},
      level 2/.style={sibling distance=13mm},
      ]
      \node (root) {$4$}
      child[sibling distance=25mm] { node (0) {$2$}
        child { node (00) {$1$} }
        child { node (01) {$3$}
          child[missing]
          child { node (011) {$3$} } } }
      child[sibling distance=25mm] { node (1) {$5$}
        child { node (10) {$4$} }
        child { node (11) {$6$} } };
    \end{scope}
    \begin{scope}[every node/.style={gray,font=\footnotesize}]
      \node at ($ (00) + (2mm,-3.5mm) $) {$0$};
      \node at ($ (01) + (-2mm,-3.5mm) $) {$1$};
      \node at ($ (011) + (-2mm,-3.5mm) $) {$1$};
      \node at ($ (011) + (2mm,-3.5mm) $) {$0$};
      \node at ($ (10) + (-2mm,-3.5mm) $) {$1$};
      \node at ($ (10) + (2mm,-3.5mm) $) {$0$};
      \node at ($ (11) + (-2mm,-3.5mm) $) {$1$};
    \end{scope}
  \end{tikzpicture}
  \;\;
  ,
  \;
  \begin{tikzpicture}[baseline=-15mm]
    \begin{scope}[
      smallbst,
      level 1/.style={sibling distance=25mm},
      level 2/.style={sibling distance=13mm},
      ]
      \node (root) {$3$}
      child { node (0) {$1$}
        child[missing]
        child { node (01) {$3$}
          child { node (010) {$2$} }
          child[missing]
          }
        }
      child { node (1) {$4$}
        child { node (10) {$4$} }
        child { node (11) {$6$}
          child { node (110) {$5$} }
          child[missing]
        }
      };
    \end{scope}
    \begin{scope}[every node/.style={gray,font=\footnotesize}]
      \node at ($ (010) + (-2mm,-3.5mm) $) {$1$};
      \node at ($ (010) + (2mm,-3.5mm) $) {$0$};
      \node at ($ (01) + (2mm,-3.5mm) $) {$0$};
      \node at ($ (10) + (-2mm,-3.5mm) $) {$1$};
      \node at ($ (10) + (2mm,-3.5mm) $) {$0$};
      \node at ($ (110) + (-2mm,-3.5mm) $) {$1$};
      \node at ($ (110) + (2mm,-3.5mm) $) {$0$};
    \end{scope}
  \end{tikzpicture}
  \right)
\end{equation}

The insertion algorithm for left strict binary search trees is symmetric to \fullref{Algorithm}{alg:sylvinsertone}:

\begin{algorithm}[Left strict leaf insertion]
\label{alg:leftbstinsertone}
~\par\nobreak
\textit{Input:} A left strict binary search tree $T$ and a symbol $a \in \aA$.

\textit{Output:} A left strict binary search tree $a \rightarrow T$.

\textit{Method:} If $T$ is empty, create a node and label it $a$. If $T$ is non-empty, examine the label $x$ of the root
node; if $a < x$, recursively insert $a$ into the left subtree of the root node; otherwise recursively insert $a$
into the right subtree of the root node. Output the resulting tree.
\end{algorithm}

Thus one can compute, for any word $u \in \aA^*$, a pair of twin binary search trees $\pbaxt{u} = (T_L,T_R)$, where
$T_R$ is $\psylv{u}$ and $T_L$ is obtained by starting with an empty left strcit binary search tree and successively
inserting the symbols of $u$, proceeding left-to-right through the word. For example, $\pbaxt{42531643}$ is
\eqref{eq:baxtpaireg}.

A \defterm{reading} of a pair of twin binary search trees $(T_L,T_R)$ is a word $u$ such that $\pbaxt{u} =
(T_L,T_R)$. It is easy to see that a reading of $(T_L,T_R)$ is a word formed from the symbols appearing in the two
binary trees $T_L$ and $T_R$ (which, by definition, contain the same symbols), ordered so that every symbol from a
parent node in $T_L$ appears to the left of those from its children in $T_L$, and every symbol from parent node in $T_R$
appears to the right of those from its children.

Define the relation $\baxtcong$ by
\[
u \baxtcong v \iff \pbaxt{u} = \pbaxt{v},
\]
for all $u,v \in \aA^*$. The relation $\baxtcong$ is a congruence, and the \defterm{Baxter monoid}, denoted $\baxt$, is
the factor monoid $\aA^*\!/{\baxtcong}$; the \defterm{Baxter monoid of rank $n$}, denoted $\baxt_n$, is the factor
monoid $\aA_n^*/{\baxtcong}$ (with the natural restriction of $\baxtcong$). Each element $[u]_{\baxtcong}$ (where
$u \in \aA^*$) can be identified with the pair of twin binary search trees $\pbaxt{u}$. The words in $[u]_{\baxtcong}$
are precisely the readings of $\pbaxt{u}$.

The monoid $\baxt$ is presented by
$\pres{\aA}{\drel{R}_\baxt}$, where
\begin{align*}
\drel{R}_\baxt ={}&\gset{(cudavb,cuadvb)}{a \leq b < c \leq d,\; u,v \in A^*} \\
&\cup \gset{(budavc,buadvc)}{a < b \leq c < d,\; u,v \in A^*};
\end{align*}
see \cite[Definition~3.1]{giraudo_baxter2}. The monoid $\baxt_n$ is presented by $\pres{\aA_n}{\drel{R}_\baxt}$, where
the set of defining relations $\drel{R}_\baxt$ is naturally restricted to $\aA_n^* \times \aA_n^*$. Note that $\baxt$
and $\baxt_n$ are multihomogeneous.

There is a straightforward method for extracting every possible reading from a pair of binary search trees
$(T_L,T_R)$:

\begin{method}
\label{method:baxterreading}

\textit{Input:} A pair of twin binary search trees $(T_L,T_R)$.

\textit{Output:} A reading of $(T_L,T_R)$.
\begin{enumerate}
\item Set $(U_L,U_R)$ to be $(T_L,T_R)$. (Throughout this computation, $U_L$ is a forest of
  left strict binary search trees and $U_R$ is a right strict binary search tree.)
\item If $U_L$ and $U_R$ are empty, halt.
\item Given some $(U_L,U_R)$, choose and output some symbol $a$ that labels a root of some tree in the forest $U_L$ and a leaf
  of the tree $U_R$.
\item Deleting the corresponding root vertex of $U_L$ and the corresponding leaf vertex of $U_L$.
\end{enumerate}
\end{method}

This is essentially \cite[Algorithm on p.133]{giraudo_baxter2}, except that the method given here is non-deterministic
in that there may be several choices for $a$ in step~3. As these choices vary, all possible readings of $(T_L,T_R)$ are obtained.

\begin{proposition}
  \label{prop:baxtproperlycontained}
  Connected components of $K(\baxt)$ are properly contained in $\evrel$-classes.
\end{proposition}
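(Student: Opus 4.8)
The plan is to mimic the structure of the proof of Proposition~\ref{prop:stalproperlycontained}: first invoke Lemma~\ref{lem:basicprops} to get ${\cyc^*} \subseteq {\evrel}$ in $\baxt$, so only the strictness of the inclusion remains to be established. To do that, I would exhibit two elements of $\baxt$ that have the same evaluation but lie in different connected components of $K(\baxt)$. The natural candidates, echoing the stalactic case, are the images of the standard words $123\cdots n$ and $213\cdots n$ for $n$ at least $3$ (say $n = 3$): I claim $\pbaxt{123}$ and $\pbaxt{213}$ have the same evaluation but are not $\cyc^*$-related.

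The key observation is that, as for stalactic tableaux, a pair of twin binary search trees whose underlying word has no repeated symbols is represented by \emph{exactly one} word. This follows because no defining relation in $\drel{R}_\baxt$ (which, like the Knuth-style and hypoplactic relations, requires a strict inequality among the pattern symbols, forcing at least two distinct letters, but here in a pattern of length at least $5$ involving a repeated bounding configuration) can be applied to a word in which every letter occurs once --- every relation in $\drel{R}_\baxt$ has the form $(cudavb,cuadvb)$ with $a \leq b < c \leq d$ or $(budavc,buadvc)$ with $a < b \leq c < d$, and in either case applying it does not change the multiset of letters, so if the word has distinct letters it must be that the relabelled occurrences are genuinely distinct positions, but one checks directly that the left-hand and right-hand sides then coincide only trivially --- more simply, one observes that a standard word $w$ is the unique reading of $\pbaxt{w}$ because Method~\ref{method:baxterreading} has no freedom of choice when all leaves of $U_R$ and roots of $U_L$ carry distinct labels available at each step. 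Hence, for a standard element $s = \pbaxt{w}$, the only $t$ with $t \cyc s$ are the $\pbaxt{w'}$ where $w'$ ranges over the cyclic rotations of $w$; and iterating, the connected component of $\pbaxt{w}$ consists precisely of $\{\pbaxt{w'} : w' \text{ a cyclic rotation of } w\}$.

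With this in hand, I would finish as follows. For $n \geq 3$, the word $123\cdots n$ has cyclic rotations $i(i+1)\cdots n 12 \cdots (i-1)$, none of which equals $213 4\cdots n$; hence $\pbaxt{123\cdots n}$ and $\pbaxt{2134\cdots n}$ are $\evrel$-related but lie in distinct connected components of $K(\baxt)$ (and likewise in $K(\baxt_n)$). Therefore the inclusion ${\cyc^*} \subseteq {\evrel}$ is proper, which is the claim. The main obstacle is the uniqueness-of-reading lemma for words with distinct letters: it must be verified carefully that no relation of $\drel{R}_\baxt$ acts nontrivially on such a word, and the cleanest route is probably via Method~\ref{method:baxterreading}, showing that when the two trees in the pair have all-distinct labels there is never a genuine choice of symbol $a$ at step~3 beyond the one forced by the cyclic structure --- equivalently, that the set of readings is exactly the set of cyclic rotations of the original word. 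Everything after that is immediate bookkeeping.
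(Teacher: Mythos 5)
Your outline follows the same strategy as the paper --- fix a standard word and argue that its connected component consists only of the images of its cyclic rotations, then produce an element with the same evaluation outside that component --- but the lemma you lean on is false, and this is a genuine gap. You claim that a pair of twin binary search trees built from a word with all-distinct letters has a \emph{unique} reading, asserting that the two sides of a relation $(cudavb,cuadvb)$ ``coincide only trivially'' on such words and that Method~\ref{method:baxterreading} ``has no freedom of choice.'' Both assertions fail already at length $4$: take $u = v = \emptyword$ and $a = 1$, $b = 2$, $c = 3$, $d = 4$ in the first family of relations to get $3412 \baxtcong 3142$, two distinct standard words representing the same element. Concretely, $\pbaxt{3412} = \pbaxt{3142}$, and if you run Method~\ref{method:baxterreading} on this pair you output $3$ first, after which both $1$ and $4$ are simultaneously a root of the forest $U_L$ and a leaf of $U_R$, so there \emph{is} a genuine choice. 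So ``standard'' does not imply ``unique reading'' in $\baxt$, and your general argument for $n \geq 4$ collapses.

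What saves the conclusion for the specific case you actually need is a much cruder observation, and it is the one the paper makes: every relation in $\drel{R}_\baxt$ has length at least $4$, so \emph{no} relation can be applied to a word of length $3$, and therefore every length-$3$ word is its own singleton $\baxtcong$-class. That is all you need to see that the component of $[123]$ is exactly $\set{[123],[231],[312]}$ and hence excludes $[213]$ (the paper uses $132$, but $213$ works equally well). If you wanted the stronger claim you were after --- that the components of standard elements in $\baxt_n$ consist only of cyclic rotations for all $n$ --- you would need a different argument, and it is in fact false as stated: by the relation above, the component of $[3412]$ contains $[3142]$, which is not a cyclic rotation.
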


\begin{proof}
  Connected components of $K(\baxt)$ are $\cyc^*$-classes, and by \fullref{Lemma}{lem:basicprops},
  ${\cyc}^* \subseteq {\evrel}$. It thus remains to prove that equality does not hold. Since all the defining relations
  in $\drel{R}_\baxt$ have length at least $4$, it follows that none of these relations can be applied to words of
  length $3$. Thus all length-$3$ words represent distinct elements of $\baxt$. Therefore the words in
  $\set{123,231,312}$ represent all the elements in one $\cyc^*$-class in $\baxt$ (and thus one connected component of
  $K(\baxt)$. The word $132$ is not in this set, but is in the same $\evrel$-class. This completes the proof.
\end{proof}

A natural question at this point is whether ${\cyc^*}$ and ${\evrel}$ do not coincide in $\baxt$ only for the slightly
trivial reason that relations in $\drel{R}_\baxt$ do not apply to words of length $3$, and that perhaps ${\cyc^*}$ and
${\evrel}$ coincide for elements represented by words of length $4$ or more. However, consider the elements of
$\baxt$ represented by the words $1243$, $2431$, $4312$, and $3124$:
\begin{align*}
  \pbaxt{1243} &= \left(
    \begin{tikzpicture}[tinybst,baseline=-8mm]
      \node (root) {$1$}
      child[missing]
      child { node (1) {$2$}
        child[missing]
        child { node (11) {$4$}
          child { node (110) {$3$} }
          child[missing]
        }
      };
    \end{tikzpicture}
    \;\;,\;
    \begin{tikzpicture}[tinybst,baseline=-8mm]
      \node (root) {$3$}
      child { node (0) {$2$}
        child { node (00) {$1$} }
        child[missing]
      }
      child { node (1) {$4$} };
    \end{tikzpicture}
    \right) \displaybreak[0]\\
  \pbaxt{2431} &= \left(
    \begin{tikzpicture}[tinybst,baseline=-5mm]
      \node (root) {$2$}
      child { node (0) {$1$} }
      child { node (1) {$4$}
        child { node (10) {$3$} }
        child[missing]
      };
    \end{tikzpicture}
    \;\;,\;
    \begin{tikzpicture}[tinybst,baseline=-5mm]
      \node (root) {$1$}
      child[missing]
      child { node (1) {$3$}
        child { node (10) {$2$} }
        child { node (11) {$4$} }
      };
    \end{tikzpicture}
    \right) \displaybreak[0]\\
  \pbaxt{4312} &= \left(
    \begin{tikzpicture}[tinybst,baseline=-8mm]
      \node (root) {$4$}
      child { node (0) {$3$}
        child { node (00) {$1$}
          child[missing]
          child { node (001) {$2$} }
        }
        child[missing]
      }
      child[missing];
    \end{tikzpicture}
    \;\;,\;
    \begin{tikzpicture}[tinybst,baseline=-8mm]
      \node (root) {$2$}
      child { node (0) {$1$} }
      child { node (1) {$3$}
        child[missing]
        child { node (11) {$4$} }
      };
    \end{tikzpicture}
    \right) \displaybreak[0]\\
  \pbaxt{3124} &= \left(
    \begin{tikzpicture}[tinybst,baseline=-5mm]
      \node (root) {$3$}
      child { node (1) {$1$}
        child[missing]
        child { node (11) {$2$} }
      }
      child { node (110) {$4$} };
    \end{tikzpicture}
    \;\;,\;
    \begin{tikzpicture}[tinybst,baseline=-5mm]
      \node (root) {$4$}
      child { node (0) {$2$}
        child { node (00) {$1$} }
        child { node (01) {$3$} }
      }
      child[missing];
    \end{tikzpicture}
    \right)
\end{align*}
It is straightforward to prove that there is exactly one reading of each of these pairs of twin binary search trees: for
example, consider extracting a reading from $\pbaxt{2431}$. Following \fullref{Method}{method:baxterreading},
$(U_L,U_R)$ is initially $\pbaxt{2431}$. The first output symbol must be $2$, since this is the unique root in
$U_L$. Deleting the corresponding vertices yields
\[
  (U_L,U_R) = \left(
    \begin{tikzpicture}[tinybst,baseline=-5mm]
      \node (0) at (-4mm,-5mm) {$1$};
      \node (1) at (4mm,-5mm) {$4$}
        child { node (10) {$3$} }
        child[missing];
    \end{tikzpicture}
    \;\;,\;
    \begin{tikzpicture}[tinybst,baseline=-5mm]
      \node (root) {$1$}
      child[missing]
      child { node (1) {$3$}
        child[missing]
        child { node (11) {$4$} }
      };
    \end{tikzpicture}
    \right)
\]
From this point onwards, there will be exactly one leaf vertex in $U_R$, and so only one choice for the symbol to
output. Hence the method must output $4$, $3$, $1$. Hence the unique reading of $\pbaxt{2431}$ is $2431$.

Hence the element represented by the words in $\set{1243,2431,4312,3124}$ form a single $\cyc^*$-class and so (for
example) $1234$ and $1243$ are not related by ${\cyc^*}$.

\begin{question}
  \begin{enumerate}
  \item Is there a characterization of $\cyc^*$-classes in $\baxt$?
  \item Is there a bound on the diameter of $\cyc^*$-classes in $\baxt_n$?
  \end{enumerate}
\end{question}

\section{Questions}

\begin{question}
  For each monoid $\mathsf{M} \in \set{\plac,\hypo,\sylv,\stal,\taig,\baxt}$, is there an efficient algorithm that takes
  two elements $T,U \in \mathsf{M}$ such that $T \evrel U$ and computes the distance between them in $K(\mathsf{M})$?
\end{question}

With regard to the previous question, note that it is always possible to compute the distance via a brute-force
computation: one could build the entire connected component $K(\mathsf{M},T)$, then find the shortest path from $T$ to
$U$. The question is whether this can be done \emph{efficiently}. Note that the strategies for constructing paths in the
various proofs in this paper do \emph{not} in general find shortest paths between two elements; see
\fullref{Example}{eg:hypoplacticupperbound}.

% \begin{question}
%   For a monoid $M \in \set{\plac,\hypo,\stal,\sylv,\taig,\baxt}$, is there a general rule that gives the diameter of a
%   connected component $K(M,u)$ for any word $u \in \aA^*$?
% \end{question}

% In particular, the lower bounds on diameters given in preceding sections all rely on standard elements. It is not clear
% whether using non-standard elements (that is, having repeated symbols) might allow one to take `short cuts' in building
% paths between elements in the same connected component.

\section{Appendix: Conjugacy}

In a group, the relation $\cyc$ is simply the usual notion of conjugacy. The concept of cyclic shifts can thus be viewed
in an algebraic way as a generalization to monoids of the concept of conjugacy in groups. Another possible
generalization, introduced by Otto~\cite{otto_conjugacy}, is $o$-conjugacy, defined by
\begin{equation}
\label{eq:oconjdef}
x \sim_o y \iff (\exists g,h \in M)(xg = gy \land hx = yh).
\end{equation}
The relation $\sim_o$ is an equivalence relation. The following result describes how $\sim_o$ is related to $\cyc$ and $\evrel$:

\begin{proposition}
  \label{prop:basicpropsoconj}
  \begin{enumerate}
  \item In any monoid, ${\cyc^*} \subseteq {\sim_o}$.
  \item In any multihomogeneous monoid ${\sim_o} \subseteq {\evrel}$.
  \end{enumerate}
\end{proposition}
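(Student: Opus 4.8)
The plan is to prove the two inclusions separately; both reduce quickly to unwinding the definitions, with the only computation being a one-line manipulation of products.

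For part~(1), recall that $\sim_o$ is an equivalence relation, hence transitive, and that $\cyc^*$ is the transitive closure of $\cyc$. It therefore suffices to establish the single-step inclusion ${\cyc} \subseteq {\sim_o}$: an easy induction on $k$ then gives ${\cyc^k} \subseteq {\sim_o}$ for every $k \in \nset$, and hence ${\cyc^*} \subseteq {\sim_o}$. So suppose $s \cyc t$, say $s = xy$ and $t = yx$ with $x,y \in M$. Putting $g = x$ in \eqref{eq:oconjdef} gives $sg = (xy)x = x(yx) = gt$, and putting $h = y$ gives $hs = y(xy) = (yx)y = th$. Hence $s \sim_o t$, as required.

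For part~(2), fix a multihomogeneous presentation $\pres{X}{\drel R}$ of $M$ with respect to which the relation $\evrel$ is defined. Since $M$ is multihomogeneous, the evaluation map $\ev{\cdot}$ is a well-defined homomorphism from $M$ to the free commutative monoid on $X$ (the monoid of $X$-indexed families of non-negative integers under componentwise addition), and this target monoid is cancellative. Now suppose $x \sim_o y$ and choose $g \in M$ with $xg = gy$. Applying $\ev{\cdot}$ yields $\ev{x} + \ev{g} = \ev{g} + \ev{y}$, and cancelling $\ev{g}$ gives $\ev{x} = \ev{y}$; that is, $x \evrel y$. (Note that only the first of the two equations in \eqref{eq:oconjdef} is needed.)

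There is no substantive obstacle here: the argument rests entirely on the observations that $\sim_o$ is already known to be transitive (so that part~(1) follows from its one-step version) and that evaluation is a homomorphism into a cancellative commutative monoid (so that part~(2) is pure cancellation). The only thing to take care over is the bookkeeping of which of $g$ and $h$ witnesses which equation in \eqref{eq:oconjdef}.
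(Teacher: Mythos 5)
Your proof is correct, and it is genuinely a different route from the paper's, because the paper does not actually give a proof here: it simply cites \cite[\S~1]{araujo_conjugation} for part~(1) and \cite[Lemma~3.2]{cm_conjugacy} for part~(2). What you have written is a self-contained direct verification. For part~(1), your key observation is that if $s=xy$ and $t=yx$ then the witnesses $g=x$ and $h=y$ already satisfy both equations in \eqref{eq:oconjdef}, giving ${\cyc}\subseteq{\sim_o}$ in one line; transitivity of $\sim_o$ then extends this to $\cyc^*$ (and since $\cyc^*$ is defined in the paper as $\bigcup_{k\geq 1}\cyc^k$ with $\cyc$ itself reflexive, no separate appeal to reflexivity of $\sim_o$ is needed). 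For part~(2), your observation that $\ev{\cdot}$ is a well-defined monoid homomorphism into the free commutative monoid on $X$, which is cancellative, makes the conclusion an immediate cancellation; you are also right that only one of the two equations in \eqref{eq:oconjdef} is used. The trade-off is just between brevity (the paper's citations) and self-containment (your explicit calculation); your argument loses nothing and would serve as a complete replacement for the cited lemmas in this context.
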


\begin{proof}
For the first part, see \cite[\S~1]{araujo_conjugation}. For the second part, see \cite[Lemma~3.2]{cm_conjugacy}.
\end{proof}

Thus in a multihomogeneous monoid, ${\cyc^*} \subseteq {\sim_o} \subseteq {\evrel}$. Since ${\cyc^*} = {\evrel}$ in the
plactic, hypoplactic, sylvester, and taiga monoids, in these settings $\sim_o$ coincides with ${\cyc^*}$ and ${\evrel}$
and thus is not of independent interest. However, it turns out that $\sim_o$ and $\evrel$ coincide in the stalactic and
Baxter monoids. (Recall that ${\cyc^*}$ is strictly contained in ${\evrel}$ in both these monoids; see
\fullref{Propositions}{prop:stalproperlycontained} and \ref{prop:baxtproperlycontained}.)

\begin{proposition}
  In $\stal$, the relations $\sim_o$ and $\evrel$ coincide.
\end{proposition}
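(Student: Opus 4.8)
The plan is to invoke \fullref[(2)]{Proposition}{prop:basicpropsoconj}, which already gives ${\sim_o} \subseteq {\evrel}$ in the multihomogeneous monoid $\stal$, and then to establish the reverse inclusion directly: given $T, U \in \stal$ with $T \evrel U$, I will exhibit $g, h \in \stal$ with $Tg = gU$ and $hT = Uh$. Let $S$ be the set of symbols of $\aA$ occurring in $T$; since $T \evrel U$, this is also the set of symbols occurring in $U$, and $|t|_x = |u|_x$ for every $x \in \aA$ whenever $t$ represents $T$ and $u$ represents $U$. The single structural input needed is the observation recorded just after \fullref{Algorithm}{alg:stalinsertone}: the left-to-right order of the columns of $\pstal{w}$ is the order of the rightmost occurrences, in $w$, of the distinct symbols of $w$. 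From this it follows that if $a, b \in \aA^*$ are words with every symbol of $a$ also occurring in $b$, then in $\pstal{ab}$ every symbol has its rightmost occurrence inside the $b$-part, so $\pstal{ab}$ and $\pstal{b}$ have the same column order, while the multiplicity of each symbol $x$ in $\pstal{ab}$ equals $|a|_x + |b|_x$. Since a stalactic tableau is completely determined by its evaluation together with the left-to-right order of its columns, this describes $\pstal{ab}$ entirely.

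Next I would write the conjugators down explicitly. List the distinct symbols of $S$ in the left-to-right column order of $U$ as $c_1, \ldots, c_m$ and regard $g = c_1 c_2 \cdots c_m$ as a word (and also as the element $\pstal{g}$ of $\stal$); list them in the column order of $T$ as $d_1, \ldots, d_m$ and similarly take $h = d_1 d_2 \cdots d_m$. Fix words $t$ and $u$ representing $T$ and $U$, so that $Tg = \pstal{tg}$, $gU = \pstal{gu}$, $hT = \pstal{ht}$, $Uh = \pstal{uh}$. The words $t$, $u$, $g$, $h$ all have symbol set exactly $S$, so the description above applies to each of $tg$, $gu$, $ht$, $uh$, and it gives: $\pstal{tg}$ has the column order of $\pstal{g}$, namely $c_1, \ldots, c_m$, and multiplicities $|t|_x + 1$; $\pstal{gu}$ has the column order of $\pstal{u} = U$, again $c_1, \ldots, c_m$, and multiplicities $1 + |u|_x$. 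As $|t|_x = |u|_x$ for all $x$, these two tableaux coincide, so $Tg = gU$. Symmetrically, $\pstal{ht}$ has the column order of $\pstal{t} = T$, namely $d_1, \ldots, d_m$, with multiplicities $1 + |t|_x$, while $\pstal{uh}$ has the column order of $\pstal{h}$, namely $d_1, \ldots, d_m$, with multiplicities $|u|_x + 1$; hence $hT = Uh$. By the definition \eqref{eq:oconjdef} of $\sim_o$, this shows $T \sim_o U$, and the reverse inclusion follows, completing the proof.

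I do not expect a genuine obstacle: the argument is a direct unwinding of the definitions once the product description is in hand. The only points requiring care are checking that the hypothesis ``every symbol of $a$ occurs in $b$'' really holds for all four products used — it does, since every word involved has symbol set $S$ — and the degenerate case $S = \emptyset$, i.e. $T = U$ the empty tableau, where one simply takes $g = h = \emptyword$. Everything else is routine.
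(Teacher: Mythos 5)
Your proof is correct and takes essentially the same route as the paper's: it uses the same conjugating elements (the words listing the columns of $T$ and of $U$, each symbol once) and the same key observation that a stalactic tableau is determined by its evaluation together with the column order, which is in turn the order of rightmost occurrences in any representing word. The only differences are cosmetic — you name the conjugators so as to match the $\sim_o$ definition directly rather than its symmetric form, and you spell out the $\pstal{ab}$ computation a bit more explicitly.
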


\begin{proof}
  Let $u,v \in \aA^*$ be such that $u \evrel v$. In particular, $\pstal{u}$ and $\pstal{v}$ both have $m$ columns,
  for some $m \in \nset$. For $i \in \set{1,\ldots,m}$, let $a_i \in A$ be the symbol that appears in the $i$-th column
  of $\pstal{u}$ and let $b_i \in A$ be the symbol that appears in the $i$-th column of $\pstal{v}$. Let
  $g = a_1\cdots a_m$ and $h = b_1\cdots b_m$. Notice that every symbol that appears in $u$ and $v$ appears exactly once
  in $g$ and $h$. Hence $gu \evrel vg$ and $uh \evrel hv$.  Furthermore, the order of rightmost appearances of
  symbols in $gu$ and $vg$ is identical; together with $gu \evrel vg$, this implies that
  $\pstal{gu} = \pstal{vg}$. Thus $gu \stalcong vg$. Similarly, $uh \stalcong hv$. Hence $u \sim_o v$. This
  proves that ${\evrel} \subseteq {\sim_o}$. The opposite inclusion follows from \fullref[(2)]{Proposition}{prop:basicpropsoconj}.
\end{proof}

\begin{proposition}
  In $\baxt$, the relations $\sim_o$ and $\evrel$ coincide.
\end{proposition}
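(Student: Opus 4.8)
The reverse inclusion ${\sim_o} \subseteq {\evrel}$ holds in any multihomogeneous monoid by \fullref[(2)]{Proposition}{prop:basicpropsoconj}, so the task is to prove ${\evrel} \subseteq {\sim_o}$. The plan is to follow the strategy used for the stalactic monoid: given $u,v \in \aA^*$ with $u \evrel v$, exhibit a word $g$ with $gu \baxtcong vg$ and a word $h$ with $uh \baxtcong hv$; these two Baxter-congruences say precisely that $g$ and $h$ witness $u \sim_o v$. The two conditions are interchanged by the left--right symmetry of the Baxter monoid (the anti-automorphism that reverses both a word and the order on the alphabet, swapping the roles of $T_L$ and $T_R$ and of left-to-right and right-to-left insertion), so it is enough to produce one of them, say $h$ with $uh \baxtcong hv$. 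Moreover, since $\sim_o$ is an equivalence relation and $\evrel$-classes are finite, it further suffices either to treat the case where $v$ is obtained from $u$ by transposing two adjacent letters, or---more conveniently---to prove $w \sim_o \sigma(w)$ for every word $w$, where $\sigma(w)$ is the weakly increasing rearrangement of $w$; then $u \sim_o \sigma(u) = \sigma(v) \sim_o v$ by transitivity.

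To build $h$, the idea is to pad with a high power of the sorted word. Write $\sigma = \sigma(u) = \sigma(v)$. Taking $h$ of the form $\sigma^M$ (or, if need be, a reading of $\pbaxt{u}$ followed by $\sigma^M$), one should show that for $M$ sufficiently large the tree $\psylv{w'}$ obtained by inserting any word $w'$ of evaluation $\ev{u}$ into the heavily saturated tree $\psylv{\sigma^M}$ depends only on that evaluation and not on the order of the letters of $w'$. This is the Baxter analogue of the fact, used in the stalactic case, that the column order of a padded word is pinned down by the padding. Granting it, both $T_R(uh)$ and $T_R(hv)$ are forced to equal the same evaluation-determined tree, so the $T_R$-components agree.

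The main obstacle is that an element of $\baxt$ records two tree structures: $T_R$ built by right-to-left insertion and $T_L$ built by left-to-right insertion. A one-sided pad such as $\sigma^M$ saturates only one of them: in $uh$ the pad sits at the right, so it is inserted first when forming $T_R(uh)$ and governs it, but it is inserted last when forming $T_L(uh)$ and so exerts no control there. The hard part of the argument is therefore to choose $h$ so that $T_L(uh) = T_L(hv)$ holds simultaneously. I expect this to require either a pad that is itself a reading of $\pbaxt{v}$---so that its one-sided effect already yields the correct left tree---or a two-stage construction that first equalises the $T_R$-components and then corrects the $T_L$-components without disturbing $T_R$, again invoking transitivity of $\sim_o$ to splice the two stages together. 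Checking that some such choice of $h$ genuinely gives $\pbaxt{uh} = \pbaxt{hv}$ is where the substance of the proof lies.
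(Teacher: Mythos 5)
Your proposal identifies the correct task (produce intertwiners in both directions) and the symmetry observation is essentially sound — if one can always find $h$ with $xh \baxtcong hy$ for every $\evrel$-related pair, then applying that universal statement to $(u,v)$ and to $(v,u)$ supplies both witnesses required by \eqref{eq:oconjdef}. But the padding construction is a sketch rather than a proof, and you say so yourself at the end; the gap is genuine and the specific idea as stated does not go through. The trouble is already visible in the sentence claiming that ``both $T_R(uh)$ and $T_R(hv)$ are forced to equal the same evaluation-determined tree.'' In forming $T_R = \psylv(\cdot)$ one inserts right to left, so in $uh$ the pad $h$ is inserted \emph{first} and can plausibly govern the shape, but in $hv$ the pad is inserted \emph{last}, after $v$; your padding lemma about inserting $u$ into the saturated tree $\psylv{\sigma^M}$ says nothing about inserting $\sigma^M$ into $\psylv{v}$. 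So even the $T_R$-half of the identity is not established, let alone the $T_L$-half that you flag as the hard part. Transitivity via $\sigma(w)$ and a two-stage splice do not repair this, because the missing lemma is needed at each stage.

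The argument the paper actually uses is much shorter and does not involve padding at all. It invokes the identity, proved in \cite[Proposition~3.8]{cm_identities}, that whenever $p \evrel q$ one has $ppq \baxtcong pqq$ and $qpp \baxtcong qqp$. Taking $g = pq$ gives $pg = ppq \baxtcong pqq = gq$, and taking $h = qp$ gives $hp = qpp \baxtcong qqp = qh$, so $p \sim_o q$ directly. The content of the proposition is therefore not a saturation phenomenon but this specific Baxter-monoid identity; without it (or some equivalent), the $o$-conjugacy cannot be extracted by the padding strategy alone.
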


\begin{proof}
  Let $p,q \in \aA^*$ be such that $p \evrel q$. By \cite[Proposition~3.8]{cm_identities}, $ppq \baxtcong pqq$ and
  $qpp \baxtcong qqp$. Hence $pg \baxtcong gq$ with $g = pq$, and $hp \baxtcong qh$ with $h = qp$. Thus $p \sim_o
  q$. This proves that ${\evrel} \subseteq {\sim_o}$. The opposite inclusion follows from
  \fullref[(2)]{Proposition}{prop:basicpropsoconj}.
\end{proof}

\bibliography{\jobname}
\bibliographystyle{alphaabbrv}

\end{document}